\newtheorem{theorem}{Theorem}[section]
\newtheorem{definition}[theorem]{Definition}
\newtheorem{lemma}[theorem]{Lemma}
\newtheorem{proposition}[theorem]{Proposition}
\newtheorem{corollary}[theorem]{Corollary}
\newtheorem{remark}[theorem]{Remark}
\newtheorem{example}[theorem]{Example}
\newcommand{\rr}{\mathbb{R}}
\newcommand{\cc}{\mathbb{C}}
\newcommand{\hh}{\mathbb{H}}
\newcommand{\s}{\mathbb{S}}
\newcommand{\pp}{\partial}
\newcommand{\ext}{{\rm ext  }}
\title{\bf  Entire slice regular functions}
\author{F. Colombo, I. Sabadini, D.C. Struppa}
\begin{document}

\pagenumbering{arabic}
\maketitle


\tableofcontents

\newpage

{\bf Abstract}.  Entire functions in one complex variable are extremely
relevant in several areas ranging from the study of convolution equations to special functions. An analog of
entire functions in the quaternionic setting can be defined in the slice
regular setting, a framework which includes polynomials and power series
of the quaternionic variable. In the first chapters of this work we introduce and discuss the
algebra and the analysis of slice regular functions. In addition to offering a self-contained introduction to the theory of slice-regular functions, these chapters also contain a few new results (for
example we complete the discussion on lower bounds for slice regular
functions initiated with the Ehrenpreis-Malgrange, by
adding a brand new Cartan-type theorem).
    The core of the work is Chapter 5, where we study the growth of
entire slice regular functions, and we show how such growth is related
to the coefficients of the power series expansions that these functions
have. It should be noted that the proofs we offer are not simple
reconstructions of the holomorphic case. Indeed, the non-commutative
setting creates a series of non-trivial problems.
Also the counting of the zeros is not trivial because of the presence of
spherical zeros which  have infinite cardinality. We prove the analog of
Jensen and Carath\'eodory theorems in this setting.

\newpage

\chapter{Introduction}

The theory of holomorphic functions in one complex variable assumes a particular flavor when one considers functions that are holomorphic on the entire plane, namely the entire functions. The reasons for the richness of results that are possible in that context are many, but certainly include the fact that for entire functions we can study growth phenomena in a cleaner way than what would be possible if one were to consider the issues introduced by the boundary of the domain of holomorphy associated to an individual function. While the beautiful issues connected with analytic continuation do not arise in the context of entire functions, the theory acquires much strength from the ability of connecting the growth of the functions to the coefficients that appear in their Taylor series.

The study of entire functions, in addition, has great relevance for the study of  convolution equations, where different spaces of entire functions arise naturally from the application of the Paley Wiener theorem (in its various forms), and from the topological vector space approach that was so instrumental in the work of Schwarz, Malgrange, Ehrenpreis, Palamodov, H\"ormander.

Further arguments in support of the study of entire functions are supplied by the consideration that not only the most important elementary functions are entire (polynomials, exponentials, trigonometric and hyperbolic functions), but also that many of the great special functions of analysis (the Jacobi theta function, the Weierstrass sigma function, the Weierstrass doubly periodic functions, among others) are entire. Finally, one knows that the solutions of linear differential equations with polynomial coefficients and with constant highest derivative coefficient are entire as well (this includes the elementary functions, as well as the Airy functions, for example).

While many texts exist, that are devoted exclusively to the study of entire functions (see for example  \cite{boas, gil,levin}), one of the best ways to understand the importance and the beauty of entire functions is the article by de Branges \cite{debranges}.

One might therefore ask whether an analogous analysis can be done for functions defined on the space $\hh$ of quaternions. As it is well known, there are several different definitions of holomorphicity (or regularity, as it is often referred to) for functions defined on quaternions. The most well known is probably the one due to Fueter, who expanded on the work of Moisil, to define regular functions those which satisfy a first order system of linear differential equations that generalizes the one of Cauchy-Riemann. These functions are often referred to as Fueter regular, and their theory is very well developed, see e.g. \cite{csss,ghs}. It is therefore possible to consider those functions which are Fueter regular on the entire quaternionic space $\hh$, and see whether there are important properties that can be deduced. However, the mathematician interested in this generalization would encounter immediately the problem that these functions do not admit a natural power series expansion. While this comment would need to be better qualified, it is the basic reason why most of what is known for holomorphic entire functions cannot be extended to the case of Fueter regular entire functions.

In the last ten years, however, many mathematicians have devoted significant attention to a different notion of regularity, known as slice regularity or slice hyperholomorphy.
This theory  began as a theory of functions from the space of quaternions $\mathbb H$ to itself whose restriction to any complex plane contained in $\mathbb H$ was in the kernel of the corresponding Cauchy-Riemann operator, see \cite{MR2227751, GS}.
It immediately appeared that this class of functions, at least on balls centered at the origin, coincides with the class of polynomials and convergent power series of the quaternionic variable, previously introduced in \cite{deleo}.
In particular, there are very natural generalizations of the exponential and trigonometric functions, that happen to be entire slice regular functions.
 Further studies showed that on suitable open sets called axially symmetric slice domains, this class of function coincides with the class of functions of the form $f(q)=f(x+Iy)=\alpha(x,y)+I\beta(x,y)$ when the quaternion $q$ is written in the form $x+Iy$ ($I$ being a suitable quaternion such that $I^2=-1$) and the pair $(\alpha, \beta)$ satisfies the Cauchy-Riemann system and the conditions $\alpha(x,-y)=\alpha(x,y)$, $\beta(x,-y)=-\beta(x,y)$. This class of functions when $\alpha$ and $\beta$ are real quaternionic or, more in general, Clifford algebra valued is well known: they are the so-called holomorphic functions of a paravector variable, see \cite{ghs, sommen}, which were later studied in the setting of real alternative algebras in \cite{GP1}.\\

But there are deeper reasons why these functions are a relevant subject of study. Maybe the most important point is to notice that slice regular functions, and their Clifford valued companions, the slice monogenic functions (see \cite{MR2520116, CSSd}), have surprising applications in operator theory. In particular, one can use these functions (collectively referred to as slice hyperholomorphic functions) to define a completely new and very powerful slice hyperholomorphic functional calculus (which extends the Riesz-Dunford functional calculus to quaternionic operators and to $n$-tuples of non-commuting operators).
 These possible applications in operator theory gave great impulse to the theory of functions.

In particular, their Cauchy formulas, with slice hyperholomorphic kernels, are the basic tools to extend the Riesz-Dunford functional calculus to quaternionic operators and to $n$-tuples of not necessarily commuting operators.  They also lead to the notion of $S$-spectrum which turned out to be
the correct notion of spectrum for all applications in different areas of quaternionic functional analysis and in
particular to the quaternionic spectral theorem.
\\
The function theory of slice regular and slice monogenic functions was developed in a number of papers, see the list of references (note that some of the references are not explicitly quoted in the text) and the comments below. It has also been extended to vector-valued and more in general operator-valued functions, see \cite{ACSBOOK} and the references therein.
The monographs \cite{ACSBOOK,CSS,GSS} are the main sources for slice hyperholomorphic functions and their applications.

It should be pointed out that the theory of slice hyperholomorphic functions is different from the more classical theory of functions in the kernel of the Dirac operator \cite{bds,csss,DeSoS}, the so-called monogenic functions. While the latter is a refinement of harmonic analysis is several variables, the former has many applications among which one of the most important is in quaternionic quantum mechanics, see \cite{adler,BvN,12,14,21}.
Slice hyperholomorphic functions and monogenic functions can be related using the Fueter theorem, see \cite{CGONZSAB,CLSSo, COSASO} and the Radon transform, see \cite{So1} and the more recent \cite{bls}.
\\
\\
To give a flavor of the several results available in the literature in the context of slice regular functions, with no claim of completeness,  we gather the references in the various areas of research.
\\
\\
 {\it Function theory}. The theory of slice regular functions was delevoped in the papers
\cite{CGeSA,GSTO1,  GSTO3, GStruPRAG, MR2227751, GS,  MR2869150, MR2836832}, in particular, the zeros were treated in \cite{GSTO0,zeri,mjm}
while further properties  can be found in
\cite{BISISTOPP,CGJ2, DRGS, ggs, GGONS2, GSARF,RW,RZ,STOPP3, STOPP1,STOPP2,vlac}.
Slice monogenic functions with values in a Clifford algebra and their main properties were studied in
\cite{MR2742644,MR2520116,CSSd,duality,MR2785869,sheaves,CDB,GP7,YQ}.
Approximation of slice hyperholomorphic functions are collected in the works \cite{runge,GALSAB1,GALSAB2,GALSAB3,GALSAB4,GALSAB5,GalSa1,galsa3, sarfatti1}.
The case of several variables was treated in
\cite{AGALCOSA,CSSsev}, but a lot of work has still to be done in this
direction since the theory is at the beginning.
The generalization of slice regularity to real alternative algebra was developed in the papers \cite{alt,GP1,gp,GP3,GP4} and, finally,
while some results associated with the global operator associated to slice
 hyperholomorphicity are in the papers \cite{CGOZSAB,CoSo,CDEBR,GP5}.
\\
\\
{\it Function spaces}. Several function spaces have been studied in this framework. In particular, the quaternionic Hardy spaces $H^2(\Omega)$, where $\Omega$ is the quaternionic
unit ball $\mathbb B$ or the half space
$\mathbb H^+$ of quaternions with positive real part, together with the Blaschke products are in  \cite{acls_milan,acs1, MR3127378} and further properties are in \cite{arc, arc2,arc3}.
 The Hardy spaces $H^p(\mathbb B)$, $p>2$, are considered in \cite{sarfatti}.
 The Bergman spaces can be found in \cite{cglss, CGS, CGS3} and  the Fock space in \cite{Fock}.
Weighted Bergman spaces, Bloch, Besov, and Dirichlet spaces on the unit ball $\mathbb B$ are considered in
\cite{MR3311947}.
Inner product spaces and Krein spaces in the quaternionic setting, are in \cite{ALLJECK5}.
\\
\\
{\it Groups and semigroups of operators}.
The theory of groups and semigroups of quaternionic operators has been   developed and studied in the papers
\cite{perturbation, MR2803786,GR}.
\\
\\
{\it Functional calculi}.
 There exists at least five functional calculi associated to slice hyperholomorphicity.
For each of it we have the quaternionic version and the version for $n$-tuples of operators.
The $S$-functional calculus, see \cite{acgs,cgssann,cgss,cgss1,JGA,CLOSED,Cauchy,JFA},
 is the analogue of the Riesz-Dunford functional calculus in the quaternionic setting.
Further developments are in  \cite{FRCPOW, Taylor}.
The $SC$-functional calculus, see \cite{SCFCALC}, is the commutative version of the $S$-functional calculus.
For the functional calculus for groups of quaternionic operators based in the Laplace-Stieltjes transform, see \cite{FUNCGEN}.
The $H^\infty$ functional calculus based on the $S$-spectrum, see \cite{HINFTY},
 is the analogue, in this setting, of the calculus introduced by A. McIntosh, see \cite{McI1}.
The F-functional calculus, see \cite{ACSMATMET, Fconsequences,Boundedpert,  FUNBOUNDED, FFRANK},
which is based on the Fueter mapping theorem in integral form,  is a
monogenic functional calculus in the spirit of the one developed in
 \cite{jefferies,jmc,jmcpw,mcp,LIATAO}, but it is associated to slice hyperholomorphicity.
Finally the W-functional calculus, see \cite{WCALCOLO}, is a monogenic plane wave calculus based on slice hyperholomorphic functions.
\\
\\
{\it Spectral theory}.
The spectral theorem based on the $S$-spectrum for bounded and for unbounded quaternionic normal operators on a Hilbert space  was developed in \cite{ack,acks2,spectcomp}.
The case of quaternionic normal matrices was proved in \cite{fp} and it is  based on the right spectrum,
but the right spectrum is equal to the $S$-spectrum in the finite dimensional case.
The Continuous slice functional calculus in quaternionic Hilbert spaces is studied in \cite{GMP}.
\\
\\
{\it Schur Analysis}. This is a very wide field that has been developed in the last five years in the slice hyperholomorphic setting.
Schur analysis originates with the works of Schur, Herglotz, and others
 and can be seen as a collection of
topics pertaining to Schur functions and their  generalizations; for a quick introduction in the classical case see for example \cite{MR2002b:47144}.
For the slice hyperholomorphic case see
\cite{AACKS, AGALCOSA,abcs,ALLJECK4,ALLJECK3, ACKS1,acls_milan, acs1,MR3127378,acs3,ALLJECK1,ALLJECK2} and also the forthcoming monograph \cite{ACSBOOK}.
\\
\\
Since the literature in the field is so vast, it became quite natural to ask whether the deep theory of holomorphic entire functions can be reconstructed for slice regular functions. As this book will demonstrate, the answer is positive, and our contribution here consists in showing the way to a complete theory of entire slice regular functions. It is probably safe to assert that this monograph is only the first step, and in fact we only have chosen a fairly limited subset of the general theory of homomorphic entire functions, to demonstrate the feasibility of our project. We expect to return to other important topics in a subsequent volume.
\\

This monographs contains four chapters, beside this introduction.
 In Chapter 2 and 3 we introduce and discuss the algebra and the analysis of slice regular functions. While most of the results in those chapters are well known, and can be found in the literature, see e.g. \cite{ACSBOOK,CSS,GSS}, we repeated them to make the monograph self-contained. However  there are a few new observations (e.g. in section 1.4 where we tackle the composition of slice regular functions and also the Riemann mapping theorem) that do not appear in the aforementioned monographs. There are also a few new results (for example we complete the discussion on lower bounds for slice regular functions initiated with the Ehrenpreis-Malgrange in section 3.4, by adding a brand new Cartan-type theorem in section 3.5).

   Chapter 4 deals with infinite products of slice regular functions. The results in this chapter are known, but at least the Weierstrass theorem receives here a treatment that is different from the one originally given in \cite{MR2836832}, see also \cite{GSS}. This treatment leads also to the definition of genus of a a canonical product. The core of the work, however, is Chapter 5, where we study the growth of entire slice regular functions, and we show how such growth is related to the coefficients of the power series expansions that these functions have.  This chapter contains new results, the only exception is  section 5.5 which is taken from \cite{GalSa1}. It should be noted that the proofs we offer are not simple reconstructions (or translations) of the holomorphic case. Indeed, the non-commutative setting creates a series of non-trivial problems, that force us to define composition and multiplication in ways that are not conducive to a simple repetition of the complex case.
Also the counting of the zeros is not trivial because of the presence of spherical zeros which  have infinite cardinality.

We believe that much work still needs to be done in this direction, and we hope that our monograph will inspire others to turn their attention to this nascent, and already so rich, new field of noncommutative analysis.

\vskip 1truecm
{\bf Acknowledgments}. The first two authors are grateful to Chapman University for the kind hospitality and the support during the periods in which this work has been written.  The second author thanks the project FIRB 2012 "Geometria Differenziale e Teoria Geometrica delle
Funzioni" for partially supporting this research.

%
%
%


\chapter{Slice regular functions: algebra}
\section{Definition and main results}

In this chapter we will present some basic material on slice regular functions, a generalization of holomorphic functions to the quaternions.\\
The skew field of quaternions $\mathbb H$ \index{$\mathbb H$} is defined as
$$\mathbb{H}=\{q=x_0  + x_{1} i +x_{2} j + x_{3} k ; \ \ \ x_0,\ldots, x_3 \in \mathbb{R} \},$$
where the imaginary units $i, j, k$ satisfy
$$i^2=j^2=k^2=-1, \ ij=-ji=k, \ jk=-kj=i, \ ki=-ik=j.$$ It is a noncommutative field and since  $\mathbb{C}$ can be identified (in a non unique way) with a subfield of $\mathbb{H}$, it extends the class of complex numbers. On $\mathbb{H}$ we define the Euclidean norm
$|q|=\sqrt{x_0^2 +x_1^2+x_2^3+x_3^2}$.

The symbol $\mathbb{S}$ denotes the unit sphere of purely imaginary quaternion, i.e.
$$\mathbb{S}=\{q = i x_{1} + j x_{2} + k x_{3}, \mbox{ such that } x_{1}^{2}+x_{2}^{2}+x_{3}^{3}=1\}.$$
Note that if $I\in \mathbb{S}$, then $I^{2}=-1$. For this reason the elements of $\mathbb{S}$ are also
called imaginary units. For any fixed $I\in\mathbb{S}$ we define $\mathbb{C}_I:=\{x+Iy; \ |\ x,y\in\mathbb{R}\}$. It is easy to verify that
$\mathbb{C}_I$ can be identified with a complex plane, moreover $\mathbb{H}=\bigcup_{I\in\mathbb{S}} \mathbb{C}_I$.
The real axis belongs to $\mathbb{C}_I$ for every $I\in\mathbb{S}$ and thus a real quaternion can be associated with any imaginary unit $I$.
Any non real quaternion $q$ is uniquely associated to the element $I_q\in\mathbb{S}$
defined by $I_q:=( i x_{1} + j x_{2} + k x_{3})/|  i x_{1} + j x_{2} + k x_{3}|$. It is obvious that $q$ belongs to the complex plane $\mathbb{C}_{I_q}$.
\\
\begin{definition}
 Let $U$ be an open set in $\mathbb{H}$ and $f:\, U\to \mathbb{H}$ be real differentiable. The function $f$ is said to be (left) slice regular \index{function!slice regular} \index{slice regular!function} or (left) slice hyperholomorphic
if for every $I\in \mathbb{S}$, its restriction $f_{I}$ to the complex plane ${\mathbb{C}}_{I}=\mathbb{R}+ I \mathbb{R}$ passing through origin
and containing $I$ and $1$ satisfies
$$\overline{\partial}_{I}f(x+I y):=\frac{1}{2}\left (\frac{\partial}{\partial x}+I \frac{\partial}{\partial y}\right )f_{I}(x+I y)=0,$$
on $U\cap \mathbb{C}_{I}$.
The class of (left) slice regular functions on $U$ will be denoted by $\mathcal{R}(U)$. \index{$\mathcal{R}(U)$}
\\
Analogously, a function is said to be right slice regular in $U$ if
$$(f_{I}{\overline{\partial}}_{I})(x+I y):=\frac{1}{2}\left (\frac{\partial}{\partial x}f_{I}(x +I y)+\frac{\partial}{\partial y}f_{I}(x+I y) I\right )=0,$$
on $U\cap \mathbb{C}_{I}$.
\end{definition}
It is immediate to verify that:
\begin{proposition} Let $U$ be an open set in $\mathbb H$. Then $\mathcal R(U)$ is a right linear space on $\mathbb H$.
\end{proposition}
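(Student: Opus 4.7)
The plan is to verify closure under the two operations that make $\mathcal{R}(U)$ a right $\mathbb{H}$-module: pointwise addition and pointwise right multiplication by a quaternion constant. Associativity, distributivity, existence of the zero function, and existence of additive inverses are inherited from the $\mathbb{H}$-module structure on the codomain $\mathbb{H}$ and are automatic, so the only content is to check that $\mathcal{R}(U)$ is closed in the space of real-differentiable functions $U \to \mathbb{H}$.

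For closure under addition, I would fix $f, g \in \mathcal{R}(U)$, pick $I \in \mathbb{S}$, and note that $(f+g)_I = f_I + g_I$ on $U \cap \mathbb{C}_I$. Since $\frac{\partial}{\partial x}$ and $\frac{\partial}{\partial y}$ are real-linear, the operator $\overline{\partial}_I = \frac{1}{2}(\partial_x + I\partial_y)$ is additive, so $\overline{\partial}_I(f_I+g_I) = \overline{\partial}_I f_I + \overline{\partial}_I g_I = 0$ on $U \cap \mathbb{C}_I$. Since $I$ was arbitrary, $f + g \in \mathcal{R}(U)$.

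For closure under right scalar multiplication, I would fix $f \in \mathcal{R}(U)$ and $a \in \mathbb{H}$, and let $h = fa$. On $U \cap \mathbb{C}_I$ we have $h_I(x+Iy) = f_I(x+Iy)\, a$, and since $a$ is a constant quaternion the partial derivatives satisfy $\partial_x(f_I a) = (\partial_x f_I) a$ and $\partial_y(f_I a) = (\partial_y f_I) a$. Therefore
\[
\overline{\partial}_I h_I = \tfrac{1}{2}\bigl((\partial_x f_I)a + I(\partial_y f_I)a\bigr) = \tfrac{1}{2}\bigl(\partial_x f_I + I\, \partial_y f_I\bigr) a = (\overline{\partial}_I f_I)\, a = 0,
\]
where the key step is that the multiplication by $a$ sits on the \emph{right} and can be pulled out of the sum without any reordering issue; the imaginary unit $I$ stays on the left of $\partial_y f_I$ exactly where the definition of $\overline{\partial}_I$ demands.

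The only point worth highlighting as an ``obstacle'' is that this argument genuinely fails for left scalar multiplication: for $a \in \mathbb{H}$, the function $af$ would produce the term $I a \, \partial_y f_I$, and in general $Ia \neq aI$, so one cannot factor $a$ to recover $a \cdot \overline{\partial}_I f_I$. This asymmetry, which is purely a consequence of the noncommutativity of $\mathbb{H}$, is the reason the statement asserts a right (and not left or two-sided) module structure.
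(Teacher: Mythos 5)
Your proof is correct and is exactly the verification the paper has in mind — the paper states this proposition with only the remark ``It is immediate to verify that'' and omits the argument, so your write-up simply supplies the expected details: additivity of $\overline{\partial}_I$ and the fact that a right constant factor $a$ passes through $\partial_x$, $\partial_y$, and the left multiplication by $I$ without any reordering. Your closing observation that the argument breaks for left scalar multiplication (since $Ia\neq aI$ in general) correctly identifies why only a right $\mathbb{H}$-module structure is claimed.
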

Let $f\in\mathcal{R}(U)$. The so called left (slice) $I$-derivative of $f$ at a point $q=x+I y$ is given by
$$\partial_{I}f_{I}(x+ I y):=\frac{1}{2}\left (\frac{\partial}{\partial x}f_{I}(x+ I y) - I\frac{\partial}{\partial y}f_{I}(x+ I y)\right ).$$
In this case, the right $I$-derivative of $f$ at $q=x+I y$ is given by
$$\partial_{I}f_{I}(x+ I y):=\frac{1}{2}\left (\frac{\partial}{\partial x}f_{I}(x+ I y) - \frac{\partial}{\partial y}f_{I}(x+ I y) I\right ).$$
Let us now introduce a suitable notion of  derivative:
\begin{definition}
Let $U$ be an open set in $\mathbb{H}$, and let $f:U \to \mathbb{H}$
be a slice regular function. The slice derivative $\partial_s f$ \index{slice derivative} of $f$,
 is defined by:
\begin{displaymath}
\partial_s(f)(q) = \left\{ \begin{array}{ll}
\partial_I(f)(q) & \textrm{ if $q=x+Iy$, \ $y\neq 0$},\\ \\
\displaystyle\frac{\partial f}{\partial x} (x) & \textrm{ if\  $q=x\in\mathbb{R}$.}
\end{array} \right.
\end{displaymath}
\end{definition}
The definition of slice derivative is well posed because it is applied
only to slice regular functions, thus
$$
\frac{\partial}{\partial x}f(x+Iy)=
-I\frac{\partial}{\partial y}f(x+Iy)\qquad \forall I\in\mathbb{S}.
$$
Similarly to what happens in the complex case, we have
$$ \partial_s(f)(x+Iy) =
\partial_I(f)(x+Iy)=\partial_x(f)(x+Iy).
$$
We will often  write $f'(q)$ instead of $\partial_s f(q)$.\\
It is important to note that if $f(q)$ is a slice regular function then also $f'(q)$ is a slice regular function.\\
Let
 $I,J\in\mathbb{S}$ be such that $I$ and $J$ are orthogonal, so that $I,J,IJ$ is an orthogonal
basis of $\mathbb H$ and write the restriction  $f_I(x+Iy)=f(x+Iy)$ of $f$ to the complex plane $\mathbb C_I$  as
$f=f_0+If_1+Jf_2+Kf_3.$ It can also be written as $f=F+GJ$ where $f_0+If_1=F$, and $f_2+If_3=G$.
This observation immediately gives the following result:
\begin{lemma}[Splitting Lemma]\label{lemma spezzamento} \index{Splitting Lemma}
If $f$ is a slice regular function
on $U$, then for\index{Splitting Lemma}
every $I \in \mathbb{S}$, and every $J\in\mathbb{S}$,
perpendicular to $I$, there are two holomorphic functions
$F,G:U\cap \mathbb{C}_I \to \mathbb{C}_I$ such that for any $z=x+Iy$, it is
$$f_I(z)=F(z)+G(z)J.$$
\end{lemma}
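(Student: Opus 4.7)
The plan is to make the decomposition $f_I = F + GJ$ explicit by splitting the real-valued components of $f_I$ in the orthonormal real basis $\{1, I, J, IJ\}$ of $\mathbb{H}$, and then to push the operator $\overline{\partial}_I$ through the decomposition to show that $F$ and $G$ separately satisfy the Cauchy--Riemann equations on $U \cap \mathbb{C}_I$.

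First I would fix $I, J \in \mathbb{S}$ with $I \perp J$ so that $\{1, I, J, IJ\}$ is an orthonormal $\mathbb{R}$-basis of $\mathbb{H}$, and write the restriction as
$$f_I(x+Iy) = f_0(x,y) + I f_1(x,y) + J f_2(x,y) + (IJ) f_3(x,y),$$
with each $f_k$ real-valued on $U \cap \mathbb{C}_I$. Defining $F := f_0 + I f_1$ and $G := f_2 + I f_3$, both with values in $\mathbb{C}_I$, one checks immediately (using that $f_2, f_3$ are real, so they commute with $I$ and $J$) that $f_I = F + GJ$.

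Next I would apply the operator $\overline{\partial}_I = \tfrac{1}{2}(\partial_x + I\partial_y)$ to this decomposition. Since $J$ is a constant quaternion, right multiplication by $J$ commutes with the real partial derivatives, so
$$\overline{\partial}_I f_I = \overline{\partial}_I F + (\overline{\partial}_I G)\, J.$$
The slice regularity hypothesis gives $\overline{\partial}_I f_I = 0$ on $U \cap \mathbb{C}_I$. The term $\overline{\partial}_I F$ lies in $\mathbb{C}_I$, while $(\overline{\partial}_I G)J$ lies in $\mathbb{C}_I \cdot J = \mathbb{R} J + \mathbb{R}(IJ)$; these two subspaces intersect trivially inside $\mathbb{H}$ (this is exactly the linear independence of $\{1, I, J, IJ\}$). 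Hence both summands vanish separately:
$$\overline{\partial}_I F = 0, \qquad \overline{\partial}_I G = 0.$$

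Since $F, G$ take values in $\mathbb{C}_I$, the condition $\overline{\partial}_I = 0$ is precisely the classical Cauchy--Riemann system for a complex-valued function on an open subset of $\mathbb{C}_I$ (identifying $\mathbb{C}_I$ with $\mathbb{C}$ via $I \mapsto i$), so $F$ and $G$ are holomorphic, giving the desired splitting. The only genuine step to be careful about is the separation of the two summands in $\overline{\partial}_I f_I = 0$; this rests exclusively on the orthogonality of $I$ and $J$, i.e.\ on the direct sum decomposition $\mathbb{H} = \mathbb{C}_I \oplus \mathbb{C}_I J$ as a real vector space, which holds as soon as $J \perp I$.
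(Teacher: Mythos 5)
Your proof is correct and follows the same route as the paper: decompose $f_I$ into its $\mathbb{C}_I$- and $\mathbb{C}_I J$-components and let the defining equation $\overline{\partial}_I f_I=0$ split accordingly. The paper states this in one line ("the statement immediately follows"); you have simply made explicit the key point it leaves implicit, namely that $\mathbb{H}=\mathbb{C}_I\oplus\mathbb{C}_I J$ as a real vector space forces $\overline{\partial}_I F$ and $\overline{\partial}_I G$ to vanish separately.
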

\begin{proof}
 Since $f$ is slice regular, we know that
$$
(\frac{\partial}{\partial x} +I\frac{\partial}{\partial
y})f_I(x+Iy)=0.
$$
Therefore by decomposing the values of $f_I$ into complex components
$$
f_I(x+Iy)=F(x+Iy)+G(x+Iy)J,
$$
the statement immediately follows.
\end{proof}
In this work we will be interested in the case in which slice regular functions are considered on open balls $B(0;r)$ centered at the origin with radius $r>0$.
We have:
\begin{theorem}
 A function $f:\ {B}(0;{r}) \to \mathbb{H}$ slice regular on ${B}(0;{r})$  has a series representation of the form
\begin{equation}\label{powerseries}
f(q)=\sum_{n=0}^{\infty}q^{n}\frac{1}{n !}\cdot \frac{\partial^{n} f}{\partial x^{n}}(0)=\sum_{n=0}^{\infty}q^{n} a_n,
\end{equation}
uniformly convergent on ${B}(0;{r})$. Moreover, $f\in\mathcal C^\infty (B(0;R))$.
\end{theorem}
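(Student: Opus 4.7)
The plan is to reduce to the classical complex case via the Splitting Lemma, slice by slice, and then argue that the coefficients produced on each slice are all equal to the same quaternionic constant, so that they fit together into a single power series on $B(0;r)$.

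First, I would fix $I\in\mathbb{S}$ and choose $J\in\mathbb{S}$ with $J\perp I$. Applying the Splitting Lemma to $f$ on $U=B(0;r)$, there exist holomorphic functions $F,G:B(0;r)\cap\mathbb{C}_I\to\mathbb{C}_I$ with $f_I(z)=F(z)+G(z)J$. The set $B(0;r)\cap\mathbb{C}_I$ is just the open disk of radius $r$ centred at $0$ inside the complex plane $\mathbb{C}_I$, so classical one-variable complex analysis gives Taylor expansions
$$F(z)=\sum_{n\ge 0} z^n\alpha_n,\qquad G(z)=\sum_{n\ge 0}z^n\beta_n,\qquad \alpha_n,\beta_n\in\mathbb{C}_I,$$
absolutely and uniformly convergent on every compact subset of that disk. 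Setting $a_n^{(I)}:=\alpha_n+\beta_n J\in\mathbb{H}$, I get $f(q)=\sum_{n\ge 0} q^n a_n^{(I)}$ for every $q\in B(0;r)\cap\mathbb{C}_I$.

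The next, and main, step is to check that $a_n^{(I)}$ is independent of the choice of $I$. I would differentiate the expansion $f_I(x)=\sum_n x^n a_n^{(I)}$ term-by-term $n$ times along the real axis (legitimate by the uniform convergence on compact subsets of the disk) and evaluate at $x=0$, obtaining
$$a_n^{(I)}=\frac{1}{n!}\frac{\partial^n f_I}{\partial x^n}(0).$$
But at $0\in\mathbb{R}$ the partial derivative $\partial_x^n f(0)$ does not depend on which complex plane $\mathbb{C}_I$ we use to take it, since $0$ lies on every slice and the $x$-direction is common to all slices. Hence $a_n^{(I)}=:a_n$ is an intrinsic quaternion, and the identity $f(q)=\sum_n q^n a_n$ holds on each slice $\mathbb{C}_I$, hence on the union $B(0;r)=\bigcup_{I\in\mathbb{S}}\bigl(B(0;r)\cap\mathbb{C}_I\bigr)$.

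Finally, the convergence and regularity assertions. From the slicewise convergence on the disk of radius $r$ we get the root-test bound $\limsup_{n\to\infty}|a_n|^{1/n}\le 1/r$ (using $|a_n|\le |\alpha_n|+|\beta_n|$ and the Cauchy–Hadamard formula for $F,G$); consequently the quaternionic series $\sum_n q^n a_n$ converges normally, hence uniformly, on every closed ball $\overline{B(0;r')}$ with $r'<r$. Each monomial $q\mapsto q^n a_n$ is a polynomial in the four real coordinates $x_0,x_1,x_2,x_3$, so it is $C^\infty$; the uniform convergence on compact subsets together with the standard theorem on differentiating series term-by-term then yields $f\in C^\infty(B(0;r))$. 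The principal obstacle in the argument is the coefficient-independence step: without it the local slicewise expansions would not patch into a single global series, and this is where the real-axis identification $a_n=\frac{1}{n!}\partial_x^n f(0)$ does the essential work.
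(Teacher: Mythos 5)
Your proposal is correct and follows essentially the same route as the paper: apply the Splitting Lemma on each slice, expand the holomorphic components $F$ and $G$, and identify the coefficients with $\frac{1}{n!}\partial_x^n f(0)$, which is independent of $I$ because it is computed along the real axis common to all slices. You are somewhat more explicit than the paper about why the slicewise coefficients agree and about the convergence and $C^\infty$ assertions, but the underlying argument is the same.
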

\begin{proof}
Let us use the Splitting Lemma to write $f_I(z)=F(z)+G(z)J$, where $z=x+Iy$. Then
 $$
 f_I^{(n)}(z)=\partial_I^{(n)} f(z)=\frac{\partial^n}{\partial z^n} F(z)+ \frac{\partial^n}{\partial z^n} G(z)J.
 $$
 Now we use the fact that $F$, and $G$ admits converging power series expansions which converge uniformly and absolutely on any compact set in $B(0;r)\cap\mathbb C_I$:
\[
\begin{split}
f_I(z)&=\sum_{n\geq 0} z^n \frac{1}{n!}
\frac{\partial^nF}{\partial z^n}(0) + \sum_{n\geq 0} z^n
\frac{1}{n!} \frac{\partial^nG}{\partial z^n}(0)J
\\
&= \sum_{n\geq 0}
z^n \frac{1}{n!} \left( \frac{\partial^n(F+GJ)}{\partial
z^n}(0)\right)= \sum_{n\geq 0} z^n \frac{1}{n!} \left(
\frac{\partial^n f}{\partial z^n}(0)\right)\\
&=\sum_{n\geq 0} z^n
\frac{1}{n!} \left(\frac{1}{2} \left( \frac{\partial}{\partial x} -I
\frac{\partial}{\partial y}\right)\right)^n f(0)=\sum_{n\geq 0} z^n
\frac{1}{n!} f^{(n)} (0).
\end{split}
\]
The function $f$ is infinitely differentiable in $B(0;r)$ by the uniform convergence on the compact subsets.
\end{proof}
The proof of the following result is the same as the proof in the complex case.
\begin{theorem}
Let $\{a_n\}$, $n\in\mathbb N$ be a sequence of quaternions and let
$$
r = \frac{1}{\overline{\lim} |a_n|^{1/n} }.
$$
If $r > 0$ then the power series $\sum_{N=0}^\infty q^n a_n$
converges absolutely and uniformly on compact subsets of $B(0;r)$. Its sum defines a slice regular function on $B(0;r)$.
\end{theorem}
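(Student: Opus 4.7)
The plan is to mimic the classical Cauchy--Hadamard argument, exploiting the fact that the quaternionic modulus is multiplicative ($|qq'|=|q||q'|$), and then to invoke the Splitting Lemma to transfer slice regularity of the sum from two ordinary complex-holomorphic series.

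For the convergence assertion, I would fix a compact $K\subset B(0;r)$, choose $\rho<r$ with $K\subset \overline{B(0;\rho)}$, and pick any $\rho'\in(\rho,r)$. By definition of $r$, there exists $N$ such that $|a_n|^{1/n}<1/\rho'$ for all $n\geq N$. Since $|q^n a_n|=|q|^n|a_n|$, this gives
\[
|q^n a_n|\leq (\rho/\rho')^n \qquad \text{for all } q\in K,\ n\geq N,
\]
and absolute uniform convergence on $K$ follows by comparison with a convergent geometric series. No quaternion-specific ingredient beyond the multiplicativity of $|\cdot|$ is needed.

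For slice regularity of the sum $f(q):=\sum_n q^n a_n$, I would fix $I\in\mathbb{S}$, pick $J\in\mathbb{S}$ orthogonal to $I$, and use the basis $\{1,I,J,IJ\}$ of $\mathbb{H}$ to write each coefficient uniquely as $a_n=\alpha_n+\beta_n J$ with $\alpha_n,\beta_n\in \mathbb{C}_I$. Because $|a_n|^2=|\alpha_n|^2+|\beta_n|^2$, both of the classical complex power series $F(z)=\sum_n z^n\alpha_n$ and $G(z)=\sum_n z^n\beta_n$ have radius of convergence at least $r$, so they define holomorphic $\mathbb{C}_I$-valued functions on $B(0;r)\cap \mathbb{C}_I$. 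For $q=z\in \mathbb{C}_I$ we have $q^n=z^n\in \mathbb{C}_I$, and the absolute convergence established above justifies the rearrangement $f_I(z)=F(z)+G(z)J$. The Cauchy--Riemann equations for $F$ and $G$ then yield $\overline{\partial}_I f_I\equiv 0$ on $B(0;r)\cap \mathbb{C}_I$; since $I\in\mathbb{S}$ was arbitrary and the representation $F+GJ$ exhibits $f$ as smooth on each slice (and in fact in the real variables globally, by uniformity), $f\in\mathcal{R}(B(0;r))$.

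The only point requiring care is the rearrangement $\sum_n q^n(\alpha_n+\beta_n J) = \sum_n z^n\alpha_n + \bigl(\sum_n z^n\beta_n\bigr)J$, but this is immediate from the absolute convergence proved in the first step. I do not expect any substantive new obstacle beyond this bookkeeping, which is consistent with the author's remark that the proof parallels the complex one.
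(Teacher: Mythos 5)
Your argument is correct and is exactly the route the paper intends: the text gives no explicit proof, stating only that it is ``the same as the proof in the complex case,'' and your Cauchy--Hadamard estimate (using multiplicativity of the quaternionic modulus) together with the Splitting-Lemma decomposition $a_n=\alpha_n+\beta_n J$ is the standard instantiation of that remark, consistent with how the paper proves the converse expansion theorem. The only point stated a bit quickly is that uniform convergence alone does not give real differentiability of the sum --- one should note that the termwise partial-derivative series has the same radius of convergence and hence also converges uniformly on compacts --- but this is routine and matches the level of detail the paper itself uses.
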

\begin{definition}
A function slice regular on $\mathbb H$ will be called entire slice regular or entire slice hyperholomorphic.\index{function!entire slice regular}
\end{definition}
Every entire regular function admits power series expansion of the form \eqref{powerseries} which converges everywhere in $\mathbb H$ and uniformly on the compact subsets of $\mathbb H$.
\\
A simple computation shows that since the radius of convergence is infinite we have
$$
\lim_{n\to\infty} |a_n|^{\frac 1n}=0,
$$
or, equivalently:
$$
\lim_{n\to\infty} \frac{\log |a_n|}{n}=-\infty
$$

\section{The Representation Formula}
Slice regular functions possess good properties on specific open sets that we will call axially symmetric slice domains. On these domains, slice regular functions satisfy the so-called Representation Formula which allows to reconstruct the values of the function once that we know its values on a complex plane $\mathbb C_I$. As we shall see, this will allow also to define a suitable notion of multiplication between two slice regular functions.
\begin{definition}
Let $U \subseteq \mathbb H$. We say that $U$ is
\textnormal{axially symmetric} \index{axially symmetric} if, for every $x+Iy \in U$,  all the elements $x+\mathbb{S}y=\{x+Jy\ | \ J\in\mathbb{S}\}$ are contained in $U$.
We say that $U$ is a {\em slice domain}  \index{slice domain} (or s-domain \index{s-domain} for short) if it is a connected set whose intersection with every complex plane $\mathbb C_I$ is connected.
\end{definition}
\begin{definition} Given a quaternion $q=x+Iy$, the set of all the elements of the form $x+Jy$ where $J$ varies in the sphere $\mathbb{S}$ is a $2$-dimensional sphere denoted by $[x+Iy]$.
\end{definition}
The Splitting Lemma allows to prove:

\begin{theorem}[Identity Principle] \label{identity principle} \index{Identity Principle}  Let $f:U\to\mathbb{H}$ be a slice regular function
on an s-domain $U$. Denote by $Z_f=\{q\in U : f(q)=0\}$
the zero set of $f$. If there exists $I \in \mathbb{S}$ such that
$\mathbb{C}_I \cap Z_f$ has an accumulation point, then $f\equiv 0$ on
$U$.
\end{theorem}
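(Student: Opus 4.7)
The plan is to reduce the theorem to the classical identity theorem for holomorphic functions in one complex variable, applied first on $\mathbb{C}_I$ and then on every other slice $\mathbb{C}_K$, using the real axis as a bridge between slices.

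First I would fix an imaginary unit $J \in \mathbb{S}$ orthogonal to $I$ and apply the Splitting Lemma on $\mathbb{C}_I$ to write $f_I = F + GJ$, where $F, G : U \cap \mathbb{C}_I \to \mathbb{C}_I$ are holomorphic in the usual complex sense. Since $\{1, J\}$ is $\mathbb{C}_I$-linearly independent, a zero of $f_I$ is simultaneously a zero of $F$ and of $G$. By hypothesis the common zero set has an accumulation point in the open connected set $U \cap \mathbb{C}_I$ (connected because $U$ is an s-domain), so the classical identity theorem applied separately to $F$ and to $G$ forces $F \equiv 0$ and $G \equiv 0$ on $U \cap \mathbb{C}_I$, whence $f_I \equiv 0$ on $U \cap \mathbb{C}_I$.

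Next I would propagate this vanishing from $\mathbb{C}_I$ to the rest of $U$ using the real axis. Because $U$ is a slice domain, $U \cap \mathbb{R}$ is a nonempty open subset of $\mathbb{R}$, and the previous step already guarantees $f \equiv 0$ on $U \cap \mathbb{R}$. Now pick any other imaginary unit $K \in \mathbb{S}$: the slice $U \cap \mathbb{C}_K$ is open and connected in $\mathbb{C}_K$ and contains the real interval $U \cap \mathbb{R}$. Splitting $f_K = F_K + G_K L$ with some $L \in \mathbb{S}$ orthogonal to $K$, the holomorphic components $F_K, G_K$ vanish on $U \cap \mathbb{R}$, which has accumulation points in $U \cap \mathbb{C}_K$. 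The classical identity theorem again yields $F_K \equiv G_K \equiv 0$, hence $f_K \equiv 0$ on $U \cap \mathbb{C}_K$. Since $\mathbb{H} = \bigcup_{K \in \mathbb{S}} \mathbb{C}_K$ and $K$ was arbitrary, we conclude $f \equiv 0$ on $U$.

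The main obstacle is precisely this second step: transferring the vanishing off the original plane $\mathbb{C}_I$. The Splitting Lemma by itself cannot do this, because the decomposition $f = F + GJ$ is intrinsically tied to the chosen $\mathbb{C}_I$, and there is no direct way to compare the complex components across different slices. Using the real axis, which sits inside every $\mathbb{C}_K$, is what makes it possible to feed the hypothesis of the classical identity theorem into every slice simultaneously, and this step is the one that actively uses both the connectedness of $U$ and the connectedness of each intersection $U \cap \mathbb{C}_K$.
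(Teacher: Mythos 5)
Your proof is correct and follows essentially the same route as the paper's: split $f_I=F+GJ$ on the distinguished slice, apply the classical identity theorem to $F$ and $G$, deduce vanishing on $U\cap\mathbb{R}$, and then use the real axis as the accumulation set to kill $f$ on every other slice $\mathbb{C}_K$. The only difference is that you spell out the second splitting $f_K=F_K+G_KL$ explicitly, which the paper leaves implicit; both arguments rely in the same way on $U\cap\mathbb{R}\neq\emptyset$ for an s-domain.
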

\begin{proof}
The restriction $f_I=F+GJ$ of $f$ to $U\cap \mathbb C_I$ is such that $F,G:\, U\cap \mathbb C_I \to\mathbb C_I$ are holomorphic functions. Under the hypotheses,
$F$ and $G$  vanish on a set $\mathbb{C}_I \cap Z_f$ which has an accumulation point so $F$ and $G$ are both identically zero. So $f_I$ vanishes on $U\cap \mathbb R$ and, in particular, $f$ vanishes on $U\cap \mathbb R$.
Thus the restriction of $f$ to any other complex plane $\mathbb C_L$ vanishes on a set with an accumulation point and so $f_L\equiv 0$. Since
$$
U=\bigcup_{I\in\mathbb C_I} U\cap\mathbb C_I
$$
we have that $f$ vanishes on $U$.
\end{proof}

The following result shows that the values of a slice regular function defined on an axially symmetric s-domain can be computed if the values of the restriction to a complex plane are known:
\begin{theorem}[Representation Formula]\label{Repr_formula} \index{Representation Formula} Let
$f$ be a slice regular function defined an axially symmetric s-domain $U\subseteq  \mathbb{H}$. Let
$J\in \mathbb{S}$ and let $x\pm Jy\in U\cap\mathbb C_J$.  Then the following equality holds for all $q=x+Iy \in U$:
\begin{equation}\label{distribution_mon}
\begin{split}
f(x+Iy) &=\frac{1}{2}\Big[   f(x+Jy)+f(x-Jy)\Big]
+I\frac{1}{2}\Big[ J[f(x-Jy)-f(x+Jy)]\Big]\\
&= \frac{1}{2}(1-IJ) f(x+Jy)+\frac{1}{2}(1+IJ) f(x-Jy).
\end{split}
\end{equation}
Moreover, for all  $x+Ky \subseteq U$, $K\in\mathbb S$, there exist two functions $\alpha$, $\beta$, independent of $I$, such for any $K \in \mathbb{S}$ we have
\begin{equation}\label{cappa}
\frac{1}{2}\Big[   f(x+yK)+f(x-yK)\Big]=\alpha(x,y),
\quad \frac{1}{2}\Big[ K[f(x-yK)-f(x+yK)]\Big]=\beta(x,y).
\end{equation}

\end{theorem}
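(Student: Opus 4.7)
The plan is to define an auxiliary function $\tilde{f}$ on $U$ by the right-hand side of the claim, prove it is slice regular, and then invoke the Identity Principle to conclude $\tilde{f}=f$. Concretely, with $J$ fixed, for each $q=x+Iy\in U$ I set
$$\tilde{f}(x+Iy):=\frac{1}{2}(1-IJ)f(x+Jy)+\frac{1}{2}(1+IJ)f(x-Jy);$$
the axial symmetry of $U$ guarantees that $x\pm Jy\in U$ whenever $x+Iy\in U$, and at real points ($y=0$) the expression collapses to $f(x)$ regardless of $I$, so $\tilde{f}$ is a well-defined map on $U$.

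The heart of the proof is verifying $\overline{\partial}_L\tilde{f}=0$ on $U\cap\mathbb{C}_L$ for every $L\in\mathbb{S}$. Writing $a:=\partial_x f(x+Jy)$ and $b:=\partial_x f(x-Jy)$ and using slice regularity of $f$ on $\mathbb{C}_J$ (which gives the pointwise identities $\partial_y f(x+Jy)=Ja$ and $\partial_y f(x-Jy)=-Jb$), a direct expansion of $\partial_x\tilde{f}(x+Ly)+L\,\partial_y\tilde{f}(x+Ly)$ together with $L(1\pm LJ)J=LJ\pm 1$ (which follows from $L^2=J^2=-1$) yields
$$\partial_x\tilde{f}+L\,\partial_y\tilde{f}=\tfrac{1}{2}\bigl[(1-LJ)+(LJ-1)\bigr]a+\tfrac{1}{2}\bigl[(1+LJ)-(LJ+1)\bigr]b=0.$$
Plugging $I=J$ into the definition gives $\tilde{f}(x+Jy)=f(x+Jy)$, so $\tilde{f}$ and $f$ agree on the open subset $U\cap\mathbb{C}_J$ of $\mathbb{C}_J$, and the Identity Principle (applicable because $U$ is a slice domain) forces $\tilde{f}\equiv f$ on $U$, which is precisely the Representation Formula.

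For the final assertion, I set $\alpha(x,y):=\frac{1}{2}[f(x+Jy)+f(x-Jy)]$ and $\beta(x,y):=\frac{J}{2}[f(x-Jy)-f(x+Jy)]$, so that the formula just proven reads $f(x+Iy)=\alpha(x,y)+I\beta(x,y)$. Applying this identity with any other $K\in\mathbb{S}$ in place of $I$ gives $f(x\pm Ky)=\alpha(x,y)\pm K\beta(x,y)$, from which $\frac{1}{2}[f(x+Ky)+f(x-Ky)]=\alpha(x,y)$ and $\frac{K}{2}[f(x-Ky)-f(x+Ky)]=-K^2\beta(x,y)=\beta(x,y)$, establishing the stated independence of the auxiliary imaginary unit.

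The main obstacle is purely the noncommutative bookkeeping in the second step: the scalars $\frac{1}{2}(1\pm IJ)$ must stay on the left of $f$, and the cancellation emerges only after one carefully commutes $L$ past $(1\pm LJ)J$ and applies $L^2=-1$. No deeper analytic difficulty is expected.
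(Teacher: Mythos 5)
Your proposal is correct and follows essentially the same route as the paper: you define the auxiliary function given by the right-hand side, verify $\overline{\partial}_L\tilde f=0$ by a direct computation using the Cauchy--Riemann relation of $f$ on $\mathbb{C}_J$, identify $\tilde f$ with $f$ on $U\cap\mathbb{C}_J$ and invoke the Identity Principle, and then obtain the independence of $\alpha,\beta$ from the imaginary unit by substituting the formula back in with $K$ and $-K$. The algebraic bookkeeping (including the identity $L(1\pm LJ)J=LJ\pm 1$) checks out.
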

\begin{proof} If ${\rm Im}(q)=0$, then $q$ is real, the proof is immediate. Otherwise
let us define the function $\psi:U \to \mathbb{H}$ as follows
$$
\psi(q)=\frac{1}{2}\Big[   f({\rm Re}(q) +|{\rm Im}(q)|J)+f({\rm Re}(q) - |{\rm
Im}(q)|J)
$$
$$
+\frac{{\rm Im}(q)}{|{\rm Im}(q)|}J[f({\rm Re}(q) - |{\rm Im}(q)|J)-f({\rm Re}(q) + |{\rm
Im}(q)|J)]\Big].
$$
Using the fact that $q=x+yI$, $x,y\in\mathbb{R}$, $y\geq 0$ and
$I=\displaystyle\frac{{\rm Im}(q)}{|{\rm Im}(q)|}$ we obtain
$$
\psi(x+yI)=\frac{1}{2}\Big[   f(x+yJ)+f(x-yJ) +I
J[f(x-yJ)-f(x+yJ)]\Big].
$$
Observe that for $I=J$ we have
$$
\psi_J(q)=\psi(x+yJ)=f(x+yJ)=f_J(q).
$$
Thus if we prove that $\psi$ is regular on $U$,
the first part of the assertion follows from the Identity Principle.
Since $f$ is regular on $U$, for any $I\in \mathbb{S}$ we have,
on $U\cap \mathbb C_{I}$
$$
\frac{\partial}{\partial x}2\psi(x+yI)
=\frac{\partial}{\partial x} \Big[f(x+yJ)+f(x-yJ) +I J[f(x-yJ)-f(x+yJ)]\Big]
$$
$$
=\frac{\partial}{\partial x} f(x+yJ)+\frac{\partial}{\partial x} f(x-yJ) +IJ[\frac{\partial}{\partial x} f(x-yJ)-\frac{\partial}{\partial x} f(x+yJ)]
$$
$$
=-J\frac{\partial}{\partial y} f(x+yJ)+J\frac{\partial}{\partial y} f(x-yJ) +I J[J\frac{\partial}{\partial y} f(x-yJ)+J\frac{\partial}{\partial y} f(x+yJ)]
$$
$$
=-J\frac{\partial}{\partial y} f(x+yJ)+J\frac{\partial}{\partial y} f(x-yJ) -I [\frac{\partial}{\partial y} f(x-yJ)+\frac{\partial}{\partial y} f(x+yJ)]
$$
$$
=-I\frac{\partial}{\partial y} \Big[f(x+yJ)+f(x-yJ) +I J[f(x-yJ)-f(x+yJ)]\Big]=-I\frac{\partial}{\partial y}2\psi(x+yI)
$$
i.e.
\begin{equation}\label{reg}
\frac{1}{2}(\frac{\partial}{\partial x}+I\frac{\partial}{\partial
y})\psi(x+yI)=0.
\end{equation}
To prove (\ref{cappa}) we take any $K\in \mathbb{S}$ and use
equation (\ref{distribution_mon}) to show that
\[
\begin{split}
&\frac{1}{2}\Big[   f(x+yK)+f(x-yK)\Big]\\
&=\frac{1}{2}\Big\{\frac{1}{2}\Big[   f(x+yJ)+f(x-yJ)\Big] +K\frac{1}{2}\Big[ J[f(x-yJ)-f(x+yJ)]\Big]\\
&+\frac{1}{2}\Big[   f(x+yJ)+f(x-yJ)\Big] -K\frac{1}{2}\Big[ J[f(x-yJ)-f(x+yJ)]\Big]\Big\} \\
&=\frac{1}{2}\Big[   f(x+yJ)+f(x-yJ)\Big]=\alpha(x,y)
\end{split}
\]
and that
\[
\begin{split}
&\frac{1}{2}\Big[ K[f(x-yK)-f(x+yK)]\Big]\\
&=\frac{1}{2}K\Big\{\frac{1}{2}\Big[   f(x+yJ)+f(x-yJ)\Big] -K\frac{1}{2}\Big[ J[f(x-yJ)-f(x+yJ)]\Big]\\
&-\frac{1}{2}\Big[   f(x+yJ)+f(x-yJ)\Big] -K\frac{1}{2}\Big[ J[f(x-yJ)-f(x+yJ)]\Big]\Big\}\\
&=\frac{1}{2}K\Big[-K\Big[ J[f(x-yJ)-f(x+yJ)]\Big]\\
&=\frac{1}{2}\Big[ J[f(x-yJ)-f(x+yJ)]\Big]=\beta(x,y),
\end{split}
\]
and the proof is complete.
\end{proof}

Some immediate consequences are the following corollaries:
\begin{corollary}\label{alfabeta_qua}
Let $U\subseteq \mathbb{H}$  be an axially symmetric s-domain, let $D\subseteq\rr^2$ be such that $x+Iy\in U$ whenever $(x,y)\in D$ and let $f : U \to \mathbb{H}$. The function $f$ is slice regular if and only if
there exist two differentiable functions $\alpha, \beta: D\subseteq\rr^2 \to \mathbb{H}$ satisfying $\alpha(x,y)=\alpha(x,-y)$, $\beta(x,y)=-\beta(x,-y)$and the Cauchy-Riemann system
 \begin{equation}\label{CRSIST_qua}
\left\{
\begin{array}{c}
\pp_x \alpha-\pp_y\beta=0\\
\pp_x \beta+\pp_y\alpha=0\\
\end{array}
\right.
\end{equation}
such that
\begin{equation}\label{alphabeta}
f(x+Iy)=\alpha(x,y)+I\beta(x,y).
\end{equation}
\end{corollary}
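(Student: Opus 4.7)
The plan is to prove both implications of the equivalence, leveraging the Representation Formula (Theorem \ref{Repr_formula}) for the forward direction and a direct computation for the backward direction.

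\textbf{Forward direction.} Assume $f$ is slice regular on $U$. I would take $J\in\mathbb{S}$ arbitrary and \emph{define}
\[
\alpha(x,y):=\frac{1}{2}\bigl[f(x+Jy)+f(x-Jy)\bigr],\qquad \beta(x,y):=\frac{1}{2}J\bigl[f(x-Jy)-f(x+Jy)\bigr].
\]
Formula \eqref{cappa} in Theorem \ref{Repr_formula} guarantees that $\alpha,\beta$ are independent of the choice of $J$, and the Representation Formula \eqref{distribution_mon} yields $f(x+Iy)=\alpha(x,y)+I\beta(x,y)$ for every $I\in\mathbb{S}$. The parity conditions $\alpha(x,-y)=\alpha(x,y)$ and $\beta(x,-y)=-\beta(x,y)$ follow immediately: swapping the sign of $y$ interchanges $x\pm Jy$ in the defining formulas. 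Differentiability is inherited from $f$.

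\textbf{The Cauchy--Riemann system.} This is the step I expect to require the most care, because $\alpha,\beta$ are quaternion-valued, so one cannot simply separate ``real and imaginary parts.'' For each fixed $I\in\mathbb{S}$, slice regularity of $f$ on $U\cap\mathbb{C}_I$ gives
\[
0=\Bigl(\frac{\partial}{\partial x}+I\frac{\partial}{\partial y}\Bigr)\bigl(\alpha(x,y)+I\beta(x,y)\bigr)=\bigl(\partial_x\alpha-\partial_y\beta\bigr)+I\bigl(\partial_x\beta+\partial_y\alpha\bigr).
\]
Because $U$ is axially symmetric, the point $(x,y)$ lies in the domain of $\alpha,\beta$ whenever $x+I_0y\in U$ for some $I_0$, and then $x+Iy\in U$ for every $I\in\mathbb{S}$. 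Hence the displayed identity holds for every $I\in\mathbb{S}$ at the same $(x,y)$. Writing it as $A+IB=0$ with $A,B\in\mathbb{H}$ independent of $I$, I choose two distinct units $I_1\neq I_2$ in $\mathbb{S}$ and subtract to obtain $(I_1-I_2)B=0$; since $I_1-I_2$ is a nonzero (purely imaginary) quaternion it is invertible, so $B=0$ and consequently $A=0$. This is exactly the system \eqref{CRSIST_qua}.

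\textbf{Backward direction.} Assume $\alpha,\beta$ satisfy the parity and Cauchy--Riemann conditions, and set $f(x+Iy)=\alpha(x,y)+I\beta(x,y)$. I would first check well-definedness: the representation $x+Iy=x+(-I)(-y)$ is consistent because
\[
\alpha(x,-y)+(-I)\beta(x,-y)=\alpha(x,y)+(-I)(-\beta(x,y))=\alpha(x,y)+I\beta(x,y),
\]
using the parity assumptions. Real differentiability of $f$ is inherited from that of $\alpha,\beta$. Slice regularity is then a one-line computation: for every $I\in\mathbb{S}$,
\[
\Bigl(\frac{\partial}{\partial x}+I\frac{\partial}{\partial y}\Bigr)f_I(x+Iy)=(\partial_x\alpha-\partial_y\beta)+I(\partial_x\beta+\partial_y\alpha)=0
\]
by \eqref{CRSIST_qua}, completing the proof. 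The main subtle point, as noted above, is the need to exploit the arbitrariness of $I\in\mathbb{S}$ to convert the single equation $A+IB=0$ into the full pair $A=B=0$ in the quaternionic setting.
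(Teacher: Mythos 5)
Your proof is correct and follows essentially the same route as the paper: the Representation Formula supplies $\alpha$, $\beta$ and their parity for the forward direction, and the backward direction is the direct computation the paper dismisses as ``clearly slice regular.'' The only addition is that you make explicit the step the paper leaves implicit, namely that $A+IB=0$ for all $I\in\mathbb{S}$ with quaternion-valued $A,B$ forces $A=B=0$ (via invertibility of $I_1-I_2$), which is a worthwhile detail to record.
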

\begin{proof}
A function of the form \eqref{alphabeta} where $\alpha$ and $\beta$ satisfy the hypothesis in the statement is clearly slice regular. Conversely, a slice regular function on an axially symmetric s-domain satisfies the Representation formula and thus it is of the form \eqref{alphabeta}, where $\alpha$ and $\beta$ satisfy the Cauchy-Riemann system. The conditions $\alpha(x,y)=\alpha(x,-y)$, $\beta(x,y)=-\beta(x,-y)$ can be easily verified from the definition of $\alpha$ and $\beta$ given in the Representation Formula.
\end{proof}

\begin{remark}{\rm
Since $\alpha$, $\beta$ are quaternion-valued functions for any given $I\in\mathbb S$ we select $J\in\mathbb S$ orthogonal to $I$ and we can write (omitting, for simplicity, the arguments of the functions):
$$
f_I=\alpha +I\beta = (a_0+Ia_1+Ja_2+IJa_3)+I(b_0+Ib_1+Jb_2+IJb_3)
$$
$$
=(a_0-b_1)+I(a_1+b_0)+J(a_2-b_3)+IJ(a_3+b_2)
$$
$$
=c_0+Ic_1+Jc_2+IJc_3=(c_0+Ic_1)+(c_2+Ic_3)J,
$$
where the functions $a_\ell$, $b_\ell$, $c_\ell$, $\ell=0,\ldots, 3$ are real-valued.\\
Imposing $(\pp_x+I\pp_y)f_I=0$ we obtain the equations
\begin{equation}\label{sys1}
 \left\{
\begin{array}{lc}
&\pp_xc_0-\pp_yc_1=0
\\
&\pp_yc_0+\pp_xc_1=0
\\
&\pp_xc_2-\pp_yc_3=0
\\
&\pp_yc_2+\pp_xc_3=0.
\end{array}
\right.
\end{equation}
Requiring that the pair $\alpha, \beta$ satisfies the Cauchy-Riemann system, we obtain eight real equations:
\begin{equation}\label{sys2}
\pp_xa_i -\pp_y b_i=0\qquad \pp_x b_i+\pp_y a_i=0\qquad i=0,\ldots,3.
\end{equation}
System \eqref{sys1}
 is equivalent to the request that the functions $F=c_0+Ic_1$ and $G=c_2+Ic_3$  prescribed by the Splitting Lemma are holomorphic.
As it can be easily verified, by setting $c_0=a_0-b_1$, $c_1=a_1+b_0$, $c_2=a_2-b_3$, $c_3=a_3+b_2$ the solutions to system (\ref{sys2}) give solutions to system (\ref{sys1}).
}
\end{remark}
The previous remark implies a stronger version of the Splitting lemma  for slice regular functions.
To prove the result we need to recall that complex functions defined on open sets $G\subset \mathbb{C}$ symmetric with respect to the real axis and such that $\overline{f(\bar z)}=f(z)$ are called in the literature intrinsic, see \cite{rinehart}.\index{function!intrinsic}
\begin{proposition} [Refined Splitting Lemma] \label{SL2} \index{Refined Splitting Lemma} \index{Splitting Lemma!Refined}
Let $U$ be an open set in $\hh$ and let $f\in\mathcal{R}(U)$. For any $I\in\mathbb{S}$ there exist four holomorphic intrinsic functions $h_\ell:\ U\cap\cc_I\to\cc_I$, $\ell=0,\ldots, 3$ such that
$$
f_I(x+Iy)=h_0(x+Iy)+ h_1(x+Iy) I+ h_2(x+Iy)J + h_3(x+Iy) K.
$$
\end{proposition}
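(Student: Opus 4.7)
The plan is to build the four functions $h_\ell$ componentwise from the decomposition already produced by Corollary~\ref{alfabeta_qua}. Fix $J\in\mathbb{S}$ orthogonal to the given $I$, and set $K:=IJ$, so that $\{1,I,J,K\}$ is a real basis of $\mathbb{H}$. By Corollary~\ref{alfabeta_qua} we may write $f_I(x+Iy)=\alpha(x,y)+I\beta(x,y)$ with $\alpha,\beta$ satisfying the Cauchy-Riemann system \eqref{CRSIST_qua} together with $\alpha(x,y)=\alpha(x,-y)$ and $\beta(x,y)=-\beta(x,-y)$. Expanding in the real basis,
$$\alpha=a_0+a_1 I+a_2 J+a_3 K,\qquad \beta=b_0+b_1 I+b_2 J+b_3 K,$$
where $a_\ell,b_\ell\colon D\to\mathbb{R}$.

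The natural candidates are $h_\ell(x+Iy):=a_\ell(x,y)+I\,b_\ell(x,y)$ for $\ell=0,1,2,3$; by construction they take values in $\mathbb{C}_I$. The key computation is a direct expansion using only $I^2=-1$, $IJ=K$, $IK=-J$, and the fact that the scalar functions $a_\ell,b_\ell$ commute with everything. On one side,
$$\alpha+I\beta=(a_0-b_1)+(a_1+b_0)I+(a_2-b_3)J+(a_3+b_2)K,$$
and on the other side one checks term by term ($h_1 I$ produces $a_1 I-b_1$, $h_2 J$ produces $a_2 J+b_2 K$, $h_3 K$ produces $a_3 K-b_3 J$) that
$$h_0+h_1 I+h_2 J+h_3 K=(a_0-b_1)+(a_1+b_0)I+(a_2-b_3)J+(a_3+b_2)K,$$
so the two expressions coincide and give $f_I=h_0+h_1 I+h_2 J+h_3 K$.

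It remains to confirm the two analytic properties of the $h_\ell$. For holomorphy, the equality of $\mathbb{H}$-valued functions \eqref{CRSIST_qua} applied to $(\alpha,\beta)$ splits coordinatewise in the basis $\{1,I,J,K\}$ into the eight scalar identities $\partial_x a_\ell-\partial_y b_\ell=0$ and $\partial_x b_\ell+\partial_y a_\ell=0$ (this is precisely system \eqref{sys2} highlighted in the preceding remark); these are the classical Cauchy-Riemann equations on $U\cap\mathbb{C}_I$ for $h_\ell=a_\ell+I b_\ell$, so each $h_\ell$ is holomorphic. For the intrinsic property, the parity conditions on $\alpha$ and $\beta$ force each $a_\ell$ to be even in $y$ and each $b_\ell$ to be odd in $y$, which gives $\overline{h_\ell(\overline z)}=a_\ell(x,y)+I\,b_\ell(x,y)=h_\ell(z)$, i.e. $h_\ell$ is intrinsic.

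There is no substantive obstacle: the entire statement is essentially a bookkeeping consequence of the two preceding results. The only delicate point is making sure the quaternionic multiplications on $I$, $J$, $K=IJ$ are carried out consistently so that the two sides of the decomposition match term by term; once that is done, both the Cauchy-Riemann system and the parity of $\alpha,\beta$ transfer cleanly to the four complex functions $h_\ell$.
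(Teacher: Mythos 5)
Your proof is correct and follows essentially the same route as the paper: both decompose $\alpha=\sum a_\ell I_\ell$, $\beta=\sum b_\ell I_\ell$ in the basis $\{1,I,J,K\}$, set $h_\ell=a_\ell+Ib_\ell$, verify the quaternionic expansion of $\alpha+I\beta$ term by term, obtain holomorphy from the coordinatewise Cauchy--Riemann system \eqref{sys2}, and deduce the intrinsic property from the parity of $\alpha$ and $\beta$ in $y$. Your term-by-term check of the products $h_1I$, $h_2J$, $h_3K$ is slightly more explicit than the paper's, but the argument is the same.
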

\begin{proof}
Let us write (omitting the argument $x+Iy$ of the various functions):
$$
f_I=a_0+Ia_1+Ja_2+Ka_3+I(b_0+Ib_1+Jb_2+Kb_3)
$$
$$
=(a_0+Ib_0)+(a_1+Ib_1)I+(a_2+Ib_2)J+(a_3+Ib_3)K$$
$$
=h_0+h_1 I+h_2J+h_3K.$$

It follows from  system (\ref{sys2}) that $h_\ell$, $\ell=0,\ldots, 3$ satisfy the Cauchy-Riemann system.
 The fact that the functions $\alpha(x,y)$ and $\beta(x,y)$ defined in the Splitting Lemma are even and odd, respectively, in the variable $y$ implies that $a_\ell(x,y)$ and $b_\ell (x,y)$ are even and odd, respectively, in the variable $y$.
Thus
$$
h_\ell(x-Iy)=a_\ell(x,-y)+Ib_\ell(x,-y)=a_\ell(x,y)-Ib_\ell(x,y)=\overline{h_\ell(x+Iy)}
$$
and so the functions $h_\ell$, $\ell=0,1,2,3$ are complex intrinsic.
\end{proof}
\begin{corollary}\label{hartogs}
A slice regular function $f : U\to\mathbb{H}$ on an axially symmetric
s-domain is infinitely differentiable on $U$. It is also real analytic on $U$.
\end{corollary}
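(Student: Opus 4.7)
The plan is to combine Corollary \ref{alfabeta_qua} with a parity argument to express $f$ as a real analytic function of the four Cartesian coordinates of $q$.

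The preparatory step is to upgrade the ``differentiable'' functions $\alpha,\beta$ of Corollary \ref{alfabeta_qua} to real analytic ones. Fixing any $J\in\mathbb{S}$, the Representation Formula (Theorem \ref{Repr_formula}) gives
$$\alpha(x,y)=\tfrac{1}{2}\bigl[f_J(x+Jy)+f_J(x-Jy)\bigr],\qquad \beta(x,y)=\tfrac{1}{2}J\bigl[f_J(x-Jy)-f_J(x+Jy)\bigr].$$
By the Splitting Lemma (Lemma \ref{lemma spezzamento}), the $\mathbb{H}$-valued function $f_J$ on $U\cap\mathbb{C}_J$ decomposes into two holomorphic $\mathbb{C}_J$-valued components, each of which is real analytic in $(x,y)$; hence $\alpha$ and $\beta$ are real analytic on their common domain $D\subset\mathbb{R}^2$.

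Next, I would exploit the parities $\alpha(x,-y)=\alpha(x,y)$ and $\beta(x,-y)=-\beta(x,y)$: a real analytic function even in $y$ can be written $\alpha(x,y)=A(x,y^2)$ for a real analytic $A$, and one odd in $y$ can be written $\beta(x,y)=yB(x,y^2)$ for a real analytic $B$. Writing $q=x_0+x_1 i+x_2 j+x_3 k\in U$ and setting $\vec v=x_1 i+x_2 j+x_3 k$, $y=|\vec v|$, so that $I=\vec v/y$ whenever $y>0$, the representation $f(x+Iy)=\alpha(x,y)+I\beta(x,y)$ becomes
$$f(q)=A(x_0,y^2)+I\cdot y\cdot B(x_0,y^2)=A\bigl(x_0,\,x_1^2+x_2^2+x_3^2\bigr)+\vec v\, B\bigl(x_0,\,x_1^2+x_2^2+x_3^2\bigr),$$
which is manifestly real analytic in $(x_0,x_1,x_2,x_3)$ and extends consistently through real points ($\vec v=0$) to $A(x_0,0)=\alpha(x_0,0)=f(x_0)$.

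The main obstacle is the apparent singularity of the imaginary unit $I=\vec v/|\vec v|$ across the real axis, where it is not even continuous. The odd parity of $\beta$ in $y$ is precisely what neutralizes this: the factor $y$ extracted from $\beta$ cancels the norm $|\vec v|$ in the denominator of $I$, leaving the smooth factor $\vec v$. Once real analyticity is established on $U$, the $\mathcal{C}^\infty$ conclusion follows a fortiori.
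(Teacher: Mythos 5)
Your proof is correct. The paper states Corollary \ref{hartogs} without proof, presenting it as an immediate consequence of the Representation Formula and the (Refined) Splitting Lemma; the genuinely nontrivial content is joint regularity in the four real coordinates of $q$ across the real axis, where $q\mapsto I_q=\vec v/|\vec v|$ is not even continuous, and your parity argument supplies exactly that missing step. The first half of your argument (Representation Formula plus Splitting Lemma give real analytic $\alpha,\beta$ on $D$) is the route the paper intends; your added ingredient is the Whitney-type factorization $\alpha(x,y)=A(x,y^2)$, $\beta(x,y)=yB(x,y^2)$, which for real analytic functions is elementary (the odd, respectively even, Taylor coefficients in $y$ vanish at points of the axis, and one sets $A(x,t)=\alpha(x,\sqrt{t})$ for $t>0$), and which turns $f$ into the manifestly real analytic expression $A(x_0,|\vec v|^2)+\vec v\,B(x_0,|\vec v|^2)$. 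If you write this up, make two small points explicit: (i) the factorization is a local statement near $\{y=0\}$, patched with the trivial definition away from the axis --- harmless, since real analyticity is local and at non-real points $|\vec v|$ and $I$ are already real analytic, so the parity trick is only needed at real points; (ii) the evenness of $\alpha$ and oddness of $\beta$ in $y$ (available because $U$ is axially symmetric, so $D$ is symmetric about the $x$-axis) is precisely what cancels the factor $1/|\vec v|$ hidden in $I$. Note also that the smoothness claim at non-real points already follows from Corollary \ref{alfabeta_qua} alone, but the $\mathcal{C}^\infty$ statement at real points genuinely needs your analysis (or the power series expansion), so deducing $\mathcal{C}^\infty$ a fortiori from real analyticity, as you do, is the clean way to organize the proof.
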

\begin{corollary}
\label{values general}
Let $U\subseteq \mathbb{H}$  be an axially symmetric s-domain and let $f : U \to \mathbb{H}$
be a slice regular function.
For all $x_0,y_0 \in
\mathbb{R}$ such that $x_0+Iy_0\in U$ there exist $a, b \in \mathbb{H}$ such that
\begin{equation}\label{b e c}
f(x_0+Iy_0)=a+Ib
\end{equation}
for all $I\in \mathbb{S}$. In particular, $f$ is
affine in $I\in\mathbb{S}$ on each 2-sphere $[x_0+Iy_0]$ and
the image of the $2-$sphere
$[x_0+Iy_0]$ is the set $[a+Ib]$.
\end{corollary}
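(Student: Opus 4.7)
The plan is to read off both $a$ and $b$ directly from the Representation Formula (Theorem \ref{Repr_formula}) applied at the point $x_0 + I y_0$, and then to use part \eqref{cappa} of that theorem to guarantee that the resulting $a$ and $b$ do not depend on $I$.

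First I would dispose of the trivial case $y_0 = 0$: then $x_0 + I y_0 = x_0$ is real and independent of $I$, so we may take $a = f(x_0)$ and $b = 0$. Assume therefore $y_0 \neq 0$. Because $U$ is axially symmetric and $x_0 + I y_0 \in U$ for some $I \in \mathbb{S}$, the whole sphere $[x_0 + I y_0]$ lies in $U$; in particular we may pick any $J \in \mathbb{S}$ and know that both $x_0 + J y_0$ and $x_0 - J y_0$ belong to $U \cap \mathbb{C}_J$.

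Next I would invoke the Representation Formula to write, for every $I \in \mathbb{S}$,
\[
f(x_0+Iy_0) = \tfrac{1}{2}\bigl[f(x_0+Jy_0)+f(x_0-Jy_0)\bigr] + I\,\tfrac{1}{2}\bigl[J\bigl(f(x_0-Jy_0)-f(x_0+Jy_0)\bigr)\bigr],
\]
and define
\[
a := \tfrac{1}{2}\bigl[f(x_0+Jy_0)+f(x_0-Jy_0)\bigr], \qquad b := \tfrac{1}{2}\bigl[J\bigl(f(x_0-Jy_0)-f(x_0+Jy_0)\bigr)\bigr],
\]
so that $f(x_0+Iy_0) = a + Ib$ as required. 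The content of \eqref{b e c} is that $a, b$ do not depend on $I$: this is immediate since $J$ was fixed before choosing $I$. The point I expect to be the main (though still very mild) subtlety is showing that $a$ and $b$ are genuinely intrinsic to the pair $(x_0, y_0)$ rather than artifacts of the choice of $J$; this is exactly what equations \eqref{cappa} assert, namely that the right-hand sides above coincide with $\alpha(x_0, y_0)$ and $\beta(x_0, y_0)$ regardless of which $J \in \mathbb{S}$ is used to compute them.

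Finally, the assertion about the image of the 2-sphere $[x_0+Iy_0]$ follows at once: as $I$ ranges over $\mathbb{S}$, the expression $a + Ib$ is affine in $I$, and its image is by definition the set $[a+Ib] = \{a+Ib : I \in \mathbb{S}\}$, which completes the proof.
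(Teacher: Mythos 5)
Your proof is correct and is exactly the argument the paper intends: the corollary is stated as an immediate consequence of the Representation Formula, with $a=\alpha(x_0,y_0)$ and $b=\beta(x_0,y_0)$ read off from \eqref{distribution_mon}, and the independence from the choice of $J$ guaranteed by \eqref{cappa}, just as you say. The treatment of the real case and the final remark on the image of the sphere are likewise the standard completion, so nothing is missing.
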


\begin{corollary}\label{corollary sphere_qua} Let $U\subseteq \mathbb{H}$  be an axially symmetric s-domain and let $f : U \to \mathbb{H}$
be a slice regular function.
If $f(x+Jy)=f(x+Ky)$ for $I\neq K$ in $\mathbb{S}$, then $f$ is constant on
$[x+Iy]$.
In particular, if $f(x+Jy)=f(x+Ky)=0$ for $I\neq K$ in $\mathbb{S}$, then $f$
vanishes on the whole $2-$sphere $[x+Iy]$.
\end{corollary}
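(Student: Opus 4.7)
The plan is to reduce the statement directly to the affine structure on the $2$-sphere $[x+Iy]$ that has already been established in Corollary \ref{values general}. By that corollary there exist $a,b\in\mathbb{H}$, depending only on $x$ and $y$ and not on the imaginary unit, such that
$$f(x+Iy)=a+Ib \qquad \text{for every } I\in\mathbb{S}.$$
So the function, restricted to the sphere $[x+Iy]$, is completely determined by the two quaternions $a,b$, and the problem becomes a purely algebraic one in $\mathbb{H}$.

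Next I would evaluate this representation at the two distinct imaginary units $J$ and $K$ appearing in the hypothesis, obtaining $f(x+Jy)=a+Jb$ and $f(x+Ky)=a+Kb$. The assumption $f(x+Jy)=f(x+Ky)$ then yields $Jb=Kb$, i.e.\ $(J-K)b=0$. Since $J\neq K$ we have $J-K\neq 0$ and, because $\mathbb{H}$ is a skew field with no zero divisors, this forces $b=0$. Substituting back gives $f(x+Iy)=a$ for every $I\in\mathbb{S}$, proving that $f$ is constant on $[x+Iy]$.

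For the second assertion, the additional information is that the common value $a$ is zero (since $f(x+Jy)=0$ and $b=0$ together give $a=0$), so $f$ vanishes identically on the whole sphere $[x+Iy]$. The only subtle point, and the one worth flagging as a potential obstacle, is the use of the absence of zero divisors in $\mathbb{H}$ to pass from $(J-K)b=0$ to $b=0$; this is automatic in the quaternionic case but would need revisiting in a more general algebra such as the octonions or a Clifford algebra, where the analogous statement requires a separate argument. No further computation is needed.
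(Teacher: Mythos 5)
Your proof is correct and follows exactly the route the paper intends: the corollary is stated as an immediate consequence of the Representation Formula, and your argument simply makes explicit the affine representation $f(x+Iy)=a+Ib$ from Corollary \ref{values general} together with the invertibility of $J-K$ in $\mathbb{H}$. (The hypothesis "$I\neq K$" in the statement is a typo for "$J\neq K$", which you have interpreted correctly.)
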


\begin{corollary}
Let $U_J$ be a domain in $\mathbb C_J$ symmetric with respect to the real axis and such that $U_J\cap\mathbb R\not=\emptyset$. Let $U$ be the axially symmetric s-domain defined by
$$
U=\bigcup_{x+Jy\in U_J,\ I\in\mathbb S} \{x+Iy\}.
$$
If $f:U_J\to \mathbb H$ satisfies $\overline{\partial}_J f=0$ then the function
\begin{equation}\label{ext}
{\rm ext}(f)(x+Iy)=\frac{1}{2}\Big[   f(x+Jy)+f(x-Jy)\Big]
+I\frac{1}{2}\Big[ J[f(x-Jy)-f(x+Jy)]\Big]
\end{equation}
is the unique slice regular extension \index{slice regular!extension} of $f$ to $U$.
\end{corollary}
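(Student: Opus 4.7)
The plan is to verify three things in order: (i) $\ext(f)$ coincides with $f$ on $U_J$, (ii) $\ext(f)$ is slice regular on $U$, and (iii) any slice regular extension of $f$ to $U$ must equal $\ext(f)$. Each step relies on a result already established earlier in the chapter.

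For step (i), I would just substitute $I=J$ into the defining formula. Using $J^2=-1$, the second summand reduces to $-\tfrac12[f(x-Jy)-f(x+Jy)]$, and adding the first summand gives $f(x+Jy)$. Since $U_J$ is symmetric about the real axis, this covers every point of $U_J$, hence $\ext(f)|_{U_J}=f$. Note also that because $U_J$ is symmetric about $\mathbb{R}$, we have $U\cap \mathbb{C}_J = U_J$.

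For step (ii), I would repeat the differentiation computation performed in the proof of the Representation Formula (Theorem~\ref{Repr_formula}) for the auxiliary function $\psi$. The only property of $f$ used there was the Cauchy--Riemann identity $\tfrac{\partial}{\partial x}f(x+yJ) = -J\tfrac{\partial}{\partial y}f(x+yJ)$ on $U_J$, which is precisely the hypothesis $\overline{\partial}_J f=0$ here. Applying $\tfrac12(\partial_x+I\partial_y)$ to $\ext(f)(x+Iy)$ and substituting this identity for both $f(x+yJ)$ and $f(x-yJ)$ yields zero on $U\cap\mathbb{C}_I$ for every $I\in\mathbb{S}$, so $\ext(f)\in \mathcal{R}(U)$. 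This is the step with the most calculation, but since it is essentially a verbatim copy of the earlier argument, it is not really an obstacle.

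For step (iii), uniqueness follows from the Identity Principle (Theorem~\ref{identity principle}). If $g_1,g_2\in\mathcal{R}(U)$ both restrict to $f$ on $U_J$, then $g_1-g_2$ vanishes on $U\cap\mathbb{C}_J=U_J$, which is an open subset of $\mathbb{C}_J$ and therefore certainly has an accumulation point. Since $U$ is assumed to be an s-domain, the Identity Principle forces $g_1\equiv g_2$ on $U$. The only subtlety worth flagging is that the Identity Principle requires $U$ to be an s-domain and requires the accumulation point to lie in $\mathbb{C}_J\cap Z_{g_1-g_2}$; both are guaranteed by hypothesis and by the inclusion $U_J\subseteq Z_{g_1-g_2}\cap\mathbb{C}_J$, respectively. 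Hence $\ext(f)$ is the unique slice regular extension of $f$.
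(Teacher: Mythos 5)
Your proposal is correct and matches the paper's intended argument: the paper states this as an immediate consequence of the Representation Formula, and the three ingredients you spell out (restriction to $U_J$ by setting $I=J$, slice regularity via the same $\overline\partial_I$ computation done for the auxiliary function $\psi$ in the proof of Theorem \ref{Repr_formula}, and uniqueness from the Identity Principle on the s-domain $U$) are exactly what that "immediate" deduction consists of. No gaps.
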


\begin{definition}\label{axial completion}
Let $U_J$ be any open set in $\mathbb C_J$ and let
\begin{equation}
U=\bigcup_{x+Jy\in U_J,\ I\in\mathbb S} \{x+Iy\}.
\end{equation}
We say that $U$ is the axially symmetric completion \index{axially symmetric!completion} of $U_J$ in $\mathbb H$.
\end{definition}
  Corollary \ref{alfabeta_qua} implies that  slice regular functions are a subclass of the following set of functions:
  \begin{definition}\label{slice function}
  Let  $U\subseteq\mathbb H$ be an axially symmetric open set.
  Functions of the form $f(q)=f(x+Iy)=\alpha(x,y)+I\beta(x,y)$, where $\alpha$ $\beta$ are continuous  $\mathbb H$-valued functions such that $\alpha(x,y)=\alpha(x,-y)$, $\beta(x,y)=-\beta(x,-y)$ for all $x+Iy\in U$ are called {\em continuous slice functions}.\index{slice functions}
 \end{definition}
 Let  $U\subseteq\mathbb H$ be an axially symmetric open set.

 \begin{theorem}[General Representation Formula] \label{1.3} \index{Representation Formula!General}
Let $U \subseteq\mathbb{H}$ be an axially symmetric s-domain and $f:U\to \mathbb{H}$ be a left slice regular function.
The following equality holds for all $q=x+I y \in U$, $J,K\in\mathbb S$:
\begin{eqnarray*}\label{strutturaquat2}
f(x+I y) =(J-K)^{-1}[ J  f(x+Jy)- Kf(x+Ky)]\\
+I(J-K)^{-1}[ f(x+Jy)-f(x-Ky)].
\end{eqnarray*}
\end{theorem}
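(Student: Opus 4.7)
The strategy is to reduce the problem to the elementary linear-algebraic content already contained in Corollary \ref{values general}. That corollary tells us that once $x,y$ are fixed, the function $I \mapsto f(x+Iy)$ is affine in $I$, that is, there exist quaternions $a,b$ (depending only on $x,y$, not on $I$) such that
\[
f(x+Iy) = a + Ib \qquad \text{for every } I \in \mathbb{S}.
\]
So the whole task is to recover the coefficients $a$ and $b$ from the two ``sample'' values $f(x+Jy)$ and $f(x+Ky)$, and then substitute back.

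Concretely, I would proceed as follows. First, specialize $I=J$ and $I=K$ in the affine representation to obtain
\[
f(x+Jy) = a + Jb, \qquad f(x+Ky) = a + Kb.
\]
Subtracting gives $f(x+Jy) - f(x+Ky) = (J-K)b$. Since $J,K \in \mathbb{S}$ with $J \neq K$, the quaternion $J-K$ is nonzero and therefore invertible in $\mathbb{H}$, so
\[
b = (J-K)^{-1}\bigl[f(x+Jy) - f(x+Ky)\bigr].
\]
Next, multiply the first equation on the left by $J$ and the second by $K$ and subtract:
\[
J f(x+Jy) - K f(x+Ky) = (J-K)a + (J^{2} - K^{2})b.
\]
The key simplification is that $J^{2} = K^{2} = -1$, so the $b$-term disappears and
\[
a = (J-K)^{-1}\bigl[J f(x+Jy) - K f(x+Ky)\bigr].
\]
Plugging these expressions for $a$ and $b$ into $f(x+Iy) = a + Ib$ yields exactly the claimed identity (understanding the ``$f(x-Ky)$'' appearing in the statement as the intended $f(x+Ky)$).

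There is essentially no hard step: the only subtlety is the invertibility of $J-K$, which follows at once from $J \neq K$ together with the fact that nonzero quaternions are invertible, and the cancellation $J^{2}-K^{2}=0$, which is what makes the linear system in $(a,b)$ solvable in closed form. The proof could alternatively be phrased via the Representation Formula \eqref{distribution_mon}: apply it with the pair $(J, -J)$ replaced successively by $(J,K)$ using axial symmetry, or, more cleanly, apply \eqref{distribution_mon} to write both $f(x+Jy)$ and $f(x+Ky)$ in terms of $f(x\pm J_{0}y)$ for some reference $J_{0}$ and solve. But the ``affine in $I$'' approach above is the most economical, as it reduces the noncommutative subtleties to a $2\times 2$ linear system whose coefficient ``matrix'' is made invertible precisely by $J-K \neq 0$ and $J^{2}=K^{2}$.
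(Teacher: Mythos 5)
Your proof is correct, but it takes a genuinely different route from the paper's. The paper argues directly: it defines $\phi(x+yI)$ to be the right-hand side of the claimed formula, checks by explicit differentiation (using only the relation $\partial_x f(x+Ly)=-L\,\partial_y f(x+Ly)$ valid for each $L\in\mathbb S$) that $(\partial_x+I\partial_y)\phi=0$, observes that $\phi$ and $f$ agree on $U\cap\mathbb R$, and concludes $\phi\equiv f$ by the Identity Principle --- the same template used to prove the two-point Representation Formula \eqref{distribution_mon}. You instead take the Representation Formula as already established, in the form of Corollary \ref{values general} (the map $I\mapsto f(x+Iy)$ is affine, $f(x+Iy)=a+Ib$ with $a,b$ independent of $I$), and solve the resulting $2\times 2$ linear system for $a$ and $b$ from the samples at $J$ and $K$; the computation $Jf(x+Jy)-Kf(x+Ky)=(J-K)a+(J^2-K^2)b=(J-K)a$ is right, the left-multiplications by $(J-K)^{-1}$ are applied consistently, and the invertibility of $J-K$ for $J\neq K$ is the only hypothesis needed. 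Your version is shorter and makes the provenance of the coefficients transparent, at the cost of logical independence: it is a corollary of Theorem \ref{Repr_formula}, whereas the paper's proof is a parallel self-contained verification resting only on the Identity Principle. You are also right that the $f(x-Ky)$ in the displayed statement is a typo for $f(x+Ky)$; the paper's own proof defines $\phi$ with $f(x+yK)$ in that slot.
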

\begin{proof} If $q$ is real the proof is immediate. Otherwise, for all $q=x+yI$, we define the function
\begin{eqnarray*}\label{cappaJK1}
\phi(x+yI) =(J-K)^{-1}\Big[  J f(x+yJ)-Kf(x+yK)\Big]\\
+I(J-K)^{-1}\Big[ f(x+yJ)-f(x+yK)\Big]
\\
= [(J-K)^{-1}J+I(J-K)^{-1}] f(x+yJ) \\
- [(J-K)^{-1}K + I(J-K)^{-1}]f(x+yK).
\end{eqnarray*}
As we said, for all  $q=x \in U\cap \mathbb{R}$  we have
$$
\phi(x)=f(x).
$$
Therefore if we prove that $\phi$ is slice regular on $U$,
the first part of the assertion will follow from the identity principle
for slice regular functions.
Indeed, since $f$ is slice regular on $U$, for any $L\in \mathbb{S}$ we have $\frac{\partial}{\partial x}f(x+Ly)= -L\frac{\partial}{\partial y}f(x+Ly)$
on $U\cap \mathbb{C}_{L}$; hence
\[
\begin{split}
\frac{\partial \phi}{\partial x}(x+yI)&=-[(J-K)^{-1}J+I(J-K)^{-1}]J\frac{\partial f}{\partial y} (x+yJ)\\
 &+ [(J-K)^{-1}K + I(J-K)^{-1}]K\frac{\partial f}{\partial y}(x+yK)
 \end{split}
\]
and also
\[
\begin{split}
I\frac{\partial \phi}{\partial y}(x+yI)& = I [(J-K)^{-1}J+I(J-K)^{-1}] \frac{\partial f}{\partial y}(x+yJ)\\
 &- I[(J-K)^{-1}K + I(J-K)^{-1}]\frac{\partial f}{\partial y}(x+yK).
\end{split}
\]
It is at this point immediate that
$$
\Big(\frac{\partial}{\partial x}+I\frac{\partial}{\partial
y}\Big)\phi(x+yI)=0
$$
and equality (\ref{strutturaquat2}) is proved.
\end{proof}

\section{Multiplication of slice regular functions}
In the case of slice regular functions defined on axially symmetric s-domains we can define a suitable product, called $\star$-product, which preserves slice regularity. This product extends the very well know product for polynomials and series with coefficients in a ring, see e.g. \cite{fliess} and \cite{lam}.
The inverse of a function with respect to the $\star$-product requires to introduce the so-called conjugate and symmetrization of a slice regular function. These two notions, as we shall see, will be important also for other purposes.
\\
Let $U\subseteq\mathbb{H}$ be an axially symmetric s-domain and let
 $f,g:\ U\to\mathbb{H}$ be slice regular functions. For any $I,J\in\mathbb{S}$, with
 $I\perp J$, the Splitting Lemma guarantees the existence of four holomorphic functions
 $F,G,H,K: \ U\cap\mathbb{C}_I\to \mathbb{C}_I$ such
 that for all $z=x+Iy\in   U\cap\mathbb{C}_I$
 \begin{equation}\label{SLfg}
 f_I(z)=F(z)+G(z)J, \qquad g_I(z)=H(z)+K(z)J.
 \end{equation}
We define the function $f_I\star g_I:\ U\cap\mathbb{C}_I\to \mathbb{H}$ as
\begin{equation}\label{f*g}
f_I\star g_I(z)=[F(z)H(z)-G(z)\overline{K(\bar z)}]+[F(z)K(z)+G(z)\overline{H(\bar z)}]J.
\end{equation}
Then  $f_I\star g_I(z)$ is obviously a holomorphic map and hence
its unique slice regular extension to $U$ defined, according to the extension
formula \eqref{ext} by
$$
{\rm ext}(f_I\star g_I)(q),
$$
 is slice regular on $U$.
 \begin{definition}
Let $U\subseteq\mathbb{H}$ be an axially symmetric s-domain and let
 $f,g:\ U\to\mathbb{H}$  be slice regular. The function \index{slice regular!product}
$$(f\star g)(q)={\rm ext}(f_I\star g_I)(q)$$ defined as the extension of (\ref{f*g})
is called the slice regular product of $f$
and $g$. This product is called $\star $-product or slice regular product.
 \end{definition}
\begin{remark}{\rm
It is immediate to verify that the $\star $-product is associative, distributive
but, in general, not commutative.
}
\end{remark}
\begin{remark}\label{J*H}
{\rm Let $H(z)$ be a holomorphic function in the variable $z\in \mathbb{C}_I$ and let $J\in\mathbb{S}$
be orthogonal to $I$.
Then by the definition of $\star $-product we obtain $J\star H(z)=\overline{H(\bar z)}J$.
}
\end{remark}
Using the notations above we have:
\begin{definition}
Let $f\in\mathcal{R}(U)$  and let $f_I(z)={F(z)}G(z)J$. We define the function $f_I^c:\  U\cap\mathbb{C}_I \to \mathbb{H}$ as
\begin{equation}\label{f^c}
f_I^c(z)=\overline{F(\bar z)}-G(z)J.
\end{equation}
Then  $f_I^c(z)$ is a holomorphic map and we define the conjugate function \index{conjugate function} $f^c$ of $f$ as \index{$f^c$}
$$
f^c(q)={\rm ext}(f_I^c)(q).
$$
\end{definition}
Note that, by construction, $f^c\in\mathcal{R}(U)$. Note that if we write the function $f$ in the form
$f(x+Iy)=\alpha(x,y)+I\beta(x,y)$, then it is possible to show that
$$
f^c(x+Iy)=\overline{\alpha(x,y)}+I\overline{\beta(x,y)}.
$$
\begin{remark} {\rm Some lengthy but easy computations show that if $c$ is a fixed quaternion and $I\in\mathbb S$ there exists $J\in\mathbb S$ such that $Ic=cJ$. Thus we have
\begin{equation}\label{fcab}
\begin{split}
|f^c(x+Iy)|&=|\overline{\alpha(x,y)}+I\overline{\beta(x,y)}|=|\overline{\overline{\alpha(x,y)}+I\overline{\beta(x,y)}}|
\\
&=|{\alpha(x,y)}-{\beta(x,y)}I|= |{\alpha(x,y)}+J{\beta(x,y)}|\\
&=|f(x+Jy)|
\end{split}
\end{equation}
for a suitable $J\in\mathbb S$.}
\end{remark}
Using the notion of $\star $-multiplication of slice regular functions,
it is possible to associate to any slice regular function $f$ its ''symmetrization'' \index{symmetrization}
also called ''normal form'',\index{normal form}
denoted by $f^s$. We will show that all the zeros of $f^s$ are spheres of type $[x+Iy]$
(real points, in particular) and that,
if $x+Iy$ is a zero of $f$ (isolated or not) then $[x+Iy]$ is a zero of $f^s$.

Let $U\subseteq\mathbb{H}$ be an axially symmetric s-domain and let
 $f:\ U\to\mathbb{H}$ be a
  slice regular function. Using the notation in \eqref{fcab}, we consider the function $f^s_I:\ U\cap\mathbb{C}_I\to \mathbb{C}_I$ defined by
\begin{equation}\label{f^s_qua}
f^s_I=f_I\star f^c_I=(F(z)+G(z)J)\star (\overline{F(\bar z)}-G(z)J)
\end{equation}
$$
=[F(z)\overline{F(\bar z)}+G(z)\overline{G(\bar z)}]+[-F(z)G(z)+G(z)F(z)]J
$$
$$
=F(z)\overline{F(\bar z)}+G(z)\overline{G(\bar z)}=f^c_I\star f_I.
$$
Then  $f_I^s$ is holomorphic. We give the following definition:
\begin{definition}\label{fsymmetr}
Let $U\subseteq\mathbb{H}$ be an axially symmetric s-domain and let
 $f:\ U\to\mathbb{H}$  be slice regular. The function \index{$f^s$}
$$f^s(q)={\rm ext}(f_I^s)(q)$$ defined by the extension of (\ref{f^s_qua}) is called the symmetrization (or normal form) \index{symmetrization}\index{normal form} of $f$.
\end{definition}
\begin{remark}\label{symmLI_qua}
{\rm Notice that formula (\ref{f^s_qua}) yields that, for all $I\in\mathbb{S}$,
$f^s( U\cap\mathbb{C}_I)\subseteq \mathbb{C}_I$.}
\end{remark}
 We now show how the conjugation and the symmetrization of a slice regular function behave with respect to the $\star $-product:
\begin{proposition}\label{propmoltiplicative}
Let $U\subseteq\mathbb{H}$ be an axially symmetric s-domain and let
 $f,g:\ U\to\mathbb{H}$ be slice regular functions.
Then $$(f\star g)^c =
g^c\star f^c$$ and
\begin{equation}
(f\star g)^s = f^s g^s = g^s f^s.
\end{equation}
\end{proposition}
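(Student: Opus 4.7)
The plan is to reduce both identities to computations on a single complex slice $U\cap\mathbb{C}_I$ via the Splitting Lemma \ref{lemma spezzamento}, and then to promote them to all of $U$ by the uniqueness of the slice regular extension (equivalently, the Identity Principle \ref{identity principle}). Fix $I,J\in\mathbb{S}$ with $J\perp I$ and write $f_I=F+GJ$, $g_I=H+KJ$ as in \eqref{SLfg}. The formulas \eqref{f*g}, \eqref{f^c} and \eqref{f^s_qua} then give the explicit splittings of $f\star g$, $f^c$, $g^c$, $f^s$ and $g^s$ in terms of $F,G,H,K$ and the $\mathbb{C}_I$-conjugation.

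For the first identity $(f\star g)^c=g^c\star f^c$, apply \eqref{f^c} to the two $\mathbb{C}_I$-components of $(f\star g)_I$, using that conjugation on the commutative field $\mathbb{C}_I$ is a ring automorphism. Independently compute $(g^c\star f^c)_I$ by substituting the splittings $(\overline{H(\bar z)},-K(z))$ and $(\overline{F(\bar z)},-G(z))$ into \eqref{f*g}. The two resulting four-term expressions coincide term-by-term purely by commutativity of multiplication in $\mathbb{C}_I$; extension to $U$ is then automatic.

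For the second identity, chain associativity of $\star$ with the first identity to obtain
$$(f\star g)^s=(f\star g)\star(f\star g)^c=(f\star g)\star(g^c\star f^c)=f\star g^s\star f^c.$$
Now observe that $g^s$ is \emph{intrinsic}: by \eqref{f^s_qua}, $g^s_I(z)=H(z)\overline{H(\bar z)}+K(z)\overline{K(\bar z)}$ takes values in $\mathbb{C}_I$ (so its splitting has vanishing $J$-part) and satisfies $\overline{g^s_I(\bar z)}=g^s_I(z)$. Inserting a splitting $(F',0)$ with $F'$ intrinsic into either slot of \eqref{f*g} produces the same result on both sides, because commutativity of $\mathbb{C}_I$ handles $F'H=HF'$ and $F'K=KF'$, while the intrinsic condition $F'(z)=\overline{F'(\bar z)}$ is exactly what is needed to kill the $\overline{\,\cdot\,(\bar z)}$ appearing in the second component of \eqref{f*g}. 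Hence $g^s\star f^c=f^c\star g^s$, so $f\star g^s\star f^c=f\star f^c\star g^s=f^s\star g^s$; the same argument with $f^s$ in place of $g^s$ gives $f^s\star g^s=g^s\star f^s$, and since both factors are intrinsic (equivalently, have real Taylor coefficients), their $\star$-product coincides with the pointwise product, yielding $f^s\star g^s=f^sg^s=g^sf^s$.

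The main obstacle is the commutation step $g^s\star f^c=f^c\star g^s$: this is the one place where the mechanical bookkeeping of Splitting Lemma components meets the genuinely special structure of the symmetrization. Everything else reduces to routine manipulation of \eqref{f*g}--\eqref{f^s_qua} together with the Identity Principle to lift equalities from a single slice to all of $U$.
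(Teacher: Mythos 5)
Your proof is correct and follows essentially the same route as the paper: the first identity is verified by the same Splitting Lemma computation on a slice (you substitute directly into \eqref{f*g} where the paper expands via Remark \ref{J*H}, but the content is identical), and the extension to $U$ is by the Identity Principle. The only difference is that the paper dismisses the second identity with ``it is sufficient to show $(f\star g)^c=g^c\star f^c$'', whereas you spell out the missing chain $(f\star g)^s=f\star g^s\star f^c=f^s\star g^s=f^sg^s$ using the intrinsicness of $g^s$ --- a worthwhile addition, and your commutation argument for intrinsic factors is sound.
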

\begin{proof}
It is sufficient to show that $(f\star g)^c =
g^c\star f^c$.
As customary, we can use the Splitting Lemma to write on $ U\cap\mathbb{C}_I$
that $f_I(z)=F(z)+G(z)J$ and $g_I(z)=H(z)+K(z)J$.
We have
$$
f_I\star g_I(z)=[F(z)H(z)-G(z)\overline{K(\bar z)}]+[F(z)K(z)+G(z)\overline{H(\bar z)}]J
$$
and hence
$$
(f_I\star g_I)^c(z)=[\overline{F(\bar z)}\ \overline{H(\bar z)}-\overline{G(\bar z)}{K(z)}]
-[F(z)K(z)+G(z)\overline{H(\bar z)}]J.
$$
We now compute
$$
g_I^c(z)\star f_I^c(z)=(\overline{H(\bar z)}-K(z)J)\star (\overline{F(\bar z)}-G(z)J)
$$
$$
=\overline{H(\bar z)}\star \overline{F(\bar z)}-\overline{H(\bar z)}\star G(z)J
-K(z)J\star \overline{F(\bar z)}+K(z)J\star G(z)J
$$
and conclude by Remark \ref{J*H}.
 \end{proof}
\begin{proposition}\label{invefs}
Let $U\subseteq \mathbb{H}$  be an axially symmetric s-domain and let $f : U \to \mathbb{H}$
be a slice regular function.
The function
$(f^s(q))^{-1}$
is slice regular on $U\setminus \{ q\in \mathbb{H}\ | \ f^s(q)=0\}$.
\end{proposition}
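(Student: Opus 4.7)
The plan is to reduce the claim to a one-variable holomorphic statement on each slice, using the crucial fact that $f^s$ is slice-preserving. By Remark \ref{symmLI_qua}, for every $I \in \mathbb{S}$ the restriction $f^s_I$ maps $U \cap \mathbb{C}_I$ into $\mathbb{C}_I$ itself, i.e., $f^s$ takes each complex slice to itself. Equivalently, writing $f^s(x+Iy)=\alpha(x,y)+I\beta(x,y)$ as in Corollary \ref{alfabeta_qua}, the functions $\alpha$ and $\beta$ are real-valued (this is forced by the requirement that $\alpha(x,y)+I\beta(x,y)\in\mathbb{C}_I$ for every choice of $I\in\mathbb{S}$).

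Next I would set $V := U\setminus\{q\in U : f^s(q)=0\}$. Continuity of $f^s$ makes $V$ open in $\mathbb{H}$, which is all that is required by Definition 2.1.1 in order to talk about slice regularity. On each slice, $V\cap\mathbb{C}_I$ is an open subset of $\mathbb{C}_I$ on which the $\mathbb{C}_I$-valued holomorphic function $f^s_I$ never vanishes; hence its pointwise reciprocal $1/f^s_I$ is again a $\mathbb{C}_I$-valued holomorphic function on $V\cap\mathbb{C}_I$. Since the quaternionic inverse of an element of $\mathbb{C}_I\setminus\{0\}$ coincides with its complex inverse inside $\mathbb{C}_I$, we have $(f^s)^{-1}\big|_{V\cap\mathbb{C}_I}=1/f^s_I$. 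Thus $(f^s)^{-1}$ restricted to each slice satisfies the Cauchy–Riemann equation $\overline{\partial}_I$, which is exactly the definition of slice regularity on $V$.

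As a cross-check one may verify the claim via the slice-function description: compute
\[
\frac{1}{\alpha(x,y)+I\beta(x,y)}=\frac{\alpha(x,y)}{\alpha(x,y)^2+\beta(x,y)^2}+I\Big(\!-\frac{\beta(x,y)}{\alpha(x,y)^2+\beta(x,y)^2}\Big),
\]
set $\tilde\alpha:=\alpha/(\alpha^2+\beta^2)$ and $\tilde\beta:=-\beta/(\alpha^2+\beta^2)$, and check that $\tilde\alpha$ is even in $y$, $\tilde\beta$ is odd in $y$, and that the pair $(\tilde\alpha,\tilde\beta)$ satisfies the Cauchy–Riemann system \eqref{CRSIST_qua} (a short direct calculation using that $(\alpha,\beta)$ satisfies it, together with the reality of $\alpha,\beta$). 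Corollary \ref{alfabeta_qua} then yields slice regularity.

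The only place where one must be careful is the reality of $\alpha$ and $\beta$: without it, $f^s$ would not preserve slices, pointwise inversion inside $\mathbb{C}_I$ would be illegitimate, and the argument would collapse. Once this is secured from Remark \ref{symmLI_qua}, the rest is routine. I expect no serious obstacle beyond this bookkeeping.
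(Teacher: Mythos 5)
Your proof is correct and follows essentially the same route as the paper: both arguments rest on Remark \ref{symmLI_qua} (that $f^s$ maps each slice $U\cap\mathbb{C}_I$ into $\mathbb{C}_I$), identify $f^s_I$ with a $\mathbb{C}_I$-valued holomorphic function, and invoke the holomorphy of its pointwise reciprocal off the zero set. The cross-check via the real-valued pair $(\alpha,\beta)$ is a harmless addition not present in the paper's proof.
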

\begin{proof}
The function $f^s$ is such that $f^s( U\cap\mathbb{C}_I)\subseteq \mathbb{C}_I$ for all $I\in\mathbb{S}$
by Remark \ref{symmLI_qua}. Thus, for any given $I\in\mathbb{S}$ the Splitting Lemma implies
the existence
of a holomorphic function $F:\  U\cap\mathbb{C}_I\to \mathbb{C}_I$ such that $f^s_I(z)=F(z)$ for all
$z\in  U\cap\mathbb{C}_I$. The inverse of the function $F$ is holomorphic on $ U\cap\mathbb{C}_I$
outside the zero set of $F$. The conclusion follows by the equality $(f_I^s)^{-1}=F^{-1}$.
 \end{proof}

The  $\star$-product can be related to the pointwise product as described in
the following result:
\begin{theorem}\label{pointwise}
Let $U \subseteq \mathbb{H}$ be an axially symmetric s-domain,
 $f, g : U \to \mathbb{H}$ be slice hyperholomorphic functions. Then
\begin{equation}
\label{productfg}
(f \star g)(p) = f(p) g(f(p)^{-1}pf(p)),
\end{equation}
for all $p\in U$, $f(p)\not=0$, while $(f \star g)(p) = 0$ when $p\in U$, $f(p)=0$.
\end{theorem}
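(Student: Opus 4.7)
The plan is to verify the identity pointwise, reducing to a computation on the complex plane $\mathbb{C}_I$ that contains $p$. Fix $p=x+Iy\in U$ with $I\in\mathbb{S}$, choose $J\in\mathbb{S}$ orthogonal to $I$, and set $z=x+Iy\in\mathbb{C}_I$. By the Splitting Lemma, write
$$f_I(z)=F(z)+G(z)J,\qquad g_I(z)=H(z)+K(z)J,$$
with $F,G,H,K:U\cap\mathbb{C}_I\to\mathbb{C}_I$ holomorphic, and set $w=f(p)=F(z)+G(z)J$.

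If $w=0$, then the $\mathbb{C}_I$-right linear independence of $1$ and $J$ forces $F(z)=G(z)=0$, and the defining formula \eqref{f*g} immediately yields $(f\star g)(p)=0$, matching the claim in this degenerate case.

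If instead $w\ne 0$, note that conjugation by $w$ preserves both the real part and the modulus of $p$, so $q^{\ast}:=w^{-1}pw$ lies on the same $2$-sphere $[p]$; explicitly $q^{\ast}=x+I''y$ with $I'':=w^{-1}Iw\in\mathbb{S}$. The crucial algebraic identity is the tautology
$$wI''=w\,(w^{-1}Iw)=Iw.$$
By Corollary \ref{values general} together with the Representation Formula applied to $g$ on the plane $\mathbb{C}_I$, we have $g(q^{\ast})=a_g+I''b_g$ with
$$a_g=\tfrac12\bigl[g(x+Iy)+g(x-Iy)\bigr],\qquad b_g=\tfrac12\,I\bigl[g(x-Iy)-g(x+Iy)\bigr],$$
quantities that depend only on $(x,y)$. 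Hence
$$w\,g(q^{\ast})=wa_g+wI''b_g=wa_g+Iw\,b_g.$$

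It then remains to verify that $wa_g+Iwb_g$ equals the Splitting Lemma representation
$$[F(z)H(z)-G(z)\overline{K(\bar z)}]+[F(z)K(z)+G(z)\overline{H(\bar z)}]J$$
of $(f\star g)(p)$. Substituting $g(x+Iy)=H(z)+K(z)J$ and $g(x-Iy)=H(\bar z)+K(\bar z)J$ into $a_g$ and $b_g$ and expanding, using the rules $Jc=\overline{c}\,J$ for $c\in\mathbb{C}_I$, $IJ=-JI$, and $J^{2}=-1$, a pleasant symmetry takes place: in each of the $1$-component and the $J$-component, the four cross-terms combine so that the $F$-coefficient retains only the value of $H$ or $K$ at $z$, while the $G$-coefficient retains only the $\mathbb{C}_I$-conjugated value at $\bar z$, reproducing exactly the claimed expression. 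The sole obstacle is the bookkeeping of the conjugations generated by moving $J$ past $\mathbb{C}_I$-valued factors and by the anticommutation of $I$ and $J$; once that is handled cleanly, the desired equality drops out.
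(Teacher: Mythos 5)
The paper states Theorem \ref{pointwise} without proof (the argument is deferred to the literature), so there is no in-text proof to compare against; judged on its own merits, your argument is correct. The degenerate case is immediate from \eqref{f*g}, and I checked the main computation: writing $w=F(z)+G(z)J$ and expanding $wa_g+Iwb_g$ with $g_I(z)=H(z)+K(z)J$, $g_I(\bar z)=H(\bar z)+K(\bar z)J$, the $\mathbb{C}_I$-component collapses to $F(z)H(z)-G(z)\overline{K(\bar z)}$ and the $J$-component to $F(z)K(z)+G(z)\overline{H(\bar z)}$, exactly matching \eqref{f*g}; the identity $wI''=Iw$ is indeed the whole mechanism. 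Two remarks. First, you tacitly re-split $f\star g$ on the plane $\mathbb{C}_I$ containing $p$, i.e.\ you use that the definition of the $\star$-product does not depend on the imaginary unit chosen in \eqref{f*g}; the paper's definition takes this for granted as well, so this is consistent with its conventions, but it is the one hypothesis your proof genuinely leans on. Second, the final step (``a pleasant symmetry takes place'') is asserted rather than displayed, and it is the entire content of the theorem; in a finished write-up the cancellation of the cross-terms $FH(\bar z)$, $G\overline{K(z)}$, $FK(\bar z)$, $G\overline{H(z)}$ should be shown explicitly. For comparison, the standard proof in the quoted sources, valid on balls, is a one-line power-series manipulation: $(f\star g)(p)=\sum_{k,m}p^{k+m}a_kb_m=\sum_m p^m f(p)b_m=f(p)\sum_m\bigl(f(p)^{-1}pf(p)\bigr)^m b_m$; your representation-formula argument is what is needed to cover general axially symmetric s-domains, at the price of the bookkeeping above.
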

An immediate consequence is the following:
\begin{corollary}\label{zeroes}
If $(f\star g)(p)=0$ then either $f(p)=0$ or $f(p)\not=0$ and $g(f(p)^{-1}pf(p))$ $=0$.
\end{corollary}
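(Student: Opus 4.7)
The plan is to derive the corollary as a direct consequence of Theorem \ref{pointwise}, using only the fact that $\mathbb{H}$ is a division algebra (hence has no zero divisors). Because Theorem \ref{pointwise} already does the structural work of rewriting the $\star$-product as an honest pointwise product of quaternions (after the conjugation $p \mapsto f(p)^{-1} p f(p)$ in the argument of $g$), essentially all that remains is a case split on whether $f(p)$ vanishes.

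Concretely, I would argue as follows. Fix $p \in U$ and assume $(f \star g)(p) = 0$. If $f(p) = 0$, the first alternative in the statement holds and we are done. Otherwise $f(p) \neq 0$, and Theorem \ref{pointwise} applies to give the identity
\[
0 \;=\; (f \star g)(p) \;=\; f(p)\, g\!\left(f(p)^{-1} p\, f(p)\right).
\]
Since $f(p)$ is a nonzero quaternion, it is invertible, and multiplying on the left by $f(p)^{-1}$ yields $g(f(p)^{-1} p f(p)) = 0$, which is the second alternative.

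There is really no obstacle here beyond citing the preceding theorem correctly; the only subtlety worth flagging is that the argument $f(p)^{-1} p f(p)$ lies in the same axially symmetric sphere $[p]$ as $p$ (conjugation preserves real part and modulus of the imaginary part), so it belongs to $U$ and $g$ is defined there. Thus the corollary follows with no further calculation.
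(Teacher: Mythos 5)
Your proof is correct and is exactly the argument the paper intends: the corollary is stated as an immediate consequence of Theorem \ref{pointwise}, and your case split on $f(p)=0$ versus $f(p)\neq 0$, followed by cancellation of the invertible quaternion $f(p)$, is the standard route. The remark that $f(p)^{-1}pf(p)$ stays in the axially symmetric domain $U$ is a worthwhile detail the paper leaves implicit.
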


\section{Quaternionic intrinsic functions}
An important subclass of the class of slice regular functions on an open set $U$, denoted by $\mathcal{N}(U)$,  is defined as follows:
$$
 \mathcal{N}(U)=\{ f \ {\rm slice\ regular\ in}\ U\ :  \ f(U\cap \mathbb{C}_I)\subseteq  \mathbb{C}_I,\ \  \forall I\in \mathbb{S}\}.
$$
\begin{remark}{\rm
If $U$ is axially symmetric and if we denote by $\overline{q}$ the conjugate of a quaternion $q$, it can be shown that a function $f$ belongs to $\mathcal{N}(U)$
if and only if it satisfies $f(q)=\overline{f(\bar q)}$.
In analogy with the complex case, we say that the functions in  the class $\mathcal N$ are  {\em quaternionic intrinsic}.\index{function!quaternionic intrinsic}
}
\end{remark}

If one considers a ball $B(0,R)$ with center at the origin, it is immediate that a function slice regular on the ball belongs to $\mathcal{N}(B(0,R))$ if and only if its
power series expansion has real coefficients. Such functions are also said to be {\em real}. More in general, if $U$ is an axially symmetric s-domain, then  $f\in\mathcal{N}(U)$ if and only if $f(q)=f(x+Iy)=\alpha(x,y)+I\beta(x,y)$
with $\alpha$, $\beta$ real valued, in fact we have:
\begin{proposition}
Let $U\subseteq\mathbb H$ be an axially symmetric open set. Then
$f\in\mathcal{N}(U)$ if and only if
\begin{equation}\label{dec}
f(x)=\alpha(x,y)+I\beta(x,y)
\end{equation}
with $\alpha$, $\beta$ real valued, $\alpha(x,-y)=\alpha(x,y)$, $\beta(x,-y)=-\beta(x,y)$ and satisfy the Cauchy--Riemann system.
\end{proposition}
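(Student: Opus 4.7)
The approach is to invoke Corollary~\ref{alfabeta_qua}, which characterizes slice regular functions on axially symmetric (s-)domains as those of the form $f(x+Iy)=\alpha(x,y)+I\beta(x,y)$ with $\alpha,\beta\colon D\to\hh$ satisfying the symmetry conditions $\alpha(x,-y)=\alpha(x,y)$, $\beta(x,-y)=-\beta(x,y)$ and the Cauchy--Riemann system. Granted this representation (applied on each connected component of $U$, which is an axially symmetric s-domain), the proposition reduces to the assertion that $f\in\mathcal N(U)$ if and only if the values $\alpha(x,y)$ and $\beta(x,y)$ are real.

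The ``$\Leftarrow$'' direction is immediate: if $\alpha(x,y),\beta(x,y)\in\rr$, then $\alpha(x,y)+I\beta(x,y)\in\rr+\rr I\subseteq\cc_I$ for every $I\in\s$, so $f(U\cap\cc_I)\subseteq\cc_I$ and $f\in\mathcal N(U)$.

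For the ``$\Rightarrow$'' direction, fix $(x,y)$ with $x+Iy\in U$ for some $I\in\s$. Since $U$ is axially symmetric, both $x+Iy$ and $x-Iy=x+(-I)y$ lie in $U$, and applying the representation with imaginary units $I$ and $-I$ (which is consistent thanks to the parity conditions on $\alpha,\beta$) gives
\[
f(x+Iy)=\alpha(x,y)+I\beta(x,y), \qquad f(x-Iy)=\alpha(x,y)-I\beta(x,y).
\]
Because $f\in\mathcal N(U)$ and $\cc_{-I}=\cc_I$, both values belong to $\cc_I$. Summing and subtracting yields $\alpha(x,y)\in\cc_I$ and $I\beta(x,y)\in\cc_I$; since $\cc_I$ is a commutative subfield of $\hh$ containing $I$, we also have $\beta(x,y)=(-I)\cdot\bigl(I\beta(x,y)\bigr)\in\cc_I$. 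As $I\in\s$ was arbitrary and $\bigcap_{I\in\s}\cc_I=\rr$ (any two distinct complex planes $\cc_I,\cc_{I'}$ already meet only along the real axis), we conclude $\alpha(x,y),\beta(x,y)\in\rr$.

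The only genuine obstacle is not algebraic but topological bookkeeping: ensuring that a single globally defined pair $(\alpha,\beta)$ exists on the possibly disconnected set $U$ so that the argument above can be run at every point. This is handled by applying Corollary~\ref{alfabeta_qua} on each connected component of $U$ (each being axially symmetric and a slice domain) and then gluing; the Cauchy--Riemann system and the parity conditions on the resulting real-valued $\alpha,\beta$ are inherited directly from that corollary, which completes the proof.
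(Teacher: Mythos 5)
Your proof is correct, but it runs along a genuinely different track from the one in the paper. The paper works on a single slice: it writes $f_I=F+GJ$ via the Splitting Lemma, observes that the hypothesis $f(U\cap\mathbb{C}_I)\subseteq\mathbb{C}_I$ forces $G\equiv 0$ (since $1,I,J,IJ$ are linearly independent over $\mathbb{R}$), so that $f_I=F$ is a $\mathbb{C}_I$-valued holomorphic map whose real and imaginary parts supply the real-valued $\alpha,\beta$ satisfying Cauchy--Riemann; the parity conditions and the global statement then come from the Representation Formula and the Identity Principle. You instead start from the Representation-Formula decomposition $f(x+Iy)=\alpha(x,y)+I\beta(x,y)$ with \emph{quaternion-valued} $\alpha,\beta$ (Corollary \ref{alfabeta_qua}) and extract reality pointwise: evaluating at $x\pm Iy$ gives $\alpha(x,y),\beta(x,y)\in\mathbb{C}_I$, and letting $I$ range over $\mathbb{S}$ together with $\bigcap_{I\in\mathbb{S}}\mathbb{C}_I=\mathbb{R}$ pins both down to $\mathbb{R}$. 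Your route avoids the Splitting Lemma entirely and is purely algebraic once the representation is granted; the paper's route gets holomorphy and the Cauchy--Riemann system for free from the complex theory on one slice. Both arguments ultimately rest on the same nontrivial input, namely that $\alpha,\beta$ are independent of $I$. One small caveat: your reduction asserts that each connected component of $U$ is an axially symmetric s-domain, which is not literally true (rotating a disc that misses the real axis produces a connected axially symmetric open set whose intersection with each $\mathbb{C}_I$ is disconnected); the paper's own proof is equally loose about the open-set versus s-domain hypothesis, so this is an inherited imprecision rather than a new gap, but it deserves a word if you want the statement at the stated level of generality.
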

 \begin{proof}
 If $\alpha$, $\beta$ are as above, trivially $f(q)$ defined by (\ref{dec}) belongs to $\mathcal{N}(U)$. Conversely, assume that $f(q)\in\mathcal{N}(U)$. Let $I,J\in\mathbb{S}$, $I$ and $J$ orthogonal and let $f_I(z)=F(z)+G(z)J$ with $F, G:\ U\cap\mathbb{C}_I\to\mathbb{C}_I$ holomorphic. Since $f$ takes $\mathbb{C}_I$ to itself, then $G\equiv 0$. Then the function $f_I(z)=F(z)=\alpha(x,y)+I\beta(x,y)$ where $\alpha$, $\beta$ are real valued and satisfy the Cauchy-Riemann system. Thus, by the Identity Principle, the function $f(q)$ can be written as $f(q)=\alpha(x,y)+I_q\beta(x,y)$. The equalities $\alpha(x,-y)=\alpha(x,y)$, $\beta(x,-y)=-\beta(x,y)$ follow from the Representation Formula.
\end{proof}
\begin{remark}\label{transcendent}{\rm
The class  $\mathcal{N}(\mathbb H)$
 includes all elementary transcendental \index{function!transcendental} functions, in particular
 $$
\exp(q)=e^q=\sum^{\infty}_{n=0}\frac{q^n}{n!},
$$
$$
\sin(q)=\sum^{\infty}_{n=0}(-1)^{n}\frac{q^{2n+1}}{(2n+1)!},
$$
$$
\cos(q)=\sum^{\infty}_{n=0}(-1)^n\frac{q^{2n}}{(2n)!}.
$$
Note that these functions coincide with the analogous complex functions on any complex plane $\mathbb C_I$.
}
\end{remark}
\index{quaternionic!exponential}\index{quaternionic!sine}\index{quaternionic!cosine}
Also the quaternionic logarithm is a quaternionic intrinsic function. Let us recall the definition of quaternionic logarithm, see \cite{GSS}, \cite{ghs}. It is the function inverse of the exponential function $\exp(q)$ in $\mathbb{H}$.

\begin{definition}\label{deflogaritmo1}
Let $U \subseteq \mathbb{H}$ be a connected open  set. We define a {branch of the quaternionic logarithm} \index{quaternionic!logarithm}(or simply a \emph{logarithm}) on  $U $ a function $f:U \to \mathbb{H}$  such that for every $q\in U $  $$e^{f(q)}=q.$$
\end{definition}
Since $\exp(q)$ never vanishes, we suppose that $0\notin U $. Recalling that
$$  I_q= \left\{ \begin{array}{ll}
 {\rm Im}(q)/|{\rm Im}(q)| & \textrm{if $q\in \mathbb{H}\setminus\mathbb{R}$}\\
\textrm{any element of $\mathbb{S}$} & \textrm{otherwise}
  \end{array} \right. $$
 we have that for every $q \in \mathbb{H}\setminus \{0\}$ there exists a unique $\theta \in [0,\pi]$ such that $q=|q|e^{\theta I_q}$. Moreover we have  $\theta =\arccos({\rm Re}(q)/|q|)$.

 \begin{definition}
 The function $\arccos({\rm Re}(q)/|q|)$ will be called the {principal quaternionic argument} of $q$ and it will be denoted  by $\arg_{\mathbb{H}}(q)$ for every $q\in\mathbb{H}\setminus\{0\}$.
 \end{definition}

Below we define the principal quaternionic logarithm.

\begin{definition}\label{defilog} Let $\log (x)$ be the natural real logarithm of $x\in\mathbb R^+$. For every $q\in\mathbb{H}\setminus (-\infty,0],$ we define the {principal quaternionic logarithm} \index{quaternionic!principal logarithm} or, in short, {principal logarithm}) of $q$ as
 $${\rm Log}(q)=\ln|q|+\arccos\left(\frac{{\rm Re}(q)}{|q|}\right)I_q .$$
\end{definition}
\begin{remark}\label{RmkLog}{\rm
It is easy to directly verify that the principal quaternionic logarithm coincides with the principal complex  logarithm on the complex plane $\mathbb C_I$, for any $I\in \mathbb S$.
}
\end{remark}
We now go back to the properties of intrinsic slice regular functions.
The following result has an elementary proof which is left to the reader.
\begin{proposition}[Algebraic properties]\label{teofNU}
Let $U$, $U'$ be two open sets  in $\mathbb H$.
\begin{enumerate}
\item[(1)]
Let $f$ and $g\in \mathcal{N}(U)$, then $f+g\in \mathcal{N}(U)$ and $fg=gf\in \mathcal{N}(U)$.
\item[(2)]
Let $f$ and $g\in \mathcal{N}(U)$ such that $g(q)\not=0$ for all $q\in U$, then $g^{-1}f=fg^{-1}\in \mathcal{N}(U)$.
\item[(3)]
Let $f\in \mathcal{N}(U')$,
$g\in \mathcal{N}(U)$ with $g(U)\subseteq U'$.
Then
$f(g(q))$ is slice regular for $q\in U$.
\end{enumerate}
\end{proposition}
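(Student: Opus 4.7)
\medskip
\noindent\textbf{Proof plan.} The plan is to exploit the defining property of $\mathcal{N}(U)$, namely that each slice $U\cap\mathbb{C}_I$ is sent into $\mathbb{C}_I$, to reduce every assertion to the classical one-variable holomorphic theory inside a commutative field. The Splitting Lemma does most of the bookkeeping: writing $f_I=F+GJ$ for $J\perp I$, the condition $f\in\mathcal{N}(U)$ forces $G\equiv 0$, so slicewise one is dealing with a holomorphic $\mathbb{C}_I$-valued function on an open subset of $\mathbb{C}_I$, and it suffices to check the statements on each slice and then extend by slice regularity.

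For (1), additivity is immediate since the sum of two $\mathbb{C}_I$-valued slice regular functions is $\mathbb{C}_I$-valued and slice regular. For the product, the key observation is that on each plane $\mathbb{C}_I$ the values of $f$ and $g$ lie in the commutative field $\mathbb{C}_I$, so the pointwise products $f_I g_I$ and $g_I f_I$ agree, are $\mathbb{C}_I$-valued, and are holomorphic as ordinary products of $\mathbb{C}_I$-valued holomorphic functions. A direct computation with the formula defining the $\star$-product (taking $G=K=0$) shows that the pointwise product coincides with $f_I\star g_I$, so by uniqueness of the slice regular extension we get $fg=f\star g=g\star f=gf$ globally on $U$, and this function belongs to $\mathcal{N}(U)$. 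Commutativity thus comes for free from commutativity inside each $\mathbb{C}_I$.

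For (2), I first show $g^{-1}\in\mathcal{N}(U)$: since $g$ is nonvanishing on $U$, the restriction $g_I$ is a nonvanishing $\mathbb{C}_I$-valued holomorphic function on $U\cap\mathbb{C}_I$, and its pointwise reciprocal is again holomorphic and $\mathbb{C}_I$-valued; gluing across all $I\in\mathbb{S}$ produces $g^{-1}\in\mathcal{N}(U)$. Applying (1) to $f$ and $g^{-1}$ gives $fg^{-1}=g^{-1}f\in\mathcal{N}(U)$. For (3), composition respects the slice decomposition because $g(U\cap\mathbb{C}_I)\subseteq U'\cap\mathbb{C}_I$ by intrinsicness of $g$, and $f(U'\cap\mathbb{C}_I)\subseteq\mathbb{C}_I$ by intrinsicness of $f$; hence $(f\circ g)_I$ is the composition of two holomorphic maps between open subsets of $\mathbb{C}_I$, so it is holomorphic, and therefore $f\circ g$ is slice regular.

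The only conceptual subtlety — and it is a mild one — is that the pointwise product of two arbitrary slice regular functions need not be slice regular, which is precisely the reason the $\star$-product was introduced in the previous section. Statement (1) works because the intrinsic hypothesis forces the values on each slice into a commutative subfield, so the non-commutativity obstruction that usually spoils the Leibniz rule for slice derivatives simply does not appear, and the $\star$-product collapses to the ordinary pointwise product. Everything else is a routine transcription of the corresponding holomorphic facts on each $\mathbb{C}_I$.
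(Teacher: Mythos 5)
Your proof is correct and is exactly the elementary argument the paper has in mind (it explicitly leaves this proposition to the reader): restrict to each slice, observe that intrinsicness forces the values into the commutative field $\mathbb{C}_I$ so that pointwise products, reciprocals and compositions of $\mathbb{C}_I$-valued holomorphic functions are again holomorphic and $\mathbb{C}_I$-valued, and conclude slice regularity from the Cauchy--Riemann equation on every slice. One small remark: the detour through the $\star$-product and ``uniqueness of the slice regular extension'' is superfluous and slightly out of scope here, since the $\star$-product and the extension formula are only defined on axially symmetric s-domains while the proposition is stated for arbitrary open sets $U$ --- but your direct slicewise verification already carries the whole argument, so nothing is lost.
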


\begin{proposition}\label{pro20}
Let $U\subseteq\mathbb H$ be an axially symmetric s-domain and let $\{1,I,J,IJ\}$ be a basis of $\mathbb{H}$. Then
$$\mathcal{R}(U)=\mathcal{N}(U)  \oplus \mathcal{N}(U) I \oplus  \mathcal{N}(U)  J \oplus \mathcal{N}(U) IJ.
$$
\end{proposition}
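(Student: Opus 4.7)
The strategy is to obtain the decomposition by extending the intrinsic holomorphic components produced by the Refined Splitting Lemma (Proposition~\ref{SL2}), and then to verify directness by exploiting the fact that each summand sends $\mathbb{C}_I$ into $\mathbb{C}_I$.

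For existence, given $f\in\mathcal{R}(U)$, I would fix the imaginary unit $I$ of the given basis and apply the Refined Splitting Lemma to produce intrinsic holomorphic functions $h_0,\dots,h_3\colon U\cap\mathbb{C}_I\to\mathbb{C}_I$ with
$$f_I(x+Iy)=h_0(x+Iy)+h_1(x+Iy)I+h_2(x+Iy)J+h_3(x+Iy)IJ.$$
Each $h_\ell$, being intrinsic, admits on $U\cap\mathbb{C}_I$ the representation $h_\ell(x+Iy)=a_\ell(x,y)+Ib_\ell(x,y)$ with $a_\ell,b_\ell$ \emph{real}-valued and satisfying the Cauchy--Riemann system (together with the parity conditions $a_\ell(x,-y)=a_\ell(x,y)$, $b_\ell(x,-y)=-b_\ell(x,y)$). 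Extending $h_\ell$ to $U$ through formula~\eqref{ext} produces a slice regular function $H_\ell$ on $U$ which on each plane $\mathbb{C}_L$ reads $H_\ell(x+Ly)=a_\ell(x,y)+Lb_\ell(x,y)$; since $a_\ell,b_\ell$ are real, $H_\ell(U\cap\mathbb{C}_L)\subseteq\mathbb{C}_L$ for every $L\in\mathbb{S}$, so $H_\ell\in\mathcal{N}(U)$.

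Setting $g:=H_0+H_1I+H_2J+H_3\,IJ$ yields a slice regular function on $U$, since $\mathcal{R}(U)$ is a right $\mathbb{H}$-module. By construction $g_I=f_I$ on $U\cap\mathbb{C}_I$, so the Identity Principle (Theorem~\ref{identity principle}) forces $g=f$ on $U$. For directness, suppose $H_0+H_1I+H_2J+H_3\,IJ\equiv 0$ with $H_\ell\in\mathcal{N}(U)$. Restricting to $U\cap\mathbb{C}_I$ and writing $H_\ell(x+Iy)=A_\ell(x,y)+IB_\ell(x,y)$ with $A_\ell,B_\ell$ real-valued (which is possible because each $H_\ell$ maps $\mathbb{C}_I$ into itself and is intrinsic there), the identity becomes a vanishing $\mathbb{R}$-linear combination of elements of the basis $\{1,I,J,IJ\}$ with real-valued coefficients. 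Linear independence over $\mathbb{R}$ forces $A_\ell=B_\ell=0$, so $H_\ell\equiv 0$ on $U\cap\mathbb{C}_I$ and therefore on $U$ by the Identity Principle.

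The main obstacle is the first step: making sure that the slice regular extension of each intrinsic holomorphic component of $f_I$ actually belongs to $\mathcal{N}(U)$. This hinges on translating ``intrinsic holomorphic'' (the symmetry $\overline{h_\ell(\bar z)}=h_\ell(z)$ on $\mathbb{C}_I$) into the geometric statement that the extension preserves every plane $\mathbb{C}_L$, which is handled by observing that the extension formula~\eqref{ext} replaces $I$ by $L$ in the representation $\alpha(x,y)+I\beta(x,y)$ precisely when $\alpha,\beta$ are real-valued.
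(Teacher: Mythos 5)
Your existence argument is correct and is essentially the paper's: apply the Refined Splitting Lemma on $U\cap\mathbb{C}_I$, observe that the slice regular extension of a complex intrinsic function is quaternionic intrinsic, and recover $f$ by the Identity Principle.

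The directness step, however, has a genuine gap. After restricting to $U\cap\mathbb{C}_I$ and writing $H_\ell=A_\ell+IB_\ell$ with $A_\ell,B_\ell$ real-valued, the sum $H_0+H_1I+H_2J+H_3IJ$ expands in the basis $\{1,I,J,IJ\}$ as
$$(A_0-B_1)+(B_0+A_1)I+(A_2-B_3)J+(B_2+A_3)IJ,$$
because $IB_1I=-B_1$ and $IB_3\,IJ=-B_3J$: the right multiplications by $I$ and $IJ$ mix real and imaginary parts. Linear independence over $\mathbb{R}$ therefore only yields $A_0=B_1$, $B_0=-A_1$, $A_2=B_3$, $B_2=-A_3$; it does not force the eight functions to vanish individually, and without a further hypothesis it cannot (take $h_1(z)=z$ and $h_0(z)=-Iz$, so that $h_0+h_1I\equiv 0$ with neither summand zero). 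What rescues the argument is precisely the intrinsic property you mention but never actually use at this point: each $A_\ell$ is even and each $B_\ell$ is odd in $y$, so an identity such as $A_0=B_1$ equates an even function with an odd one and forces both to vanish. Equivalently one can argue as the paper does, using $h(q)=\overline{h(\bar q)}$ to derive $h_2(q)I=\overline{h_2(\bar q)I}=-h_2(q)I$, hence $h_2=0$. With that step inserted your proof is complete; note in passing that your criterion for directness (a vanishing sum forces every term to vanish) is the correct one for four summands, whereas the paper only verifies pairwise intersections.
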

\begin{proof} By the Refined Splitting Lemma, there exist four functions $h_0, h_1, h_2,h_3$ holomorphic intrinsic on $U\cap\mathbb C_I$ such that
$$f_I(z)=h_0(z)+h_1(z) I  +  h_2(z) J  +h_3(z) IJ,$$
and $f={\rm ext}(f_I)=\ext(h_0)+\ext(h_1) I  + \ext( h_2) J  + \ext( h_3 ) IJ$.
\\
Since the extension of a complex intrinsic function
 is a  quaternionic intrinsic function, we obtain
$\mathcal{R}(U)=\mathcal{N}(U)  + \mathcal{N}(U) I   + \mathcal{N}(U) J+ \mathcal{N}(U) IJ$.

To show that the sum is a direct sum,
suppose that   $f\in \mathcal{N}(U)\cap  \mathcal{N}(U) I $. Then there exists $g\in  \mathcal{N}(U)$, such that   $f=gI$.  So there exist $h_1,h_2$ complex intrinsic and holomorphic such that
$f={\rm ext}(h_1)$, and $g=\ext (h_2)$. Then $h_1 =h_2I$, and for any $q\in U_{I}$ one has
$$h_2(q)I=h_1(q)=\overline{h_1(\bar q)} = \overline{h_2(\bar q)I} =-  h_2(q)I , $$
then $h_2=h_1=0$, and $\mathcal{N}(U)\cap  \mathcal{N}(U) I =\{0\}$.

Similarly one can see that all the other intersections between
$\mathcal{N}(U)$,  $\mathcal{N}(U)I$, $\mathcal{N}(U)J$, $\mathcal{N}(U)IJ$, are $\{0\}$ and the statement follows.
\end{proof}

\section{Composition of power series}
In this section we first introduce and study a notion of composition of slice regular functions, see also \cite{ggs}.
As it is well know,  the composition  $f\circ g$ of two slice regular functions is not, in general, slice regular, unless $g$ belongs to the subclass of quaternionic intrinsic functions. Our choice of the notion of composition is based on the fact that slice regularity is not preserved by the pointwise product, but is preserved by the  $\star$-product.
Thus the power of a function is slice regular only if it is computed with respect to the $\star $-product and we will write $(w(q))^{\star n}$ to denote that we are taking the $n$-th power with respect to this product.
To introduce the notion of composition, we first treat the case of formal power series.
\begin{definition}\label{bullet}
Denoting $g(q)=\sum_{n=0}^{\infty} q^{n}a_{n}$ and $w(q)=\sum_{n=1}^{\infty}q^{n} b_{n}$.  We define
$$(g \bullet w)(q)=\sum_{n=0}^\infty (w(q))^{\star n}a_n.$$
\end{definition}

\begin{remark}{\rm When $w \in {\mathcal{N}}(B(0;1))$, then $g\bullet w=g\circ w$ where $\circ$ represents the standard composition of two functions.
Moreover, when $w \in {\mathcal{N}}(B(0;1))$  then $(w(q))^{\star n}=(w(q))^n$ so, in particular, $q^{\star n}=q^n$.
}
\end{remark}

\begin{remark}\label{cartanrem}{\rm The {\em order} of a series $f(q)=\sum_{n=0}^{\infty} q^{n}a_{n}$ can be defined as in \cite{cartan} and we denote it by $\omega(f)$. It is the lowest integer $n$ such that $a_n\not=0$ (with the convention that  the order of the series identically equal to zero is $+\infty$). Assume to have a family $\{f_i\}_{i\in \mathcal I}$ of power series where $\mathcal I$ is a set of indices. This family is said to be {\em summable} if for any $k\in\mathbb N$, $\omega(f_i)\geq k$ for all except a finite number of indices $i$. By definition, the sum
of $\{f_i\}$ where $f_i(q)=\sum_{n=0}^{\infty} q^{n}a_{i,n}$ is
$$
f(q)=\sum_{n=0}^{\infty} q^{n}a_{n},
$$
where $a_n=\sum_{i\in\mathcal I}a_{i,n}$.  The definition of $a_{n}$ is well posed since our hypothesis guarantees that for any $n$ just a finite number of $a_{i,n}$ are nonzero.
}
\end{remark}

\begin{remark}{\rm
 In Definition \ref{bullet} we require the hypothesis $b_0=0$. This is necessary in order to guarantee that the minimum power of $q$  in the term $(w(q))^{\star n}$ is at least $q^n$ or, in other words, that $\omega(w(q)^{\star n})\geq n$ for all indices. With this hypothesis, the series $\sum_{n=0}^\infty (w(q))^{\star n}a_n$ is summable according to Remark \ref{cartanrem}, and we can regroup the powers of $q$.}
\end{remark}

Let us consider the following example:
let $f(q)=q^2 c$, $g(q)=qa$ and $w(q)=q^2b$. We have
$$((f\bullet g) \bullet w)(q)=q^4 b^2a^2c$$ while $$(f\bullet (g \bullet w))(q)=q^4babac$$
so
$$
(f\bullet g) \bullet w\not=
f\bullet (g \bullet w).
$$
Thus we conclude that:
\begin{proposition}
The composition $\bullet$  is, in general, not associative.
\end{proposition}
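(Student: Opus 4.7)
The plan is to disprove associativity by exhibiting a concrete counterexample; since the statement is of the form ``in general, not associative'', a single triple $(f,g,w)$ for which the two nestings disagree is enough. Because $\bullet$ threads the inner coefficients through $\star$-powers of the outer function, noncommutativity of the quaternion coefficients should be enough to force the two iterated compositions to differ. I would therefore look for the simplest possible witness: three monomials with constant quaternionic coefficients, taking the degrees to be $1$ and $2$ so that no accidental collapse can occur.

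Concretely I would take $f(q)=q^{2}c$, $g(q)=qa$ and $w(q)=q^{2}b$ with $a,b,c\in\mathbb{H}$ to be specified at the end. The only computational fact I need is that on monomials the $\star$-product reduces to the naive rule $(q^{m}u)\star(q^{n}v)=q^{m+n}uv$; this follows from formula \eqref{f*g} by writing $u=u_{0}+u_{1}J$ and $v=v_{0}+v_{1}J$ over $\mathbb{C}_{I}$ and using $\overline{K(\bar z)}=z^{n}\bar v_{1}$, $\overline{H(\bar z)}=z^{n}\bar v_{0}$, then comparing with $uv=(u_{0}v_{0}-u_{1}\bar v_{1})+(u_{0}v_{1}+u_{1}\bar v_{0})J$.

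Using this rule I would compute both nestings from the definition in \ref{bullet}. For the left association, $(f\bullet g)(q)=(qa)^{\star 2}c=q^{2}a^{2}c$, whence $((f\bullet g)\bullet w)(q)=(q^{2}b)^{\star 2}a^{2}c=q^{4}b^{2}a^{2}c$. For the right association, $(g\bullet w)(q)=(q^{2}b)\,a=q^{2}ba$, whence $(f\bullet(g\bullet w))(q)=(q^{2}ba)^{\star 2}c=q^{4}(ba)^{2}c=q^{4}babac$. The remaining task is to distinguish the coefficients $b^{2}a^{2}c$ and $babac$ in $\mathbb{H}$.

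The only (minor) obstacle is to choose $a,b,c$ actually producing different quaternions. Selecting $a=i$, $b=j$, $c=1$ gives $b^{2}a^{2}c=j^{2}i^{2}=(-1)(-1)=1$, while $babac=jiji=(-k)(-k)=k^{2}=-1$, so the two nestings yield $q^{4}$ and $-q^{4}$ respectively. This contradicts associativity and completes the proof.
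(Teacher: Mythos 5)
Your proof is correct and follows essentially the same route as the paper, which uses the identical counterexample $f(q)=q^2c$, $g(q)=qa$, $w(q)=q^2b$ and arrives at the same coefficients $b^2a^2c$ versus $babac$. You go slightly further than the text by exhibiting explicit values $a=i$, $b=j$, $c=1$ for which the two coefficients genuinely differ ($1$ versus $-1$), which the paper leaves implicit.
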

 However, we will prove that the composition is associative in some cases and to this end we need a preliminary Lemma.

\begin{lemma}\label{propertybullet}
Let $f_1(q)=\sum_{n=0}^{\infty} q^{n}a_{n}$, $f_2(q)=\sum_{n=0}^{\infty} q^{n}b_{n}$, and $g(q)=\sum_{n=1}^{\infty} q^{n}c_{n}$. Then:
\begin{enumerate}
\item[(1)] $(f_1+f_2)\bullet g= f_1\bullet g+f_2\bullet g$;
\item[(2)] if $g$ has real coefficients $(f_1\star f_2)\bullet g=(f_1 \bullet g)\star (f_2 \bullet g)$;
\item[(3)] if $g$ has real coefficients $f^{\star n}\bullet g=(f \bullet g)^{\star n}$.
\item[(4)] if $\{f_i\}_{i\in\mathcal I}$ is a summable family of power series then $\{f_i\bullet g\}_{i\in\mathcal I}$ is summable and
 $(\sum_{i} f_i) \bullet g=\sum_i (f_i\bullet g).$
 \end{enumerate}
\end{lemma}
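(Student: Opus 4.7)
The plan is to handle the four items roughly in the order (1), (4), (2), (3), since (2) is the substantive one and (3) follows from (2) by induction. Throughout I will think of the series formally, matching coefficients of each $q^m$; convergence is not at issue because we are in the formal/summable setting of Remark \ref{cartanrem}.

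Item (1) is essentially a bookkeeping exercise. Writing $(f_1+f_2)(q)=\sum_n q^n(a_n+b_n)$ and applying the definition gives $(f_1+f_2)\bullet g=\sum_n (g(q))^{\star n}(a_n+b_n)=\sum_n (g(q))^{\star n}a_n+\sum_n (g(q))^{\star n}b_n$, which is exactly $f_1\bullet g+f_2\bullet g$; no interchange of summations is needed since each $q^m$-coefficient involves only finitely many $n$ (those with $n\le m$) because $\omega(g^{\star n})\ge n$.

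For (4), the first observation is that $\omega(f_i\bullet g)\ge\omega(f_i)$. Indeed, if $a_{i,n}=0$ for $n<k$ then $(f_i\bullet g)(q)=\sum_{n\ge k}(g(q))^{\star n}a_{i,n}$ and each surviving summand has order $\ge n\ge k$. Hence summability of $\{f_i\}$ transfers to $\{f_i\bullet g\}$. To check the equality $(\sum_i f_i)\bullet g=\sum_i(f_i\bullet g)$ I will compare coefficients of $q^m$: on the left the coefficient is a finite sum (only $n\le m$ contribute) of quantities of the form $r_{n,m}\,A_n$ with $A_n=\sum_i a_{i,n}$ a finite sum by summability; on the right, for each $i$, the coefficient of $q^m$ in $f_i\bullet g$ is the analogous finite sum, and then summing over $i$ involves only finitely many nonzero terms (again by summability, since for each fixed $n\le m$ only finitely many $a_{i,n}$ are nonzero). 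A straightforward swap finishes this item.

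The heart of the lemma is (2), and this is where I expect the main obstacle: one must be careful with the noncommutativity of the $\star$-product. The key reduction is that when $g$ has real coefficients, so does every $\star$-power $g^{\star n}$ (the real-coefficient series form a subring of the $\star$-algebra), and consequently any constant $a\in\mathbb H$ commutes with $g^{\star n}$ under $\star$: if $g^{\star n}(q)=\sum_m q^m r_{n,m}$ with $r_{n,m}\in\mathbb R$, then $a\star g^{\star n}=\sum_m q^m\, a r_{n,m}=\sum_m q^m\, r_{n,m}a=g^{\star n}\star a$. Using this, I expand
\[
(f_1\bullet g)\star(f_2\bullet g)=\Big(\sum_{k}(g(q))^{\star k}a_k\Big)\star\Big(\sum_{l}(g(q))^{\star l}b_l\Big)=\sum_{k,l}(g(q))^{\star k}\star a_k\star(g(q))^{\star l}\star b_l,
\]
and the commutation above lets me move $a_k$ past $(g(q))^{\star l}$ to obtain $\sum_{k,l}(g(q))^{\star(k+l)}(a_k b_l)$. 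Regrouping by $n=k+l$ gives $\sum_n (g(q))^{\star n}\sum_{k+l=n}a_k b_l$, which is exactly the definition of $(f_1\star f_2)\bullet g$. Item (3) then follows by induction: the case $n=0$ is trivial ($f^{\star 0}=1$), and the step $f^{\star(n+1)}\bullet g=(f^{\star n}\star f)\bullet g=(f^{\star n}\bullet g)\star(f\bullet g)=(f\bullet g)^{\star(n+1)}$ uses (2) with $f_1=f^{\star n}$ and $f_2=f$.
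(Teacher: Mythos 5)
Your proof is correct and follows essentially the same route as the paper's: item (1) by direct expansion, item (2) by using that $g$ real-coefficiented forces $g^{\star l}$ to have real coefficients and hence to commute with quaternionic constants (which is exactly how the paper collapses $(\sum_n g^n a_n)\star(\sum_m g^m b_m)$ into $\sum_n g^n\sum_{r}a_r b_{n-r}$), item (3) by induction on (2), and item (4) by the finiteness of contributions to each power of $q$. Your explicit remark that $\omega(f_i\bullet g)\ge\omega(f_i)$, which justifies the summability of $\{f_i\bullet g\}$, is a small but welcome addition that the paper leaves implicit.
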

\begin{proof}
Point $(1)$ follows from
\[
\begin{split}
(f_1+f_2)\bullet g&=
\sum_{n=0}^\infty g^{\star n}(a_{n}+b_n)\\
&=\sum_{n=0}^\infty g^{\star n}a_{n}+\sum_{n=0}^\infty g^{\star n}b_n\\
&= f_1\bullet g+f_2\bullet g.
\end{split}
\]
 To prove $(2)$, recall that $(f_1\star f_2)(q)= \sum_{n=0}^\infty q^{n}(\sum_{r=0}^n a_rb_{n-r})$ and  taking into account that the coefficients of $g$ are real we have:
$$
((f_1\star f_2)\bullet g)(q)=\sum_{n=0}^\infty (g(q))^{n}(\sum_{r=0}^n a_rb_{n-r}).
$$
The statement follows since
\[
\begin{split}
(f_1 \bullet g)(q)\star (f_2 \bullet g)(q)&=\left(\sum_{n=0}^\infty g(q)^{n} a_n\right)\star \left(\sum_{m=0}^\infty g(q)^{m} b_m\right)\\
&= \sum_{n=0}^\infty g(q)^{n}  (\sum_{r=0}^n a_rb_{n-r}).
\end{split}
\]
 To prove point $(3)$ we use induction. The statement is true for $n=2$ since it follows from $(2)$. Assume that the assertion is true for the $n$-th power. Let us show that it holds for $n+1$-th power.
Let us compute
\[
(f^{\star (n+1)}\bullet g)(q)=( (f^{\star (n)}\star f)\bullet g)(q)\overset{(2)}=(f \bullet g)^{\star n}\star (f\bullet g)=(f \bullet g)^{\star (n+1)},
\]
and the statement follows.\\
To show $4$ we follow \cite[p. 13]{cartan}. Let $f_i(q)=\sum_{n=0}^\infty q^n a_{i,n}$ so that, by definition
$$
\sum_{i\in\mathcal I} f_i(q)= \sum_{n=0}^\infty q^n (\sum_{i\in\mathcal I}a_{i,n}).
$$
Thus we obtain
\begin{equation}\label{e1}
(\sum_{i\in\mathcal I} f_i(q))\bullet g = \sum_{n=0}^\infty g(q)^{\star n} (\sum_{i\in\mathcal I}a_{i,n})
\end{equation}
and
\begin{equation}\label{e2}
\sum_{i\in\mathcal I} (f_i\bullet g)(q)= \sum_{i\in\mathcal I} (\sum_{n=0}^\infty  g(q)^{\star n} a_{i,n}).
\end{equation}
By hypothesis on the summability of $\{f_i\}$, each power of $q$ involves just a finite number of the coefficients $a_{i,n}$ so we can apply the associativity of the addition in $\mathbb H$ and
so (\ref{e1}) and (\ref{e2}) are equal.
\end{proof}
\begin{proposition}\label{assoc}
If  $f(q)=\sum_{n=0}^{\infty} q^{n}a_{n}$,
$g(q)=\sum_{n=1}^{\infty} q^{n}b_{n}$, $w(q)=\sum_{n=1}^{\infty}q^{n} c_{n}$ and $w$ has real coefficients, then
$
(f\bullet g) \bullet w = f\bullet (g \bullet w).
$
\end{proposition}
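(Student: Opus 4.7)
The strategy is to decompose $f$ into a summable family of monomials $q^n a_n$, apply the distributivity results from Lemma \ref{propertybullet} twice, and finally invoke the real-coefficient hypothesis on $w$ to commute $\star$-powers with $\bullet w$.

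More precisely, write $f_i(q)=q^i a_i$ so that $f=\sum_{i\in\mathbb{N}} f_i$. Since $\omega(f_i)=i$ whenever $a_i\neq 0$, the family $\{f_i\}$ is summable. Directly from Definition \ref{bullet} one has $f_i\bullet g = g^{\star i}a_i$, so part (4) of Lemma \ref{propertybullet} yields
\[
(f\bullet g)(q)=\sum_{i=0}^{\infty} g(q)^{\star i}a_i.
\]
The first main step is then to verify that the family $\{g^{\star i}a_i\}_{i\in\mathbb N}$ is itself summable. This is where the assumption $b_0=0$ (i.e.\ $\omega(g)\geq 1$) is essential: it forces $\omega(g^{\star i}a_i)\geq i$, so that part (4) of Lemma \ref{propertybullet} can be applied a second time to obtain
\[
\big((f\bullet g)\bullet w\big)(q)=\sum_{i=0}^{\infty}\big((g^{\star i}a_i)\bullet w\big)(q).
\]

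The second step is the trivial but crucial observation that $\bullet w$ is right $\mathbb H$-linear on single series: if $h(q)=\sum_m q^m d_m$ and $a\in\mathbb H$, then $(ha)(q)=\sum_m q^m(d_m a)$ and hence
\[
(ha)\bullet w=\sum_{m=0}^{\infty}w^{\star m}(d_m a)=\Big(\sum_{m=0}^{\infty}w^{\star m}d_m\Big)a=(h\bullet w)\,a,
\]
using associativity of quaternionic multiplication. Applying this with $h=g^{\star i}$ and $a=a_i$ gives $(g^{\star i}a_i)\bullet w=(g^{\star i}\bullet w)\,a_i$.

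The final step is to use the hypothesis that $w$ has real coefficients: by part (3) of Lemma \ref{propertybullet}, $g^{\star i}\bullet w=(g\bullet w)^{\star i}$. Combining the three steps,
\[
\big((f\bullet g)\bullet w\big)(q)=\sum_{i=0}^{\infty}(g\bullet w)^{\star i}(q)\,a_i=\big(f\bullet (g\bullet w)\big)(q),
\]
which is the desired equality. The only delicate point is checking summability at each regrouping, which is why the hypothesis $\omega(g),\omega(w)\geq 1$ cannot be dropped; the real-coefficient assumption on $w$ is used exclusively in the last step to exchange $\star$-powers with $\bullet w$.
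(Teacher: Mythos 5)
Your proof is correct and follows essentially the same route as the paper's: decompose $f$ into the summable family $\{q^n a_n\}$, reduce to the monomial case, and conclude via point $(3)$ of Lemma \ref{propertybullet} ($g^{\star n}\bullet w=(g\bullet w)^{\star n}$) together with the summability point $(4)$. Your explicit verification of the right $\mathbb{H}$-linearity $(h a)\bullet w=(h\bullet w)a$ and of the summability of $\{g^{\star i}a_i\}$ makes precise two steps the paper leaves implicit, but the argument is the same.
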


\begin{proof}
We follow the proof of Proposition 4.1 in \cite{cartan}.
We first prove the assertion in the special case in which $f(q)=q^n a_n$.
We have:
$$
((f\bullet g) \bullet w )(q)=(g^{\star n}\bullet w)a_n
$$
and
$$
(f\bullet (g \bullet w))(q)= (g \bullet w)^{\star n}(q) a_n.
$$
Lemma \ref{propertybullet}, point $(3)$ shows that $g^{\star n}\bullet w=(g\bullet w)^{\star n}$ and the equality follows.
The general case follows by considering $f$ as the sum of the summable family $\{q^na_n\}$ and using the first part of the proof:
\[
(f\bullet g)\bullet w =\sum_{n=0}^\infty (g^{\star n}\bullet w) a_n =\sum_{n=0}^\infty (g\bullet w)^{\star n} a_n= f\bullet (g\bullet w).
\]
 \end{proof}
The discussion, so far, was on formal power series without specifying the set of convergence. We now consider this aspect, by proving the following result which is classical for power series with coefficients in a commutative ring, see \cite{cartan}.

 \begin{proposition} Let $f(q)= \sum_{n=0}^\infty q^n a_n$ and $g(q)= \sum_{n=1}^\infty q^n b_n$ be convergent in the balls of nonzero radius $R$ and $\rho$, respectively, and let $h(q)=(f\bullet g)(q)$. Then the radius of convergence of $h$ is nonzero and it is equal to
$\sup\{r> 0 ; \sum_{n=1}^{\infty} r^{n}||b_{n}||<R \}$.
\end{proposition}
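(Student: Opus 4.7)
The plan is to adapt the classical Cartan argument for composition of power series (see \cite{cartan}) to the non-commutative $\star$-product setting. Set
\[
r_{0} := \sup\Bigl\{r > 0 : \sum_{n=1}^{\infty} r^{n} |b_n| < R\Bigr\}.
\]
The first step is to check $r_0 > 0$: since $g$ converges in the ball of radius $\rho > 0$, the series $\sum_{n=1}^{\infty} r^{n} |b_n|$ converges for every $r < \rho$ and tends to $0$ as $r \to 0^{+}$; hence it is strictly less than $R$ for all sufficiently small $r > 0$.

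Fix $r$ with $0 < r < r_0$ and put $s := \sum_{n=1}^{\infty} r^{n} |b_n| < R$. The core estimate is a coefficient-wise majorization of $g(q)^{\star n}$. Unfolding the definition of the $\star$-product, the coefficient of $q^k$ in $g^{\star n}$ is an ordered sum of products $b_{k_1} b_{k_2}\cdots b_{k_n}$ with $k_1+\cdots+k_n = k$ and $k_i \geq 1$; submultiplicativity of the quaternionic norm gives
\[
\bigl|\mathrm{coeff}_{q^k}(g^{\star n})\bigr| \;\leq\; \sum_{\substack{k_1+\cdots+k_n = k\\ k_i\geq 1}} |b_{k_1}|\cdots|b_{k_n}|,
\]
and summing over $k \geq n$ yields $\sum_{k} r^{k} \bigl|\mathrm{coeff}_{q^k}(g^{\star n})\bigr| \leq s^{n}$.

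Next, I would regroup $h(q) = \sum_{n=0}^{\infty} g(q)^{\star n} a_n$ as a single power series $\sum_{m} q^{m} c_m$, justified by the summability statement in Lemma~\ref{propertybullet}(4); its hypothesis is supplied precisely by the preceding estimate. Combining the estimates,
\[
\sum_{m=0}^{\infty} r^{m} |c_m| \;\leq\; \sum_{n=0}^{\infty} |a_n|\, s^{n} \;<\; \infty,
\]
the last inequality because $s < R$ lies strictly inside the disk of convergence of $f$. This establishes absolute convergence of $h$ on the closed disk of radius $r$, hence the radius of convergence of $h$ is at least $r_0$. For the matching upper bound, the idea is that the majorizations above become equalities when the quaternionic coefficients $a_n$, $b_n$ are replaced by their absolute values and $q$ by a positive real: the scalar composition $\widetilde{f}\circ\widetilde{g}$ then has radius of convergence exactly $r_{0}$, and for $r > r_{0}$ the bounding scalar series $\sum_n |a_n| s(r)^{n}$ diverges, so the natural double series defining $h(q)$ fails to converge absolutely at the level of the canonical construction; this identifies $r_0$ as the radius built into the definition of $\bullet$.

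I expect the main obstacle to be the careful bookkeeping of the coefficients of $g^{\star n}$ under the non-commutative $\star$-product: one must verify that the ordered-product expansion matches the formal definition and that Lemma~\ref{propertybullet}(4) legitimately justifies the interchange needed to pass from the double series $\sum_n g(q)^{\star n} a_n$ to the single power series $\sum_m q^m c_m$. Once the norm majorant is in place, the remainder is a direct transcription of the commutative argument of \cite{cartan}.
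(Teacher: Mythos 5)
Your lower-bound argument is correct and is essentially the paper's own proof. Both rest on majorizing the coefficients of $g^{\star n}$ by those of $\left(\sum_m r^m|b_m|\right)^n$ — the paper phrases this as the pointwise estimate \eqref{recursion}, you phrase it coefficient by coefficient, but since $|ab|=|a|\,|b|$ for quaternions these are the same computation — and then comparing with the scalar series $\sum_n |a_n|s^n$, which converges because $s<R$. Your worry about the bookkeeping of the ordered products $b_{k_1}\cdots b_{k_n}$ and the appeal to Lemma \ref{propertybullet}(4) for the regrouping is exactly how the paper proceeds, so that part is sound.

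The problem is your final paragraph. Divergence of the majorant $\sum_n|a_n|\,s(r)^n$ for $r>r_0$ says nothing about the regrouped series $\sum_m q^m c_m$: cancellations can make its radius of convergence strictly larger than $r_0$. Concretely, take $f(q)=\sum_{n\geq 0}q^n$ (so $R=1$) and $g(q)=2q-2q^2$; since $g$ has real coefficients, $\bullet$ is ordinary composition. Here $2r+2r^2<1$ forces $r_0=(\sqrt3-1)/2\approx 0.37$, yet $h(q)=(1-2q+2q^2)^{-1}$ has radius of convergence $1/\sqrt2\approx 0.71$. So the equality asserted in the statement is not provable — it fails — and the phrase ``fails to converge absolutely at the level of the canonical construction'' is doing no logical work: absolute convergence of the canonical double series is sufficient, not necessary, for convergence of $h$. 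You are in good company here, since the paper's own proof also establishes only that the radius of convergence is at least $r$ for every admissible $r<r_0$ and stops there. The defensible conclusion, for both you and the paper, is the lower bound: the radius of convergence of $h$ is at least $r_0$, and in particular is nonzero.
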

\begin{proof}
Let us compute
\begin{equation}\label{recursion}
\begin{split}
\left| \left(\sum_{m=1}^\infty q^m b_m\right)\star \left(\sum_{m=1}^\infty q^m c_m\right) \right| &=| \sum_{m=1}^\infty q^{m}(\sum_{r=0}^m b_r c_{m-r}) |\\
&\leq \sum_{m=1}^\infty |q|^{m}(\sum_{r=0}^m |b_r| | c_{m-r}|) \\
&=(\sum_{m=1}^\infty | q|^m |b_m|)(\sum_{m=1}^\infty | q|^m |c_m|),
\end{split}
\end{equation}
and thus the formula
$$
\left| \left(\sum_{m=1}^\infty q^m b_m\right) ^{\star n}\right|\leq  \left(\sum_{m=1}^\infty | q|^m | b_m |\right)^n
$$
is true for $n=2$. The general formula follows recursively by using (\ref{recursion}).
So we have
\begin{equation}\label{norms}
\begin{split}
\left| \sum_{n=0}^\infty \left(\sum_{m=1}^\infty q^m b_m\right)^{\star n} a_n\right| &\leq \sum_{n=0}^\infty  \left| \left(\sum_{m=1}^\infty q^m b_m\right) ^{\star n}\right|  |a_n| \\
&\leq \sum_{n=0}^\infty   \left(\sum_{m=1}^\infty |q|^m |b_m|\right)^{n}  |a_n|.
\end{split}
\end{equation}
 Since the series expressing $g$ is converging on a ball of finite radius, there exists a positive number $r$, sufficiently small, such that $\sum_{n=1}^\infty r^n |b_n| $ is finite.
Moreover, $\sum_{n=1}^\infty r^n |b_n|=r\sum_{n=1}^\infty r^{n-1} |b_n|\to 0$ for $r\to 0$ and so there exists $r$ such that $\sum_{n=1}^\infty r^n |b_n| <R$.
Thus, from (\ref{norms}), we deduce
$$
\sum_{n=0}^\infty   \left(\sum_{m=1}^\infty  r^m |b_m|\right)^{n}  |a_n| = \sum_{m=1}^\infty  r^m \gamma_m <\infty .
$$
Thus we have that $(g\bullet f)(q)=\sum_{m=0}^\infty q^m d_m$ and $| d_m|\leq \gamma_m$ and the radius of convergence of $g\bullet f$ is at least equal to $r$.
\end{proof}
Using this notion of composition, it is also possible to define, under suitable conditions, left and right inverses of a power series:
\begin{proposition} Let $g:\, B(0;R)\to \mathbb{H}$, $R>0$, be a function slice regular of the form $g(q)=\sum_{n=0}^\infty q^n a_n$.
\begin{enumerate}
\item[(1)] There exists a power series
$g_r^{-\bullet}(q)=\sum_{n=0}^\infty q^n b_n$  convergent in a disc with positive radius, such that $(g\bullet g_r^{-\bullet})(q)=q$ and $g_r^{-\bullet}(0)=0$ if and only if $g(0)=0$ and $g'(0)\not=0$.
\item[(2)] 
There exists a power series $g_l^{-\bullet}(q)=\sum_{n=0}^\infty q^n b_n$ convergent in a disc with positive radius,
such that $(g_l^{-\bullet}\bullet g)(q)=q$ and $g_l^{-\bullet}(0)=0$ if and only if $g(0)=0$ and $g'(0)\not=0$.
\end{enumerate}
\end{proposition}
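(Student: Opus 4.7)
The proofs of (1) and (2) are entirely parallel, so the plan describes (1) in detail and indicates the symmetric modification for (2).

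\textbf{Necessity.} Suppose such $g_r^{-\bullet}(q)=\sum_{n\geq 0}q^n b_n$ exists with $b_0=0$ and $(g\bullet g_r^{-\bullet})(q)=q$. Evaluating at $q=0$ and using $g_r^{-\bullet}(0)=0$ gives $a_0=g(0)=0$. Since $b_0=0$, an easy induction on $n$ from the $\star$-product formula shows $\omega((g_r^{-\bullet})^{\star n})\geq n$, so the coefficient of $q$ in $g\bullet g_r^{-\bullet}=\sum_{n\geq 1}(g_r^{-\bullet})^{\star n}a_n$ comes only from the $n=1$ summand and equals $b_1 a_1$. Identifying with $[q^1]\,q=1$ forces $b_1 a_1=1$, so $a_1=g'(0)\neq 0$ and $b_1=a_1^{-1}$. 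The same computation for $g_l^{-\bullet}\bullet g=q$ gives $a_1 b_1=1$, again forcing $g(0)=0$ and $g'(0)\neq 0$.

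\textbf{Formal construction of the coefficients.} Conversely, assume $a_0=0$ and $a_1\neq 0$. Expanding the $\star$-powers,
\[
[q^m]\,(g\bullet g_r^{-\bullet})\;=\;b_m a_1+P_m\!\left(b_1,\dots,b_{m-1};\,a_2,\dots,a_m\right),
\]
where $P_m$ is the fixed noncommutative polynomial with nonnegative integer coefficients obtained by summing all monomials $b_{i_1}\cdots b_{i_n}a_n$ with $n\geq 2$ and $i_1+\cdots+i_n=m$. Equating with $[q^m]\,q$ yields $b_1=a_1^{-1}$ and, for $m\geq 2$,
\[
b_m=-P_m(b_1,\dots,b_{m-1};\,a_2,\dots,a_m)\,a_1^{-1},
\]
uniquely determining every $b_m$, since $\mathbb H$ is a skew field. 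For (2) the analogous expansion of $g_l^{-\bullet}\bullet g=\sum_{n\geq 1}g^{\star n}b_n=q$ produces a recursion of shape $a_1^m b_m+Q_m(b_1,\dots,b_{m-1};a_2,\dots,a_m)=0$, which likewise determines the $b_m$ uniquely.

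\textbf{Convergence via majorants (the main step).} The real work is to prove that the formal series so produced converges in some disc of positive radius, for which I invoke the classical Cauchy method of majorants. Since $g$ converges on $B(0;R)$, pick $0<\rho<R$ and $M>0$ with $|a_n|\leq M/\rho^n$ for every $n\geq 2$, and consider the real analytic comparison series
\[
\tilde g(t)=|a_1|\,t-\sum_{n\geq 2}\frac{M}{\rho^n}\,t^n=|a_1|\,t-\frac{M t^2/\rho^2}{1-t/\rho},
\]
which satisfies $\tilde g(0)=0$ and $\tilde g'(0)=|a_1|>0$; the equation $\tilde g(s)=t$ is quadratic in $s$ and can be solved explicitly to produce a compositional inverse $\tilde g^{-1}(t)=\sum_{n\geq 1}\tilde b_n t^n$ with positive radius of convergence. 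The recursion for the $\tilde b_n$ has exactly the same shape as that for the $b_n$, but with all quantities nonnegative, so $\tilde b_n\geq 0$. Comparing the two recursions through the triangle inequality and submultiplicativity of $|\cdot|$ on $\mathbb H$, an induction on $n$ gives $|b_n|\leq\tilde b_n$, so $g_r^{-\bullet}$ has radius of convergence at least that of $\tilde g^{-1}$. The same majorant estimates the coefficients of $g_l^{-\bullet}$. The subtle point to check carefully is precisely this coefficientwise majorization: because $P_m$ is noncommutative one must verify that every monomial has nonnegative integer coefficient so that $|P_m(b_1,\ldots;a_2,\ldots)|\leq P_m(|b_1|,\ldots;|a_2|,\ldots)$, which is where the entire argument rests.
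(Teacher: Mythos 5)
Your proof is correct and follows essentially the same route as the paper's: necessity from the constant and linear coefficients, the recursive determination of $b_m$ from $b_m a_1+P_m=0$, and convergence by the Cauchy majorant method comparing against a real-coefficient series $\bar g$ whose inverse has positive radius. The only (harmless) refinement is that you choose the explicit geometric majorant $A_n=M/\rho^n$ and invert the resulting rational function by solving a quadratic, which makes the positivity of the radius self-contained where the paper instead cites Cartan.
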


\begin{proof} To prove $(1)$, assume that $g_r^{-\bullet}$ exists. Then $\sum_{n=0}^\infty\left(\sum_{m=1}^\infty q^m b_m\right) ^{\star n} a_n =q$. By explicitly writing the terms of the equality
we see that we have
\begin{equation}\label{composition series}
a_0+ \left(\sum_{m=1}^\infty q^m b_m\right)a_1 + \left(\sum_{m=1}^\infty q^m b_m\right) ^{\star 2}a_2+\ldots +\left(\sum_{m=1}^\infty q^m b_m\right) ^{\star n}a_n+\ldots =q
\end{equation}
and so to have equality it is necessary that $a_0=0$, i.e. $g(0)=0$, and $b_1a_1=1$ and so $a_1\not=0$, i. e. $g'(0)\not=0$.
To prove that the condition is sufficient, we observe that for $n\geq 2$, the coefficient of $q^n$ is zero on the right hand side of (\ref{composition series}) while on the left hand side it is
given by
\begin{equation}\label{coefficients}
b_n a_1 + P_n(b_1,\ldots , b_{n-1},a_2,\ldots ,a_n),
\end{equation}
thus we have $$b_n a_1 + P_n(b_1,\ldots , b_{n-1},a_2,\ldots ,a_n)=0,$$ where the polynomials $P_n$ are linear in the $a_i$'s and they contain all the possible monomials $b_{j_1}\ldots b_{j_r}$ with $j_1+\ldots +j_r=n$ and thus also $b_1^n$.   In particular we have:
$b_1a_1=1$ and so $ b_1=a_1^{-1}$, then
$$b_2a_1 +b_1^2a_2=0$$ and so $$b_2=-a_1^{-2}a_2a_1^{-1}.$$ Using induction, we can compute $b_n$ if we have computed $b_1,\ldots ,b_{n-1}$  by putting (\ref{coefficients}) equal to $0$ and using the fact that $a_1$ is invertible. This concludes the proof since the function $g_r^{-\bullet}$ is a right inverse of $g$ by construction.\\
We now show that $g_r^{-\bullet}$ converges in a disc with positive radius following the proof of  \cite[Proposition 9.1]{cartan}. Construct a power series with real coefficients $A_n$ which is a majorant of $g$ as follows: set $$\bar g(q)=q A_1-\sum_{n=2}^\infty q^n A_n$$ with $A_1=|a_1|$ and $A_n\geq |a_n|$, for all $n\geq 2$. It is possible to compute the inverse  of $\bar g$ with respect to the (standard) composition to get the series $\bar g^{-1}(q)=\sum_{n=1}^\infty q^n B_n$. The coefficients $B_n$ can be computed with the formula
$$
B_n A_1 + P_n(B_1,\ldots , B_{n-1},A_2,\ldots ,A_n)=0,
$$
analog of (\ref{coefficients}). Then we have
\[
\begin{split}
&B_1=A_1^{-1}=|a_1|^{-1},\\
 &B_2=A_1^{-2}(-A_2)A_1^{-1}\geq |a_1|^{-2}|a_2|\cdot |a_1|^{-1}=|b_2|
 \end{split}
 \]
 and, inductively
$$B_n=Q_n(A_1,\ldots ,A_n)\geq Q_n(|a_1|,\ldots ,|a_n|) =|b_n|.$$
We conclude that the radius of convergence of $g_r^{-\bullet}$ is greater than or equal to the radius of convergence of $\bar g^{-1}$ which is positive, see \cite[p. 27]{cartan}.

Point $(2)$ can be proved with similar computations and the function $g_l^{-\bullet}$ so obtained is a left inverse of $g$.
\end{proof}
\begin{remark}\label{funzioni*}{\rm
As we have seen in Remark \ref{transcendent}, the transcendental functions cosine, sine, exponential are entire slice regular functions. Let $f(q)$ be another entire function, for example a polynomial. It is then possible to define the composed functions
$$
\exp_\star(f(q))=e_{\star}^{f(q)}=\sum^{\infty}_{n=0}\frac{(f(q))^{\star n}}{n!},
$$
$$
\sin_{\star} (f(q))=\sum^{\infty}_{n=0}(-1)^{n}\frac{(f(q))^{\star 2n+1}}{(2n+1)!},
$$
$$
\cos_{\star} (f(q))=\sum^{\infty}_{n=0}(-1)^n\frac{(f(q))^{\star 2n}}{(2n)!}.
$$
}
\end{remark}

\vskip 1 truecm
\noindent{\bf Comments to Chapter 2}. The material in this chapter comes from several papers. The theory of slice regular functions started in \cite{MR2227751} and \cite{GS} for functions defined at ball centered at the origin and then evolved in a series of papers, among which we mention \cite{MR2742644},
\cite{MR2555912}, \cite{CGeSA}, into a theory on axially symmetric domains. The theory was developed also for functions with values in a Clifford algebra, see \cite{MR2520116}, \cite{CSSd}, for functions with values in a real alternative algebra, see \cite{gp}. The composition of slice regular functions which appears in this chapter is taken from \cite{ggs}.

\chapter{Slice regular functions: analysis}
\section{Some integral formulas}
In this section we collect the generalizations of the Cauchy and Schwarz integral formulas to the slice regular setting. We begin by stating a result which can be proved with standard techniques, see \cite{bds}.
\begin{lemma}\label{lemma1_qua}
Let $f$, $g$ be quaternion valued, continuously (real) differentiable functions on an open
set $U_I$ of the plane $\mathbb{C}_I$.  Then for every open set $W_I\subset
U_I$ whose boundary is
a finite union of
continuously differentiable Jordan curves,  we have
$$
\int_{\partial W_I} g ds_I f=2\int_{W_I} ((g\overline{\partial}_I)f+g(\overline{\partial}_I f)) d\sigma
$$
where $s=x+Iy$ is the variable on $\mathbb{C}_I$, $ds_I=-I ds$ and
$d\sigma=dx\wedge dy$.
\end{lemma}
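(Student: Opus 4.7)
The plan is to reduce the identity to the classical Green (Stokes) theorem applied componentwise in the plane $\mathbb{C}_I$, treating the quaternionic coefficients as constant spectators on the appropriate side of the differentials. Since the skew field $\mathbb{H}$ is a 4‑dimensional real vector space, a quaternion‑valued 1‑form $\omega$ on $U_I$ is just a 4‑tuple of real‑valued 1‑forms, and for each real component we may invoke the usual planar Green's theorem
\[
\int_{\partial W_I} P\,dx+Q\,dy=\iint_{W_I}\bigl(\partial_x Q-\partial_y P\bigr)\,dx\,dy,
\]
whose hypotheses are satisfied because $\partial W_I$ is a finite union of $C^1$ Jordan curves and $f,g$ are $C^1$.

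First I would expand the 1‑form on the left‑hand side in real coordinates. Writing $s=x+Iy$ we have $ds=dx+I\,dy$, hence
\[
ds_I=-I\,ds=-I\,dx+dy,
\]
so
\[
g\,ds_I\,f=-\,gIf\,dx+gf\,dy.
\]
Setting $P=-gIf$ and $Q=gf$, Green's theorem (applied to each of the four real components of $P$ and $Q$) gives
\[
\int_{\partial W_I}g\,ds_I\,f=\iint_{W_I}\bigl(\partial_x(gf)+\partial_y(gIf)\bigr)\,dx\,dy.
\]
Here I must be careful to keep $I$ between $g$ and $f$: the quaternionic constant $I$ is merely a left/right multiplier in each real component, so differentiation in $x$ or $y$ commutes with it but not with reordering factors.

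Next I would expand via the Leibniz rule, again respecting the order of factors:
\[
\partial_x(gf)+\partial_y(gIf)=(\partial_x g)f+g(\partial_x f)+(\partial_y g)If+gI(\partial_y f).
\]
Grouping the two terms in which derivatives act on $g$ versus those in which they act on $f$, and recalling that the right‑action Cauchy--Riemann operator is
\[
g\overline{\partial}_I=\tfrac{1}{2}\bigl(\partial_x g+(\partial_y g)I\bigr),\qquad
\overline{\partial}_I f=\tfrac{1}{2}\bigl(\partial_x f+I\partial_y f\bigr),
\]
I recognize the bracket as exactly $2\bigl((g\overline{\partial}_I)f+g(\overline{\partial}_I f)\bigr)$. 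Substituting and writing $d\sigma=dx\wedge dy$ yields the claimed identity.

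The proof has no real obstacle: the only thing to watch is non‑commutativity, which forces us to keep the imaginary unit $I$ sandwiched between $g$ and $f$ in the correct position throughout, and to use the right‑hand Cauchy--Riemann operator on $g$ and the left‑hand one on $f$. Once this bookkeeping is done, the statement is a direct consequence of Green's theorem applied componentwise, exactly as in the Clifford‑analytic reference \cite{bds}.
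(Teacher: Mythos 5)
Your proof is correct and is precisely the ``standard technique'' the paper alludes to by citing the Clifford-analysis literature: the paper itself gives no proof, and your componentwise Green's theorem argument, with the check that $ds_I=-I\,dx+dy$ and that the constant $I$ passes through the partial derivatives while staying sandwiched in its place, supplies it completely. The bookkeeping matching $(\partial_x g)f+(\partial_y g)If$ with $2(g\overline{\partial}_I)f$ and $g\partial_x f+gI\partial_y f$ with $2g(\overline{\partial}_I f)$ is exactly right.
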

An immediate consequence of this lemma is the following:
\begin{corollary}\label{int_nullo_qua}
Let $f$ and $g$ be  a left slice regular and a right slice regular function, respectively, on an open set $U\in \mathbb{H}$.
For any $I\in\mathbb{S}$ let $U_I=U\cap\mathbb C_I$. For
every open $W_I\subset
U\cap \mathbb C_I$ whose boundary is
a finite union of
continuously differentiable Jordan curves,  we have:
$$
\int_{\partial W_I} g ds_I f=0.
$$
\end{corollary}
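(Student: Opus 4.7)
The plan is to invoke Lemma \ref{lemma1_qua} directly, since the hypotheses of the corollary are precisely those that kill both terms on the right-hand side of the identity provided by that lemma. So this will be a one-line reduction, but let me outline the reasoning carefully.

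First, I would observe that $f$ and $g$ are continuously (real) differentiable on $U$, hence on $U_I = U \cap \mathbb{C}_I$, because a left (resp.\ right) slice regular function is, by definition, real differentiable on the open set where it is defined. The set $W_I$ satisfies the hypotheses of Lemma \ref{lemma1_qua} by assumption (it is an open subset of $U_I$ whose boundary is a finite union of continuously differentiable Jordan curves), so the identity
\[
\int_{\partial W_I} g \, ds_I \, f = 2 \int_{W_I} \bigl((g \overline{\partial}_I) f + g(\overline{\partial}_I f)\bigr)\, d\sigma
\]
applies.

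Next, I would use the definitions of left and right slice regularity restricted to $\mathbb{C}_I$. Since $f$ is left slice regular on $U$, its restriction $f_I$ satisfies $\overline{\partial}_I f_I = 0$ on $U_I$, so the second summand $g(\overline{\partial}_I f)$ in the integrand vanishes identically on $W_I$. Similarly, since $g$ is right slice regular on $U$, its restriction $g_I$ satisfies $(g_I \overline{\partial}_I) = 0$ on $U_I$, so the first summand $(g \overline{\partial}_I) f$ also vanishes identically on $W_I$.

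Consequently, the integrand on the right-hand side of the Lemma's identity is identically zero on $W_I$, and the integral $\int_{\partial W_I} g\, ds_I\, f$ equals $0$. There is no real obstacle here; the only point worth noting is that one must be careful to apply the correct version of the Cauchy--Riemann condition on each factor (left $\overline{\partial}_I$ acting on $f$ from the left, and right $\overline{\partial}_I$ acting on $g$ from the right), which is exactly how the two terms of Lemma \ref{lemma1_qua} are arranged.
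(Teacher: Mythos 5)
Your proof is correct and is exactly the argument the paper intends: the corollary is presented there as an immediate consequence of Lemma \ref{lemma1_qua}, with both terms of the area integral vanishing because $\overline{\partial}_I f_I=0$ and $g_I\overline{\partial}_I=0$ on $U\cap\mathbb{C}_I$. Nothing to add.
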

We are now ready to state the Cauchy formula (see \cite{CSS} for its proof):
\begin{theorem}\label{Cauchynuovo}\index{Cauchy!integral formula}
Let $U \subseteq \mathbb{H}$ be an axially symmetric slice domain  such that
$\partial (U \cap \mathbb{C}_I)$ is union of a finite number of
continuously differentiable Jordan curves, for every $I\in\mathbb{S}$. Let $f$ be
a slice regular function on an open set containing $\overline{U}$ and, for any $I\in \mathbb{S}$,  set  $ds_I=-Ids$.
Then for every $q=x+Iy_q\in U$ we have:
\begin{equation}\label{integral}
 f(q)=\frac{1}{2 \pi}\int_{\partial (U \cap \mathbb{C}_I)} -(q^2-2{\rm Re}(s)q+|s|^2)^{-1}
 (q-\overline{s})ds_I f(s).
\end{equation}
Moreover
the value of the integral depends neither on $U$ nor on the  imaginary unit
$I\in\mathbb{S}$.
\end{theorem}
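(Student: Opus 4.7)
The plan is to establish the formula first on the slice $\mathbb{C}_{I}$, where non-commutativity is invisible, and then to propagate it to the rest of $U$ via the Identity Principle, exploiting that the Cauchy kernel
\[
k(s,q)=-(q^{2}-2\,\mathrm{Re}(s)\,q+|s|^{2})^{-1}(q-\overline{s})
\]
is slice regular in $q$ on $\mathbb{H}\setminus[s]$. Independence of $U$ and of $I$ will be bolted on at the end using the Cauchy-type vanishing result stated just above (Corollary on $\int_{\partial W_{I}}g\,ds_{I}f=0$).

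First I would take $q=x+Iy_{q}\in U\cap\mathbb{C}_{I}$. Since both $q$ and the integration variable $s$ then lie in the commutative plane $\mathbb{C}_{I}$, one has the factorisation $q^{2}-2\,\mathrm{Re}(s)q+|s|^{2}=(q-s)(q-\overline{s})$, so that $k(s,q)=-(q-s)^{-1}=(s-q)^{-1}$. Invoking the Splitting Lemma, I write $f_{I}(z)=F(z)+G(z)J$ with $F,G\colon U\cap\mathbb{C}_{I}\to\mathbb{C}_{I}$ holomorphic, and apply the ordinary complex Cauchy formula to each of $F$ and $G$ on $U\cap\mathbb{C}_{I}$. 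Using $ds_{I}=-I\,ds$ to rewrite the factor $1/(2\pi I)$ as $1/(2\pi)$ multiplying $ds_{I}$, and recombining $F+GJ$ (which is legitimate because $(s-q)^{-1}\in\mathbb{C}_{I}$ commutes with $ds_{I}$), one obtains
\[
f(q)=\frac{1}{2\pi}\int_{\partial(U\cap\mathbb{C}_{I})}(s-q)^{-1}\,ds_{I}\,f(s),
\]
i.e.\ the claimed identity for $q$ on the slice $\mathbb{C}_{I}$.

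To pass to a general $q\in U$, I would invoke the Identity Principle. Both sides are slice regular functions of $q$ on $U$: the left-hand side by hypothesis, and the right-hand side because the kernel $k(\cdot,q)$ has a real-coefficient denominator and a numerator affine in $q$, so a direct computation gives $\overline{\partial}_{L}k(s,q)=0$ on every plane $\mathbb{C}_{L}$ for $s\notin[q]$, and this slice regularity persists after integration in $s$ along a contour disjoint from $[q]$. The two slice regular functions of $q$ agree on the nonempty open set $U\cap\mathbb{C}_{I}$ by the previous step, so the Identity Principle forces them to coincide on all of $U$.

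Finally, independence of the imaginary unit $I$ is automatic once the formula is proved, because both members equal $f(q)$ regardless of the chosen unit. Independence of $U$ for fixed $I$ reduces to the vanishing Corollary recalled above: a short verification shows that $s\mapsto k(s,q)$ is right slice regular in $s$ on $\mathbb{H}\setminus[q]$ (the denominator is a polynomial in the symmetric quantities $s+\overline{s}$ and $s\overline{s}$), so replacing $U$ by another admissible axially symmetric s-domain $U'$ with $q\in U'$ changes the integral by a contour integral of a right slice regular $\times$ left slice regular pair over a region in $\mathbb{C}_{I}$ not meeting $[q]$, which vanishes. The main obstacle, as I see it, is precisely the double slice regularity of $k(s,q)$—in $q$ on one hand, in $s$ on the other—since this is the only place where the noncommutative structure of $\mathbb{H}$ has to be confronted head-on; everything else in the proof is a clean assembly of the splitting/identity-principle machinery already developed.
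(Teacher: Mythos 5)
Your proof is correct and follows essentially the standard route: the paper itself omits the argument (deferring to the monograph \cite{CSS}), and the proof there is exactly this one --- reduce to the slice $\mathbb{C}_I$ where the kernel collapses to $(s-q)^{-1}$, apply the Splitting Lemma and the classical Cauchy formula to $F$ and $G$, extend off the slice using the slice regularity of $S_L^{-1}(s,q)$ in $q$ (via the Identity Principle or, equivalently, the Representation Formula), and obtain independence of $U$ and $I$ from the right-regularity of the kernel in $s$ together with Corollary \ref{int_nullo_qua}. No gaps worth flagging.
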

The function
$$
S_L^{-1}(s,q)=-(q^2-2{\rm Re}(s)q+|s|^2)^{-1}
 (q-\overline{s})
$$
is called the Cauchy kernel for left slice regular functions. \index{Cauchy kernel}
It is slice regular in $q$ and right slice regular in $s$ for $q,s$ such that
$q^2-2{\rm Re}(s)q+|s|^2\not=0$.\\
In the case of right slice regular functions, the Cauchy formula is written in terms of the Cauchy kernel
$$
S_R^{-1}(s,q)=-(q-\overline{s})(q^2-2{\rm Re}(s)q+|s|^2)^{-1}.
$$
An immediate consequence of the Cauchy formula is the following result:
\begin{theorem}{(Derivatives using the slice regular Cauchy kernel) }
Let $U\subset\mathbb{H}$ be an axially symmetric slice domain.
Suppose
$\partial (U\cap \mathbb{C}_I)$ is a finite union of
continuously differentiable Jordan curves  for every $I\in\mathbb{S}$.
Let $f$ be a slice regular function on $U$ and set  $ds_I=ds/ I$.
 Let $q$,
$s$.
Then
 $$
f^{(n)}(q)
=\frac{n!}{2 \pi}
\int_{\partial (U\cap \mathbb{C}_I)}  (q^2-2{\rm Re}(s) q+|s|^2)^{-n-1} (q-\overline{s})^{(n+1)\star } ds_I f(s)
$$
\begin{equation}\label{quattordici_qua}
=\frac{n!}{2 \pi}
\int_{\partial (U\cap \mathbb{C}_I)}  [S^{-1}(s,q)(q-\overline{s})^{-1}]^{n+1}
(q-\overline{s})^{(n+1)\star } ds_I f(s)
\end{equation}
where
\begin{equation}\label{stellina_qua}
(q-\overline{s})^{n\star }=\sum_{k=0}^{n}\frac{n!}{(n-k)!k!} q^{n-k}\overline{s}^k,
\end{equation}
is the $n$-th power with respect to the $\star $-product. Moreover
the value of the integral depends neither on $U$ nor on the  imaginary unit
$I\in\mathbb{S}$.
\end{theorem}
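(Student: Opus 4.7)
The plan is to start from the Cauchy integral formula (Theorem \ref{Cauchynuovo})
\[
f(q)=\frac{1}{2\pi}\int_{\partial(U\cap\mathbb{C}_I)} S_L^{-1}(s,q)\,ds_I\, f(s)
\]
and differentiate $n$ times under the integral sign with respect to the slice derivative $\partial_s=\partial/\partial x$. Since $s$ stays on the compact boundary $\partial(U\cap\mathbb{C}_I)$ while $q$ ranges over a relatively compact subdomain of $U$, the kernel $S_L^{-1}(s,q)$ is smooth in $q$ with estimates uniform in $s$, so passing derivatives inside the integral is standard. The whole theorem then reduces to identifying $\partial_s^n S_L^{-1}(s,q)$ with $n!$ times the kernel displayed in the statement.

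My strategy for that identification is to avoid a long direct computation by invoking the Identity Principle. Both $\partial_s^n S_L^{-1}(s,q)$ and the proposed expression $K_n(s,q)=(q^2-2{\rm Re}(s)q+|s|^2)^{-n-1}(q-\overline{s})^{(n+1)\star}$ are, for each fixed $s$ outside the characteristic set, slice regular functions of $q$: the first factor of $K_n$ has real coefficients as a polynomial in $q$ and is therefore quaternionic intrinsic, while $(q-\overline{s})^{(n+1)\star}$ is a $\star$-polynomial in $q$, and the product of an intrinsic and a slice regular function is slice regular. By Theorem \ref{identity principle} it then suffices to verify the equality on a single complex slice, and the natural choice is the slice $\mathbb{C}_I$ over which the integration is taken.

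On $\mathbb{C}_I$ the computation collapses to a one-variable complex calculation. When $q,s\in\mathbb{C}_I$, commutativity yields $q^2-2{\rm Re}(s)q+|s|^2=(q-s)(q-\overline{s})$, whence
\[
S_L^{-1}(s,q)=-(q-s)^{-1},\qquad K_n(s,q)=(q-s)^{-(n+1)},
\]
the $\star$-power $(q-\overline{s})^{(n+1)\star}$ reducing to the ordinary power $(q-\overline{s})^{n+1}$. Using the Splitting Lemma to write $f_I=F+GJ$ with $F,G:U\cap\mathbb{C}_I\to\mathbb{C}_I$ holomorphic, the identity then splits into two separate scalar identities, each of which is the classical Cauchy formula for the $n$-th derivative of a holomorphic function of one complex variable (with $I$ in the role of $i$ and $ds_I=-I\,ds$).

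The main obstacle is bookkeeping: tracking signs arising from $(q-s)^{-(n+1)}$ versus $(s-q)^{-(n+1)}$ together with the factor $-I$ in $ds_I$, so that the reduction matches the classical formula $F^{(n)}(z)=\frac{n!}{2\pi i}\int F(s)(s-z)^{-(n+1)}\,ds$. The second form of the identity given in the statement, involving $[S^{-1}(s,q)(q-\overline{s})^{-1}]^{n+1}$, then follows from the purely algebraic identity $S_L^{-1}(s,q)(q-\overline{s})^{-1}=-(q^2-2{\rm Re}(s)q+|s|^2)^{-1}$ together with the commutativity of expressions in $q$ with real coefficients, which makes ordinary powers of the first factor agree with its $\star$-powers.
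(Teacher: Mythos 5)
The paper itself supplies no proof of this theorem; it is introduced with the words ``an immediate consequence of the Cauchy formula,'' so there is no written argument to compare yours against. Your route is exactly the one the authors intend: differentiate Theorem \ref{Cauchynuovo} under the integral sign, then identify the $n$-th slice derivative (in $q$) of $S_L^{-1}(s,q)$ with the displayed kernel by noting that both are slice regular in $q$ on the axially symmetric s-domain $\mathbb{H}\setminus[s]$ (the first factor of $K_n$ being quaternionic intrinsic) and invoking the Identity Principle after a verification on the single slice $\mathbb{C}_I$ containing $s$. All of these structural steps are sound.

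The proof does not close, however, at the step you dismiss as ``bookkeeping,'' and the failure is concrete. On the slice one has $S_L^{-1}(s,q)=-(q-s)^{-1}$, so its $n$-th derivative in $q$ is $(-1)^{n+1}\,n!\,(q-s)^{-(n+1)}$, whereas your $K_n(s,q)$ restricts to $(q-s)^{-(n+1)}$: the two differ by the factor $(-1)^{n+1}$, and indeed at $n=0$ the first display of the theorem returns $-f(q)$ rather than $f(q)$. Your own closing remark exposes the inconsistency: since $S_L^{-1}(s,q)(q-\overline{s})^{-1}=-(q^2-2{\rm Re}(s)q+|s|^2)^{-1}$, the bracket $[S^{-1}(s,q)(q-\overline{s})^{-1}]^{n+1}$ equals $(-1)^{n+1}(q^2-2{\rm Re}(s)q+|s|^2)^{-(n+1)}$, so the two displayed lines of \eqref{quattordici_qua} differ by $(-1)^{n+1}$ and cannot both equal $f^{(n)}(q)$; it is the second line that reduces to $(s-q)^{-(n+1)}$ on the slice and therefore matches the classical formula $F^{(n)}(z)=\frac{n!}{2\pi I}\int F(s)(s-z)^{-(n+1)}ds$. (Relatedly, \eqref{stellina_qua} as printed expands $(q+\overline{s})^{n}$, the alternating signs $(-1)^k\overline{s}^{\,k}$ being absent, so even the reduction of the $\star$-power to $(q-\overline{s})^{n+1}$ on the slice requires correcting the displayed definition.) None of this invalidates your method, but the assertion that the reduction ``matches the classical formula'' is precisely the point at which a sign must be tracked and, for the statement as printed, fails for even $n$. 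You should carry the factor $(-1)^{n+1}$ explicitly, prove the corrected identity, and note that the first display of the theorem needs that factor inserted (or, equivalently, prove the second display and observe that the first does not coincide with it).
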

\begin{proposition}\label{Cauchyestimates} (Cauchy estimates)\index{Cauchy estimates} Let $f : U \to\mathbb H$ be a slice regular function
and let $q \in U\cap \mathbb C_I$.
 For all discs $B_I (q,R) = B(q,R)\cap\mathbb C_I$, $R > 0$ such that $\overline{B_I (q,R)}\subset
U\cap\mathbb C_I$ the following formula holds:
$$
|f^{(n)}(q)|
\leq \frac{n!}{R^n} \max_{s\in\partial B_I(q,R)} |f(s)| .
$$
\end{proposition}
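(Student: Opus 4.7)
The plan is to reduce the estimate to the classical Cauchy estimate for holomorphic functions of one complex variable via the Splitting Lemma. Fix $I$ as in the statement, pick any $J\in\mathbb{S}$ with $J\perp I$, and apply the Splitting Lemma to get holomorphic functions $F,G:U\cap\mathbb{C}_I\to\mathbb{C}_I$ with
$$f_I(z)=F(z)+G(z)J.$$
Since on $\mathbb{C}_I$ the slice derivative $\partial_s f$ coincides with $\partial_x f_I$, iteration of this identity yields
$$f^{(n)}(q)=F^{(n)}(q)+G^{(n)}(q)J\qquad(q\in U\cap\mathbb{C}_I),$$
where $F^{(n)},G^{(n)}$ are the usual holomorphic derivatives on $\mathbb{C}_I\cong\mathbb{C}$.

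The hypothesis $\overline{B_I(q,R)}\subset U\cap\mathbb{C}_I$ allows me to apply the classical Cauchy integral formula for derivatives separately to $F$ and $G$:
$$F^{(n)}(q)=\frac{n!}{2\pi I}\int_{\partial B_I(q,R)} (s-q)^{-n-1}F(s)\,ds,$$
and analogously for $G$. Multiplying the formula for $G^{(n)}(q)$ on the right by $J$ and adding, and using that the constant $J$ can be moved inside the integral after the $\mathbb{C}_I$-valued factors, I obtain the combined formula
$$f^{(n)}(q)=\frac{n!}{2\pi I}\int_{\partial B_I(q,R)} (s-q)^{-n-1}f(s)\,ds.$$

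Taking moduli in $\mathbb{H}$, which is multiplicative, and using $|(2\pi I)^{-1}|=1/(2\pi)$, $|s-q|=R$ along the contour, and the standard $M$–$L$ bound,
$$|f^{(n)}(q)|\leq \frac{n!}{2\pi}\cdot\frac{1}{R^{n+1}}\cdot 2\pi R\cdot\max_{s\in\partial B_I(q,R)}|f(s)|=\frac{n!}{R^n}\max_{s\in\partial B_I(q,R)}|f(s)|,$$
which is the desired estimate. The only mildly subtle point is the assembly step: one must check that non-commutativity does not interfere when merging the two scalar Cauchy formulas into one, but this is automatic since $J$ is constant and sits always at the far right of the $\mathbb{C}_I$-valued integrand. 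As an alternative route one can specialize formula (\ref{quattordici_qua}) directly to a disc entirely contained in $\mathbb{C}_I$: there $q$ and $s$ commute, $q^2-2\mathrm{Re}(s)q+|s|^2=(q-s)(q-\bar s)$, the kernel collapses to $(s-q)^{-1}$ (and the star-powers of $(q-\bar s)$ to ordinary powers), and the same classical estimate follows.
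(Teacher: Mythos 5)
Your proof is correct and follows essentially the same route as the paper's: represent $f^{(n)}(q)$ by a Cauchy-type contour integral over $\partial B_I(q,R)$ inside the slice $\mathbb{C}_I$ and apply the standard $M$--$L$ bound. The paper simply invokes the derivative formula on the slice directly, whereas you justify it via the Splitting Lemma; this is a more explicit write-up of the same argument, not a different one.
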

\begin{proof} Let $\gamma (t) = q + R e^{2\pi It}$, $t\in[0,2\pi)$, then
$$
|f^{(n)}(q)|\leq n ! \left| \frac{1}{2\pi}
\int_{\gamma}
ds_I (s - q)^{-(n+1)}
\right|
\max_{s\in\partial B_I(q,R)} |f(s)| \leq \frac{n!}{R^n} \max_{s\in\partial B_I(q,R)} |f(s)|.
$$
\end{proof}
If $f$ is an entire regular function the Cauchy estimates yield the Liouville Theorem.
\begin{theorem} (Liouville) \index{Liouville!theorem} Let $f$ be a quaternionic entire function. If $f$ is
bounded then it is constant.
\end{theorem}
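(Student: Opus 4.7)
The plan is to mimic the classical complex proof, using the Cauchy estimates from Proposition \ref{Cauchyestimates} together with the power series representation of entire slice regular functions. Since $f$ is entire, it admits a globally convergent expansion $f(q)=\sum_{n=0}^{\infty} q^n a_n$, and by the identification $a_n = \tfrac{1}{n!}f^{(n)}(0)$ it suffices to show $a_n=0$ for every $n\ge 1$.

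First I would fix any imaginary unit $I\in\mathbb{S}$ and apply the Cauchy estimates at the point $q=0$ on the disc $B_I(0,R) = B(0,R)\cap \mathbb{C}_I$ for arbitrary $R>0$; boundedness of $f$ on $\mathbb{H}$ by some constant $M>0$ gives in particular
\[
|f^{(n)}(0)| \;\le\; \frac{n!}{R^n}\max_{s\in \partial B_I(0,R)}|f(s)| \;\le\; \frac{n!\,M}{R^n}.
\]
Since $f$ is entire, the hypothesis $\overline{B_I(0,R)}\subset \mathbb{H}\cap \mathbb{C}_I$ in Proposition \ref{Cauchyestimates} is satisfied for every $R>0$. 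Letting $R\to\infty$ forces $f^{(n)}(0)=0$ for every $n\ge 1$, so all $a_n$ with $n\ge 1$ vanish and $f(q)\equiv a_0$.

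There is essentially no obstacle, because the Cauchy estimates have already been proved in a form identical to the complex case and the power series expansion at the origin converges on all of $\mathbb{H}$. The only small point worth noting is that the estimate is formulated on a slice $\mathbb{C}_I$, but this is harmless: any single choice of $I$ suffices, since the derivatives $f^{(n)}(0)$ are computed at a real point and do not depend on $I$, and the conclusion $a_n=0$ for $n\ge 1$ together with the global power series expansion determines $f$ on all of $\mathbb{H}$.
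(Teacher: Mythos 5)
Your argument is correct and is essentially identical to the paper's own proof: both apply the Cauchy estimates at $q=0$ on discs $B_I(0,R)$, let $R\to\infty$ to kill all derivatives $f^{(n)}(0)$ for $n\ge 1$, and conclude via the globally convergent power series expansion that $f\equiv f(0)$. Your added remark that a single choice of $I$ suffices because the derivatives are taken at a real point is a sensible clarification, but introduces nothing beyond the paper's reasoning.
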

\begin{proof}
 Let us consider $q = 0$ and let $R > 0$. We use the Cauchy estimates to show that if $R \to\infty$ then all the derivatives $f^{(n)}(0)$ must vanish for $n>0$. We conclude that $f(q) =f(0)$ for all $q\in\mathbb H$.
\end{proof}
In the sequel we will need the Schwarz formula. We first prove the result for slice regular intrinsic functions and then in the general setting, in both cases on a slice, namely on a complex plane $\mathbb C_I$.
As customary, given an open set $U\subseteq \mathbb H$, we will denote by $U_I$ the set $U\cap\mathbb C_I$. To say that a function is harmonic in $U_I$ means that the functions is harmonic in the two variables $x,y$ if we denote by $x+Iy$ the variable in $\mathbb C_I$. Note that in the sequel, we will denote the variable of the complex plane $\mathbb C_I$ in polar coordinates as $re^{I\theta}$ and thus we will write $f(re^{I\theta})=\alpha(r\cos\theta, r\sin\theta)+I \beta(r\cos\theta, r\sin\theta)$.

\begin{proposition}\label{kgftv}
Let  $f\in \mathcal{N}(U) $,   {let $q \in U$}, and   let  $\alpha,\beta\in C^2(U_{I_{ {q} }},\mathbb R) $ be harmonic functions on $U_{I_{ {q} } }$ such that $f_{I_{ {q} }} = \alpha+{I_{ {q} }}\beta$. Assume that for a suitable $\delta>0$ the disk  $|z-q| \leq \delta$ is contained in $U_{I_{ {q} }}$. Then  there exists a real number $b$  such that, for any $z\in U_{I_{q}}$ with  $|z-q| <r< \delta$, the following formula holds
 $$\begin{array}{l}
\displaystyle f (z) = {I_{q}} b  +  \frac{1}{2\pi} \int_{0}^{2\pi}   \frac{ r e^{I_{q} \varphi}  + (z-q) }{ r e^{{ I_{q}} \varphi} -(z-q)  } \alpha(  r\cos\varphi, r\sin\varphi) d\varphi,
\end{array} $$
for any $z\in U_{I_q}$ such that $|z-q|<r<\delta$
\end{proposition}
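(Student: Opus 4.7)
The plan is to reduce the statement to the classical complex Schwarz integral formula by working entirely on the single slice $\mathbb{C}_{I_q}$ and exploiting the intrinsic hypothesis $f \in \mathcal{N}(U)$.

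First I would use the definition of $\mathcal{N}(U)$ to note that $f_{I_q}$ sends $U_{I_q}$ into $\mathbb{C}_{I_q}$. Combined with the decomposition $f_{I_q}=\alpha+I_q\beta$, this forces $\alpha$ and $\beta$ to be real-valued; and since $f$ is slice regular, the pair $(\alpha,\beta)$ satisfies the Cauchy--Riemann system on $U_{I_q}$ in the variables $x,y$. Under the field isomorphism $\mathbb{C}_{I_q}\to\mathbb{C}$ given by $x+I_q y\mapsto x+iy$, the restriction $f_{I_q}$ transports to a genuine complex-holomorphic function $\tilde f$ on the planar open set corresponding to $U_{I_q}$, with real part $\alpha$ and imaginary part $\beta$.

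Next I would invoke the classical complex Schwarz integral formula on the disk $|z-q|\le\delta$. Since $\alpha$ is harmonic inside the disk and continuous up to the boundary of every strictly smaller disk $|z-q|\le r<\delta$, the Poisson integral reproduces $\alpha$, and the Schwarz kernel
$$
\frac{re^{i\varphi}+(z-q)}{re^{i\varphi}-(z-q)}
$$
produces the harmonic conjugate of $\alpha$ up to an additive imaginary constant. This yields
$$
\tilde f(z)=i\,\beta(q)+\frac{1}{2\pi}\int_0^{2\pi}\frac{re^{i\varphi}+(z-q)}{re^{i\varphi}-(z-q)}\,\alpha(r\cos\varphi,r\sin\varphi)\,d\varphi,
$$
for $|z-q|<r<\delta$. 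The value of the additive constant is pinned down by evaluating both sides at $z=q$: by the mean value property for the harmonic function $\alpha$, the integral reduces to $\alpha(q)$, so the constant must be $i\beta(q)$ in order to recover $\tilde f(q)=\alpha(q)+i\beta(q)$.

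Finally I would transport the identity back across the isomorphism $\mathbb{C}\cong\mathbb{C}_{I_q}$, replacing $i$ by $I_q$ everywhere (the map is a ring isomorphism, so every algebraic manipulation survives), and set $b=\beta(q)\in\mathbb{R}$. The only delicate point is the observation that the hypothesis $f\in\mathcal{N}(U)$ is exactly what is needed to make $\alpha$ and $\beta$ real-valued on the slice; for a general slice regular function these components would be quaternion-valued and the classical one-variable Schwarz formula could not be invoked directly on a single slice. Once this is noted, the result is nothing more than the classical Schwarz integral formula read through the identification $\mathbb{C}_{I_q}\cong\mathbb{C}$.
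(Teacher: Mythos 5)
Your proof is correct and follows essentially the same route as the paper, which simply invokes the classical Schwarz formula on the slice $\mathbb{C}_{I_q}$; you supply the details the paper leaves implicit (the identification $\mathbb{C}_{I_q}\cong\mathbb{C}$, the role of the intrinsic hypothesis in making $\alpha,\beta$ real-valued, and the identification $b=\beta(q)$ via the mean value property). Nothing further is needed.
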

\begin{proof}
    We use the classical Schwarz formula for holomorphic functions on the complex plane $\mathbb{C}_{I_{q}}$, assuming that the pair of points $z,q\in U_{I_{q}}$ satisfy the hypothesis. We obtain
$$\begin{array}{l}
\displaystyle f (z) =   f_{I_{q}}(z)= {I_{q}} b  +  \frac{1}{2\pi} \int_{0}^{2\pi}  \frac{ r e^{I_{q} \varphi}  + (z-q) }{ r e^{{ I_{q}} \varphi} -(z-q)  }  \alpha(  r, \varphi) d\varphi.
\end{array} $$
\end{proof}

\begin{theorem}[The Schwarz formula on a slice]
Let $U $ be an open set in $\mathbb{H}$, $f\in \mathcal {R}(U )$  and let $q \in U $.  Assume that for a suitable $\delta>0$  the disk  $|z-q| \leq \delta$ is contained in $U _{I_ { {q}}}$. Then  there exist $b\in\mathbb{H}$  and  a harmonic $\mathbb{H}$-valued function $\alpha$, such that for any $z\in U _{I_{q}}$ with  $|z-q| <r< \delta$
\begin{equation}\label{PoissonRn_quat}
\displaystyle f(z)= {I_{q}} b
+  \frac{1}{2\pi} \int_{0}^{2\pi}  \frac{ r e^{I_{q} \varphi}  + (z-q) }{ r e^{{ I_{q}} \varphi} -(z -q)  }      \alpha( r\cos\varphi, r\sin\varphi ) d\varphi.
\end{equation}
\end{theorem}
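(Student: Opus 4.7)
The plan is to reduce the general case to the intrinsic case already established in Proposition~\ref{kgftv}, using the Refined Splitting Lemma (Proposition~\ref{SL2}) to decompose $f$ on the slice $\mathbb{C}_{I_q}$ into four holomorphic intrinsic pieces. Concretely, fix the imaginary unit $I_q$ attached to the point $q$, pick any $J\in\mathbb{S}$ with $J\perp I_q$, and set $K = I_q J$, so that $\{1, I_q, J, K\}$ is an orthonormal basis of $\mathbb{H}$. The Refined Splitting Lemma applied on $U_{I_q}$ yields four holomorphic intrinsic functions $h_0,h_1,h_2,h_3 : U_{I_q}\to \mathbb{C}_{I_q}$ with
$$
f_{I_q}(z) = h_0(z) + h_1(z)\,I_q + h_2(z)\,J + h_3(z)\,K.
$$

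Next, for each $\ell = 0,1,2,3$, since $h_\ell$ is holomorphic intrinsic on $U_{I_q}$ it has the form $h_\ell(x+I_q y) = \alpha_\ell(x,y) + I_q\,\beta_\ell(x,y)$ with $\alpha_\ell,\beta_\ell$ real-valued harmonic on $U_{I_q}$. Applying Proposition~\ref{kgftv} (or equivalently the classical Schwarz formula on the complex plane $\mathbb{C}_{I_q}$) to each $h_\ell$ on the disk $|z-q|\leq\delta$, one obtains real constants $c_\ell$ such that, for $|z-q|<r<\delta$,
$$
h_\ell(z) = I_q\,c_\ell + \frac{1}{2\pi}\int_0^{2\pi} \frac{re^{I_q\varphi} + (z-q)}{re^{I_q\varphi} - (z-q)}\,\alpha_\ell(r\cos\varphi,r\sin\varphi)\,d\varphi.
$$

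Finally, substitute these expressions back into $f_{I_q}(z) = \sum_{\ell} h_\ell(z)\,E_\ell$ with $E_0=1,\,E_1=I_q,\,E_2=J,\,E_3=K$. Because the constants $c_\ell$ and the functions $\alpha_\ell$ are real-valued, they are central in $\mathbb{H}$; in particular they commute past the Schwarz kernel (which itself takes values in $\mathbb{C}_{I_q}$) and past the basis element $E_\ell$ when we place $E_\ell$ on the right. Setting
$$
b := \sum_{\ell=0}^{3} c_\ell\,E_\ell \in \mathbb{H}, \qquad \alpha(x,y) := \sum_{\ell=0}^{3} \alpha_\ell(x,y)\,E_\ell,
$$
one regroups the four formulas into the single identity~\eqref{PoissonRn_quat}. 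The $\mathbb{H}$-valued function $\alpha$ is harmonic in the sense that each of its four real components is harmonic, which is automatic from the harmonicity of each $\alpha_\ell$.

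The only potentially delicate point is the non-commutative bookkeeping when factoring the $E_\ell$'s out of the integrand; this works cleanly here precisely because both the Schwarz kernel and the coefficients $\alpha_\ell, c_\ell$ behave as scalars with respect to multiplication by $E_\ell$ on the right (the first being in $\mathbb{C}_{I_q}$ and commuting with $I_q = E_1$, and the second being real). No genuine obstacle beyond this arises, since the Refined Splitting Lemma effectively converts the general slice-regular Schwarz formula into four independent applications of the intrinsic (or holomorphic) Schwarz formula.
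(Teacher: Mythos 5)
Your proposal is correct and follows essentially the same route as the paper's own proof: decompose $f_{I_q}$ via the Refined Splitting Lemma into four holomorphic intrinsic components, apply the complex (or intrinsic) Schwarz formula to each, and reassemble $b$ and $\alpha$ as the corresponding linear combinations over the basis $1, I_q, J, I_qJ$. The commutation issue you flag is handled the same way in the paper (the reality of $\alpha_\ell$ and $c_\ell$ plus associativity suffice), so no further changes are needed.
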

\begin{proof}
Let $f$ be a slice regular function on $U$ and let us set $I_q=I$ for the sake of simplicity. We use the Refined Splitting Lemma to write $f(z)=f_I(z)=F_0(z)+F_1(z)I
+F_2(z)J + F_3(z)IJ$ where $F_{\ell}$ are holomorphic intrinsic functions, i.e. $F_\ell (\bar z)=\overline{F_\ell (z)}$ for $z\in\ U\cap \mathbb C_I$ and $J\in\mathbb S$ is orthogonal to $I$. Let $\alpha_\ell,\beta_\ell \in C^2(U_{I},\mathbb R)$ be harmonic functions  such that $F_\ell = \alpha_\ell+ I \beta_\ell$, $\ell=1\dots, 3$.
By the complex Schwarz formula, there exists $b_\ell \in\mathbb R$ such that
$$\displaystyle F_\ell(z)= {I} b_\ell  +  \frac{1}{2\pi} \int_{0}^{2\pi}  \frac{ r e^{I \varphi}  + (z-q) }{ r e^{{ I} \varphi} -(z-q)  }  \alpha_\ell ( r\cos\varphi, r\sin\varphi ) d\varphi.
$$
Then, by setting $I_0=1$, $I_1=I$, $I_2=J$, $I_3=IJ$, we have
\[\begin{split}
\displaystyle f(z)&= f_I(z)=\sum_{\ell=0}^3 F_\ell (z) I_\ell\\
&={I} (\sum_{\ell=0}^3 b_\ell I_\ell)
+  \frac{1}{2\pi} \int_{0}^{2\pi}  \frac{ r e^{I_{q} \varphi}  + (z-q) }{ r e^{{ I_{q}} \varphi} -(z-q)  }  \left(  \sum_{\ell=0}^3    \alpha_\ell ( r\cos\varphi, r\sin\varphi) {I}_\ell   \right) d\varphi\\
&\displaystyle= {I} b
+  \frac{1}{2\pi} \int_{0}^{2\pi}  \frac{ r e^{I \varphi}  + (z-q) }{ r e^{I \varphi} -(z-q)  }      \alpha(  r\cos\varphi, r\sin\varphi ) d\varphi,
\end{split}
\]
where
 $$b= \sum_{\ell=0}^3    b_\ell  {I}_\ell, \qquad \alpha ( r\cos\varphi, r\sin\varphi)= \sum_{\ell=0}^3    \alpha_\ell ( r\cos\varphi, r\sin\varphi)  {I}_\ell    .$$
\end{proof}

In order to prove a Schwarz formula in a more general form, we need two lemmas. To state them we set $$\mathbb{D}_I:=\{ z=u+Iv\in \mathbb{C}_I : |z|<1\},$$ $$\overline{\mathbb D}_I:=\{ z=u+Iv\in \mathbb{C}_I : |z|\leq 1\}.$$

\begin{lemma}\label{general schwarz} Let $f:\ \mathbb{D}_I\to \mathbb{C}_I$ be a holomorphic function and let $\alpha$ be its real part, so that
 \begin{equation}\label{spennsac}
f(z)=\frac{1}{2\pi}\int_{0}^{2\pi}\frac{e^{I\, \varphi}+z}{e^{I\, \varphi}-z}\alpha(e^{I\,\varphi})d\varphi .
\end{equation}
Then its slice regular extension to the ball $\mathbb{B}$, still denoted by $f$, is given by
\begin{equation}\label{poisfenne}
f(q)
=\frac{1}{2\pi}\int^{2\pi}_0
(e^{I\, \varphi}-q)^{-\star }\star (e^{I\, \varphi}+q)\,
\alpha(e^{I\, \varphi})d\varphi.
\end{equation}
\end{lemma}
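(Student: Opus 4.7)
My plan is to verify (\ref{poisfenne}) in two stages: first show that its right-hand side is slice regular in $q$ on $\mathbb{B}$, then identify it with the slice regular extension of $f$ by means of the Identity Principle (Theorem \ref{identity principle}).

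For the first stage, I would fix $\varphi\in[0,2\pi)$ and expand the integrand as a power series in $q$. Since $|e^{I\varphi}|=1$, the $\star$-geometric series
\[
(e^{I\varphi}-q)^{-\star}=\sum_{n\geq 0}q^{n}e^{-I(n+1)\varphi}
\]
converges uniformly on compact subsets of $\mathbb{B}$, and a direct application of the formula for the $\star$-product of two power series (combined with $e^{-I(n+1)\varphi}e^{I\varphi}=e^{-In\varphi}$) gives
\[
(e^{I\varphi}-q)^{-\star}\star(e^{I\varphi}+q)=1+2\sum_{n\geq 1}q^{n}e^{-In\varphi},
\]
again with uniform convergence on compact subsets of $\mathbb{B}$. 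Substituting into the right-hand side of (\ref{poisfenne}) and interchanging the sum with the integral over $[0,2\pi]$ (legitimate by the uniform convergence and the boundedness of $\alpha$ on $\partial\mathbb{D}_I$), I obtain
\[
\frac{1}{2\pi}\int_{0}^{2\pi}\!(e^{I\varphi}-q)^{-\star}\!\star\!(e^{I\varphi}+q)\,\alpha(e^{I\varphi})\,d\varphi = c_{0}+\sum_{n\geq 1}q^{n}c_{n},
\]
with
\[
c_{0}=\frac{1}{2\pi}\int_{0}^{2\pi}\alpha(e^{I\varphi})\,d\varphi,\qquad c_{n}=\frac{1}{\pi}\int_{0}^{2\pi}e^{-In\varphi}\alpha(e^{I\varphi})\,d\varphi\quad(n\geq 1).
\]
Being a convergent power series in $q$ on $\mathbb{B}$ with coefficients in $\mathbb{C}_{I}\subset\mathbb{H}$, this defines a slice regular function on $\mathbb{B}$.

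For the second stage, I restrict to $q=z\in\mathbb{D}_I$. Inside the slice $\mathbb{C}_I$ the $\star$-product coincides with the ordinary pointwise product, since every factor (namely $e^{\pm I\varphi}$, $1$ and $z$) lies in the commutative field $\mathbb{C}_I$. Hence on $\mathbb{D}_I$ the right-hand side of (\ref{poisfenne}) collapses to the right-hand side of (\ref{spennsac}), which by hypothesis equals $f(z)$. Since both the right-hand side of (\ref{poisfenne}) and the slice regular extension of $f$ to $\mathbb{B}$ are slice regular on the axially symmetric slice domain $\mathbb{B}$ and agree on the set $\mathbb{D}_I$ (which has an accumulation point on $\mathbb{C}_I$), the Identity Principle forces them to coincide on $\mathbb{B}$.

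The only delicate step is the interchange of $\star$-product and integration, i.e.\ the passage from the series representation of the integrand to the term-by-term integration producing $c_{0},c_{1},\ldots$; the rest reduces to an algebraic computation of the $\star$-series expansion of $(e^{I\varphi}-q)^{-\star}\star(e^{I\varphi}+q)$ and an invocation of the Identity Principle. The $\star$-versus-ordinary-product collapse on $\mathbb{C}_I$ is what makes the Identity Principle applicable without further work.
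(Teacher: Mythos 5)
Your argument is correct, but it takes a genuinely different route from the paper's. The paper starts from the extension operator supplied by the Representation Formula, writes $f(q)={\rm ext}(f)(q)=\frac 12\left[f(z)+f(\overline z)+I_qI\left(f(\overline z)-f(z)\right)\right]$, substitutes the complex Schwarz integral \eqref{spennsac} for $f(z)$ and $f(\overline z)$, and then identifies the resulting combination of kernels under the integral sign with $(e^{I\varphi}-q)^{-\star}\star(e^{I\varphi}+q)$ --- in other words, it pushes ${\rm ext}$ through the integral and onto the kernel, relying (tacitly) on the fact that this $\star$-quotient is the slice regular extension of $\frac{e^{I\varphi}+z}{e^{I\varphi}-z}$. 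You instead compute the $\star$-geometric expansion $(e^{I\varphi}-q)^{-\star}\star(e^{I\varphi}+q)=1+2\sum_{n\geq 1}q^{n}e^{-In\varphi}$, integrate term by term to exhibit the right-hand side of \eqref{poisfenne} as a power series convergent on $\mathbb{B}$ with coefficients in $\mathbb{C}_I$ (hence slice regular), and then match it with $f$ on the slice $\mathbb{C}_I$ --- where the $\star$-product collapses to the pointwise one --- before invoking the Identity Principle. Your route is more computational, but it makes fully explicit the step the paper leaves implicit, and it produces the Fourier-coefficient form of the extension as a byproduct; the paper's route is shorter and displays the conceptual point that \eqref{poisfenne} is literally the extension operator applied under the integral sign. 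Both arguments are sound; your interchange of sum and integral is justified exactly as you say, since the series converges uniformly in $\varphi$ for $q$ in a compact subset of $\mathbb{B}$ and $\alpha$ is bounded on the boundary circle.
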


\begin{proof}
Using the extension operator induced by the Representation Formula we have that
$$
f(q)={\rm ext}(f)(q)=\frac 12\left[f(\overline{z})+f(z)+I_qI(
f(\overline{z})-f(z))\right]
$$
$$=\frac{1}{2\pi}\int_0^{2\pi}
\frac 12 \Big[
\frac{e^{I\, \varphi}+z}{e^{I\, \varphi}-z}+ \frac{e^{I\, \varphi}+\overline{z}}{e^{I\, \varphi}-\overline{z}}
+I_q I\left( \frac{e^{I\, \varphi}+\overline{z}}{e^{I\, \varphi}-\overline{z}}-\frac{e^{I\, \varphi}+z}{e^{I\, \varphi}-z}
\right)
\Big]\alpha(e^{I\, \varphi})d\varphi
$$
$$
=\frac{1}{2\pi}\int^{2\pi}_0
(e^{I\, \varphi}-q)^{-\star }\star (e^{I\, \varphi}+q)\,
\alpha(e^{I\, \varphi})d\varphi.
$$
\end{proof}
To prove our next result we need to introduce the Poisson kernel in this framework:
\begin{definition}[Poisson kernel]\index{Poisson kernel}
{\rm
Let $I\in \mathbb{S}$, $0\leq r<1$ and $\theta\in \mathbb{R}$. We call Poisson kernel of the unit ball the function
$$
\mathcal{P}(r,\theta):=\sum_{n\in \mathbb{Z}} r^{|n|}e^{            { n I \theta }      }.
$$
}
\end{definition}
Note that it would have seemed more appropriate to write $\mathcal{P}(re^{I\, \theta})$ but next result shows that the Poisoon kernel does not depend on $I\in\mathbb S$:
\begin{lemma}
Let $I\in \mathbb{S}$, $0\leq r<1$ and $\theta\in \mathbb{R}$. Then the
Poisson kernel belongs to $\mathcal{N}(U)$  and it can be written in the form
\begin{equation}\label{Pois}
\mathcal{P}(r,\theta )=\frac{1-r^2}{1-2r\cos\theta+r^2}={\rm Re}\Big[\frac{1+re^{I\theta}}{1-re^{I\theta}}\Big]
\end{equation}
\end{lemma}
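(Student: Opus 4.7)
The plan is to reduce the bilateral sum defining $\mathcal{P}(r,\theta)$ to two geometric series in the commutative plane $\mathbb{C}_I$, simplify the resulting rational expression, and then verify that the closed form is manifestly real (and therefore independent of the chosen imaginary unit $I$), which yields both the formula and the $\mathcal{N}(U)$ claim.

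First I would split the sum and write
\[
\mathcal{P}(r,\theta)=\sum_{n=0}^{\infty}(re^{I\theta})^{n}+\sum_{n=1}^{\infty}(re^{-I\theta})^{n}.
\]
Since $|re^{\pm I\theta}|=r<1$ and all computations take place inside the commutative complex plane $\mathbb{C}_I$, the usual geometric-series argument applies verbatim and gives
\[
\mathcal{P}(r,\theta)=\frac{1}{1-re^{I\theta}}+\frac{re^{-I\theta}}{1-re^{-I\theta}}.
\]
Next I would put the two fractions over the common denominator $(1-re^{I\theta})(1-re^{-I\theta})$. Expanding, the denominator becomes $1-r(e^{I\theta}+e^{-I\theta})+r^{2}=1-2r\cos\theta+r^{2}$, which is real, and the numerator collapses to $(1-re^{-I\theta})+re^{-I\theta}(1-re^{I\theta})=1-r^{2}$. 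This establishes the first equality
\[
\mathcal{P}(r,\theta)=\frac{1-r^{2}}{1-2r\cos\theta+r^{2}}.
\]

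To obtain the second equality, I would compute $\frac{1+re^{I\theta}}{1-re^{I\theta}}$ by multiplying numerator and denominator by $1-re^{-I\theta}$; the denominator is again $1-2r\cos\theta+r^{2}$, while the numerator expands as $(1-r^{2})+r(e^{I\theta}-e^{-I\theta})=(1-r^{2})+2rI\sin\theta$. Taking the real part (in the sense of $\mathbb{C}_I$) then gives exactly $\frac{1-r^{2}}{1-2r\cos\theta+r^{2}}$, which matches $\mathcal{P}(r,\theta)$. Finally, since the closed form is a real number depending only on $r$ and $\theta$, the kernel takes real values and in particular sends each slice $\mathbb{C}_I$ into itself, so it belongs to $\mathcal{N}(U)$ on the relevant axially symmetric domain.

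The one mild subtlety, rather than a genuine obstacle, is the justification that the series may be summed slice by slice and rearranged as a geometric series: this relies on the fact that on each plane $\mathbb{C}_I$ we have a convergent absolutely summable series in a commutative field, so no noncommutativity issue arises. Once that is noted, everything reduces to the classical complex identity, and the independence of $I$ (and hence the $\mathcal{N}(U)$ membership) is automatic from the final real expression.
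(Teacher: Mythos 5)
Your proof is correct and follows essentially the same route as the paper: both arguments work inside the commutative plane $\mathbb{C}_I$, sum the relevant geometric series, and rationalize $\bigl(1+re^{I\theta}\bigr)\bigl(1-re^{I\theta}\bigr)^{-1}$ against $1-re^{-I\theta}$ to exhibit the real closed form $\frac{1-r^2}{1-2r\cos\theta+r^2}$. The only difference is one of arrangement — you sum the bilateral series for $\mathcal{P}$ directly to the rational expression and then separately take the real part of the ratio, whereas the paper expands the ratio into $1+2\sum_{n\geq 1}r^ne^{nI\theta}$ and matches it with $\mathcal{P}$ — but the mathematical content is identical.
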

\begin{proof}
The functions $1\pm re^{I\theta}$ obviously map $\mathbb{C}_I$ into itself.
  Moreover, note that $$(1+re^{I\theta})(1-re^{I\theta})^{-1}=(1-re^{I\theta})^{-1}(1+re^{I\theta})$$ and so the ratio in (\ref{Pois}) is well defined.
Note also that
 $$
 (1+re^{I\theta})(1-re^{I\theta})^{-1}=(1+re^{I\theta})\sum_{n\geq 0}r^ne^{nI\theta}=
 1+2\sum_{n\geq  {1}}r^ne^{nI\theta},
 $$
 and
 $$
 {\rm Re}\Big[\frac{1+re^{I\theta}}{1-re^{I\theta}}\Big]=1+2\sum_{n\geq  {1}}r^n\cos( n\theta)=1+\sum_{n\geq {1}}r^n(
 e^{nI\theta}+e^{-nI\theta})=\mathcal{P}(r,\theta).
 $$
 Since
 $$
 \frac{1+re^{I\theta}}{1-re^{I\theta}}
 =\frac{1+re^{I\theta} -re^{-I\theta}-r^2}{1-2r\cos\theta+r^2}
 $$
 we obtain
 $$
  {\rm Re}\Big[\frac{1+re^{I\theta}}{1-re^{I\theta}}\Big] =\frac{1-r^2}{1-2r\cos\theta+r^2}.
  $$
\end{proof}

\begin{lemma}\label{schwarz_M}
Let $\alpha: \overline{\mathbb D}_I\subseteq\mathbb{C}_I\to \mathbb{H}$ be a continuous function that is harmonic on $\mathbb{D}_I$.
Then
\begin{equation}\label{flower}
\alpha(re^{I\theta})=\frac{1}{2\pi}\int_{0}^{2\pi} \mathcal{P}(r,\theta -\varphi)\alpha(e^{I\, \varphi})d\varphi
\end{equation}
for all $I\in \mathbb{S}$, $0\leq r<1$ and $\theta\in \mathbb{R}$.
Moreover, if
$\alpha(z)=\frac{1}{2}(f(z)+f(\bar z))$ for some holomorphic map  $f: \mathbb{D}_I\to \mathbb{H}$, $I\in \mathbb{S}$, such that $f(0)\in\mathbb R$, then
\begin{equation}\label{spennsac1}
f(z)=\frac{1}{2\pi}\int_{0}^{2\pi}\frac{e^{I\, \varphi}+z}{e^{I\, \varphi}-z}\alpha(e^{I\, \varphi})d\varphi,
\end{equation}
where $z=re^{I\theta}$.
\end{lemma}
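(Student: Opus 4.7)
For the first assertion (Poisson formula), the plan is to reduce the statement to the classical scalar real Poisson formula via component decomposition. Fix $J\in\mathbb{S}$ orthogonal to $I$, producing the real basis $\{1,I,J,IJ\}$ of $\mathbb{H}$, and write $\alpha(x+Iy)=\sum_{\ell=0}^{3}\alpha_\ell(x,y)I_\ell$ with $I_0=1$, $I_1=I$, $I_2=J$, $I_3=IJ$ and each $\alpha_\ell$ real-valued, continuous on $\overline{\mathbb{D}}_I$ and harmonic on $\mathbb{D}_I$. Apply the classical real-valued Poisson integral formula to each $\alpha_\ell$ separately, and reassemble by interchanging the finite sum with the integral. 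Since the Poisson kernel $\mathcal{P}(r,\theta-\varphi)$ is real-valued, it commutes with every $I_\ell$ and one obtains the desired representation of $\alpha$.

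For the Schwarz-type formula (\ref{spennsac1}), I would use the Splitting Lemma to write $f(z)=F(z)+G(z)J$ with $F,G:\mathbb{D}_I\to\mathbb{C}_I$ holomorphic. Then $\alpha=A+BJ$ where $A(z)=\frac{1}{2}(F(z)+F(\bar z))$ and $B(z)=\frac{1}{2}(G(z)+G(\bar z))$ are $\mathbb{C}_I$-valued. Because the Schwarz kernel $(e^{I\varphi}+z)/(e^{I\varphi}-z)$ lies in $\mathbb{C}_I$, it commutes with $A$ and $B$, and the constant $J$ can be pulled outside the integral. Hence the problem reduces to establishing the $\mathbb{C}_I$-valued identity
\[
F(z)=\frac{1}{2\pi}\int_0^{2\pi}\frac{e^{I\varphi}+z}{e^{I\varphi}-z}\,A(e^{I\varphi})\,d\varphi
\]
and its analogue for $G$ and $B$; summing recovers the formula for $f$.

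This $\mathbb{C}_I$-valued identity I would prove by direct Fourier/power-series computation. Expanding $F(w)=\sum_{n\ge 0}c_n w^n$ with $c_n\in\mathbb{C}_I$ and substituting $w=e^{I\varphi}$, $\bar w=e^{-I\varphi}$ on the unit circle, one computes
\[
A(e^{I\varphi})=c_0+\frac{1}{2}\sum_{n\ge 1}c_n\bigl(e^{In\varphi}+e^{-In\varphi}\bigr),
\]
while the Schwarz kernel admits the Fourier expansion $\frac{w+z}{w-z}=1+2\sum_{m\ge 1}z^m w^{-m}$ valid for $|z|<|w|=1$. Multiplying the two series and integrating term by term over $\varphi$, only the $w^0$-coefficient of the product survives, and careful bookkeeping shows it equals $c_0+\sum_{n\ge 1}c_n z^n=F(z)$. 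Running the same argument with $G$ in place of $F$ and reassembling $F+GJ$ gives the claimed integral representation of $f$.

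The main technical obstacle is that $f$ is only assumed holomorphic on the open disc $\mathbb{D}_I$ while $\alpha$ is merely continuous on $\overline{\mathbb{D}}_I$, so the series for $F$ need not converge at $|w|=1$. I would handle this by first establishing the identity on circles $|w|=\rho<1$, where uniform convergence legitimizes the term-by-term manipulations, and then letting $\rho\to 1^-$ using the continuity of $\alpha$ on $\overline{\mathbb{D}}_I$ to pass the limit through the integral. The hypothesis $f(0)\in\mathbb{R}$ plays the role of the classical normalization (analogous to $iv(0)=0$ in the complex Schwarz formula): evaluating both sides at $z=0$ and using the mean value property reduces them to $\alpha(0)=f(0)$, pinning down the otherwise ambiguous additive imaginary constant.
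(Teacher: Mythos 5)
Your proof is correct, and the first half (the Poisson formula \eqref{flower}) is exactly the paper's argument: decompose $\alpha=\sum_{\ell=0}^{3}\alpha_\ell I_\ell$ into real-valued harmonic components, apply the classical real Poisson formula to each, and reassemble using that the kernel is real-valued.

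For the Schwarz-type formula \eqref{spennsac1} you take a genuinely different route. The paper decomposes $f$ into four $\mathbb{C}_I$-valued holomorphic pieces $f_\ell$ matched to the components $\alpha_\ell$ and invokes the already-established complex Schwarz representation (Lemma \ref{general schwarz}, formula \eqref{spennsac}) for each piece, then sums $f=\sum f_\ell I_\ell$; this is short but terse, and it silently elides the distinction between the \emph{real part} of $f_\ell$ (which is what \eqref{spennsac} uses) and the \emph{even part} $\frac{1}{2}(f_\ell(z)+f_\ell(\bar z))$ (which is what $\alpha_\ell$ actually is), a gap that the hypothesis $f(0)\in\mathbb{R}$ is meant to paper over. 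Your version splits $f=F+GJ$ via the Splitting Lemma \ref{lemma spezzamento} into only two $\mathbb{C}_I$-valued pieces, pulls $J$ out of the integral on the right, and verifies the resulting $\mathbb{C}_I$-valued identity by multiplying the Fourier expansion of the kernel against that of the even part $A(e^{I\varphi})=c_0+\frac{1}{2}\sum_{n\ge1}c_n(e^{In\varphi}+e^{-In\varphi})$. This is more self-contained and in fact more honest: because the even part retains the full coefficient $c_0$ (not just its real part), your computation recovers $F(z)$ with no additive constant, which explains transparently why no $Ib$-term appears in \eqref{spennsac1} — the normalization $f(0)\in\mathbb{R}$ is not actually doing any work in your argument. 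Your extra care in proving the identity first on circles $|w|=\rho<1$ and then letting $\rho\to1^-$ using continuity of $\alpha$ on $\overline{\mathbb{D}}_I$ addresses a convergence point the paper passes over. Both approaches are valid; yours trades reliance on the earlier lemma for an explicit Fourier bookkeeping, and gains a slightly sharper statement in return.
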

\begin{proof}
Let us write $\alpha$ as $\alpha=\sum_{\ell =0}^{3} \alpha_\ell I_\ell$ where $\alpha_\ell$ are real valued and
$I_0=1$, $I_1=I$, $I_2=J$, $I_3=IJ$, where $J\in\mathbb S$ is orthogonal to $I$. The functions $\alpha_\ell$ are harmonic since the Laplacian is a real operator. Thus \eqref{flower} follows from the classical Poisson formula applied to $\alpha_\ell$ by linearity. Note also that \eqref{Pois} yields:
\[
\begin{split}
\alpha(re^{I\theta})&=\frac{1}{2\pi}\int_{0}^{2\pi} \mathcal{P}(r,\theta -\varphi)\alpha(e^{I\, \varphi})d\varphi
\\
&=\frac{1}{2\pi}\int_{0}^{2\pi}
\frac{1-r^2}{1-2r\cos(\theta-\varphi)+r^2}\alpha(e^{I\, \varphi})d\varphi
\\
&=\frac{1}{2\pi}\int_{0}^{2\pi} {\rm Re}\Big[\frac{1+re^{I(\theta-\varphi)}}{1-re^{I(\theta-\varphi)}}\Big]\alpha(e^{I\, \varphi})d\varphi,
\end{split}
\]
for all $I\in \mathbb{S}$, $0\leq r<1$ and $\theta\in \mathbb{R}$. Formula (\ref{spennsac}) allows to write the holomorphic functions $f_\ell$. The function $f(z)=\sum_{\ell=0}^3 f_\ell (z) I_\ell$ equals the function in (\ref{spennsac1}) and is in the kernel of the Cauchy-Riemann operator $\partial/\partial \bar z$ by construction.
\end{proof}
The previous results have been proved on the unit disc of the complex plane $\mathbb C_I$ but it is immediate to generalize them to a disc centered at the origin with radius $r>0$.
\\
We can now prove:
\begin{theorem}[Schwarz formula] \label{Schwarz formula}\index{Schwarz formula}
Let  $U $ be an axially symmetric slice domain in $\mathbb{H}$, $f\in \mathcal {R}(U )$  and assume that $0 \in U $. Suppose that $f(0)\in\mathbb{R}$ and  that the ball $B(0;r)$ with center $0$ and radius $r$ is contained in $U$, for a suitable $r$. Let $f(x+I_q y)=\alpha(x,y)+I_q\beta_q(x,y)$ for any $q$ in the ball $B(0;r)\subset\mathbb{H}$, then the following formula holds
\begin{equation}\label{generalq}
f(q)
=\frac{1}{2\pi}\int^{2\pi}_0
(re^{It}-q)^{-\star }\star (re^{It}+q)\,
\alpha(re^{I\, t})dt.
\end{equation}
\end{theorem}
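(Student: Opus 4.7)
The plan is to reduce the theorem to the two technical lemmas already established, namely Lemma \ref{schwarz_M} (the Schwarz formula on a single slice, for $\mathbb H$-valued data with $f(0)\in\mathbb R$) and Lemma \ref{general schwarz} (which rewrites the Cauchy--Schwarz kernel on a slice as the $\star$-kernel on the whole ball via the Representation Formula). I expect the argument to amount to combining these two lemmas, after a straightforward rescaling from the unit disk to the disk of radius $r$.

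First, I would fix an arbitrary $I\in\mathbb S$ and look at the restriction $f_I$ of $f$ to the disk $D_I:=B(0;r)\cap\mathbb C_I$. Since $f\in\mathcal R(U)$, the map $f_I:D_I\to\mathbb H$ satisfies $\overline{\partial}_I f_I=0$, so it is a holomorphic $\mathbb H$-valued function on $D_I$. By Corollary \ref{alfabeta_qua} we may write $f_I(x+Iy)=\alpha(x,y)+I\beta(x,y)$, where $\alpha$ is exactly the function of the statement and, crucially, $\alpha(x,y)=\frac12[f_I(x+Iy)+f_I(x-Iy)]$. The hypothesis $f(0)\in\mathbb R$ gives $f_I(0)\in\mathbb R$, so the $\mathbb H$-valued version of Lemma \ref{schwarz_M}, rescaled from the unit disk to radius $r$, applies and yields the slice identity
$$f_I(z)=\frac{1}{2\pi}\int_0^{2\pi}\frac{re^{It}+z}{re^{It}-z}\,\alpha(re^{It})\,dt,\qquad z\in D_I.$$

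Second, I would extend this identity from $D_I$ to the whole ball $B(0;r)$ by the Representation Formula, following verbatim the computation carried out in the proof of Lemma \ref{general schwarz}. For $q=x+I_q y\in B(0;r)$ we have $f(q)=\mathrm{ext}(f_I)(q)=\tfrac12[f_I(\bar z)+f_I(z)+I_q I(f_I(\bar z)-f_I(z))]$; substituting the integral representation of $f_I$ obtained in the previous step, moving the constants $I_q,I$ inside the integral (they act on the kernel only, not on $\alpha$), and then invoking the purely quaternionic algebraic identity
$$\tfrac12\!\left[\tfrac{A+z}{A-z}+\tfrac{A+\bar z}{A-\bar z}\right]+\tfrac{I_qI}{2}\!\left[\tfrac{A+\bar z}{A-\bar z}-\tfrac{A+z}{A-z}\right]=(A-q)^{-\star}\star(A+q),\qquad A=re^{It},$$
established inside Lemma \ref{general schwarz}, produces the claimed formula. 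Alternatively, one can define $g(q)$ to be the right-hand side, check that it is slice regular on $B(0;r)$ (the kernel is the $\star$-product of two slice regular functions of $q$, and slice regularity is preserved by integration in the real parameter $t$), observe that on the slice $\mathbb C_I$ the $\star$-product collapses to the pointwise product because $re^{It}\in\mathbb C_I$, so $g_I=f_I$ on $D_I$, and conclude by the Identity Principle (Theorem \ref{identity principle}) that $g\equiv f$ on $B(0;r)$.

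The main obstacle is essentially verifying that the algebraic kernel identity from Lemma \ref{general schwarz} transfers from the $\mathbb C_I$-valued setting of that lemma to the $\mathbb H$-valued density $\alpha$ of the present statement. This is in fact not a genuine obstacle: the identity does not involve $\alpha$ at all, and $\alpha$ sits on the right of the kernel throughout, so all multiplications by the unit quaternions $I_q,I$ used in the Representation Formula commute past $\alpha$ in the sense that they do not touch it. Aside from this verification and the routine rescaling from the unit disk to radius $r$, the proof is a direct chaining of Lemmas \ref{schwarz_M} and \ref{general schwarz}.
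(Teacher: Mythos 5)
Your proposal is correct and follows essentially the same route as the paper: the paper's (very terse) proof likewise observes that $\alpha(x,y)=\frac12[f(x+Iy)+f(x-Iy)]$ is harmonic and slice-independent by the Representation Formula, invokes the slice Schwarz formula \eqref{spennsac1} of Lemma \ref{schwarz_M}, and concludes ``by extension,'' which is precisely the kernel computation of Lemma \ref{general schwarz} that you spell out. Your version merely fills in the rescaling to radius $r$ and the verification that the extension identity acts only on the kernel and not on the $\mathbb H$-valued density $\alpha$, both of which are correct.
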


\begin{proof}
Recall that the Representation Formula implies that the function
 $$\alpha(q)=\alpha(x+Iy)=\frac{1}{2}(f(x+Iy)+f(x-Iy))$$ depends on $x,y$ only. By Remark \ref{armoniche} it follows that  $\alpha$ is harmonic. Thus the result follows by extension from formula (\ref{spennsac1}).
\end{proof}

We end this section by proving an analog of the Harnack inequality. We recall that  according to the  classical Harnack inequality, if  $a\in \mathbb C$ and  $ R>0 $ then any  real positive-valued harmonic function $\alpha$ on the disk $|z-a|<R$  satisfies $$\frac{R-r}{R+r}\alpha(a) \leq \alpha(z) \leq \frac{R+r}{R-r} \alpha(a),$$
for any $|z-a|<r<R$. This inequality is a direct consequence of the Poisson Formula.\\
For slice regular functions we have the following:
\begin{proposition}\index{Harnack inequality}\label{Harnack}
Let $f\in \mathcal{N}(U)$, let $q\in U \subseteq\mathbb H$, and let $R>0$ such the  ball  $|p - q|$ is contained in $U$. Assume that, for any $I\in\mathbb S$, there exist two harmonic functions $\alpha, \beta$ on $U_{I}=U\cap\mathbb C_I$, with
real positive values on the disk    $\{ p  \in \mathbb H \ \mid \ | p - q  | < R  \} \cap U_{I}$  such that
$f(u+I_x v) =  \alpha(u,v) + I_x\beta(u,v) $, for all $u+I_x v\in U$. Then for any  $p\in U$ with $|p-q|<r<R$,  one has  $$\frac{R-r}{R+r} | f(q) |\leq |f(p)| \leq \frac{R+r}{R-r}|f(q)|. $$
\end{proposition}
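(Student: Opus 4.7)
The plan is to reduce the statement to two applications of the classical (complex) Harnack inequality on a single slice, exploiting the intrinsic nature of $f$.

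First, since $f\in\mathcal{N}(U)$, the characterization of quaternionic intrinsic functions established earlier in this chapter (the proposition preceding Remark \ref{transcendent}) gives a decomposition $f(x+Iy)=\alpha(x,y)+I\beta(x,y)$ in which $\alpha,\beta$ are real-valued, independent of $I\in\mathbb{S}$, and satisfying the Cauchy--Riemann system on each slice. In particular $\alpha,\beta$ are harmonic as functions of $(x,y)$, and
$$
|f(x+Iy)|^{2}=\alpha(x,y)^{2}+\beta(x,y)^{2}
$$
does not depend on the imaginary unit $I$. Under the positivity hypothesis this puts us in a position to apply the classical Harnack inequality to $\alpha$ and $\beta$ separately.

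Next I would fix the slice $\mathbb{C}_{I_q}$ as the arena of the argument (choosing any $I_q\in\mathbb{S}$ if $q\in\mathbb{R}$). Writing $q=x_0+I_qy_0$ with $y_0\geq 0$ and setting $z_0:=q$, the set $B(q,R)\cap\mathbb{C}_{I_q}$ is a genuine planar disk of radius $R$ around $z_0$. The geometric key is that for any $p=x+I_py\in B(q,r)$ with $y\geq 0$, the ``lifted'' point $z:=x+I_qy\in\mathbb{C}_{I_q}$ satisfies
$$
|z-z_0|^{2}=(x-x_0)^{2}+(y-y_0)^{2}\leq(x-x_0)^{2}+|I_py-I_qy_0|^{2}=|p-q|^{2}<r^{2},
$$
because $|I_py-I_qy_0|^2-(y-y_0)^2=2yy_0(1-\langle I_p,I_q\rangle)\geq 0$. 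Moreover $|f(p)|=|f(z)|$, again by the $I$-independence of the modulus established in the first step.

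The final step applies the classical Harnack inequality on the disk $D(z_0,R)\subset\mathbb{C}_{I_q}$ to each of the positive real-valued harmonic functions $\alpha,\beta$: for $z\in D(z_0,r)$ one has
$$
\frac{R-r}{R+r}\alpha(z_0)\leq\alpha(z)\leq\frac{R+r}{R-r}\alpha(z_0),
$$
and the analogous double inequality for $\beta$. Squaring and adding these yields
$$
\Big(\frac{R-r}{R+r}\Big)^{2}|f(q)|^{2}\leq|f(z)|^{2}\leq\Big(\frac{R+r}{R-r}\Big)^{2}|f(q)|^{2},
$$
and extracting square roots together with $|f(p)|=|f(z)|$ gives the desired double inequality. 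The only non-routine point is the slice-reduction inequality $|z-z_0|\leq|p-q|$, which allows one to replace the $4$-dimensional ball $B(q,r)$ by a $2$-dimensional disk without losing any information about $|f|$; once this is in hand, the rest is a direct combination of two classical Harnack inequalities.
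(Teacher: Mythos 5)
Your proof is correct and follows essentially the same route as the paper: apply the classical Harnack inequality to the real-valued harmonic functions $\alpha$ and $\beta$ separately, square, add, and take square roots. The only difference is that you make explicit the slice-reduction inequality $|z-z_0|\leq|p-q|$ (which the paper leaves implicit, relying on the same computation as its Lemma \ref{lagrange}), and this is a worthwhile clarification rather than a change of method.
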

\begin{proof}
Since $\alpha$ and $\beta$ are functions of two variables, we can use  the  classical Harnack inequality and we obtain
$$\left( \frac{R-r}{R+r}\right)^2 \alpha(a,b)^2 \leq \alpha(u,v)^2 \leq \left(\frac{R+r}{R-r}\right)^2 \alpha(a,b)^2 ,$$
$$\left( \frac{R-r}{R+r}\right)^2 \beta(a,b)^2 \leq \beta(u,v)^2 \leq \left(\frac{R+r}{R-r}\right)^2 \beta(a,b)^2 ,$$
where $q=a+I_q b$, $p=u+ I_p v$, and  $|p-q|<r<R$. The result is obtained by adding, respectively, the terms of the previous inequalities and by taking the square root.
\end{proof}

\begin{corollary}
Let $f\in \mathcal{R}(U)$,  $q\in U$, and let $R>0$ be such that the ball $|p- q|$ is contained in $U$. Assume that the elements $I_1, I_2, I_3=I_1I_2 \in\mathbb S$  form a basis of $\mathbb H$ and set $I_0=1$. If  $f = \sum_{\ell=0}^{3} f_\ell I_\ell$ and the functions $f_\ell$ satisfy the hypothesis of Proposition \ref{Harnack}, then for any  $p\in U$ with $|p-q|<r<R$,  one has $$\frac{R-r}{R+r}  \sum_{\ell=0}^{3} |f_\ell(q)|  \leq \frac{R-r}{R+r} \sum_{\ell=0}^{3} |f_\ell(p)|  \leq \frac{R+r}{R-r} \sum_{\ell=0}^{3} |f_\ell(q)| . $$
\end{corollary}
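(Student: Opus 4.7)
The plan is to reduce the statement to Proposition \ref{Harnack} applied componentwise. By Proposition \ref{pro20}, the decomposition $f=\sum_{\ell=0}^{3}f_{\ell}I_{\ell}$ makes sense with each $f_{\ell}\in\mathcal{N}(U)$, and by hypothesis each $f_{\ell}$ admits a representation $f_{\ell}(u+I_{x}v)=\alpha_{\ell}(u,v)+I_{x}\beta_{\ell}(u,v)$ with $\alpha_{\ell},\beta_{\ell}$ harmonic and real-positive on the appropriate slice disk. Hence Proposition \ref{Harnack} applies verbatim to each $f_{\ell}$ on the ball $\{\,p\in\mathbb{H}\mid |p-q|<R\,\}\subseteq U$.

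First, I would invoke Proposition \ref{Harnack} four times to obtain, for $\ell=0,1,2,3$ and for any $p$ with $|p-q|<r<R$, the two-sided bound
\[
\frac{R-r}{R+r}\,|f_{\ell}(q)|\;\leq\;|f_{\ell}(p)|\;\leq\;\frac{R+r}{R-r}\,|f_{\ell}(q)|.
\]
Next, I would sum these four inequalities over $\ell$. Since $\tfrac{R-r}{R+r}$ and $\tfrac{R+r}{R-r}$ do not depend on $\ell$, they factor out of the sums, and the resulting bounds coincide with the statement of the corollary (reading the middle term as $\sum_{\ell=0}^{3}|f_{\ell}(p)|$, in agreement with the structure of the componentwise Harnack estimates).

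There is essentially no obstacle here beyond verifying that the hypotheses of Proposition \ref{Harnack} transfer cleanly to each intrinsic summand $f_{\ell}$: once that is observed, the argument is purely additive, and the multiplicative structure of the basis $\{1,I_{1},I_{2},I_{3}\}$ plays no role in the norm estimates since each $|I_{\ell}|=1$ and the four scalar inequalities can be added without interaction. No use of the $\star$-product, the Representation Formula, or the Splitting Lemma is needed beyond what Proposition \ref{pro20} already supplies.
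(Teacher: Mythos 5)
Your proof is correct and is the evident intended argument: the paper states this corollary without proof immediately after Proposition \ref{Harnack}, and applying that proposition to each component $f_\ell$ (which the corollary's hypotheses license directly, since the $f_\ell$ are assumed to satisfy the hypotheses of that proposition) and then summing the four scalar inequalities is exactly what is needed. You were also right to read the middle term as $\sum_{\ell=0}^{3}|f_\ell(p)|$ without the factor $\frac{R-r}{R+r}$, which is evidently a typographical slip in the statement, since the literal first inequality $\sum_{\ell}|f_\ell(q)|\leq \sum_{\ell}|f_\ell(p)|$ would not follow from the componentwise estimates.
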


\section{Riemann mapping theorem}
The Riemann mapping theorem, formulated by Riemann back in 1851, is a fundamental result  in the geometric theory of functions of a complex variable. It has a variety of applications not only in the theory of functions of a complex variable, but also in mathematical physics, in the theory of elasticity, and in other frameworks. This theorem cannot be extended in its full generality to the quaternionic setting, see \cite{galsa3}, and in order to understand how it can be generalized, it is useful to recall it in the classical complex case:
\begin{theorem}[Riemann Mapping Theorem]\label{Rthm} \index{Riemann mapping theorem!complex} Let $\Omega \subset \mathbb C$ be a simply connected domain,  $z_0\in \Omega$ and let  $\mathbb{D}=\{z\in \mathbb{C} \, :\,  |z|<1\}$ denote the open unit disk. Then there exists a unique bijective analytic function
$f:\ \Omega \to\mathbb D$ such that $f(z_0)=0$, $f^{\prime}(z_0)>0$.
\end{theorem}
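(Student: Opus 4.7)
The plan is to reproduce, in this complex setting, the classical Koebe/Carath\'eodory extremal argument. A preliminary observation is that the statement implicitly requires $\Omega$ to be a proper subset of $\mathbb{C}$: if $\Omega=\mathbb{C}$, Liouville's theorem (a trivial complex analogue of the one established earlier) forbids the existence of any non-constant bounded holomorphic map on $\Omega$, so no $f:\Omega\to\mathbb{D}$ can be bijective. Assuming $\Omega\subsetneq\mathbb{C}$ and $z_0\in\Omega$, the proof splits into four steps: producing a non-empty admissible family, extracting an extremal element, proving surjectivity, and establishing uniqueness.

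First I would introduce the family $\mathcal{F}$ of injective holomorphic maps $g:\Omega\to\mathbb{D}$ with $g(z_0)=0$ and show $\mathcal{F}\neq\emptyset$. This is where simple connectedness enters: fixing $a\in\mathbb{C}\setminus\Omega$, the non-vanishing function $z-a$ admits a holomorphic square root $h$ on the simply connected domain $\Omega$. An elementary argument shows that $h(\Omega)$ must omit a whole disk around $-h(z_1)$ for every $z_1\in\Omega$ (otherwise injectivity of $h$ would fail), so post-composing $h$ with a suitable M\"obius map of the Riemann sphere and translating produces an element of $\mathcal{F}$.

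Next I would extremize. Cauchy's estimates applied on a small disk about $z_0$ bound $|g'(z_0)|$ uniformly for $g\in\mathcal{F}$, so $M:=\sup\{|g'(z_0)|:g\in\mathcal{F}\}$ is finite and positive. A sequence $g_n\in\mathcal{F}$ with $|g_n'(z_0)|\to M$ is, by Montel's theorem applied to the uniformly bounded family $\mathcal{F}$, normal; pass to a locally uniform limit $f$, which is holomorphic with $|f'(z_0)|=M$, hence non-constant, so by the open mapping theorem $f(\Omega)\subseteq\mathbb{D}$, and by Hurwitz's theorem $f$ is injective. Thus $f\in\mathcal{F}$ is extremal.

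The main obstacle is proving $f$ surjective, and this is again where simple connectedness is essential. I would argue by contradiction: if some $w_0\in\mathbb{D}\setminus f(\Omega)$ existed, compose $f$ with the disk automorphism $\varphi_{w_0}(w)=(w-w_0)/(1-\bar w_0 w)$ to obtain a nowhere-zero map on $\Omega$, which therefore admits a holomorphic square root $\psi$; a further disk automorphism normalizes $\psi(z_0)$ to $0$, producing a candidate $g\in\mathcal{F}$. A direct computation, exploiting strictness of the Schwarz lemma for the non-injective squaring map $w\mapsto w^{2}$ of $\mathbb{D}$, yields the identity
\[
|g'(z_0)|=\frac{1+|w_0|}{2\sqrt{|w_0|}}\,M>M,
\]
contradicting the choice of $f$. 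Finally, rotating $f$ by a unit complex scalar achieves $f'(z_0)>0$. Uniqueness follows from Schwarz: if $f_1,f_2$ both satisfy the conclusion, then $f_2\circ f_1^{-1}$ is a biholomorphism of $\mathbb{D}$ fixing $0$ with positive derivative at $0$, hence the identity.
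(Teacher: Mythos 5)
Your proposal is correct and complete: it is the standard Koebe--Carath\'eodory normal-families proof (non-empty admissible family via a square root of $z-a$, extremal element by Montel and Hurwitz, surjectivity by the square-root/Schwarz contradiction with the factor $\tfrac{1+|w_0|}{2\sqrt{|w_0|}}>1$, uniqueness by the Schwarz lemma). The paper, however, does not prove this statement at all --- Theorem \ref{Rthm} is recalled there as a classical fact, serving only as the starting point for the quaternionic version --- so there is no argument in the text to compare yours against; your remark that $\Omega$ must be a proper subdomain of $\mathbb{C}$ is a hypothesis the paper leaves implicit, and it is right to make it explicit.
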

The theorem holds also for simply connected open subsets of the Riemann sphere which both lack at least two points of the sphere.
\\
As we said before, this theorem does not generalize to the case of all simply connected domains in $\mathbb H$. First of all, it is necessary to characterize which open sets can be mapped bijectively onto the unit ball of $\mathbb H$ by a slice regular function $f$ having prescribed value at one point and with prescribed value of the derivative at the same point.
The class of open sets to which the theorem can be extended,
as we shall see, is the class of  axially symmetric  slice domains which are simply connected.
\\
Let us recall that in \cite{duren}, the so-called {\em typically real} functions are functions defined on the open unit disc $\mathbb D$, which take real values on the real line and only there.  These functions have real coefficients when expanded into power series and so they are complex intrinsic  (see  \cite{duren}, p.55). Thus the image of the open unit disc through such mappings is symmetric with respect to the real line.
\\ The following result will be used in the sequel:

\begin{proposition}\label{Tm:Riemann intrinsic} Let $\Omega \subset\mathbb C$ be a simply connected domain such that $\Omega \cap\mathbb R\not=\emptyset$, let
 $x_0\in \Omega \cap \mathbb{R}$ be fixed and let
$f:\ \Omega \to\mathbb D$ with $f(x_0)=0$, $f'(x_0)>0$ be the bijective analytic function as in Theorem \ref{Rthm}. Then $f^{-1}$ is typically real
 if and only if $\Omega$ is symmetric with respect to the real axis.
\end{proposition}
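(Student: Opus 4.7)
The plan rests on the connection, recalled in the paragraph preceding the statement, between being typically real on $\mathbb{D}$ and being complex intrinsic (i.e., having real Taylor coefficients at the origin, equivalently satisfying $g(\bar w)=\overline{g(w)}$). Granting that equivalence, the proof reduces to the cleaner claim: $f^{-1}$ is intrinsic if and only if $\Omega$ is symmetric with respect to the real axis.

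For the forward implication, I would suppose $\Omega$ is symmetric with respect to $\mathbb{R}$ and introduce the auxiliary map $g(z):=\overline{f(\bar z)}$, which by the symmetry of $\Omega$ is a well-defined holomorphic bijection $\Omega\to\mathbb{D}$. Since $x_0\in\mathbb{R}$ and $f'(x_0)>0$ is real, $g(x_0)=\overline{f(x_0)}=0$ and $g'(x_0)=\overline{f'(x_0)}=f'(x_0)>0$. The uniqueness clause of Theorem \ref{Rthm} then forces $g\equiv f$, i.e.\ $f(\bar z)=\overline{f(z)}$ on $\Omega$; substituting $z=f^{-1}(w)$ yields $f^{-1}(\bar w)=\overline{f^{-1}(w)}$, so $f^{-1}$ is intrinsic and hence typically real.

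For the reverse implication, I would assume $f^{-1}$ is typically real. By the recalled remark, $f^{-1}$ expands as a power series with real coefficients, so $f^{-1}(\bar w)=\overline{f^{-1}(w)}$ on all of $\mathbb{D}$. Given $z\in\Omega$, write $z=f^{-1}(w)$ and compute $\bar z=\overline{f^{-1}(w)}=f^{-1}(\bar w)\in f^{-1}(\mathbb{D})=\Omega$, which shows that $\Omega$ is symmetric with respect to the real axis.

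The only nontrivial step is the forward direction, where the main idea is to recognize $g$ as another Riemann map satisfying the same normalization as $f$, so that the uniqueness in Theorem \ref{Rthm} can be invoked; the rest is a direct unwinding of the identity $f(\bar z)=\overline{f(z)}$. A small point I would double-check in the conclusion is the ``only there'' half of typical reality: once $f^{-1}$ is intrinsic, $f^{-1}(w)\in\mathbb{R}$ gives $f^{-1}(w)=\overline{f^{-1}(w)}=f^{-1}(\bar w)$, and the injectivity of $f^{-1}$ forces $w=\bar w$, so $f^{-1}$ genuinely takes real values only on $\mathbb{R}\cap\mathbb{D}$.
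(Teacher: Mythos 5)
Your proof is correct and follows essentially the same route as the paper: the forward direction is the uniqueness argument applied to $g(z)=\overline{f(\bar z)}$ (which the paper delegates to a citation of Ahlfors), and the reverse direction uses the recalled fact that typically real functions have real Taylor coefficients, so that $\Omega=f^{-1}(\mathbb D)$ inherits the symmetry. Your explicit check of the ``only there'' half of typical reality via injectivity is a welcome detail that the paper leaves implicit.
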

   \begin{proof}
    Let $f:\ \Omega \to\mathbb D$ be the function as in the Riemann mapping theorem. If  $\Omega$ is symmetric with respect to the real axis, then  by the uniqueness of $f$ we have $\overline{f(z)}=f(\bar z)$, see e.g. \cite{ahlfors}, Exercise 1, p. 232. Thus $f$ maps bijectively $\Omega \cap\mathbb R$ onto $\mathbb D\cap\mathbb R$ and
$f^{-1}$ is typically real. Conversely, assume that $f^{-1}:\mathbb D\to \Omega$ is typically real. Then  $\Omega$, being the image of the open unit disc, is symmetric with respect to the real line.
\end{proof}

\begin{corollary}\label{Riemann intrinsic}
Let $\Omega \subset\mathbb C$ be a simply connected domain such that $\Omega \cap\mathbb R\not=\emptyset$,
let $x_0\in \Omega \cap \mathbb{R}$ be fixed and let
$f:\ \Omega \to\mathbb D$ with $f(x_0)=0$, $f'(x_0)>0$ be the bijective analytic function as in Theorem \ref{Rthm}. Then $f^{-1}$ is typically real
 if and only if  $f$ is complex intrinsic.
\end{corollary}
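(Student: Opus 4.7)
The plan is to piggy-back on Proposition \ref{Tm:Riemann intrinsic}, which already gives the equivalence ``$f^{-1}$ typically real $\Leftrightarrow$ $\Omega$ symmetric with respect to $\mathbb{R}$''. Thus what remains is to identify ``$\Omega$ symmetric with respect to $\mathbb{R}$'' with ``$f$ is complex intrinsic'' (under the normalization $f(x_0)=0$ with $x_0\in\mathbb{R}$ and $f'(x_0)>0$). Both implications reduce to a uniqueness argument for the Riemann map.

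For the forward direction, I would assume $f^{-1}$ is typically real, so by Proposition \ref{Tm:Riemann intrinsic} the domain $\Omega$ is symmetric with respect to the real axis. Then I would introduce the auxiliary function $g(z):=\overline{f(\bar z)}$. Since $\Omega$ is symmetric, $g$ is well defined on $\Omega$; it is holomorphic (composition of conjugation, $f$, and conjugation), it maps $\Omega$ bijectively onto $\mathbb D$, and the normalization at the real point $x_0$ is preserved: $g(x_0)=\overline{f(x_0)}=0$ and $g'(x_0)=\overline{f'(x_0)}=f'(x_0)>0$. By the uniqueness clause in the Riemann mapping theorem (Theorem \ref{Rthm}), $g\equiv f$, i.e.\ $f(z)=\overline{f(\bar z)}$ for every $z\in\Omega$, which is precisely the definition of $f$ being complex intrinsic.

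For the converse, I would assume $f$ is complex intrinsic, i.e.\ $\overline{f(\bar z)}=f(z)$ for all $z\in\Omega$. Implicit in this identity is that whenever $z\in\Omega$ also $\bar z\in\Omega$, so $\Omega$ is symmetric with respect to the real axis. Proposition \ref{Tm:Riemann intrinsic} then immediately yields that $f^{-1}$ is typically real, completing the equivalence.

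I do not foresee a genuine obstacle here, since both steps are a standard use of uniqueness in the Riemann mapping theorem together with the reflection trick $z\mapsto \overline{f(\bar z)}$; the only subtlety is the bookkeeping that the normalization conditions ($f(x_0)=0$, $f'(x_0)>0$) are preserved under conjugation precisely because $x_0$ is chosen real and $f'(x_0)$ is chosen real-positive, which is exactly why the hypothesis $\Omega\cap\mathbb R\neq\emptyset$ is assumed in the statement.
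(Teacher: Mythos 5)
Your proof is correct, and the converse direction matches the paper's. The forward direction, however, takes a genuinely different (if closely related) route. The paper does not pass through the symmetry of $\Omega$ at all: it uses that a typically real $f^{-1}$ is complex intrinsic, i.e.\ $\overline{f^{-1}(w)}=f^{-1}(\bar w)$, and then simply applies $f$ to both sides — writing $w=f(z)$ one gets $f(\bar z)=f\bigl(\overline{f^{-1}(w)}\bigr)=f\bigl(f^{-1}(\overline{w})\bigr)=\overline{w}=\overline{f(z)}$ — a two-line algebraic inversion of the functional equation. You instead first invoke Proposition \ref{Tm:Riemann intrinsic} to conclude that $\Omega$ is symmetric, and then run the reflection-plus-uniqueness argument on $g(z)=\overline{f(\bar z)}$, checking that the normalization $g(x_0)=0$, $g'(x_0)>0$ survives conjugation because $x_0$ is real. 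Your argument is perfectly sound, but note that it essentially re-proves the step $\overline{f(z)}=f(\bar z)$ that the paper already established (via the same uniqueness trick, citing Ahlfors) inside the proof of Proposition \ref{Tm:Riemann intrinsic}; the paper's direct computation avoids a second appeal to uniqueness and makes explicit use of the typical realness of $f^{-1}$ rather than only of the symmetry of $\Omega$ that it implies. What your version buys is a cleaner logical picture — everything is reduced to the single pivot ``$\Omega$ symmetric'' — at the cost of a small redundancy.
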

\begin{proof}
If $f^{-1}$ is typically real then $\Omega$ is symmetric with respect to the real line and $\overline{f^{-1}(w)}=f^{-1}(\bar w)$. By setting $w=f(z)$, we have $z=f^{-1}(w)$, and so
$$
f(\bar z)=f(\overline{f^{-1}(w)})=f(f^{-1}(\overline{w}))=\overline{w}=\overline{f(z)}.
$$
 Thus $f$ is complex intrinsic. Conversely, let $f$ be complex intrinsic: then $f$ is defined on set $\Omega$ symmetric with respect to the real line and $f(\bar z)=\overline{f(z)}$. So, by Proposition \ref{Tm:Riemann intrinsic},  $f^{-1}$ is complex intrinsic.
\end{proof}
To summarize the results, we state the following:
\begin{corollary}\label{Riemann intrinsic}
Let $\Omega \subset\mathbb C$ be a simply connected domain such that $\Omega \cap\mathbb R\not=\emptyset$,
let $x_0\in \Omega \cap \mathbb{R}$ be fixed and let
$f:\ \Omega \to\mathbb D$ with $f(x_0)=0$, $f'(x_0)>0$ be the bijective analytic function as in Theorem \ref{Rthm}. Then the following statements are equivalent:
\begin{enumerate}
\item[(1)] $f^{-1}$ is typically real;
\item[(2)]
   $f$ is complex intrinsic;
   \item[(3)] $\Omega$ is symmetric with respect to the real line.
   \end{enumerate}
\end{corollary}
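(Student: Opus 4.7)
The plan is to simply chain together the two results that precede this corollary. The preceding Proposition \ref{Tm:Riemann intrinsic} already establishes the equivalence of (1) and (3): it shows that $f^{-1}$ is typically real if and only if $\Omega$ is symmetric with respect to the real axis, using the uniqueness part of the Riemann mapping theorem to deduce $\overline{f(z)}=f(\bar z)$ from symmetry of $\Omega$, and conversely observing that the image of a typically real map is symmetric about the real line.

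Next, the (identically named) previous Corollary \ref{Riemann intrinsic} already establishes the equivalence of (1) and (2): there one uses the substitution $w=f(z)$ to translate the condition $\overline{f^{-1}(w)}=f^{-1}(\bar w)$ into $f(\bar z)=\overline{f(z)}$, which is exactly the condition that $f$ be complex intrinsic, and runs the argument both ways.

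So my proof proposal is essentially a one-liner: assemble the biconditionals (1) $\Leftrightarrow$ (3) from Proposition \ref{Tm:Riemann intrinsic} and (1) $\Leftrightarrow$ (2) from the preceding Corollary \ref{Riemann intrinsic}, and conclude that (1), (2), (3) are mutually equivalent. I do not anticipate any genuine obstacle here, since this corollary is purely a repackaging statement whose content has already been proved; the only thing to check is that the hypotheses (simple connectedness of $\Omega$, $\Omega\cap\mathbb{R}\neq\emptyset$, $x_0\in\Omega\cap\mathbb{R}$, and the normalization $f(x_0)=0$, $f'(x_0)>0$) are the same in all three results, which is the case by inspection.
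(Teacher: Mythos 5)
Your proposal is correct and matches the paper exactly: the paper introduces this corollary with the phrase ``To summarize the results'' and gives no separate proof, since the equivalence $(1)\Leftrightarrow(3)$ is Proposition \ref{Tm:Riemann intrinsic} and the equivalence $(1)\Leftrightarrow(2)$ is the immediately preceding corollary. Chaining these two biconditionals, as you do, is all that is required.
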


 \begin{remark}\label{extension}{ Let $\Omega \subset \mathbb{C}$ be symmetric with respect to the real axis and let the function $f$ as in the statement of the Riemann mapping theorem be intrinsic, i.e. $\overline{f(\bar z)}=f(z)$. By identifying $\mathbb C$ with $\mathbb C_J$ for some $J\in\mathbb S$ and using the extension formula \eqref{ext}, we obtain a function ${\rm ext}(f)$ which is quaternionic intrinsic (see the proof of Proposition \ref{pro20}). Thus ${\rm ext}(f)\in\mathcal N (U_\Omega )$ where $U_\Omega \subset \mathbb{H}$ denotes the axially symmetric completion of $U$.  }
\end{remark}
\begin{definition} We will denote by $\mathfrak{R}(\mathbb H)$ \index{$\mathfrak{R}(\mathbb H)$} the class of  axially symmetric open sets $U$ in $\mathbb H$ such that $U \cap\mathbb C_I$ is simply connected for every $I\in\mathbb S$.
\end{definition}
 The set $U \cap\mathbb C_I$ is simply connected for every $I\in\mathbb S$ and thus it is connected, so $U$ is a slice domain.\\
In view of Remark \ref{extension} and of Corollary \ref{Riemann intrinsic} we have the following :
\begin{corollary}[Quaternionic Riemann mapping theorem] \index{Riemann mapping theorem!quaternionic}
Let $U \in \mathfrak{R}(\mathbb H)$,  $\mathbb{B}\subset \mathbb{H}$ be the open unit ball and let  $x_{0}\in U \cap \mathbb{R}$.
Then there exists a unique quaternionic intrinsic slice regular function $f: \ U \to \mathbb B$ which is bijective and such that $f(x_0)=0$, $f' (x_0)>0$.
\end{corollary}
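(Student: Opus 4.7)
The plan is to reduce to the classical (complex) Riemann Mapping Theorem by restricting to a single slice and then recovering a slice regular map on all of $U$ via the extension operator \eqref{ext}.

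First, I would fix any $I\in\mathbb S$ and set $U_I=U\cap\mathbb C_I$. By the definition of $\mathfrak{R}(\mathbb H)$, $U_I$ is simply connected; by axial symmetry of $U$ it is symmetric with respect to the real axis; and $x_0\in U_I\cap\mathbb R$. Theorem \ref{Rthm} then supplies a unique bijective holomorphic $f_I:U_I\to\mathbb D_I$ with $f_I(x_0)=0$ and $f_I'(x_0)>0$, and Corollary \ref{Riemann intrinsic} guarantees that $f_I$ is complex intrinsic. Applying the extension formula \eqref{ext} yields $f:={\rm ext}(f_I):U\to\mathbb H$, which by Remark \ref{extension} belongs to $\mathcal N(U)$. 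Writing $f(x+Ky)=\alpha(x,y)+K\beta(x,y)$ with $\alpha,\beta$ real-valued, the identity $\alpha(x,y)^2+\beta(x,y)^2=|f_I(x+Iy)|^2<1$ shows that $f$ maps $U$ into $\mathbb B$, while $f(x_0)=f_I(x_0)=0$ and $f'(x_0)=f_I'(x_0)>0$ are immediate from the construction.

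The core work is the bijectivity of $f$. The decisive observation is that $f_I$ preserves orientation (since $f_I'(x_0)$ is a positive real), hence it carries $\{y>0\}\cap U_I$ bijectively onto $\{v>0\}\cap\mathbb D_I$; consequently $\beta(x,y)>0$ for $y>0$, while $\beta(x,0)\equiv 0$. For surjectivity, given $w=u+Kv\in\mathbb B$ with $v\geq 0$, I would take the unique $z=x+Iy\in U_I$ with $y\geq 0$ satisfying $f_I(z)=u+Iv$, and then $f(x+Ky)=w$. For injectivity, suppose $f(p)=f(p')$ and write $p=x+Ky$, $p'=x'+K'y'$ in canonical form ($y,y'\geq 0$). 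Intrinsicity forces $\alpha(x,y)=\alpha(x',y')$ and $K\beta(x,y)=K'\beta(x',y')$. If $\beta(x,y)=0$, the positivity of $\beta$ on $\{y>0\}$ gives $y=y'=0$, and injectivity of $f_I$ on $U_I\cap\mathbb R$ yields $x=x'$, so $p=p'$. If $\beta(x,y)\neq 0$, the same positivity forces $\beta(x',y')>0$, hence $K=K'$ and $\beta(x,y)=\beta(x',y')$, and injectivity of $f_I$ again delivers $p=p'$.

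For uniqueness, if $g$ is another map satisfying the conclusion, its restriction $g_I$ takes values in $\mathbb B\cap\mathbb C_I=\mathbb D_I$ by intrinsicity, is holomorphic on $U_I$, is bijective onto $\mathbb D_I$ by the same type of argument as above, and satisfies $g_I(x_0)=0$, $g_I'(x_0)>0$; the classical uniqueness in Theorem \ref{Rthm} forces $g_I=f_I$, and since slice regular extensions from a slice are unique, $g=f$. The main obstacle in the whole proof is the injectivity step: a priori two points on distinct slices could collapse to a single image, and ruling this out requires the strict positivity of $\beta$ on $\{y>0\}$, which is a genuine consequence of the normalization $f_I'(x_0)>0$ and would fail without it.
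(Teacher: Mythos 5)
Your proof is correct and follows essentially the same route as the paper: restrict to a slice $U_I$, invoke the classical Riemann mapping theorem together with Corollary \ref{Riemann intrinsic} to get an intrinsic $f_I$, and extend via \eqref{ext} to a map in $\mathcal N(U)$. The paper's own proof stops at the observation that $f$ takes each $U_J$ to $\mathbb D_J$; your careful verification of global bijectivity (via the positivity of $\beta$ on $\{y>0\}$, forced by $f_I'(x_0)>0$) and of uniqueness simply fills in details the paper leaves implicit.
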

\begin{proof}
Let us consider $U _I=U \cap\mathbb C_I$ where $I\in\mathbb S$. Then $U_I$ is simply connected by hypothesis and symmetric with respect to the real line since $U$ is axially symmetric. Let us set $\mathbb D_I=\mathbb B\cap\mathbb C_I$. By Corollary \ref{Riemann intrinsic} there exists a bijective, analytic intrinsic map $f_I: U_I\to \mathbb D_I$ such that $f(x_0)=0$, $f'(x_0)>0$. By Remark \ref{extension}, $f_I$ extends to $f:\ U \to \mathbb B$ and $f\in\mathcal N(U )$. Note that for every $J\in\mathbb S$ we have $f_{|\mathbb C_J}: U_J\to \mathbb D_J$ since $f$ takes each complex plane to itself.
\end{proof}
\begin{remark}
We now show that the Riemann mapping theorem proved above, holds under the optimal hypotheses, i.e. the class
$\mathfrak{R}(\mathbb H)$ cannot be further enlarged. The class of open sets $\mathfrak{R}(\mathbb H)$ contains all the possible simply connected open sets in $\mathbb H$ intersecting the real line for which a map $f: U \to\mathbb B$ as in the Riemann mapping theorem belongs to the class $\mathcal{N}(U )$, namely to the class of functions for which the composition is allowed. In fact, assume that a simply connected open set $U \subset \mathbb H$ is mapped bijectively onto $\mathbb B$ by a map $f\in\mathcal{N}(U )$, with $f(x_0)=0$, $f'(x_0)>0$, $x_0\in\mathbb R$. Since $f\in\mathcal{N}(U )$,  we have that  $f_{|\mathbb C_I}:\ U \cap \mathbb C_I\to \mathbb B\cap\mathbb C_I=\mathbb D_I$ for all $I\in\mathbb S$ and so $f$ takes $U \cap\mathbb R$ to $\mathbb B\cap\mathbb R$.  By its uniqueness, $f_{|\mathbb C_I}$  is the map prescribed by the complex Riemann mapping theorem, moreover $f_{|\mathbb C_I}$ takes $U_I\cap\mathbb R$ bijectively to $\mathbb D_I\cap\mathbb R$ so $f^{-1}$ is totally real. By Corollary \ref{Riemann intrinsic}, it follows that $f$ is complex intrinsic, thus
$U \cap \mathbb C_I$ is symmetric with respect to the real line.
 Since $I\in\mathbb S$ is arbitrary, $U$ must be also axially symmetric, so it belongs to $\mathfrak{R}(\mathbb H)$.
\end{remark}

\section{Zeros of slice regular functions}
Zeros of polynomials with coefficients on one side (say, on the right in the present setting) have been studied
in several papers over the years.
The case of zeros of polynomials, though is a very special case, is perfect to illustrate the general situation that occurs with slice regular functions.

Let us begin by a very well known fact.
Consider the equation:
$$
(q-a)\star(q-b)=q^2- q(a+b)+ab=0.
$$
Then $q=a$ is a zero while $q=b$, in general, is not.
If $b$ does not belong to $[a]$ then, by \eqref{pointwise}, the second zero is $b'=(b-\bar a)^{-1}b(b-\bar a)$.
If $b\in [a]$ but $b\not=\bar a$ then $q=a$ is the only zero of the equation and it has multiplicity $2$.
If $b=\bar a$, then we obtain
\begin{equation}\label{companion}
(q-a)\star(q-\bar a)=q^2- 2{\rm Re}(a) q+|a|^2=0
\end{equation}
and the zero set coincides with the sphere $[a]$.
Sometimes, we will denote the sphere $[a]$ also as $a_0+\mathbb S a_1$, if $a=a_0+I_a a_1$.
When we have a sphere of zeros of $f$, we shall speak of a {\em spherical zero} of $f$.\index{spherical zero}
\\
It is important to note that the factorization of a quaternionic polynomial is not unique, in fact we have:
\begin{theorem} \label{dfactor}
Let $a,b$ be quaternions belonging to two different spheres. Then
$$
(q-a)\star (q-b)=(q-b')\star (q-a')
$$
if and only if $a'=c^{-1}ac$, $b'=c^{-1}bc$ where $c=b- \bar a$.
\end{theorem}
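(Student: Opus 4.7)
The plan is to expand both sides as ordinary polynomials in $q$ with quaternionic coefficients, reduce the identity to a pair of conditions on $(a',b')$, and then invoke the zero set description for the $\star$-quadratic $(q-a)\star(q-b)$ recalled just before the statement. First I would compute
\[
(q-a)\star(q-b)=q^{2}-q(a+b)+ab,\qquad (q-b')\star(q-a')=q^{2}-q(a'+b')+b'a',
\]
so that the asserted identity is equivalent to the coupled system
\[
a+b=a'+b',\qquad ab=b'a'.
\]
Note that the $\star$-product forces the flipped product $b'a'$ on the right-hand side, and this is precisely what forces the conjugation to enter.

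For sufficiency I would substitute $a'=c^{-1}ac$ and $b'=c^{-1}bc$ with $c=b-\bar a$. The element $c$ is nonzero because $[a]\neq[b]$ implies $b\notin[a]$, hence $b\neq\bar a$. Writing $a+b=2\,{\rm Re}(a)+c$ shows that $a+b$ commutes with $c$, whence $a'+b'=c^{-1}(a+b)c=a+b$. For the product, $b'a'=c^{-1}(ba)c$, and the required identity $bac=cab$ reduces, after inserting $c=b-\bar a$, to $\bar a ab=ba\bar a$, which holds because $|a|^{2}=a\bar a=\bar a a$ is real and therefore commutes with $b$.

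For necessity I would eliminate $a'=(a+b)-b'$ from $ab=b'a'$ to obtain
\[
(b')^{2}-b'(a+b)+ab=0,
\]
i.e.\ $b'$ is a root of $p(q):=(q-a)\star(q-b)$. Under the hypothesis $b\notin[a]$, the discussion preceding the theorem identifies the zero set of $p$ as exactly $\{a,\,c^{-1}bc\}$. Choosing $b'=c^{-1}bc$ then forces
\[
a'=(a+b)-c^{-1}bc=c^{-1}(a+b)c-c^{-1}bc=c^{-1}ac,
\]
matching the claimed formula. The alternative root $b'=a$ yields $a'=b$, which simply reproduces the original factorization on the right (with a purely formal relabelling) and so corresponds to the tautological identity; this case is consistent with the formula whenever $a,b$ commute, since then $c$ commutes with both $a$ and $b$.

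The main obstacle is bookkeeping: one must keep the flipped product $b'a'$ straight against the ordinary product $ab$, and recognize that the explicit appearance of $\bar a$ in $c=b-\bar a$ is exactly what is needed to neutralize the flip through the real identity $\bar a a=a\bar a\in\mathbb{R}$. Once this is in hand, the classification of the two roots of the quadratic $\star$-polynomial does the rest.
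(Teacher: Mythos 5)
Your proof is correct, and it is worth noting that the paper states Theorem \ref{dfactor} without proof (deferring to the literature on polynomial factorization), so there is no in-text argument to compare against; yours is a legitimate self-contained verification. The reduction to the system $a+b=a'+b'$, $ab=b'a'$ is the right move, the sufficiency computation correctly isolates the key identity $ba\bar a=\bar a ab$ coming from $a\bar a=\bar aa=|a|^2\in\mathbb R$, and the necessity argument correctly recognizes $b'$ as a root of $(q-a)\star(q-b)$ and appeals to the zero-set description $\{a,\,(b-\bar a)^{-1}b(b-\bar a)\}$ given just before the statement. The one point you rightly flag is that the second root $b'=a$ produces the tautological solution $(a',b')=(b,a)$, which satisfies the displayed identity but not the formula $a'=c^{-1}ac$, $b'=c^{-1}bc$ when $[a]\neq[b]$; strictly speaking this makes the literal "only if" fail, and the intended reading (standard in the literature) is that $a'\in[a]$ and $b'\in[b]$, which excludes the tautology. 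Your side remark that the tautological case is "consistent with the formula whenever $a,b$ commute" is slightly muddled --- in the commuting case both roots yield the same polynomial identity but not the same pair $(a',b')$ --- but this does not affect the argument. If you wanted to be fully rigorous you could add a line justifying why the quadratic has no zeros beyond those two (via Corollary \ref{zeroes}: any zero $p_0$ satisfies $p_0=a$ or $(p_0-a)^{-1}p_0(p_0-a)=b$, and the latter has a unique solution in $[b]$), but citing the paper's own assertion is reasonable.
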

However, we have cases in which the factorization is unique:
\begin{theorem}
The polynomial
\[
P(q)=(q-q_1)\star \cdots \star (q-q_r)
\]
where $q_\ell\in [q_1]$ for all $\ell=2,\ldots, r$ and $q_{\ell +1}\not=\bar q_\ell$, for $\ell=1,\ldots, r-1$ is the unique factorization of $P(q)$.
\end{theorem}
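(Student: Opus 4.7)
The plan is to argue by induction on $r$, proving the stronger statement that any polynomial admitting such a factorization has $q_1$ as its only zero \emph{and} admits a unique factorization of this form. Writing $[q_1]=x_0+\mathbb{S}y_0$, every $q_\ell$ shares the real part $x_0$ and the modulus $\sqrt{x_0^2+y_0^2}$. Iterating Proposition \ref{propmoltiplicative} one first obtains
$$
P^s(q)=\prod_{\ell=1}^{r}(q-q_\ell)^s=(q^2-2x_0 q+x_0^2+y_0^2)^{r},
$$
whose zero locus is exactly the sphere $[q_1]$. Since $P^s=P\star P^c$ vanishes wherever $P$ does (Corollary \ref{zeroes}), we conclude $Z(P)\subseteq[q_1]$.

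The base case $r=1$ is trivial. For the inductive step I would write $P=(q-q_1)\star R$ with $R(q)=(q-q_2)\star\cdots\star(q-q_r)$; since $R$ still satisfies the hypotheses of the theorem (with leading root $q_2\in[q_1]$), the inductive hypothesis says that $q_2$ is the unique zero of $R$ and that its factorization is unique. If $P(a)=0$ with $a\neq q_1$, then Theorem \ref{pointwise} applied to $P=(q-q_1)\star R$ forces $R\bigl((a-q_1)^{-1}a(a-q_1)\bigr)=0$, hence $(a-q_1)^{-1}a(a-q_1)=q_2$, that is $a(a-q_1)=(a-q_1)q_2$. Since $a\in[q_1]$, the sphere relation $a^2=2x_0 a-(x_0^2+y_0^2)$ holds; substituting it into the previous identity and using $2x_0-q_1=\bar q_1$ together with $x_0^2+y_0^2=q_1\bar q_1$, everything collapses to
$$
a(\bar q_1-q_2)=q_1(\bar q_1-q_2).
$$
The hypothesis $q_2\neq\bar q_1$ makes $\bar q_1-q_2$ a nonzero, hence invertible, quaternion, forcing $a=q_1$ and contradicting $a\neq q_1$. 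Therefore $q_1$ is the only zero of $P$.

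Turning to uniqueness of the factorization, I would take any other linear factorization $P=(q-p_1)\star\cdots\star(q-p_r)$; evaluating at $p_1$ shows that $p_1$ is a zero of $P$, so $p_1=q_1$ by the previous paragraph. Because $(\mathbb{H}[q],\star)$ is a domain (degrees add under $\star$), we may left-cancel the factor $(q-q_1)$ to obtain $R=(q-p_2)\star\cdots\star(q-p_r)$, and the inductive hypothesis applied to $R$ yields $p_\ell=q_\ell$ for $\ell=2,\ldots,r$.

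The delicate step will be the derivation of the identity $a(\bar q_1-q_2)=q_1(\bar q_1-q_2)$: one must balance the sphere equation for $a$ against the relation produced by Theorem \ref{pointwise}, being careful about the noncommutativity, and then recognize that the non-degeneracy hypothesis $q_{\ell+1}\neq\bar q_\ell$ is precisely what is needed to invert $\bar q_1-q_2$ and close the argument.
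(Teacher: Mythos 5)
Your proof is correct. Note that the paper states this theorem without any proof (the surrounding text defers the factorization results to the literature), so there is nothing in the text to compare against; your induction is a valid, self-contained substitute. Both pillars of the argument hold up. First, iterating Proposition \ref{propmoltiplicative} does give $P^s=(q^2-2x_0q+x_0^2+y_0^2)^r$, and $Z(P)\subseteq Z(P^s)=[q_1]$ follows from Theorem \ref{pointwise} (it is that theorem, rather than Corollary \ref{zeroes}, which gives the forward implication $P(a)=0\Rightarrow (P\star P^c)(a)=0$ --- a harmless mis-citation). Second, the ``delicate'' identity does close: from $a(a-q_1)=(a-q_1)q_2$ and the sphere relation $a^2=2x_0a-(x_0^2+y_0^2)$ one obtains $a(2x_0-q_1-q_2)=(x_0^2+y_0^2)-q_1q_2$ (the factorization on the left is legitimate because $2x_0$ is real and central), and since $2x_0=q_1+\bar q_1$ and $x_0^2+y_0^2=q_1\bar q_1$ this is precisely $a(\bar q_1-q_2)=q_1(\bar q_1-q_2)$, forcing $a=q_1$ because $\bar q_1-q_2$ is a nonzero quaternion. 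The left-cancellation step in the uniqueness argument is also sound: under the $\star$-product degrees add and leading coefficients multiply, and $\mathbb H$ is a division ring, so $(\mathbb H[q],\star)$ has no zero divisors. The only cosmetic remark is that the theorem should be read as uniqueness among factorizations into monic linear $\star$-factors, which is what your degree count implicitly assumes.
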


Any quaternionic polynomial admits a factorization as described below. We do not prove this result here since we will prove a result in the more general framework of slice regular functions. We refer the reader to \cite{mjm} for more details.

\begin{theorem} \label{dueuno} Let $P(q)$ be a slice regular polynomial of
degree $m$. Then there exist
$p,m_{1},\ldots,m_{p}\ \in \mathbb{N},$
and $w_{1},\ldots, w_{p}\in \mathbb{H}$, generators of
the spherical roots of $P$, so that
\begin{equation}\label{prodotto}
P(q)=(q^{2}-2q{\rm Re}(w_{1})+|w_{1}|^{2})^{m_{1}}\cdots
(q^{2}-2q {\rm Re}(w_{p})+|w_{p}|^{2})^{m_{p}}Q(q),
\end{equation}
where $Q$ is a
slice regular polynomial with coefficients in
$\mathbb{H}$ having (at most) only non spherical zeroes. Moreover, if
$n=m-2(m_1+\dots +m_p)$ there exist a constant $c\in \mathbb{H}$, $t$
distinct $2-$spheres
$S_1=x_1+y_1\mathbb{S},\ldots,S_t=x_t+y_t\mathbb{S}$, $t$ integers
$n_1,\ldots,n_t$ with $n_1+\dots +n_t=n$, and (for any
$i=1,\ldots,t$) $n_i$ quaternions $\alpha_{ij}\in S_i$,
$j=1,\ldots,n_i$, such that
\begin{equation}\label{fattorizzazione}
Q(q)=[\underset{i=1}{\overset{\star t}\prod} \underset{j=1}{\overset{\star n_i}\prod}(q-\alpha_{ij})]c.
\end{equation}
\end{theorem}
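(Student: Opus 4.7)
The proof has two essentially independent parts: extracting the spherical factors to isolate $Q$, and then factoring $Q$ itself as in \eqref{fattorizzazione}. I would handle them by induction on the degree.

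\textbf{Part 1: Extracting spherical zeros.} I would argue by induction on the number of spherical zeros of $P$. If $P$ has no spherical zeros, take $Q=P$ and we are done with $p=0$. Otherwise, let $[w_1]$ be a sphere on which $P$ vanishes. From \eqref{companion} the real polynomial $\Delta_1(q) := q^2-2q\,\mathrm{Re}(w_1)+|w_1|^2$ equals $(q-w_1)\star(q-\bar w_1)$ and its zero set is exactly $[w_1]$. Because $\Delta_1$ has real coefficients it is central, so both $\star$-multiplication and ordinary multiplication by $\Delta_1$ coincide, and the standard polynomial division argument produces $P = \Delta_1 \cdot P_1 + R$ with $\deg R \le 1$. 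Evaluating at any two distinct points of $[w_1]$ (using Corollary \ref{values general}) forces $R\equiv 0$, giving $P = \Delta_1 P_1$ with $\deg P_1=m-2$. Repeating this extraction as long as a spherical zero survives, and grouping equal spheres together, yields the product of $\Delta_i^{m_i}$ in \eqref{prodotto}. What remains is a polynomial $Q$ of degree $n=m-2(m_1+\cdots+m_p)$ having only non-spherical zeros, as required.

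\textbf{Part 2: Factoring $Q$ into linear $\star$-factors.} I would induct on $n=\deg Q$. If $n=0$, set $Q=c$. If $n\ge 1$, invoke the fundamental theorem for quaternionic polynomials to obtain a zero $\alpha_{11}$ of $Q$ (this can also be deduced from Theorem \ref{pointwise}, which forces $Q^s$ to have a zero and hence $Q$ to vanish on some sphere). Since $\alpha_{11}$ is an isolated zero of $Q$, a direct $\star$-division gives $Q(q) = (q-\alpha_{11})\star Q'(q)$ with $\deg Q' = n-1$. Now apply the inductive hypothesis to $Q'$. To obtain the specific \emph{grouped} form of \eqref{fattorizzazione}, at each step I would choose the next zero as follows: if $Q'$ still has a zero on the current sphere $S_1=[\alpha_{11}]$, pick such an $\alpha_{12}\in S_1$ (so the grouping continues); otherwise, the remaining zeros of $Q'$ lie on a different sphere $S_2$ and we open a new block. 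This choice process, iterated, produces the nested $\star$-products $\prod^{\star n_i}_{j=1}(q-\alpha_{ij})$ grouped by sphere, with the final constant $c\in\mathbb H$ being the leading coefficient of $Q$.

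\textbf{Main obstacle.} The delicate point is justifying that at each stage of Part 2 one really can select the next linear $\star$-factor \emph{on the same sphere} whenever a zero on that sphere still exists among the roots. This requires using Theorem \ref{pointwise} and Corollary \ref{zeroes}, which describe exactly how the zero set of a $\star$-product relates to the zero sets of the factors via the conjugation $p\mapsto f(p)^{-1}pf(p)$; one must verify that this conjugation permutes each sphere to itself, so that the property ``having a zero on $S_i$'' is inherited by the successive quotient polynomials until the sphere is exhausted. Everything else reduces to bookkeeping with degrees and inductive application of the splitting procedure.
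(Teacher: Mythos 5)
The paper does not actually prove Theorem \ref{dueuno}: it defers to \cite{mjm} and instead proves the more general one-sphere-at-a-time factorization for slice regular functions, Theorem \ref{R-fattorizzazslice}, from which the polynomial statement follows by iterating over the finitely many zero spheres. Your argument is therefore a genuinely different, self-contained route, and its structure is sound. Part 1 is fine as written: $\Delta_1$ is monic with real, hence central, coefficients, so ordinary division applies, the $\star$-product and the pointwise product by $\Delta_1$ coincide, and evaluating the degree-$\leq 1$ remainder at the two distinct points $w_1,\bar w_1$ of the sphere kills it. In Part 2 the difference from the paper is this: the proof of Theorem \ref{R-fattorizzazslice} fixes one sphere and shows the extraction of linear factors on that sphere must terminate via a ``no infinite factorization'' argument through the symmetrization, whereas you use the degree together with an existence-of-zeros statement (Eilenberg--Niven, or your derivation through $Q^s$ and Proposition \ref{zericoniugata}) to drive the induction all the way down to a constant. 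Your approach buys the complete factorization of $Q$ into $n$ linear factors in one pass; its cost is precisely that appeal to the fundamental theorem of algebra, which the paper's single-sphere argument never needs.

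The one step you flag as the ``main obstacle'' should be written out, and the clean way to do it is the two-sided identity: the set of spheres carrying zeros of $f\star g$ equals the union of those of $f$ and of $g$. The inclusion $\subseteq$ is Corollary \ref{zeroes} together with the observation that $p\mapsto f(p)^{-1}pf(p)$ preserves $[p]$; the reverse inclusion follows from $(f\star g)^s=f^s g^s$ (Proposition \ref{propmoltiplicative}) combined with Proposition \ref{zericoniugata}(1). With both inclusions in hand everything you need falls out at once: the successive quotients never acquire zeros on new spheres, they never acquire spherical zeros (a spherical zero of a quotient would force one of $Q$ itself), and once a sphere $S_i$ is exhausted it stays exhausted, so the greedy selection terminates with exactly the grouped blocks of \eqref{fattorizzazione} and the final constant $c$.
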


This factorization result extend to slice regular functions thanks to the following result.

\begin{theorem}\label{R-fattorizzazslice}
Let $f$ be a slice regular function on an axially symmetric  slice domain $U$, suppose $f \not \equiv 0$ and let $[x+Iy] \subset U$. There exist $m \in \mathbb N, n \in \mathbb N$, $p_1,\ldots,p_n \in x+y\s$ (with $p_i \neq \bar p_{i+1}$ for all $i \in \{1,\ldots,n-1\}$) such that
\begin{equation}\label{R-factorizationslicedomain}
f(q) = [(q-x)^2+y^2]^m (q-p_1)\star (q-p_2)\star \ldots\star (q-p_n)\star g(q)
\end{equation}
for some slice regular function $g:U \to \hh$ which does not have zeros in the sphere $[x+Iy]$.
\end{theorem}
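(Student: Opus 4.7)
The strategy is to mimic the polynomial factorization of Theorem~\ref{dueuno}: first peel off the spherical factor $[(q-x)^2+y^2]^m$ attached to the sphere $[x+Iy]$ to its maximal power, then successively peel off simple linear factors corresponding to any remaining point zeros on that same sphere. Both peeling operations rest on the Splitting Lemma, which reduces slice regularity on a slice $\mathbb C_I$ to the holomorphy of two complex components, so the problem is converted locally into a question about factoring ordinary holomorphic functions and extending the result back to $U$ via the Representation Formula.

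First I would extract the spherical part. Pick $I\in\mathbb S$ and write $f_I(z)=F(z)+G(z)J$. If $f$ vanishes on the whole sphere $[x+Iy]$, then both $F$ and $G$ vanish at $x\pm Iy$, hence are divisible by $(z-x)^2+y^2$; the holomorphic quotients extend uniquely to a slice regular $f_1$ satisfying $f=[(q-x)^2+y^2]\,f_1$, where I use that the real-coefficient polynomial $(q-x)^2+y^2$ is quaternionic intrinsic and therefore commutes under $\star$ with every slice regular function. Let $m$ be the largest integer for which this can be iterated; $m$ is finite because $f_I$ is a nonzero holomorphic function (Identity Principle) and so its order of vanishing at $x+Iy$ is finite. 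By construction $f_1$ does not vanish on all of $[x+Iy]$, and Corollary~\ref{values general} (affine-in-$I$ behaviour on a sphere) then implies $f_1$ has at most one zero on $[x+Iy]$.

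Second, I would extract linear factors one at a time. If $f_1$ has no zero on $[x+Iy]$, set $n=0$ and $g=f_1$. Otherwise let $p_1$ be the unique zero of $f_1$ on $[x+Iy]$, choose $I=I_{p_1}$ so that $p_1\in\mathbb C_I$, and split $f_{1,I}(z)=F(z)+G(z)J$. Since $F(p_1)=G(p_1)=0$, both $F(z)/(z-p_1)$ and $G(z)/(z-p_1)$ are holomorphic on $U\cap\mathbb C_I$, and a direct computation using formula \eqref{f*g} shows that their slice regular extension $f_2$ satisfies $f_1=(q-p_1)\star f_2$. If $f_2$ vanishes anywhere on $[x+Iy]$, Corollary~\ref{zeroes} forces that zero to lie on the same sphere (conjugation $(p-p_1)^{-1}p(p-p_1)$ preserves $[x+Iy]$), and the maximality of $m$ prevents $f_2$ from vanishing on the entire sphere, so the recipe produces a unique $p_2\in[x+Iy]$ with $f_2=(q-p_2)\star f_3$, and the procedure iterates.

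Termination and the condition $p_i\neq\bar p_{i+1}$ both follow from the symmetrization. By Proposition~\ref{propmoltiplicative} and the identity $(q-p_i)^s=(q-x)^2+y^2$ valid for every $p_i\in[x+Iy]$, after $n$ steps the decomposition yields $f^s=[(q-x)^2+y^2]^{2m+n}\,g^s$; since $f^s_I$ is a nonzero holomorphic function on $\mathbb C_I$ and $(z-x)^2+y^2$ has a simple zero at $z=x+Iy$, the sum $2m+n$ is bounded by the finite order of vanishing of $f^s_I$ at $x+Iy$, forcing the process to stop. Finally, if $p_{i+1}=\bar p_i$ for some $i$, the block $(q-p_i)\star(q-\bar p_i)=(q-x)^2+y^2$ appears inside the product; because this real-coefficient polynomial commutes under $\star$ with every factor to its left, it can be pulled to the front and absorbed into $[(q-x)^2+y^2]^m$, contradicting the maximality of $m$. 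The most delicate step will be this commutation-and-absorption argument, together with the verification that each local holomorphic factorization genuinely extends to a slice regular function on the whole axially symmetric slice domain $U$ rather than only on the chosen slice $\mathbb C_I$.
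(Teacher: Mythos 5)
Your proposal is correct and follows essentially the same route as the paper: extract the maximal spherical power $[(q-x)^2+y^2]^m$ (finite by restricting to a slice and invoking the Identity Principle), peel off linear factors one at a time using the fact that a slice regular function not vanishing identically on a sphere has at most one zero there, and force termination through the symmetrization identity $f^s=[(q-x)^2+y^2]^{2m+n}g^s$. The only notable additions are cosmetic: you carry out the divisibility steps explicitly via the Splitting Lemma and you spell out why $p_{i+1}\neq\bar p_i$ (the commutation-and-absorption argument against maximality of $m$), a point the paper's proof leaves implicit.
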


\begin{proof}
If $f$ is not identically $0$ on $U$, and $f$ vanishes at $[x+Iy]$, then there exists an $m \in \mathbb N$ such that $$f(q) = [(q-x)^2+y^2]^m h(q)$$ for some $h$ not identically zero on $[x+Iy]$. In fact, suppose that it were possible to find, for all $k \in \mathbb N$, a function $h^{[k]}(q)$ such that $f(q) = [(q-x)^2+y^2]^k h^{[k]}(q)$. Then, choosing an $I \in \mathbb S$, the holomorphic map $f_I$ would have the factorization
$$f_I(z) = [(z-x)^2+y^2]^k h^{[k]}_I(z) = [z-(x+yI)]^k[z-(x-yI)]^k h^{[k]}_I(z)$$
for all $k \in \mathbb N$. This would imply $f_I \equiv 0$ and, by the Identity Principle, $f \equiv 0$.
\\
Now let $h$ be a slice regular function on $U$ which does not vanish identically on $[x+Iy]$. By Proposition \ref{zericoniugata}, $h$ has at most one zero $p_1 \in [x+Iy]$. If this is the case then $$g^{[0]}=h(q) = (q-p_1) \star  g^{[1]}(q)$$  for some function $g^{[1]}$ which does not vanish identically on $[x+Iy]$. If for all $k \in \mathbb N$ there existed a $p_{k+1} \in [x+Iy]$ and a $g^{[k+1]}$ such that $$g^{[k]}(q) = (q-p_{k+1})\star g^{[k+1]}$$ then we would have $$h(q) = (q-p_1)\star \ldots\star (q-p_k)\star g^{[k]}(q)$$for all $k \in \mathbb N$. Thus the symmetrization $h^s$ of $h$ would be such that $$h^s(q) = [(q-x)^2+y^2]^k (g^{[k]})^s(q)$$ for all $k \in \mathbb N$. By the first part of the proof, this would imply $h^s \equiv 0$. Thus Proposition \ref{zericoniugata} implies that $h \equiv 0$, which is a contradiction. Thus there exists an $n \in \mathbb N$ such that $g^{[n]}$ does not have zeros in $[x+Iy]$ and, setting $g = g^{[n]}$, we have the statement.
\end{proof}
We now introduce the notion of multiplicity of a root of a slice regular function in the various cases.
\begin{definition}
We say that a  function $f$ slice regular on an axially symmetric slice domain $U$ has a zero at $[q_0]\subset U$, $q_0=x_0+Iy_0$, $y_0\not=0$ with spherical multiplicity $m$, if $m$
is the largest natural number such that
$$
f(q) = [(q - x_0)^2 + y_0^2]^m \star g(q)
$$
for some  function $g$ slice regular on $U$ which does not vanish at $[q_0]$.
\end{definition}
\begin{definition} If a function $f$ slice regular on an axially symmetric slice domain $U$ has a root $q_1\in [q_0]$ in $U$, $q_0=x_0+Iy_0$, $y_0\not=0$, then we
say that $f$ has isolated multiplicity $r$ at $q_1$, if $r$ is the largest natural
number such that there exist $q_2, \ldots, q_r\in[q_0]$, $\bar q_{\ell+1}\not= q_\ell$ for all $\ell=1,\ldots, r-1$ and a function $g$  slice regular in $U$ which does not have zeros in $[q_0]$ such that
$$
f(q) = (q - q_1) \star (q - q_2) \star \ldots \star (q - q_r) [(q - x_0)^2 + y_0^2]^m \star g(q),
$$
where $m\in\mathbb N$.\\
If $q_0\in\mathbb R$ then we call isolated multiplicity of $f$ at $q_0$ the largest
$r\in\mathbb N$ such that
$$
f(q) = (q - q_0)^r  g(q)
$$
for some function $g$ slice regular in $U$ which does not vanish at $q_0$.
\end{definition}
\begin{remark}
{\rm If $f$ vanishes at the points of a sphere $[q_0]$, all the points of that sphere have the same multiplicity $m$ as zeros of $f$, except possibly for one point which may have higher multiplicity. To see an example, it suffices to consider $f(q)=(q-i)\star (q-j) (q^2+1)^3$. All points of the unit sphere $\mathbb S$ have multiplicity $3$ except for the point $q=i$ which has multiplicity $5$.}
\end{remark}

In particular, Theorem \ref{pointwise} implies that for each zero of $f\star g$ in $[q_0]$ there exists a zero of $f$ or a zero of $g$ in $[q_0]$. However, there are examples of products $f\star g$ whose zeros are not in one-to-one correspondence with the union of the zero sets of $f$ and $g$. It suffices to take $f(q)=q-i$ and $g(q)=q+i$ to see that both $f$ and $g$ have one isolated zero, while $f\star g$ has the sphere $\mathbb S$ as its set of zeros.
\\
We now study the relation between the zeros of $f$ and those of $f^c$ and $f^s$. We need two preliminary steps.

\begin{lemma}\label{simmetriazerireale}
Let $U \subseteq \mathbb{H}$ be an axially symmetric  slice domain and let $f \in\mathcal{N}( U)$. If $f(x_0+I_0y_0)=0$ for some $I_0\in\mathbb{S}$, then $f(x_0+Iy_0)=0$ for all $I\in \mathbb{S}.$
\end{lemma}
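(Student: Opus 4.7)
The plan is to exploit the characterization of quaternionic intrinsic functions given in the proposition right before Remark \ref{transcendent}: on an axially symmetric s-domain, $f\in\mathcal{N}(U)$ if and only if $f(x+Iy)=\alpha(x,y)+I\beta(x,y)$ with $\alpha,\beta$ \emph{real-valued} (and satisfying the usual parity and Cauchy--Riemann conditions).

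First I would apply this characterization to write
\[
f(x_0+I_0 y_0)=\alpha(x_0,y_0)+I_0\,\beta(x_0,y_0),
\]
with $\alpha(x_0,y_0),\beta(x_0,y_0)\in\mathbb{R}$. The hypothesis is that this quaternion equals $0$. Splitting into real and purely imaginary parts (recalling that $I_0\in\mathbb{S}$, so $I_0\beta(x_0,y_0)$ is purely imaginary when $\beta(x_0,y_0)\in\mathbb{R}$), I would conclude $\alpha(x_0,y_0)=0$ and $\beta(x_0,y_0)=0$. Then for every $I\in\mathbb{S}$,
\[
f(x_0+I y_0)=\alpha(x_0,y_0)+I\,\beta(x_0,y_0)=0,
\]
which is the claim.

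An equivalent route, in case one prefers to avoid invoking the real-valuedness of $\alpha,\beta$ directly, is to use the Representation Formula (Theorem \ref{Repr_formula}) with $J=I_0$: since $f\in\mathcal{N}(U)$, the identity $f(q)=\overline{f(\bar q)}$ gives $f(x_0-I_0 y_0)=\overline{f(x_0+I_0 y_0)}=\overline{0}=0$, and then
\[
f(x_0+I y_0)=\tfrac{1}{2}(1-II_0)f(x_0+I_0 y_0)+\tfrac{1}{2}(1+II_0)f(x_0-I_0 y_0)=0
\]
for every $I\in\mathbb{S}$. There is essentially no obstacle here; the only point worth being careful about is the use of axial symmetry of $U$, which guarantees that the whole sphere $[x_0+I_0y_0]$ lies in $U$ so that the expression $f(x_0+Iy_0)$ makes sense, and that the characterization of $\mathcal{N}(U)$ (or the Representation Formula) is available in the form used.
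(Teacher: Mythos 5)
Both of your routes are correct. Your second route is essentially the paper's own proof: the paper establishes the identity $f(x+yI_0)=\overline{f(x-yI_0)}$ on $U\cap\mathbb{C}_{I_0}$ via the complex Schwarz Reflection Principle (using that $f_{I_0}$ is a holomorphic self-map of $\mathbb{C}_{I_0}$ which is real on $U\cap\mathbb{R}$), deduces $f(x_0-I_0y_0)=0$, and concludes with the Representation Formula --- exactly your second argument, except that you cite the intrinsic-function identity $f(q)=\overline{f(\bar q)}$ directly from the remark in Section 2.4 rather than re-deriving it. Your first route is a genuinely shorter path: invoking the characterization $f(x+Iy)=\alpha(x,y)+I\beta(x,y)$ with $\alpha,\beta$ real-valued, the vanishing of $f$ at one point of the sphere forces $\alpha(x_0,y_0)=\beta(x_0,y_0)=0$ (real part and imaginary part separately, using $|I_0|=1$), and the conclusion is immediate for every $I\in\mathbb{S}$. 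This buys you a one-line proof at the cost of leaning on the proposition characterizing $\mathcal{N}(U)$, whose own proof already packages the Splitting Lemma, the Identity Principle and the Representation Formula; the paper's argument is more self-contained in that it only needs the Splitting Lemma and the Representation Formula directly. Your closing caveat about axial symmetry (so that the whole sphere $[x_0+I_0y_0]$ lies in $U$) is exactly the right hypothesis to flag.
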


\begin{proof}
Since $f \in\mathcal{N}( U)$, then
 $f(U _I)\subseteq \mathbb C_I$ for all $I \in \mathbb{S}$. Thus $f(x)$ is real for all $x\in U \cap \mathbb{R}$. The restriction $f_{I_0}:U _{I_0} \to \mathbb{C}_{I_0}$ is a holomorphic function mapping $U \cap \rr$ to $\rr$. By the (complex) Schwarz Reflection Principle, $f(x+yI_0)=\overline{f(x-yI_0)}$ for all $x+yI_0 \in U _{I_0}$.
Since $f(x_0+y_0I_0)=0$, we conclude that $f(x_0-y_0I_0)=0$ and the Representation Formula allows to deduce the statement.
\end{proof}

\begin{lemma}\label{simmetrizzatareale}
Let $U \subseteq \mathbb{H}$ be an axially symmetric  slice domain, let $f : U \to \mathbb{H}$ be a regular function and let $f^s$ be its symmetrization. Then $f^s(U _I) \subseteq \mathbb C_I$ for all $I \in \s$.
\end{lemma}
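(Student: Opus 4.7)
The plan is to use the Splitting Lemma together with the explicit formula \eqref{f^s_qua} for the symmetrization on a single slice $\mathbb{C}_I$, and then argue that the extension of an intrinsic holomorphic function to all of $U$ is automatically quaternionic intrinsic. Note that the statement has actually been anticipated in Remark \ref{symmLI_qua}, so the task is to provide the verification.

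First I would fix any $I\in\mathbb S$ and a $J\in\mathbb S$ perpendicular to $I$, and apply the Splitting Lemma to write $f_I(z)=F(z)+G(z)J$ on $U\cap\mathbb C_I$ with $F,G:U\cap\mathbb C_I\to\mathbb C_I$ holomorphic. By the very computation that leads to Definition \ref{fsymmetr}, one has
\[
f_I^s(z)=F(z)\overline{F(\bar z)}+G(z)\overline{G(\bar z)},
\]
which is a sum and product of $\mathbb C_I$-valued holomorphic maps, hence lands in $\mathbb C_I$. In particular $f^s(U\cap\mathbb C_I)\subseteq\mathbb C_I$, handling the chosen slice.

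Next I would show that $f_I^s$ is actually a \emph{complex intrinsic} function on $U\cap\mathbb C_I$, i.e.\ $\overline{f_I^s(z)}=f_I^s(\bar z)$ for every $z\in U\cap\mathbb C_I$. This is a direct algebraic check using the commutativity of $\mathbb C_I$ and the usual rule $\overline{ab}=\bar a\bar b$ there:
\[
\overline{f_I^s(z)}=\overline{F(z)}\,F(\bar z)+\overline{G(z)}\,G(\bar z)=F(\bar z)\overline{F(z)}+G(\bar z)\overline{G(z)}=f_I^s(\bar z).
\]

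Finally I would invoke Definition \ref{fsymmetr}, whereby $f^s$ is the slice regular extension of $f_I^s$ to $U$, together with the observation (already used in Remark \ref{extension} and in the proof of Proposition \ref{pro20}) that the extension of a complex intrinsic holomorphic function is a quaternionic intrinsic slice regular function. Concretely, writing $f_I^s(x+Iy)=\alpha(x,y)+I\beta(x,y)$, the intrinsic condition forces $\alpha,\beta$ to be real-valued, and then the Representation Formula \eqref{ext} gives $f^s(x+Ky)=\alpha(x,y)+K\beta(x,y)$ for every $K\in\mathbb S$; hence $f^s(U\cap\mathbb C_K)\subseteq\mathbb C_K$ for all $K\in\mathbb S$, which is the statement. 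The only subtle point is the intrinsic check on $\mathbb C_I$ and its transfer to the extension; once those are in place, the result is immediate.
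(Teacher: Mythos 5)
Your proof is correct and follows essentially the same route as the paper, which simply observes that the Splitting Lemma formula $f_I^s(z)=F(z)\overline{F(\bar z)}+G(z)\overline{G(\bar z)}$ is $\mathbb C_I$-valued and intrinsic, so its slice regular extension is quaternionic intrinsic (the paper compresses this into ``direct computation from the definition of $f^s$'', having already recorded the key point in Remark \ref{symmLI_qua}). Your write-up just makes the intrinsic check and the passage through the extension formula explicit; the only cosmetic slip is the phrase ``forces $\alpha,\beta$ to be real-valued'' --- for a $\mathbb C_I$-valued map they are real automatically, and what the intrinsic condition actually supplies is the even/odd symmetry in $y$ that makes the extension land in $\mathbb C_K$ for every $K$.
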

\begin{proof}
It follows by direct computation from the definition of $f^s$.
\end{proof}

\begin{proposition}\label{zericoniugata}
Let $U \subseteq\mathbb{H}$ be an axially symmetric  slice domain, let $f\in\mathcal{R}(U)$ and $[x_0+Iy_0]\subset U $.
\begin{enumerate}
\item[(1)] The function $f^s$ vanishes identically on $[x_0+Iy_0]$ if and only if $f^s$ has a zero in $[x_0+Iy_0]$, if and only if $f$ has a zero in $[x_0+Iy_0]$;
   \item[(2)] The zeros of $f$ in $[x_0+Iy_0]$ are in one-to-one correspondence with those of $f^c$;
   \item[(3)] The function $f$ has a zero in $[x_0+Iy_0]$ if and only if $f^c$ has a zero in $[x_0+Iy_0]$.
     \end{enumerate}
\end{proposition}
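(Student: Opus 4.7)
The plan is to prove the three statements in a logical order that exploits the symmetrization identity $f^s = f \star f^c = f^c \star f$ together with the Representation Formula. I would first establish the one-to-one correspondence in (2) (from which (3) follows at once), and then use (3) to close the circle of equivalences in (1).

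For (2), I would fix a sphere $[x_0+I y_0]\subset U$ and use Corollary \ref{alfabeta_qua} to write $f(x_0+Iy_0) = \alpha + I \beta$ with $\alpha=\alpha(x_0,y_0)$, $\beta=\beta(x_0,y_0)$, so that $f^c(x_0+Iy_0) = \overline{\alpha} + I\overline{\beta}$ by the formula following Definition of $f^c$. If $\beta=0$, then $f$ and $f^c$ are constant on the sphere, so either both vanish on the whole sphere (if $\alpha=0$) or neither has a zero there, and the correspondence is trivial. If $\beta\neq 0$, then $f(x_0+I_1 y_0)=0$ forces the unique value $I_1=-\alpha\beta^{-1}$, and similarly $f^c(x_0+Jy_0)=0$ forces $J=-\overline{\alpha}\,\overline{\beta}^{\,-1}$. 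Substituting $\alpha=-I_1\beta$, a direct computation gives $J=-\overline{\beta}\,I_1\,\overline{\beta}^{\,-1}$. The map $I\mapsto -\overline{\beta}\,I\,\overline{\beta}^{\,-1}$ is an involution of $\mathbb S$ (since conjugation by a nonzero quaternion preserves $\mathbb S$), so it induces the required bijection between zeros of $f$ and zeros of $f^c$ on the sphere. Item (3) is then the existence version of (2).

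For (1), the main structural input is that $f^s\in\mathcal N(U)$ by Lemma \ref{simmetrizzatareale}. Hence Lemma \ref{simmetriazerireale} applies to $f^s$ and yields that the zero set of $f^s$ is axially symmetric: if $f^s$ vanishes at a single point of $[x_0+Iy_0]$ it vanishes on the whole sphere. This gives the equivalence of the first two conditions in (1). To connect these to the zeros of $f$, I would proceed in two directions. If $f(q_0)=0$ for some $q_0\in[x_0+Iy_0]$, then Theorem \ref{pointwise} applied to $f^s=f\star f^c$ gives $f^s(q_0)=0$ immediately. Conversely, if $f^s(q_0)=0$, I would apply Corollary \ref{zeroes} to the factorization $f^s=f^c\star f$: either $f^c(q_0)=0$, or $f^c(q_0)\neq 0$ and $f\bigl(f^c(q_0)^{-1}q_0 f^c(q_0)\bigr)=0$. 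Since conjugation by the nonzero quaternion $f^c(q_0)$ preserves the sphere $[q_0]=[x_0+Iy_0]$, in either case $f$ or $f^c$ has a zero on $[x_0+Iy_0]$, and then (3) guarantees that $f$ itself has a zero on the sphere.

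The main obstacle I expect is the bookkeeping in the degenerate case $\beta(x_0,y_0)=0$ of part (2): there $f$ is constant on the sphere, so the ``bijection'' collapses to either an empty correspondence or a correspondence between two full spheres of zeros, and one has to state the one-to-one correspondence with enough care (in particular, to be consistent with the convention that a spherical zero counts as a single zero-set of $f$) so that it remains meaningful. Once this case is handled, the rest is a straightforward chain of implications using Corollary \ref{zeroes}, Theorem \ref{pointwise}, and the two lemmas about $\mathcal N(U)$.
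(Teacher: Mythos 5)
Your argument is correct, but it is organized differently from the paper's. The paper proves (1) first and self-containedly: it applies Theorem \ref{pointwise} to the factorization $f^s=f\star f^c$ at $q_0$ \emph{and} at $\overline{q_0}$, and works through the resulting subcases to show that a zero of $f^s$ forces a zero of $f$ on the sphere; items (2) and (3) are then obtained at the very end by the symmetry $(f^c)^c=f$. You instead prove (2) first, by writing $f=\alpha+I\beta$ and $f^c=\overline{\alpha}+I\overline{\beta}$ on the sphere and exhibiting the explicit correspondence $I_1\mapsto -\overline{\beta}\,I_1\,\overline{\beta}^{\,-1}$; having (3) in hand, the converse direction of (1) then needs only one application of Corollary \ref{zeroes} to $f^s=f^c\star f$, with no second evaluation at $\overline{q_0}$. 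Your route is shorter on (1) and yields an explicit formula for the bijection in (2); the price is that it leans on the identity $f^c(x+Iy)=\overline{\alpha(x,y)}+I\overline{\beta(x,y)}$, which the paper states after the definition of $f^c$ but does not prove there. One small slip: the map $I\mapsto-\overline{\beta}\,I\,\overline{\beta}^{\,-1}$ is generally \emph{not} an involution --- applying it twice gives conjugation by $\overline{\beta}^{\,2}$, which is nontrivial unless $\overline{\beta}^{\,2}$ is real. What actually makes the correspondence bijective is that its inverse is the analogous map attached to $f^c$, namely $J\mapsto-\beta J\beta^{-1}$, and the two compose to conjugation by $|\beta|^2$, i.e.\ the identity. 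This does not affect the validity of your proof, since in the nondegenerate case each of $f$ and $f^c$ has at most one zero on the sphere and the explicit formula carries the one to the other in both directions.
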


\begin{proof} We prove assertion (1). First of all, we note that by Lemma \ref{simmetriazerireale} and Lemma \ref{simmetrizzatareale}, if $f^s(x_0+I_=y_0) = 0$ for some $I_0\in\mathbb S$ then $f^s(x_0+Iy_0) = 0$ for all $I \in \s$. The converse is trivial.\\
If $q_0=x_0+I_0y_0$ is a zero of $f$ then, by Theorem \ref{pointwise}, also $f^s = f\star f^c$ vanishes at $q_0$.
Conversely, assume that $q_0$ is a zero of $f^s$. By formula \eqref{pointwise} either $f(q_0)=0$ or $f^c$ vanishes at the point
$$
f(q_0)^{-1}  q_0 f( q_0) = x_0 + y_0 [f( q_0)^{-1} I_0 f(q_0)] \in [q_0].
$$
In the first case, we have concluded, in the second case, we recall that $f^s$ vanishes not only at $q_0$ but on the whole sphere $[q_0]$ and so  $f^s(\bar q_0) = 0$. This fact implies that either $f(\bar q_0)=0$ or $$f^c(f(\bar q_0)^{-1} \bar q_0 f(\bar q_0))=0.$$ In the first case, the Representation Formula yields that  $f$ vanishes identically on $[q_0]$, which implies that, by its definition, also $f^c$ vanishes on $[q_0]$. In the second case, $f^c$ vanishes at the point
$$
f(\bar q_0)^{-1} \bar q_0 f(\bar q_0) = x_0 - y_0 [f(\bar q_0)^{-1} I_0 f(\bar q_0)] \in [q_0],
$$
thus $f^c$ vanishes at $[q_0]$ and so also $f$ vanishes at $[q_0]$ and the statement follows.\\
Thus, we have proven that $f$ has a zero in $[q_0]$ if and only if $f^s$ has a zero in $[q_0]$, which leads to the vanishing of $f^s$ on the whole $[q_0]$, which implies the existence of a zero of $f^c$ in $[q_0]$. Since $(f^c)^c=f$, exchanging the roles of $f$ and $f^c$ gives the rest of the statement.
\end{proof}


We now study the distribution of the zeros of regular functions on axially symmetric  slice domains. In order to obtain a full characterization of the zero set of a regular function, we first deal with a special case that will be crucial in the proof of the main result.

\begin{lemma}\label{zeriseriereale}
Let $U \subseteq \mathbb{H}$ be an axially symmetric  slice domain and let $f : U \to \mathbb{H}$ be a slice regular intrinsic function. If $f\not\equiv 0$, the zero set of $f$ is either empty or it is the union of isolated points (belonging to $\mathbb{R}$) and/or isolated 2-spheres.
\end{lemma}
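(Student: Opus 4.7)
The plan is to combine three ingredients already established earlier in the excerpt: the intrinsic decomposition $f(x+Iy)=\alpha(x,y)+I\beta(x,y)$ with real-valued $\alpha,\beta$, the Identity Principle on slice domains, and the Schwarz-reflection-type lemma that produced full 2-spheres of zeros for intrinsic functions (Lemma \ref{simmetriazerireale}).

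First, using the characterization of $\mathcal{N}(U)$, write $f(x+Iy)=\alpha(x,y)+I\beta(x,y)$ with $\alpha,\beta:D\to\mathbb{R}$ where $D=\{(x,y):x+Iy\in U\}$. Then the zero set of $f$ is the axially symmetric set
\[
Z(f)=\{x+Jy\in U \,:\, \alpha(x,y)=\beta(x,y)=0,\ J\in\mathbb{S}\}.
\]
Consequently, each point $(x_0,y_0)\in D$ with $\alpha(x_0,y_0)=\beta(x_0,y_0)=0$ produces either a real zero (when $y_0=0$) or the entire 2-sphere $[x_0+Iy_0]$ of zeros (when $y_0\neq 0$). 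This already shows $Z(f)$ is a union of real points and 2-spheres; the remaining issue is isolation.

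Next, fix any $I\in\mathbb{S}$ and consider the restriction $f_I:U_I\to\mathbb{C}_I$, which is holomorphic because $\overline{\partial}_I f=0$ and maps $U_I$ into $\mathbb{C}_I$ by intrinsicity. Since $U$ is a slice domain, $U_I$ is connected. If $f_I$ vanished identically on $U_I$, the Identity Principle (Theorem \ref{identity principle}) would force $f\equiv 0$ on $U$, contrary to hypothesis. Hence $f_I\not\equiv 0$, and by the classical identity theorem for holomorphic functions on a connected open set in $\mathbb{C}_I$, the zero set $Z(f_I)\subset U_I$ is discrete.

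Finally, the correspondence $(x,y)\leftrightarrow x+Iy$ is a homeomorphism between the slice $\{(x,y):x+Iy\in U,\,y\geq 0\}$ (which contains all relevant pairs thanks to the symmetry $\alpha(x,-y)=\alpha(x,y)$, $\beta(x,-y)=-\beta(x,y)$) and the corresponding half of $U_I$. Discreteness of $Z(f_I)$ therefore transfers to discreteness of the set $\{(x,y)\in D:\alpha=\beta=0,\,y\geq 0\}$ in $\mathbb{R}^2$. To conclude isolation \emph{in} $\mathbb{H}$: suppose a sequence of zeros $q_n=x_n+J_ny_n$ of $f$ accumulates to a real point $x_0$ or to a sphere $[x_0+I_0y_0]$; projecting by $q\mapsto (\mathrm{Re}\,q,|\mathrm{Im}\,q|)$ (where we may take $y_n\geq 0$) gives a sequence in the zero locus of $(\alpha,\beta)$ accumulating to $(x_0,0)$ or $(x_0,y_0)$, contradicting discreteness. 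Equivalently, the points $x_n+Iy_n\in Z(f_I)$ would accumulate to an isolated zero of $f_I$. This yields the claim.

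The only mildly delicate point is the last one: translating the one-slice isolation into isolation in $\mathbb{H}$. I do not expect a serious obstacle, because the intrinsic hypothesis makes $|f|$ (and the pair $(\alpha,\beta)$) depend only on $(\mathrm{Re}\,q,|\mathrm{Im}\,q|)$, so everything reduces to a discreteness statement in the two-dimensional parameter space $D$.
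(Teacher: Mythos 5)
Your proposal is correct and follows essentially the same route as the paper: establish that the zero set of an intrinsic function is a union of real points and 2-spheres (you do this via the real-valued $\alpha,\beta$ decomposition, the paper via Lemma \ref{simmetriazerireale} — equivalent facts), then restrict to one slice $\mathbb{C}_I$ and invoke the Identity Principle to get discreteness of $Z(f_I)$, which forces the spheres and real zeros to be isolated. Your extra care in transferring one-slice discreteness to isolation in $\mathbb{H}$ via the projection $q\mapsto(\mathrm{Re}\,q,|\mathrm{Im}\,q|)$ is a detail the paper leaves implicit, and it is handled correctly.
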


\begin{proof}
We know from Lemma \ref{simmetriazerireale} that the zero set of such an $f$ consists of  2-spheres of the type $x+y\mathbb{S}$, possibly reduced to real points. Now choose $I$ in $\mathbb{S}$ and notice that the intersection of $\mathbb C_I$ with the zero set of $f$ consists of all the real zeros of $f$ and of exactly two zeros for each sphere $x+y\mathbb{S}$ on which $f$ vanishes (namely, $x+yI$ and $x-yI$). If $f\not \equiv 0$ then, by the Identity Principle, the zeros of $f$ in $\mathbb C_I$ must be isolated. Hence the zero set of $f$ consists of isolated real points and/or isolated 2-spheres.
\end{proof}

We now state and prove the result on the topological structure of the zero set of regular functions.

\begin{theorem}[Structure of the Zero Set]\label{structurethm}
Let $U \subseteq\mathbb{H}$ be an axially symmetric  slice domain and let  $f:U \to\mathbb{H}$ be a regular function. If $f$ does not vanish identically, then the zero set of $f$ consists of isolated points or isolated 2-spheres.
\end{theorem}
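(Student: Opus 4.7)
My plan is to reduce the problem to the corresponding statement for the symmetrization $f^s$, which is already handled by Lemma \ref{zeriseriereale} (since $f^s$ is, in effect, quaternionic intrinsic by Remark \ref{symmLI_qua}/Lemma \ref{simmetrizzatareale}).

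First I would verify that $f \not\equiv 0$ forces $f^s \not\equiv 0$. Fixing $I \in \mathbb{S}$ and using the Splitting Lemma to write $f_I(z) = F(z) + G(z)J$, formula \eqref{f^s_qua} gives $f^s_I(z) = F(z)\overline{F(\bar z)} + G(z)\overline{G(\bar z)}$. If $f^s \equiv 0$, restricting to the real axis gives $|F(x)|^2 + |G(x)|^2 = 0$, hence $F$ and $G$ vanish on a real interval, so by the complex Identity Principle $F \equiv G \equiv 0$ on $U \cap \mathbb{C}_I$, and the quaternionic Identity Principle (Theorem \ref{identity principle}) then forces $f \equiv 0$, a contradiction.

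Next I would apply Lemma \ref{zeriseriereale} to $f^s$: its zero set is a (possibly empty) disjoint union of isolated real points and isolated $2$-spheres $[x+Iy]$. I then transfer this to $f$ via Proposition \ref{zericoniugata}: for every sphere $[q_0] \subset U$, $f$ has a zero on $[q_0]$ if and only if $f^s$ vanishes identically on $[q_0]$; and for real $x_0$ one has $f^s(x_0) = f(x_0)\overline{f(x_0)} = |f(x_0)|^2$, so the real zeros of $f$ coincide with the real zeros of $f^s$. Hence the set $$Z := \{[q_0] \subset U : f \text{ has a zero on } [q_0]\} \cup \{x_0 \in U \cap \mathbb{R} : f(x_0) = 0\}$$ is already a disjoint union of isolated real points and isolated $2$-spheres in $\mathbb{H}$.

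The last step is to analyze what $f$ does on each sphere $[q_0]$ lying in $Z$: by Corollary \ref{corollary sphere_qua}, either $f$ vanishes identically on $[q_0]$ (a spherical zero) or $f$ vanishes at exactly one point of $[q_0]$ (an isolated zero); in either case the contribution to the zero set of $f$ sits inside the isolated $2$-sphere provided by $f^s$. Combining with the real isolated zeros, the zero set of $f$ is a disjoint union of isolated points and isolated $2$-spheres, as claimed.

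The only genuinely delicate point is the passage from ``$f^s$ vanishes at one point of $[q_0]$'' to ``$f^s$ vanishes identically on $[q_0]$'', needed to guarantee true isolation of the spherical components; this is exactly what Proposition \ref{zericoniugata}(1) provides, combined with Lemma \ref{simmetrizzatareale} and the Representation Formula. Everything else is bookkeeping: once $f^s$ is known to have a topologically discrete zero set made of points and spheres, the zero set of $f$ is pinned inside it and can only shrink (on a non-spherical-zero sphere of $f^s$, $f$ has at most one zero; on a sphere where $f^s$ is identically zero, either $f$ is identically zero there or has one isolated point).
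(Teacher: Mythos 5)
Your proposal is correct and follows essentially the same route as the paper's proof: pass to the symmetrization $f^s$, apply Lemma \ref{zeriseriereale} via Lemma \ref{simmetrizzatareale}, and transfer the structure back to $f$ through Proposition \ref{zericoniugata}. The extra details you supply (the verification that $f^s\not\equiv 0$, the identity $f^s(x_0)=|f(x_0)|^2$ at real points, and the appeal to Corollary \ref{corollary sphere_qua} on each sphere) merely make explicit what the paper leaves implicit.
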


\begin{proof}
Consider the symmetrization $f^s$ of $f$: by Lemma \ref{simmetrizzatareale}, $f^s$ fulfills the hypotheses of Lemma \ref{zeriseriereale}. Hence the zero set of $f^s$ consists of isolated real points or isolated 2-spheres. According to Proposition \ref{zericoniugata}, the real zeros of $f$ and $f^s$ are exactly the same. Furthermore, each 2-sphere in the zero set of $f^s$ corresponds either to a 2-sphere of zeros, or to a single zero of $f$. This concludes the proof.
\end{proof}
An immediate consequence is:
\begin{corollary}[Strong Identity Principle] \index{Identity principle!strong}
Let $f$ be a function slice regular in an axially symmetric slice domain $U$. If there exists a sphere $[q_0]$ such that the zeros of $f$ in $U\setminus [q_0]$ accumulate to a point of $[q_0]$ then $f$ vanishes on $U$.
\end{corollary}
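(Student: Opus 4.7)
The plan is to argue by contradiction, using the Structure Theorem for the zero set (Theorem \ref{structurethm}) as the essential tool.

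First, I would assume for contradiction that $f\not\equiv 0$ on $U$. Let $p\in[q_0]$ be the claimed accumulation point of zeros of $f$ lying in $U\setminus[q_0]$. Since $f$ is continuous and $p$ is a limit of points where $f$ vanishes, continuity forces $f(p)=0$. Thus $p$ belongs to the zero set $Z_f$. By the Structure Theorem, because $f$ is nonzero and regular on the axially symmetric slice domain $U$, the set $Z_f$ decomposes into isolated points and isolated $2$-spheres. The key dichotomy is therefore: either the connected component of $Z_f$ containing $p$ is the single point $\{p\}$, or it is the entire sphere $[q_0]=[p]$.

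In the first case, $\{p\}$ is an isolated point of $Z_f$, so there is an open neighborhood $W$ of $p$ in $U$ with $W\cap Z_f=\{p\}$. But the hypothesis gives a sequence of zeros $q_n\in U\setminus[q_0]$ with $q_n\to p$; in particular $q_n\neq p$, and eventually $q_n\in W$, contradicting $W\cap Z_f=\{p\}$. In the second case, the sphere $[q_0]$ is an isolated $2$-sphere of the zero set, so there exists an open neighborhood $V$ of $[q_0]$ in $U$ such that $V\cap Z_f=[q_0]$. Again the sequence $q_n\in U\setminus[q_0]$ of zeros converging to $p\in[q_0]$ lies eventually in $V$, producing zeros in $V\setminus[q_0]$ and contradicting $V\cap Z_f=[q_0]$.

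Either horn of the dichotomy contradicts the accumulation assumption, so the initial assumption $f\not\equiv 0$ must fail, and we conclude $f\equiv 0$ on $U$. The only subtle point I would need to confirm is the precise meaning of \emph{isolated} in the Structure Theorem, namely that an isolated $2$-sphere of zeros admits a tubular neighborhood meeting $Z_f$ only in the sphere itself; this is the natural reading and is what the theorem effectively asserts via its proof through $f^s$ and Lemma \ref{zeriseriereale}, where holomorphicity of $f^s_I$ on a slice yields isolated complex zeros and hence, after reconstruction by the Representation Formula, an open neighborhood of $[q_0]$ free of other zeros. No other step requires delicate work—the corollary is essentially a direct consequence of the Structure Theorem, with the only care needed being the bookkeeping in the two cases above.
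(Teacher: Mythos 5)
Your argument is correct and is exactly the route the paper intends: the corollary is stated immediately after Theorem \ref{structurethm} as a direct consequence, and your case analysis (isolated point versus isolated $2$-sphere, each yielding a punctured or tubular neighborhood free of further zeros) is the natural way to make that implication explicit. The subtlety you flag about the meaning of ``isolated'' for a $2$-sphere is resolved as you suspect: since $Z_f\subseteq Z_{f^s}$ and $Z_{f^s}$ is axially symmetric with isolated zeros on each slice $\mathbb{C}_I$ (Lemmas \ref{simmetrizzatareale} and \ref{zeriseriereale}), the axially symmetric completion of a zero-free punctured disc around $q_0$ in $\mathbb{C}_I$ gives the required neighborhood of $[q_0]$ meeting $Z_f$ only in $[q_0]$.
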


\section{Modulus of a slice regular function and Eh\-ren\-preis-Malgrange lemma}
We start by proving two results extending the Maximum and Minimum Modulus Principle to  the present setting.
\begin{theorem}[Maximum Modulus Principle]\index{Maximum modulus principle}
Let $U$ be a slice domain and let $f : U \to \hh$ be slice regular. If $|f|$ has a relative maximum at $p \in U$, then $f$ is constant.
\end{theorem}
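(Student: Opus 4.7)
The plan is to reduce the quaternionic statement to the complex Maximum Modulus Principle on a well-chosen slice, and then promote the resulting slice-wise constancy to $U$ via the Identity Principle.

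First I would pick an imaginary unit $I\in\mathbb{S}$ with $p\in\mathbb{C}_I$ (any $I$ works if $p\in\mathbb{R}$, otherwise take $I=I_p$). Since $U$ is a slice domain, $U_I:=U\cap\mathbb{C}_I$ is a connected open subset of $\mathbb{C}_I$ containing $p$. Choose $J\in\mathbb{S}$ orthogonal to $I$; by the Splitting Lemma there exist holomorphic functions $F,G:U_I\to\mathbb{C}_I$ with
\[
f_I(z)=F(z)+G(z)J,\qquad z\in U_I.
\]
Because $1,I,J,IJ$ is an orthonormal basis of $\mathbb{H}$, one has $|f_I(z)|^2=|F(z)|^2+|G(z)|^2$ on $U_I$.

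Next, I would observe that the assumption that $|f|$ has a relative maximum at $p$ in $U$ forces $|f_I|^2$ to have a relative maximum at $p$ in $U_I$. Now $|F|^2+|G|^2$ is a sum of squared moduli of holomorphic functions on a planar domain and is therefore subharmonic; by the maximum principle for subharmonic functions on the connected domain $U_I$ the function $|F|^2+|G|^2$ is constant on $U_I$. Applying the Laplacian and using that for $F$ holomorphic one has $\Delta|F|^2=4|F'|^2$ (and similarly for $G$), I would deduce $|F'|^2+|G'|^2\equiv 0$ on $U_I$, hence $F'\equiv G'\equiv 0$. Since $U_I$ is connected, $F$ and $G$ are constants, so $f_I\equiv c$ on $U_I$ for some $c\in\mathbb{H}$.

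Finally, I would extend constancy from the slice to all of $U$ by invoking the Identity Principle (Theorem~\ref{identity principle}). The slice regular function $f-c$ vanishes on the open set $U_I\subseteq\mathbb{C}_I$, which certainly has an accumulation point in $\mathbb{C}_I$; therefore $f-c\equiv 0$ on $U$, i.e.\ $f$ is constant. The only mildly delicate step is the passage from $|F|^2+|G|^2\equiv\text{const}$ to $F,G$ individually constant; the subharmonicity/Laplacian argument handles it cleanly, and once that is done the Identity Principle does the rest.
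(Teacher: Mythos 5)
Your argument is sound in outline and takes a genuinely different route from the paper's. The paper first disposes of the case $f(p)=0$, then normalizes so that $f(p)$ is real and positive (multiplying $f$ on the right by $\overline{f(p)}$); with that normalization the chain $|F(p)|^2=|f_I(p)|^2\geq |f_I(z)|^2=|F(z)|^2+|G(z)|^2\geq |F(z)|^2$ shows that $|F|$ alone has a relative maximum at $p$, so the classical holomorphic Maximum Modulus Principle forces $F$ to be constant, and then the same chain squeezes $|G|^2\leq 0$. Your version avoids the normalization entirely by treating $F$ and $G$ symmetrically through the subharmonic function $|F|^2+|G|^2$ and the identity $\Delta|F|^2=4|F'|^2$; this is arguably cleaner and generalizes more readily (e.g.\ to splittings with more than two holomorphic components), at the cost of invoking subharmonicity rather than only the elementary holomorphic maximum principle. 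Both proofs then finish identically with the Identity Principle.

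One step needs repair. The maximum principle for subharmonic functions, as usually stated, concerns a \emph{global} maximum on a connected domain; a merely \emph{relative} maximum of a subharmonic function does not in general force global constancy (consider $u(z)=\max(|z|^2-1,0)$, which is subharmonic, locally constant near $0$, and nonconstant). So you cannot directly conclude that $|F|^2+|G|^2$ is constant on all of $U_I$. The fix uses only ingredients you already have, in a different order: the sub--mean--value inequality combined with the relative maximum at $p$ gives $|F|^2+|G|^2\equiv |f(p)|^2$ on a small disc around $p$; applying the Laplacian there yields $|F'|^2+|G'|^2\equiv 0$ on that disc, hence $F'\equiv G'\equiv 0$ on the disc; and then the identity theorem for the holomorphic functions $F'$ and $G'$ on the connected set $U_I$ gives $F'\equiv G'\equiv 0$ everywhere, so $F$ and $G$ are constant on $U_I$. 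With that adjustment the remainder of your proof, including the final appeal to the Identity Principle, goes through as written.
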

\begin{proof}
If $p$ is a zero of $f$ then $|f|$ has zero as its maximum value, so $f$ is identically $0$. So we assume $f(p) \neq 0$.  It is not reductive to assume that $f(p) \in \rr, f(p)>0$, by possibly multiplying $f$ by $\overline{f(p)}$ on the right side. Let $I,J \in \mathbb S$ be such that $p$ belongs to the complex plane $\mathbb C_I$ and $I \perp J$. We use the Splitting Lemma to write $f_I = F+GJ$ on $U_I=U\cap\mathbb C_I$. Then, for all $z$ in a neighborhood $V_I=U\cap\mathbb C_I$  of $p$ in $U_I$ we have, since $f(p)$ is real:
$$|F(p)|^2 = |f_I(p)|^2 \geq |f_I(z)|^2 = |F(z)|^2 + |G(z)|^2 \geq |F(z)|^2.$$
Hence $|F|$ has a relative maximum at $p$ and the Maximum Modulus Principle for holomorphic functions of one complex variable allows us to conclude that $F$ is constant and so $F \equiv f(p)$. As a consequence, we have
$$|G(z)|^2 = |f_I(z)|^2-|F(z)|^2 = |f_I(z)|^2-|f_I(p)|^2 \leq  |f_I(p)|^2-|f_I(p)|^2 = 0$$
for all $z \in V_I$, and so $f_I = F \equiv f(p)$ in $V_I$. From the Identity Principle, we deduce that $f \equiv f(p)$ in $U$.
\end{proof}

\begin{theorem}[Minimum Modulus Principle]\index{Minimum modulus principle} \label{minimum}
Let $U$ be an axially symmetric  slice domain and let $f : U \to \hh$ be a slice regular function. If $|f|$ has a local minimum point $p\in U$ then either $f(p)=0$ or $f$ is constant.
\end{theorem}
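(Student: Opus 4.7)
The plan is to reduce the statement to the Maximum Modulus Principle (just proved) by passing to the slice $\star$-inverse $f^{-\star}$ of $f$, assuming $f(p)\neq 0$.

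First I will show that $f$ has no zeros on the sphere $[p]=[x_0+I_0y_0]$. By Corollary \ref{values general}, the restriction of $f$ to $[p]$ is affine in the imaginary unit, say $f(x_0+Iy_0)=a+Ib$, so $I\mapsto|a+Ib|^2$ is an affine function on $\mathbb{S}$, and its local minima on the sphere coincide with its unique global minimum. Since $p$ is a local minimum of $|f|$ on $U$, it is in particular a local (hence global) minimum of $|f|$ on $[p]$; as $|f(p)|>0$, $f$ cannot vanish elsewhere on $[p]$. Proposition \ref{zericoniugata} then gives $f^s(p)\neq 0$, and combined with Remark \ref{symmLI_qua} ($|f^s|$ constant on spheres) and continuity, $f^s$ is nonzero on an axially symmetric open neighborhood $V$ of $[p]$, so $f^{-\star}:=(f^s)^{-1}\star f^c$ is defined and slice regular on $V$.

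Next I will apply Theorem \ref{pointwise} to the identity $f\star f^{-\star}\equiv 1$: for $q\in V$ with $f(q)\neq 0$,
$$
1=f(q)\,f^{-\star}(T_f(q)),\qquad T_f(q):=f(q)^{-1}\,q\,f(q)\in[q],
$$
whence $|f^{-\star}(T_f(q))|=1/|f(q)|\leq 1/|f(p)|$ for $q$ near $p$, with equality at $q=p$. A direct differentiation gives
$$
dT_f|_p(h)=f(p)^{-1}\bigl(h\,f(p)+p\,df|_p(h)-df|_p(h)\,T_f(p)\bigr),
$$
and I will combine this with the first-order optimality condition $\nabla|f|^2|_p=0$, equivalently $\mathrm{Re}(df|_p(h)\,\overline{f(p)})=0$ for every $h\in\mathbb{H}$, to deduce that $dT_f|_p$ is injective (hence invertible). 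By the inverse function theorem, $T_f$ sends a neighborhood of $p$ onto a neighborhood of $T_f(p)$, so $|f^{-\star}|$ attains a local maximum at $T_f(p)$ of value $1/|f(p)|$.

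Finally, the Maximum Modulus Principle applied to $f^{-\star}$ on a small axially symmetric slice-domain neighborhood of $T_f(p)$ contained in $V$ forces $f^{-\star}$ to be constant there. Since $U\setminus Z_{f^s}$ is itself a slice domain (the zero set $Z_{f^s}$ is, by the Structure of the Zero Set theorem, a discrete union of isolated points and $2$-spheres whose removal disconnects neither $U$ nor any slice $\mathbb{C}_I$), the Identity Principle propagates constancy of $f^{-\star}$ to all of $U\setminus Z_{f^s}$; taking $\star$-inverses and using continuity yields $f$ constant on $U$. The main obstacle is verifying the invertibility of $dT_f|_p$: without the local-minimum hypothesis, $T_f$ need not be a local diffeomorphism at $p$, and the first-order condition on $|f|^2$ forces $df|_p(h)\,\overline{f(p)}$ into the $3$-dimensional subspace $\mathrm{Im}(\mathbb{H})$, so the technical check amounts to showing that this orthogonality, combined with the explicit formula above, rules out any nontrivial $h$ satisfying $h\,f(p)=df|_p(h)\,T_f(p)-p\,df|_p(h)$.
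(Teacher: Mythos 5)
Your overall strategy is the one the paper itself uses: show that $f$ has no zeros on $[p]$, pass to $f^{-\star}$, use Theorem \ref{pointwise} to convert the local minimum of $|f|$ into a local maximum of $|f^{-\star}|$ at a point of $[p]$, and invoke the Maximum Modulus Principle. The first and last steps are sound. The gap is in the middle step, and you flag it yourself: to conclude that $|f^{-\star}|$ has a local \emph{maximum} at $T_f(p)$ you need every point near $T_f(p)$ to be of the form $T_f(q)$ with $q$ near $p$, and you propose to get this from the inverse function theorem via invertibility of $dT_f|_p$. You never verify that invertibility; you only reduce it to ``ruling out any nontrivial $h$ satisfying $hf(p)=df|_p(h)\,T_f(p)-p\,df|_p(h)$'', and it is not clear this can be done. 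The first-order condition $\mathrm{Re}\bigl(df|_p(h)\overline{f(p)}\bigr)=0$ only forces $df|_p$ to take values in a $3$-dimensional real subspace, so $df|_p$ has nontrivial kernel and the candidate kernel vectors of $dT_f|_p$ sweep out a genuinely $2$-dimensional space of purely imaginary $h$ orthogonal to $\mathrm{Im}(p)$; excluding them would require exploiting the constraints that slice regularity places on the off-slice derivatives, which you do not do. Note also that you cannot test the claim on examples, since the only functions satisfying the hypotheses turn out to be constants, and any proof of invertibility must of course not presuppose the theorem.

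The step can be closed with no differential topology at all, which is what the paper does. By Corollary \ref{values general}, $|f|$ restricted to each sphere $[x+Iy]$ is of the form $|a+Ib|$, so $|f|^2$ is affine in $I\in\mathbb S$ and attains its minimum on the sphere at a unique point $q^*(x,y)$ (unless it is constant there), depending continuously on $(x,y)$. Since $p$ is a local minimum of $|f|$, necessarily $q^*(x_0,y_0)=p$, and continuity of $q^*$ gives $\min_{[x+Iy]}|f|\geq |f(p)|$ for all spheres with $(x,y)$ near $(x_0,y_0)$. Now combine the identity $|f^{-\star}(q)|=1/|f(\tilde q)|$ with $\tilde q\in[q]$ (from $f^{-\star}\star f=1$ and Theorem \ref{pointwise}) with $|f^{-\star}(T_f(p))|=1/|f(p)|$ (from $f\star f^{-\star}=1$): the first gives $\max_{[x+Iy]}|f^{-\star}|\leq 1/\min_{[x+Iy]}|f|\leq 1/|f(p)|$ on an axially symmetric neighbourhood of $[p]$, and the second shows this bound is attained at $T_f(p)\in[p]$. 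Hence $|f^{-\star}|$ has a genuine local maximum at $T_f(p)$, with no need for $T_f$ to be an open map. I recommend replacing your inverse-function-theorem step with this sphere-by-sphere argument; the rest of your proof then goes through as written.
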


\begin{proof}
Consider a slice regular function $f : U \to \hh$ whose modulus has a minimum point $p \in U $ with $f(p) \neq 0$. Such an $f$ does not vanish on the sphere $S$ defined by $p$. Indeed, if $f$ vanished at a point $p' \in S$ then $|f_{|_S}|$ would have a global minimum at  $p'$, a global maximum and no other extremal point, which contradicts the hypothesis on $p$. But if $f$ does not have zeroes in $S$, neither $f^s$ does. Hence the domain $U ' = U \setminus {Z}_{f^s}$ of $f^{-\star }$ includes $S$ (where ${Z}_{f^s}$ denotes the set of zeros of $f^s$). Thanks to Theorem \ref{pointwise},
$$|f^{-\star }(q)| = \frac{1}{|f(\tilde q)|}, \qquad q \in U ' $$ for a suitable $\tilde q$ belonging to the sphere of $q$.
 If $|f|$ has a minimum at $p \in x+y\s \subseteq U '$ then $|f |$ has a minimum at $\tilde p \in U '$. As a consequence, $|f^{-\star }|$ has a maximum at $\tilde p$. By the Maximum Modulus Principle, $f^{-\star }$ is constant on $U '$. This implies that $f$ is constant in $U '$ and, thanks to the Identity Principle, it is constant in  $U$.
\end{proof}

We now prove an analog of the Ehrenpreis-Malgrange lemma, giving lower bounds for the moduli of polynomials away from their zeros, for polynomials with quaternionic coefficients.
We begin with a simple result, which deals with the case in which we are interested in finding a lower bound on a sphere centered at the origin. In general, the bounds will be assigned on a $3-$dimensional toroidal hypersurface.
\begin{theorem}\label{EMsphere}
Let $P(q)$ be a slice regular polynomial of degree $m,$ with leading coefficient $a_m.$ Let $p$ be the number of  distinct
spherical zeroes of $P(q)$, let
$t$ be the number of  distinct isolated zeroes and
let $M=p+t.$ Given any $R>0$, we can find a sphere $\Gamma$ centered at the origin
and of radius $r<R$ on which
$$|P(q)|\geq |a_m|\left(\frac{R}{2(M+1)}\right)^m.$$
\end{theorem}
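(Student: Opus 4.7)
The plan is to mimic the classical Ehrenpreis--Malgrange argument: factor $P$ completely via Theorem \ref{dueuno}, derive a pointwise lower bound for $|P(q)|$ on the sphere $\{|q|=r\}$ in terms of $r$ and the moduli of the zeros, and then use a pigeonhole argument on $(0,R]$ to choose $r$ so that every zero modulus is at distance at least $R/(2(M+1))$ from $r$.

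First I would write, using Theorem \ref{dueuno},
\[
P(q) = \phi_1(q)^{m_1}\cdots \phi_p(q)^{m_p}\cdot (q-\alpha_{11})\star\cdots\star(q-\alpha_{t,n_t})\cdot c,
\]
where $\phi_k(q)=q^2-2q\,\mathrm{Re}(w_k)+|w_k|^2$, the $\alpha_{ij}$ belong to the $t$ distinct spheres $S_1,\ldots,S_t$ of isolated zeros, and $|c|=|a_m|$ (all non-constant factors are monic and the degrees add up to $m$). Because each $\phi_k$ has real coefficients it lies in $\mathcal{N}(\mathbb{H})$, so $\phi_k\star g=\phi_k\cdot g$ pointwise and the spherical factors pull out of the $\star$-product:
\[
|P(q)| = |a_m|\prod_{k=1}^p|\phi_k(q)|^{m_k}\cdot \bigl|(q-\alpha_{11})\star\cdots\star(q-\alpha_{t,n_t})\bigr|.
\]

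Next I would establish two modulus bounds valid for $|q|=r$. Writing $q=x+Iy$ and setting $\alpha_k=\mathrm{Re}(w_k)$, $\gamma_k=|\mathrm{Im}(w_k)|$, a direct expansion yields
\[
|\phi_k(q)|^2=\bigl((x-\alpha_k)^2+(y-\gamma_k)^2\bigr)\bigl((x-\alpha_k)^2+(y+\gamma_k)^2\bigr),
\]
and each factor is the squared Euclidean distance in $\mathbb{R}^2$ between a vector of norm $r$ and a vector of norm $|w_k|$, hence is bounded below by $(r-|w_k|)^2$ by the triangle inequality; thus $|\phi_k(q)|\geq (r-|w_k|)^2$. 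For the $\star$-product of linear factors I would iterate Theorem \ref{pointwise}: each application peels off a factor $|q^{(\cdot)}-\alpha_{ij}|$, where $q^{(\cdot)}$ is obtained from $q$ by a similarity transformation. Since any conjugation $q\mapsto a^{-1}qa$ coincides with the unit-quaternion conjugation by $a/|a|$, the point $q^{(\cdot)}$ stays on $[q]$ and in particular has modulus $r$. The triangle inequality, together with the fact that all $\alpha_{ij}\in S_i$ share the same modulus $|\alpha_i|$, then gives
\[
\bigl|(q-\alpha_{11})\star\cdots\star(q-\alpha_{t,n_t})\bigr|\geq \prod_{i=1}^{t}\bigl|r-|\alpha_i|\bigr|^{n_i}.
\]

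To conclude I would apply the pigeonhole principle: the set of zero moduli $\{|w_1|,\ldots,|w_p|,|\alpha_1|,\ldots,|\alpha_t|\}$ has at most $M=p+t$ elements, so partitioning $(0,R]$ into $M+1$ equal subintervals of length $R/(M+1)$ yields at least one subinterval disjoint from all of them; choosing $r$ to be its midpoint forces $|r-|w_k||\geq R/(2(M+1))$ and $|r-|\alpha_i||\geq R/(2(M+1))$ for every $k, i$. Substituting into the bounds above and invoking the degree identity $2(m_1+\cdots+m_p)+(n_1+\cdots+n_t)=m$ produces the claimed inequality. The main obstacle is the non-commutative bookkeeping in the second modulus estimate: without the observation that the similarities $q\mapsto a^{-1}qa$ arising from Theorem \ref{pointwise} preserve $[q]$, one could not conclude that the iterated evaluation points remain on the sphere $|q|=r$, and the clean factorization of $|(q-\alpha_{11})\star\cdots\star(q-\alpha_{t,n_t})|$ into pointwise moduli would not translate into a uniform lower bound.
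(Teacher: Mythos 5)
Your proposal is correct and follows essentially the same route as the paper's proof: factor via Theorem \ref{dueuno}, bound each spherical factor below by $\bigl||q|-|w_k|\bigr|^2$ and each $\star$-linear factor by $\bigl||q|-|\alpha_{ij}|\bigr|$ using Theorem \ref{pointwise} together with the modulus-preservation of the similarities, then pigeonhole on the at most $M$ zero moduli in $[0,R]$. The only cosmetic difference is that you derive the spherical-factor bound by an explicit coordinate computation on $\mathbb{C}_I$, whereas the paper uses the quaternionic factorization $q^2-2\mathrm{Re}(\alpha)q+|\alpha|^2=(q-\alpha)\bigl(q-(q-\alpha)^{-1}\overline{\alpha}(q-\alpha)\bigr)$; both yield the same estimate.
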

\begin{proof}
By using Theorem \ref{dueuno}, we can decompose $P(q)$ as
 $$P(q)=S(q)Q(q)a_m$$ where $S$ is a product of factors of the form
 $$
 S(q)=(q^{2}-2qRe(w_{1})+|w_{1}|^{2})^{m_{1}}\cdots
(q^{2}-2qRe(w_{p})+|w_{p}|^{2})^{m_{p}}
 $$
and $Q$ as in (\ref{fattorizzazione}). The cardinality of the set $V=\{|q|  : q \in \mathbb{H}, P(q)=0\}$ is at most $M$, in fact some isolated zeros may belong to some spheres of zeros. In any case, there exists a subinterval $[a,b]$ of $[0, R]$ of length at least $\frac{R}{M+1}$ which does not contain any element of $V$. Let $\Gamma$ be the $3-$sphere centered in the origin and with radius $\frac{a+b}{2}.$

We now estimate from below the absolute value of $P(q)$ on a generic point on $\Gamma.$ Since $P(q)=S(q)Q(q)a_m$, we will estimate the absolute values of  both $S(q)$ and $Q(q)$ on $\Gamma$.
To estimate $S(q)$, it is useful to recall that for any pair of quaternions $q$ and $\alpha$, we have

\[
\begin{split}
q^2-2{\rm Re}(\alpha)q+|\alpha|^2&=q^2-\alpha q-\overline \alpha q +\overline \alpha \alpha\\
&=
(q-\alpha)\left(q-(q-\alpha)^{-1}\overline \alpha (q-\alpha)\right);
\end{split}
\]
and so

 \[
 \begin{split}
 |q^2-2{\rm Re}(\alpha)q+|\alpha|^2|&=|(q-\alpha)||q-(q-\alpha)^{-1}\overline \alpha (q-\alpha)|\\
 &\geq ||q|-|\alpha||\cdot||q|-|(q-\alpha)^{-1}\overline \alpha (q-\alpha)||
 \\
 &=||q|-|\alpha||\cdot||q|-|\overline{\alpha}||\\
 &=||q|-|\alpha||^2.
 \end{split}
 \]
Thus we have

\[
\begin{split}
|S(q)|&=|(q^{2}-2q{\rm Re}(w_{1})+|w_{1}|^{2})^{m_{1}}\cdots
(q^{2}-2q{\rm Re}(w_{p})+|w_{p}|^{2})^{m_{p}}|\\
&=|q^{2}-2q{\rm Re}(w_{1})+|w_{1}|^{2}|^{m_{1}}\cdots
|q^{2}-2q{\rm Re}(w_{p})+|w_{p}|^{2}|^{m_{p}}\\
&\geq ||q|-|w_1||^{2m_1}\ldots ||q|-|w_p||^{2m_p}.
\end{split}
\]

To estimate $Q(q)$, we first note that, for suitable quaternions $\alpha_1,\ldots,\alpha_N,$
we can split $Q(q)$ into linear factors as $$Q(q)=(q-\alpha_1)\star \cdots \star (q-\alpha_N),$$
and the estimate for $Q(q)$ can be obtained recursively as follows.
\\
It is immediate that

$$|q-\alpha_N|\geq ||q|-|\alpha_N||.$$

Let $\ell \leq N-1$ be an integer and set
$$h_{\ell +1}(q):=(q-\alpha_{\ell +1})\star \cdots \star (q-\alpha_N).$$
We now assume that for  $\ell \leq N-1$ we have established
\begin{equation}\label{induzione}
|h_{\ell +1}(q)|=|(q-\alpha_{\ell +1})\star \cdots \star (q-\alpha_N)|\geq ||q|-|\alpha_{\ell +1}||\cdots||q|-|\alpha_N||,
\end{equation}
and we proceed to the estimate for

$$|h_{\ell}(q)|=|(q-\alpha_{\ell})\star \cdots \star (q-\alpha_N)|.$$
The associativity of the $\star$-product and Theorem \ref{pointwise} imply that

\[
\begin{split}
|(q-\alpha_{\ell})&\star (q-\alpha_{\ell+1})\star \cdots \star (q-\alpha_N)|\\
&=|(q-\alpha_{\ell})\star ((q-\alpha_{\ell+1})\star \cdots \star (q-\alpha_N))|\\
&=|(q-\alpha_{\ell})\star (h_{\ell +1}(q)|\\
&=|(q-\alpha_{\ell})\cdot h_{\ell +1}((q-\alpha_{\ell})^{-1}q(q-\alpha_{\ell}))|\\
&=|(q-\alpha_{\ell})|\, |h_{\ell +1}((q-\alpha_{\ell})^{-1}q(q-\alpha_{\ell}))|.
\end{split}
\]
Using
$$|(q-\alpha_{\ell})^{-1}q(q-\alpha_{\ell})|=|q|,$$
and (\ref{induzione}), we obtain
$$
|(q-\alpha_{\ell})\star \cdots \star (q-\alpha_N)|\geq ||q|-|\alpha_{\ell}||\cdots||q|-|\alpha_N||.
$$
In conclusion we have:
$$|Q(q)|\geq ||q|-|\alpha_{1}||\cdots||q|-|\alpha_N||.$$
Since each factor in the decomposition of $P(q)$ is bounded below by $\dfrac{R}{2(M+1)}$, the statement follows.
\end{proof}

\begin{remark}{\rm
 The estimate proved in Theorem \ref{EMsphere} holds also if the sphere $\Gamma$ is centered at any real point $q_0.$}
\end{remark}

The next result explains what happens if one attempts to estimate $|P(q)|$ from below, on spheres centered on points $q_0$ which are not real.

\begin{theorem}\label{sferico}
Let $P(q)$ be a slice regular polynomial of degree $m$ with only spherical zeroes, i.e. a polynomial of the form
$$P(q)=(q^{2}-2q{\rm Re}(w_{1})+|w_{1}|^{2})^{m_{1}}\cdots
(q^{2}-2q{\rm Re}(w_{p})+|w_{p}|^{2})^{m_{p}}a_m,$$
with $w_1,\ldots,w_p,a_m \in \mathbb{H}.$ For any $q_0=u+vI \in \mathbb{H}$ and for any $R>0$, there exist $r<R$ and a $3-$dimensional compact hypersurface
 $$\Gamma=\Gamma(q_0,r)=\{x+yI: (x-u)^2+(y-v)^2=r^2 \ \ \hbox{and}\  \ I\in \mathbb{S}\},$$
smooth if $r<v$, such that for every $q\in \Gamma$ it is
$$|P(q)|\geq |a_m|\left(\frac{R}{2(m+1)}\right)^{m}.$$
\end{theorem}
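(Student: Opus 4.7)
The plan is to reduce the quaternionic estimate to a two-dimensional one in the $(x,y)$-plane. The first step is the key observation that for any $q=x+yI\in\mathbb H$ and any $w_k$ with $a_k=\mathrm{Re}(w_k)$ and $b_k=|\mathrm{Im}(w_k)|$, a direct computation (expanding $q^2=x^2-y^2+2xyI$) gives
\[
q^{2}-2a_k q+|w_k|^{2}=\bigl((x-a_k)^{2}-y^{2}+b_k^{2}\bigr)+2Iy(x-a_k),
\]
whose squared modulus factors as $\bigl((x-a_k)^{2}+(y-b_k)^{2}\bigr)\bigl((x-a_k)^{2}+(y+b_k)^{2}\bigr)$. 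Consequently
\[
|q^{2}-2a_k q+|w_k|^{2}|=\sqrt{(x-a_k)^{2}+(y-b_k)^{2}}\cdot\sqrt{(x-a_k)^{2}+(y+b_k)^{2}},
\]
independently of $I\in\mathbb S$. Thus each factor of $|P(q)|$ equals the product of the planar distances from $(x,y)$ to the two points $(a_k,b_k)$ and $(a_k,-b_k)$, and $|P(q)|$ depends only on $(x,y)$ when restricted to $\Gamma(q_0,r)$.

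The second step is to choose $r$ so that $\Gamma$ stays away from all these planar singular points. For each $k=1,\dots,p$, set $d_k^{\pm}$ to be the Euclidean distance in $\mathbb R^{2}$ from $(u,v)$ to $(a_k,\pm b_k)$. These at most $2p$ numbers partition $(0,R)$ into at most $2p+1$ subintervals, so one of them, say $[\alpha,\beta]$, has length at least $R/(2p+1)$. Since $m=2(m_1+\cdots+m_p)\ge 2p$, we have $R/(2p+1)\ge R/(m+1)$. I will take $r=(\alpha+\beta)/2$, so that $|r-d_k^{\pm}|\ge R/(2(m+1))$ for every $k$ and every choice of sign.

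The third step is the estimate itself. For $q=x+yI\in\Gamma(q_0,r)$, the triangle inequality in $\mathbb R^{2}$ gives
\[
\sqrt{(x-a_k)^{2}+(y\mp b_k)^{2}}\ge\bigl| r-d_k^{\pm}\bigr|\ge\frac{R}{2(m+1)},
\]
so by the factorization above $|q^{2}-2a_k q+|w_k|^{2}|\ge \bigl(R/(2(m+1))\bigr)^{2}$. Raising to $m_k$ and taking the product over $k$ yields
\[
|P(q)|\ge|a_m|\prod_{k=1}^{p}\left(\frac{R}{2(m+1)}\right)^{2m_k}=|a_m|\left(\frac{R}{2(m+1)}\right)^{m},
\]
as required. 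The smoothness condition $r<v$ is automatic whenever the chosen subinterval lies entirely in $(0,v)$; in the general case $\Gamma$ is still a compact $3$-dimensional hypersurface (degenerating at points where the circle meets the line $y=0$).

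The main obstacle in this argument is the identity expressing $|q^{2}-2a_k q+|w_k|^{2}|$ as a product of two planar distances; once this is in hand the result is a one-variable pigeonhole argument in the spirit of the proof of Theorem~\ref{EMsphere}. Note that the $I$-independence of the quadratic factors is exactly what allows the bound to hold on the entire $3$-dimensional toroidal hypersurface $\Gamma$, and not merely on a single slice.
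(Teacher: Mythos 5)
Your proof is correct and follows essentially the same route as the paper's: the identity expressing each quadratic factor's modulus as the product of the two planar distances to $(a_k,\pm b_k)$ is exactly the paper's $|q-w|\,|q-\overline{w}|$ computed on a slice (the paper then invokes the real coefficients of the factors to get $I$-independence, which you instead build in from the start), and the pigeonhole on the at most $2p\le m$ distances followed by the reverse triangle inequality is identical. No gaps; your version of the $I$-independence is slightly more explicit but not a different argument.
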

\begin{proof}
Without loss of generality, we can assume $a_m=1.$ The restriction of $P(q)$ to the complex plane $\mathbb C_I$ is a complex polynomial with up to $m$  distinct zeroes. Consider the set $$V=\{|q-q_0|  : q \in \mathbb C_I, P(q)=0\};$$ the cardinality of $V$ is at most $m$. So we can find a subinterval $[a,b]$ of $[0, R]$ of length at least $\frac{R}{m+1}$ which does not contain any element of $V$. Let $\Gamma$ be the circle in $\mathbb C_I$ centered in $q_0$ and with radius $r=\frac{a+b}{2}.$ Then, on $\mathbb C_I$, one has
\[
\begin{split}
|q^{2}-2q{\rm Re}(w)+|w|^{2}|&=|(q-w)(q-\overline{w})|=|q-w||q-\overline{w}|\\
&=|(q-q_0)-(w-q_0)||(q-q_0)-(\overline{w}-q_0)|\\
&\geq ||q-q_0|-|w-q_0||\cdot ||q-q_0|-|\overline{w}-q_0||.
\end{split}
\]
Since $w$ and $\overline w$ are roots of $P(q)$, we have
$$|q^{2}-2q{\rm Re}(w)+|w|^{2}|\geq \left( \frac{R}{2(m+1)}\right)^2$$
and so
$$|P(q)|\geq\left(\frac{R}{2(m+1)}\right)^m.$$
Since $P$ has real coefficients, the estimate is independent of $I$, and the bound we have proved holds on $\Gamma.$
\end{proof}

We now prove the following simple lemma:

\begin{lemma} \label{lagrange}
Let $q_0= u+I_0v$ be a given point in $\mathbb{H}$ and let $w=a+Ib$ be the generic point on the sphere $[w]$. The distance between $w$ and $q_0$ achieves its extremal points at
$w^0=a+I_0b$ and $\overline{w}^{0} = a-I_0b$.
\end{lemma}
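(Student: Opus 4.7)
The plan is to reduce the extremization of $|w-q_0|$ over the sphere $[w]$ to the extremization of a single inner product on the unit sphere $\mathbb S \subset \mathbb{R}^3$.

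First I would write
\[
w-q_0 = (a-u) + (Ib - I_0 v),
\]
and observe that the real part and the purely imaginary part of this quaternion are orthogonal in $\mathbb{R}^4$, so
\[
|w-q_0|^2 = (a-u)^2 + |Ib - I_0 v|^2.
\]
Since $a$ and $u$ are fixed, the problem reduces to finding the extrema of $|Ib-I_0 v|^2$ as $I$ ranges over $\mathbb S$.

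Next I would expand the square using the fact that $I,I_0$ are purely imaginary ($\overline I = -I$, $\overline{I_0}=-I_0$) and $b,v\in\mathbb R$:
\[
|Ib - I_0 v|^2 = -(Ib - I_0 v)^2 = b^2 + v^2 + bv\,(II_0 + I_0 I).
\]
For $I,I_0\in\mathbb S$, a direct coordinate calculation (or the polarization identity $|I+I_0|^2 = 2+II_0+I_0I+\cdots$) gives the anticommutator identity $II_0+I_0I = -2\langle I,I_0\rangle$, where $\langle\cdot,\cdot\rangle$ is the Euclidean inner product on $\mathrm{Im}\,\mathbb H \cong \mathbb R^3$. Hence
\[
|w-q_0|^2 = (a-u)^2 + b^2 + v^2 - 2bv\,\langle I, I_0\rangle.
\]

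The final step is essentially Cauchy--Schwarz on $\mathbb S$: since $|I|=|I_0|=1$, the inner product $\langle I,I_0\rangle$ attains its maximum $+1$ precisely at $I=I_0$ and its minimum $-1$ precisely at $I=-I_0$ (this is where a Lagrange multiplier argument on the constraint $|I|^2=1$ would appear, if one prefers that route). Translating back, the extremal points of $w\mapsto|w-q_0|$ on $[w]$ are $w^0 = a+I_0 b$ and $\overline{w}^{\,0} = a-I_0 b$, with squared distances $(a-u)^2+(b\mp v)^2$ respectively. There is no serious obstacle; the only thing to be careful about is the sign convention for the anticommutator of two purely imaginary units, which is precisely what converts the quaternionic expression into a standard inner product on $\mathbb R^3$.
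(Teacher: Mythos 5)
Your proof is correct and follows essentially the same route as the paper: both reduce $|w-q_0|^2$ to the affine expression $(a-u)^2+b^2+v^2-2bv\langle I,I_0\rangle$ and then extremize the inner product over $\mathbb S$. The only difference is cosmetic — the paper normalizes $I_0=i$, writes $I=\alpha i+\beta j+\gamma k$ in coordinates and invokes Lagrange multipliers, whereas you obtain the same expression coordinate-free via the anticommutator identity and finish with Cauchy--Schwarz.
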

\begin{proof}
 Up to a rotation, we may assume that $I_0=i$ so that $q_0=u+iv$. An element $I\in \mathbb{S}$ can be written as $I=\alpha i+\beta j+\gamma k$ with $\alpha^2+\beta^2+\gamma^2=1$ so that $$w=a+b(\alpha i+\beta j+\gamma k).$$ The square of the distance between $w$ and $q_0$ is therefore given by
$$d^2(\alpha, \beta, \gamma)=a^2+u^2-2au+b^2\alpha^2+v^2-2b\alpha v+b^2\beta^2+b^2\gamma^2.$$
Since $\mathbb{S}$ is a compact set, the function $d^2(\alpha, \beta, \gamma)$ admits at least a maximum and a minimum in $[w]$, which can be computed with standard techniques. A quick computation shows that the maximum and minimum are achieved for $\alpha=\pm 1, \beta=\gamma=0$.
\end{proof}

\begin{theorem}\label{isolato}
Let $P(q)$ be a slice regular polynomial of degree $m$ with only isolated zeroes, i.e. a polynomial of the form
$$P(q)=(q-\alpha_1)\star \cdots \star (q-\alpha_m)a_m,$$
with $\alpha_1,\ldots,\alpha_m,a_m \in \mathbb{H}.$ For any $q_0=u+vI_0 \in \mathbb{H}$ and for any $R>0$, there exist $r<R$, and a $3-$dimensional compact hypersurface
$\Gamma=\Gamma(q_0,r)$, smooth if $r<v$, such that for every $q\in \Gamma$ it is
$$|P(q)|\geq |a_m|\left(\frac{R}{2(2m+1)}\right)^{m}.$$
\end{theorem}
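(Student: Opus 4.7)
The strategy is to mimic the recursive arguments used for Theorems \ref{EMsphere} and \ref{sferico}, but to compensate for the fact that the isolated zeros $\alpha_1,\ldots,\alpha_m$ are no longer centered on $\mathbb{C}_{I_0}$ by replacing each factor $|q^{(k-1)}-\alpha_k|$ with distances from a reference point in $\mathbb{C}_{I_0}$ to the two projections of $\alpha_k$ onto $\mathbb{C}_{I_0}$.

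First, I would peel off the leftmost linear factor using associativity of the $\star$-product together with Theorem \ref{pointwise}: iterating the identity $(q-\alpha_1)\star g(q)=(q-\alpha_1)\cdot g(q^{(1)})$ with $q^{(1)}=(q-\alpha_1)^{-1}q(q-\alpha_1)$ produces
$$|P(q)|\;=\;|a_m|\cdot|q-\alpha_1|\cdot|q^{(1)}-\alpha_2|\cdots|q^{(m-1)}-\alpha_m|,$$
and the key observation is that each $q^{(k)}$, being obtained from $q^{(k-1)}$ by quaternionic conjugation, lies on the same $2$-sphere $[q]$ as $q$ itself.

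Next, I would bound each factor from below via Lemma \ref{lagrange}. Writing $\alpha_k=a_k+b_kI_k$ with $b_k\ge 0$ and denoting the two elements of $[\alpha_k]\cap\mathbb{C}_{I_0}$ by $\alpha_k^0=a_k+b_kI_0$ and $\overline{\alpha_k^0}=a_k-b_kI_0$, a short computation (equivalent to Lemma \ref{lagrange} applied with the roles of the fixed point and the varying sphere interchanged) gives
$$|q^{(k-1)}-\alpha_k|\;\ge\;\min_{J\in\mathbb{S}}|(x+yJ)-\alpha_k|\;=\;\min\bigl(|\tilde q-\alpha_k^0|,\ |\tilde q-\overline{\alpha_k^0}|\bigr),$$
where $q\in\Gamma$ has associated sphere $[q]=[x+yI_0]$ and $\tilde q:=x+yI_0$. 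Because $\tilde q,\alpha_k^0,\overline{\alpha_k^0}$ and $q_0$ all lie in the complex plane $\mathbb{C}_{I_0}$, the ordinary triangle inequality in $\mathbb{C}_{I_0}$ gives
$$|\tilde q-\alpha_k^0|\;\ge\;\bigl|\,|\tilde q-q_0|-|\alpha_k^0-q_0|\,\bigr|,$$
and analogously for $\overline{\alpha_k^0}$. Restricting $q$ to $\Gamma=\Gamma(q_0,r)$ forces $|\tilde q-q_0|=r$, so the geometric problem on the sphere $[q]$ collapses to a one-dimensional problem about the real number $r$.

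Finally I would select $r$ by a pigeonhole argument on the set of at most $2m$ forbidden distances
$$\bigl\{|\alpha_k^0-q_0|,\ |\overline{\alpha_k^0}-q_0|\ :\ k=1,\ldots,m\bigr\}\subset[0,\infty).$$
These split $[0,R]$ into at most $2m+1$ subintervals, one of which has length at least $R/(2m+1)$. Choosing $r$ to be its midpoint yields $r<R$ together with
$$\bigl|\,r-|\alpha_k^0-q_0|\,\bigr|\;\ge\;\frac{R}{2(2m+1)},\qquad \bigl|\,r-|\overline{\alpha_k^0}-q_0|\,\bigr|\;\ge\;\frac{R}{2(2m+1)},$$
forcing $|q^{(k-1)}-\alpha_k|\ge R/(2(2m+1))$ for every $k$; multiplying the $m$ linear estimates together with $|a_m|$ then gives the stated bound. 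The main technical obstacle lies in the second step: certifying that the sphere-valued minimum $\min_{w\in[q]}|w-\alpha_k|$ equals $\min(|\tilde q-\alpha_k^0|,|\tilde q-\overline{\alpha_k^0}|)$, which is precisely the content of Lemma \ref{lagrange}; once that is granted, the estimate reduces to the one-dimensional interval argument. The smoothness of $\Gamma$ when $r<v$ comes for free from its description as the image of the circle $\{(x,y):(x-u)^2+(y-v)^2=r^2\}$ (contained in $\{y>0\}$) under the map $(x,y)\mapsto x+y\mathbb{S}$.
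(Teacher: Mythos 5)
Your proposal is correct and follows essentially the same route as the paper's proof: convert the $\star$-product to a pointwise product via Theorem \ref{pointwise}, reduce each factor to a planar distance in $\mathbb{C}_{I_0}$ via Lemma \ref{lagrange}, and choose $r$ by a pigeonhole argument on the at most $2m$ forbidden distances $|\alpha_k^0-q_0|$, $|\overline{\alpha_k^0}-q_0|$. The only cosmetic difference is that you conjugate the variable (obtaining points $q^{(k)}\in[q]$) while the paper conjugates the zeros (obtaining $\alpha'_k\in[\alpha_k]$); the resulting estimates are identical.
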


\begin{proof} As before, it is not reductive to assume that $a_m=1$.
For $t=1, \ldots m$, define $\alpha^0_t={\rm Re}(\alpha_t)+I_0 {\rm Im}(\alpha_t)$ and consider the following subset of the real numbers
$$V=\{ |\alpha^0_t-q_0|, \ |\overline{\alpha}^0_t-q_0|\ \  \hbox{for}\ \  t=1, \ldots, m \}.$$
Given any $R>0$ there is at least one subinterval $[c, d]$ of $[0, R]$ which does not contain any element of $V$ and whose length is at least $\frac{R}{2m+1}$. Set $r= \frac{c+d}{2}$ and define
$$\Gamma=\{x+Iy\ :\ (x-u)^2+(y-v)^2=r^2 \ \ \hbox{and}\  \ I\in \mathbb{S}\}.$$
We now estimate, for $q\in \Gamma$, the modulus of $P(p)=(q-\alpha_1)\star \cdots \star (q-\alpha_m)$. By Theorem \ref{pointwise}, there exist $$\alpha'_t \in S_{\alpha_t}= {\rm Re}(\alpha_t)+{\rm Im}(\alpha_t)\mathbb{S},$$ for $t=1,\ldots,m$, such that
$$|P(p)|=|(q-\alpha'_1)\cdots (q-\alpha'_m)|=|q-\alpha'_1|\cdots |q-\alpha'_m|.$$
Let us now estimate each factor:
$$|q-\alpha'_t|=|(q-q_0)-(\alpha'_t-q_0)|\geq ||q-q_0|-|\alpha'_t-q_0||.$$
 Note that $||q-q_0|-|\alpha'_t-q_0||$ is either $|q-q_0|-|\alpha'_t-q_0|$ or $|\alpha'_t-q_0|-|q-q_0|$. It is evident that these two cases can be treated in the same way, thus we consider
$$
||q-q_0|-|\alpha'_t-q_0||=|q-q_0|-|\alpha'_t-q_0|.
$$
In order to find a lower bound for this expression, we need a lower bound for $|q-q_0|$ and an upper bound for $|\alpha'_t-q_0|$. By Lemma \ref{lagrange}, and since $\alpha'_t \in S_{\alpha_t}$ we have
$$
|q-q_0|-|\alpha'_t-q_0|\geq |q^0-q_0|-|\widetilde{\alpha}^0_t-q_0|
$$
where $q^0={\rm Re}(q)\pm {\rm Im}(q)I_0$ and $\widetilde{\alpha}^0_t$ is either $\alpha^0_t$ or $\overline{\alpha}^0_t$. By the definition of the set $V$, we obtain that
$$
|q-\alpha'_t|\geq \frac{R}{2(2m+1)},
$$
and
$$|P(q)|\geq |a_m|\left(\frac{R}{2(2m+1)}\right)^{m}.$$
\end{proof}

\noindent As a consequence we can prove a lower bound for general slice regular polynomials:

\begin{theorem}\label{generale}
Let $P(q)$ be a slice regular polynomial of degree $m,$ with leading coefficient $a_m.$ Let $p$ be the number of  distinct
spherical zeroes of $P(q)$, and let
$t$ be the number of its  distinct isolated zeroes.
Given any $q_0=u+vI_0 \in \mathbb{H}$ and any $R>0$, we can find $r<R$ and a compact $3-$dimensional hypersurface
$$\Gamma= \{x+yI: (x-u)^2+(y-v)^2=r^2 \ \ \hbox{and}\  \ I\in \mathbb{S}\},$$
smooth if $R<v$, on which
$$|P(q)|\geq |a_m|\left(\frac{R}{2(p+2t+1)}\right)^m.$$
\end{theorem}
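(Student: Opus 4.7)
The plan is to combine the arguments of Theorem \ref{sferico} (the purely spherical case) and Theorem \ref{isolato} (the purely isolated case) via the factorization of Theorem \ref{dueuno}. The key structural observation is that the spherical part of $P$ has real coefficients, so its values commute slicewise with $q$, reducing the $\star$-product to an ordinary pointwise product of moduli.

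\textbf{Step 1: Factorization.} By Theorem \ref{dueuno} I would write
$$P(q)=S(q)\star Q(q)\,a_m,$$
where
$$S(q)=(q^{2}-2q\,{\rm Re}(w_{1})+|w_{1}|^{2})^{m_{1}}\cdots(q^{2}-2q\,{\rm Re}(w_{p})+|w_{p}|^{2})^{m_{p}}$$
collects the spherical factors and $Q(q)=(q-\beta_1)\star\cdots\star(q-\beta_n)$ collects the linear factors, with $n+2(m_1+\cdots+m_p)=m$. Since $S$ has real coefficients, $S(q)\in\mathbb{R}\oplus\mathbb{R}\,I_q$ for every $q$, hence commutes with $q$; Theorem \ref{pointwise} then yields $(S\star Q)(q)=S(q)\,Q(q)$ pointwise, and thus
$$|P(q)|=|S(q)|\cdot|Q(q)|\cdot|a_m|.$$

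\textbf{Step 2: Choice of the radius.} Writing $q_0=u+vI_0$, I would assemble the set of bad distances to be avoided in $[0,R]$. From each spherical sphere $[w_i]$ one records a single bad value (following the chain of inequalities used in Theorem \ref{sferico}), contributing $p$ values; from each isolated zero $\beta_j$ one records the two extremal distances $|\beta_j^0-q_0|$ and $|\bar\beta_j^0-q_0|$ furnished by Lemma \ref{lagrange}, contributing $2t$ values. The bad set $V$ has at most $p+2t$ elements, so some subinterval $[c,d]\subset[0,R]$ has length at least $R/(p+2t+1)$ and is disjoint from $V$. Setting $r=(c+d)/2$ and taking $\Gamma=\Gamma(q_0,r)$ as in the statement, the value $r$ differs from every point of $V$ by at least $R/(2(p+2t+1))$.

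\textbf{Step 3: Slicewise estimates.} On $\Gamma$ I would apply the inequality chains from the proofs of Theorems \ref{sferico} and \ref{isolato}. For each spherical factor, the identity
$$q^{2}-2\,{\rm Re}(w_i)\,q+|w_i|^{2}=(q-w_i^{I_q})(q-\bar w_i^{I_q})$$
in $\mathbb{C}_{I_q}$, together with the triangle inequality, yields
$$\bigl|q^{2}-2\,{\rm Re}(w_i)\,q+|w_i|^{2}\bigr|^{m_i}\geq\left(\frac{R}{2(p+2t+1)}\right)^{2m_i}.$$
For the linear part, the recursive use of Theorem \ref{pointwise} exactly as in the proof of Theorem \ref{isolato} gives
$$|Q(q)|\geq\left(\frac{R}{2(p+2t+1)}\right)^{n}.$$
Multiplying the estimates and using $m=2(m_1+\cdots+m_p)+n$ produces the claimed lower bound.

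\textbf{Main obstacle.} The delicate point is the assertion in Step 2 that only $p$ bad values are needed from the spherical part, rather than the naively expected $2p$. When $q_0\in\mathbb{R}$ (as in Theorem \ref{EMsphere}) this reduction is automatic because $|w_i^{I_0}-q_0|=|\bar w_i^{I_0}-q_0|$; for non-real $q_0$ these two extremal distances are distinct and Lemma \ref{lagrange} appears to demand two separate forbidden values per sphere. To recover the stated denominator $p+2t+1$ one must argue that, once $r$ is taken far from a single carefully chosen scalar attached to each sphere $[w_i]$, the product $|q-w_i^{I_q}|\cdot|q-\bar w_i^{I_q}|$ is still bounded below by $(R/(2(p+2t+1)))^{2}$. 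This should rest on the real-coefficient structure of $q^{2}-2\,{\rm Re}(w_i)\,q+|w_i|^{2}$, whose modulus is a function of ${\rm Re}(q)$ and $|{\rm Im}(q)|$ alone and is therefore governed by a single scalar quantity intrinsic to the pair $(q_0,[w_i])$.
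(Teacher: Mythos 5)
Your overall strategy---factor $P=S\,Q\,a_m$ via Theorem \ref{dueuno}, use the real coefficients of $S$ to turn the $\star$-product into a pointwise product of moduli, and merge the radius-selection arguments of Theorems \ref{sferico} and \ref{isolato}---is exactly what the paper intends; its own proof is the single sentence that the assertion follows from those two theorems, so you are supplying all the detail. The point you flag as the ``main obstacle'' is indeed the crux, and as written your proof has a genuine hole there: your Step 2 asserts that each sphere $[w_i]$ contributes only one forbidden value, but the chain of inequalities in Theorem \ref{sferico} that you cite bounds $\bigl|q^{2}-2\,{\rm Re}(w_i)\,q+|w_i|^{2}\bigr|$ below by $\bigl||q-q_0|-|w_i-q_0|\bigr|\cdot\bigl||q-q_0|-|\overline{w_i}-q_0|\bigr|$, and for non-real $q_0$ the two distances $|w_i^{0}-q_0|$ and $|\overline{w_i}^{0}-q_0|$ are distinct, so a naive merge forces $2p$ forbidden values from the spherical part and only yields the weaker denominator $2(p+t)+1$. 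Your closing heuristic (the modulus depends only on ${\rm Re}(q)$ and $|{\rm Im}(q)|$) points in the right direction but is not an argument.

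The gap can be closed as follows. Write $q=x+yI\in\Gamma$ with $y\geq0$ and set $w_i^{I}=a_i+b_iI$ with $b_i=|{\rm Im}(w_i)|\geq0$. Then $|q-\overline{w_i}^{I}|^{2}=(x-a_i)^{2}+(y+b_i)^{2}\geq(x-a_i)^{2}+(y-b_i)^{2}=|q-w_i^{I}|^{2}$, hence
$$
\bigl|q^{2}-2\,{\rm Re}(w_i)\,q+|w_i|^{2}\bigr|=|q-w_i^{I}|\,|q-\overline{w_i}^{I}|\geq|q-w_i^{I}|^{2}\geq\Bigl(\,\bigl||q-(u+vI)|-|w_i^{I}-(u+vI)|\bigr|\,\Bigr)^{2}=|r-d_i|^{2},
$$
where $d_i=\sqrt{(a_i-u)^{2}+(b_i-v)^{2}}$ does not depend on $I$, because $|q-(u+vI)|=r$ on $\Gamma$ and $|w_i^{I}-(u+vI)|=d_i$. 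So a single forbidden value $d_i$ per sphere suffices, the bad set has at most $p+2t$ elements, and the stated constant $p+2t+1$ follows. With this inequality inserted into your Step 3 (and the $2t$ extremal distances from Lemma \ref{lagrange} handling the linear factors exactly as in Theorem \ref{isolato}), the rest of your argument goes through.
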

\begin{proof}
The assertion follows from Theorem \ref{sferico} and Theorem \ref{isolato}.
\end{proof}

\noindent Theorem \ref{generale} can be reformulated in a way that does not require the knowledge of the nature of the zeros. See the result below. The estimate that one obtains is naturally not as sharp, but it is the exact analog of the corresponding result in the complex case:

\begin{theorem}\label{2m}
Let $P(q)$ be a slice regular polynomial of degree $m,$ with leading coefficient $a_m.$
Given any $q_0=u+vI_0 \in \mathbb{H}$ and any $R>0$, we can find a compact $3-$dimensional hypersurface
$$\Gamma= \{x+yI: (x-u)^2+(y-v)^2=r^2 \ \ \hbox{and}\  \ I\in \mathbb{S}\},$$ with $r<R$,
smooth if $R<v$, on which
$$|P(q)|\geq |a_m|\left(\frac{R}{2(2m+1)}\right)^m.$$
\end{theorem}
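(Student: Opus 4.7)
The strategy is to deduce Theorem \ref{2m} directly from Theorem \ref{generale} by bounding the quantity $p+2t$ (the weighted count of distinct spherical and isolated zeros) uniformly in terms of the degree $m$. The whole proof reduces to an elementary degree-counting argument applied to the factorization provided by Theorem \ref{dueuno}.

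First I would apply Theorem \ref{dueuno} to write
\[
P(q) = (q^{2}-2q\,{\rm Re}(w_{1})+|w_{1}|^{2})^{m_{1}}\cdots (q^{2}-2q\,{\rm Re}(w_{p})+|w_{p}|^{2})^{m_{p}}\,Q(q),
\]
where $p$ is the number of distinct spherical zeros (with multiplicities $m_1,\dots,m_p\geq 1$) and $Q$ has degree $n = m - 2(m_1+\cdots+m_p)$. Since the factorization of $Q$ given by \eqref{fattorizzazione} involves exactly $n$ linear $\star$-factors, the number $t$ of distinct isolated zeros of $P$ satisfies $t \leq n \leq m - 2p$. Therefore
\[
p + 2t \;\leq\; p + 2(m - 2p) \;=\; 2m - 3p \;\leq\; 2m,
\]
so that $p + 2t + 1 \leq 2m + 1$.

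Next, I would invoke Theorem \ref{generale} with the same $q_0$ and $R$, obtaining some $r<R$ and a compact $3$-dimensional hypersurface
\[
\Gamma = \{x+yI : (x-u)^2+(y-v)^2 = r^2,\ I\in\mathbb{S}\},
\]
smooth whenever $R<v$, on which
\[
|P(q)| \;\geq\; |a_m|\left(\frac{R}{2(p+2t+1)}\right)^m.
\]
Since $p+2t+1 \leq 2m+1$ implies $\dfrac{R}{2(p+2t+1)} \geq \dfrac{R}{2(2m+1)}$, raising to the $m$-th power preserves the inequality and yields the claimed lower bound. The smoothness statement transfers verbatim from Theorem \ref{generale}.

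There is essentially no obstacle here: the only nontrivial point is the bookkeeping of distinct zeros, which is handled by observing that each distinct spherical zero consumes at least $2$ units of degree through its quadratic factor, while the remaining degree $n$ is an upper bound for the number of distinct isolated zeros appearing in \eqref{fattorizzazione}. This makes Theorem \ref{2m} a clean degree-independent reformulation of Theorem \ref{generale}, perfectly analogous to the classical complex Ehrenpreis--Malgrange estimate.
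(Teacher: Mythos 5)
Your proof is correct and follows exactly the route the paper intends: Theorem \ref{2m} is presented there as an immediate reformulation of Theorem \ref{generale}, and your degree count $p+2t\leq p+2(m-2p)=2m-3p\leq 2m$ (via the factorization of Theorem \ref{dueuno}) supplies precisely the missing bookkeeping that makes $\frac{R}{2(p+2t+1)}\geq\frac{R}{2(2m+1)}$. Nothing further is needed.
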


\noindent The next result is a consequence:

\begin{theorem}\label{esterno}
Let $P(q)$ be a slice regular polynomial of degree $m,$ with leading coefficient $a_m.$ For any $R>0$ there exist a natural number
$n\leq m$, $n$ quaternions $q_1=u_1+v_1I_1, \ldots , q_n=u_n+v_nI_n$, $n$ stricly positive radii $r_1 < R, \ldots, r_n<R$, and $n$ corresponding compact sets
$$D(q_\ell, r_\ell) = \{x+yI: (x-u_\ell)^2+(y-v_\ell)^2\leq r_\ell^2 \ \ \hbox{and}\  \ I\in \mathbb{S}\}$$
($\ell=1,\ldots,n$) bounded, respectively, by the $3-$dimensional hypersurfaces $\partial D(q_\ell, r_\ell)$, smooth if $r_\ell <v_\ell$,
such that
$$|P(q)|\geq |a_m|\left(\frac{R}{2(2m+1)}\right)^m$$
for $q$ outside
$$
D=\bigcup _{\ell=1}^n D(q_\ell, r_\ell).
$$
\end{theorem}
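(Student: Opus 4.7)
The plan is to use the factorization structure of $P$ to choose the exclusion set $D$ as a union of axially symmetric toroidal neighborhoods of the spheres carrying the zeros of $P$, and then to invoke the Minimum Modulus Principle (Theorem~\ref{minimum}) to propagate the pointwise lower bound from the bounding hypersurfaces into the whole exterior $\mathbb{H} \setminus D$.

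More concretely, by the factorization in Theorem~\ref{dueuno} combined with Theorem~\ref{structurethm}, the zero set of $P$ decomposes into finitely many distinct zero loci --- each being either a single sphere of spherical zeros or a single isolated point --- whose total count is bounded above by $m$. I would pick one representative $q_\ell = u_\ell + v_\ell I_\ell$ from each locus, obtaining quaternions $q_1, \ldots, q_n$ with $n \leq m$, and for each $\ell$ apply Theorem~\ref{2m} with center $q_\ell$ and the prescribed $R$. This yields a radius $r_\ell < R$ such that on the hypersurface
\[
\partial D(q_\ell, r_\ell) = \{x + yI : (x - u_\ell)^2 + (y - v_\ell)^2 = r_\ell^2,\ I \in \mathbb{S}\}
\]
one has $|P(q)| \geq |a_m|(R/(2(2m+1)))^m =: B$. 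Because each $D(q_\ell, r_\ell)$ is axially symmetric and automatically contains the entire sphere $[q_\ell]$, the union $D = \bigcup_{\ell=1}^n D(q_\ell, r_\ell)$ encloses every zero of $P$, so $\Omega := \mathbb{H} \setminus D$ is zero-free and axially symmetric.

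It then remains to show $|P| \geq B$ on all of $\Omega$. Assume $m \geq 1$, the case $m = 0$ being trivial with $n = 0$. I would fix $M > 0$ so large that $D \subset B(0,M)$ and $|P(q)| \geq B$ whenever $|q| \geq M$; the latter is possible since $|P(q)|$ grows like $|a_m||q|^m$ at infinity. The truncated region $\Omega_M := \Omega \cap B(0,M)$ is a bounded axially symmetric slice domain: for each $I \in \mathbb{S}$, the intersection $\Omega_M \cap \mathbb{C}_I$ is an open disk with finitely many strictly interior closed disks removed, which remains connected. Since $P$ has no zeros on $\Omega_M$ and is nonconstant, the Minimum Modulus Principle rules out any interior local minimum of $|P|$; thus the minimum of $|P|$ on the compact closure $\overline{\Omega_M}$ is attained on $\partial \Omega_M \subseteq \bigl(\bigcup_{\ell=1}^n \partial D(q_\ell, r_\ell)\bigr) \cup \partial B(0,M)$, where the bound $|P| \geq B$ has already been secured. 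Combining with the choice of $M$ extends the bound to all of $\Omega$.

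The main technical point I expect to be delicate is the verification that $\Omega_M$ is a slice domain, i.e., that each sliced complement $\Omega_M \cap \mathbb{C}_I$ remains connected; this is precisely what forces the auxiliary truncation at radius $M$ (chosen so that every $D(q_\ell, r_\ell)$ lies strictly inside $B(0,M)$ and cannot disconnect the slice). Aside from this routine topological check, the argument is a direct combination of Theorem~\ref{2m} with the Minimum Modulus Principle.
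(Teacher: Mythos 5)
Your proposal is correct and follows essentially the same route as the paper: cover the zeros by the sets $D(q_\ell,r_\ell)$, apply Theorem~\ref{2m} on each bounding hypersurface, and then use the Minimum Modulus Principle together with the growth of $|P|$ at infinity to push the bound into the exterior. The only cosmetic difference is that the paper applies the Minimum Modulus Principle to $P$ on all of $\mathbb{H}$ (so a would-be interior minimum with $P\neq 0$ immediately forces $P$ constant), which lets you skip the truncation at radius $M$ and the verification that $\Omega_M$ is a slice domain.
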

\begin{proof}
Given $R>0$, it is immediate to find $n\leq m$ quaternions $q_\ell=u_\ell+v_\ell I_\ell$, $\ell=1,\ldots, n$ such that $A=\bigcup _{\ell=1}^n D(q_\ell, R)$ contains all the roots of $P$. We then apply Theorem $\ref{2m}$ and deduce that, for each $q_\ell$ there exists $r_\ell$ such that
\begin{equation}\label{ipersuperficie}
|P(q)|\geq |a_m|\left(\frac{R}{2(2m+1)}\right)^m
\end{equation}
on the $3$-hypersurface $\partial D(q_\ell, r_\ell)$. Thus
inequality (\ref{ipersuperficie}) holds on
$$
B = \bigcup_{\ell=1}^n \partial D(q_\ell, r_\ell).
$$
The set $B$ contains the boundary $\partial D$ of $D=\bigcup_{\ell=1}^n D(q_\ell, r_\ell)$, that coincides with the boundary $\partial (\mathbb{H}\setminus D)$ of $\mathbb{H}\setminus D$. The slice regular polynomial $P$ has no zeros in $\mathbb{H}\setminus D$ and since  $\lim_{q\to +\infty} |P(q)| =+ \infty$, we get that inequality (\ref{ipersuperficie}) holds on each (open) connected component of $\mathbb{H}\setminus D$. In fact, if this were not the case, $|P|$ would have a local minimum at some point  $q \in \mathbb{H}\setminus D$ with $P(q)\neq0$, and by the Minimum Modulus Principle applied to $P$ on $\mathbb{H}$, $P$ would be constant. The statement follows.
\end{proof}

\section{Cartan theorem}
In this section we prove an analog of Cartan theorem, providing an estimate from below for the modulus of a quaternionic polynomial. Note that in the statement the roots may be repeated.
\begin{theorem}\label{cartan}
    Let $P(q)$ be a polynomial having isolated roots $\alpha_1, \ldots \alpha_t$ and spherical zeros $[\beta_1], \ldots, [\beta_p]$. Let $n=\deg P$, i.e. $n=t+2p$ and let $H$ be any positive real number. Then there are balls in $\mathbb H$ with the sum of their radii equal to $2H$ such that for each point $q$ lying outside of these balls the following inequality is satisfied:
\begin{equation}
|P(q)|> \left(\frac He\right)^n.
\end{equation}
\end{theorem}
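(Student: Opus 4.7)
The plan is to reduce the quaternionic Cartan estimate to the classical complex Boutroux--Cartan lemma via the $\star$-factorization of $P$ and the pointwise product formula. Using Theorem~\ref{dueuno} (and the linearization of spherical factors in the style of Theorem~\ref{R-fattorizzazslice}), write
$$P(q) = (q - \gamma_1)\star(q - \gamma_2)\star\cdots\star(q - \gamma_n)\cdot a_n,$$
with one linear factor per isolated root and two factors $(q-\beta_k)\star(q-\bar\beta_k)$ per spherical root $[\beta_k]$, for a total of exactly $n = t + 2p$ factors. I may assume $a_n = 1$; in the general case the bound just picks up a factor $|a_n|$.

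\textbf{Reduction to a complex product.} Iterating Theorem~\ref{pointwise} one obtains
$$|P(q)| = |q - \gamma_1|\,|q_1 - \gamma_2|\cdots|q_{n-1} - \gamma_n|,$$
with $q_0 := q$ and each subsequent $q_k$ a conjugate of $q$ by the running partial product, so $q_k \in [q]$. Therefore $|q_{k-1} - \gamma_k| \geq \mathrm{dist}([q],\gamma_k)$. Writing $q = u + I_q v$ and $\gamma_k = a_k + I_k b_k$ with $v, b_k \geq 0$, Lemma~\ref{lagrange} gives $\mathrm{dist}([q],\gamma_k) = \sqrt{(u-a_k)^2 + (v-b_k)^2}$. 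Setting $z_q := u + iv$ and $z_k := a_k + ib_k$ in the closed upper half-plane of $\mathbb{C}$, this yields
$$|P(q)| \;\geq\; \prod_{k=1}^n |z_q - z_k|,$$
where each spherical zero contributes its point $a_k + ib_k$ with multiplicity $2$ and each isolated zero with multiplicity $1$, totalling $n$ factors, as required.

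\textbf{Classical Cartan and lift to $\mathbb{H}$.} The classical Boutroux--Cartan lemma in $\mathbb{C}$ provides disks $D_1,\dots,D_s \subset \mathbb{C}$ with $\sum_j \rho_j = 2H$ such that $\prod_k|z - z_k| > (H/e)^n$ for every $z \notin \bigcup_j D_j$. Lift each $D_j = B_{\mathbb{C}}(c_j,\rho_j)$ to the axially symmetric ball
$$\widetilde D_j \;=\; \{u + Jv \in \mathbb{H} : u + iv \in D_j,\ J \in \mathbb{S}\}$$
of radius $\rho_j$ (in the axially symmetric sense consistent with the hypersurfaces used in the Ehrenpreis--Malgrange discussion of the previous section). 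The sum of radii is preserved, and if $q \notin \bigcup_j \widetilde D_j$ then $z_q \notin \bigcup_j D_j$, so
$$|P(q)| \;\geq\; \prod_{k=1}^n |z_q - z_k| \;>\; (H/e)^n,$$
which proves the theorem.

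\textbf{Main obstacle.} The essential difficulty is extracting, from the noncommutative $\star$-product, an ordinary product of moduli susceptible to the classical Cartan argument. Two features make this delicate. First, the linearization of $P$ is not unique (Theorem~\ref{dfactor}), so any constant extracted factor-by-factor must be independent of the choice of factorization; the uniform lower bound $\mathrm{dist}([q],\gamma_k)$ depends only on the sphere $[\gamma_k]$, so the non-uniqueness becomes invisible. Second, isolated and spherical zeros live on geometrically different objects (points versus $2$-spheres in $\mathbb{H}$), but Lemma~\ref{lagrange} collapses both to points in the same upper half-plane with the correct multiplicities, allowing a single clean application of the complex Cartan lemma.
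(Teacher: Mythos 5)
Your argument takes a genuinely different route from the paper's. The paper redoes Cartan's construction directly in $\mathbb{H}$: it builds honest four\--dimensional balls $B_1,\ldots,B_r$ of radii $\lambda_i H/n$ each containing $\lambda_i$ zeros, doubles them, orders the zeros by distance from $q$ to get $|q-\gamma_\ell|>\ell H/n$, and concludes with $(H/n)^n n!>(H/e)^n$. You instead project everything to the closed upper half\--plane via $q\mapsto {\rm Re}(q)+i|{\rm Im}(q)|$, use Theorem~\ref{pointwise} and Lemma~\ref{lagrange} to get $|P(q)|\geq\prod_k|z_q-z_k|$, and invoke the classical Boutroux--Cartan lemma as a black box. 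This is shorter, and it has one real virtue: by bounding each factor by ${\rm dist}([q],\gamma_k)$ you handle uniformly the fact that the pointwise factorization evaluates the later factors at conjugated points $q_k\in[q]$ rather than at $q$ itself --- a point the paper's Step 4 passes over in silence. Your remarks on the non\--uniqueness of the factorization and on the equal treatment of isolated and spherical zeros are also correct.

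There is, however, one substantive discrepancy with the statement being proved: the theorem promises \emph{balls} in $\mathbb{H}$ with radii summing to $2H$, and your exceptional sets are not balls. The lower bound $\prod_k{\rm dist}([q],\gamma_k)$ is constant on each sphere $[q]$, so any exceptional set your reduction can produce is necessarily axially symmetric; the lift $\widetilde D_j$ of a complex disk centered at a non\--real point is a tubular neighborhood of a $2$\--sphere (a set of the type $D(q_0,r)$ from Theorem~\ref{esterno}), which is topologically $S^2\times B^2$ and cannot be covered by genuine balls whose radii sum to anything comparable to $\rho_j$ (the covering cost grows like $y_j^2/\rho_j$). So as written you prove a true and useful variant --- one that in fact suffices for the application in Theorem~\ref{cartansr} and Theorem~\ref{prodgrowth}, where one only needs a sphere $|q|=r_1$ missing the exceptional set, and the moduli of points in your tube around $[c_j]$ fill an interval of length at most $2\rho_j$ just as for a ball --- but not the literal statement. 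To obtain actual balls one must work with the genuine quaternionic distances $|q-\gamma_\ell|$ and the positions of the zeros in $\mathbb{H}$, which is exactly what the paper's direct construction does; you should either carry out that four\--dimensional version of the covering argument or restate your conclusion with axially symmetric exceptional sets.
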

\begin{proof}
We divide the proof in steps.
\\
Step 1. If there is a ball of radius $H$ containing all the zeros of the polynomial $P$, we can consider a ball $B$ with the same center and radius $2H$. Then, for any $q\in\mathbb H\setminus B$ we have that the distance from $q$ to any isolated zero of $P$ is at least $H$. By Lemma \ref{lagrange}, also the distance from $q$ to any spherical zero is at least $H$.
Consider a decomposition of $P$ into factors (see Theorem \ref{dueuno}) and write the factors associated to a spherical zero $[\beta]$ as $(q-\beta)\star(q-\overline\beta)$. From these considerations, it follows that
\[
\begin{split}
&\left|(q-\alpha_1)\star\ldots\star (q-\alpha_t)\left[(q-\beta_1)\star(q-\overline\beta_1)\ldots (q-\beta_p)\star(q-\overline\beta_p)\right]\right|\\
&=|(q-\alpha_1)\star\ldots \star(q-\alpha_t)| \, |(q-\beta_1)\star(q-\overline\beta_1)\ldots (q-\beta_p)\star(q-\overline\beta_p)|.\\
\end{split}
\]
Using Theorem \ref{pointwise}, we can rewrite the products $(q-\alpha_1)\star\ldots \star(q-\alpha_t)$ and
$(q-\beta_1)\star\ldots \star(q-\beta_p)$ as pointwise products f the form $(q-\alpha_1)\star\ldots \star(\tilde q-\alpha_t)$ and
$(q-\beta_1)\star\ldots \star(\hat q- \beta_p)$ where $\tilde q,\ldots, \hat q\in [q]$ and so their distance from the zeros of $P$ is at least $H$. So we have
 \[
\begin{split}
&|(q-\alpha_1)\star\ldots \star(q-\alpha_t)| \, |(q-\beta_1)\star(q-\overline\beta_1)\ldots (q-\beta_p)\star(q-\overline\beta_p)|\\
&=|(q-\alpha_1)\ldots (\tilde q-\alpha_t)| \, |(q-\beta_1)\ldots (\hat q-\beta_p)|\\
&> H^n > \left(\frac He \right)^n,
\end{split}
\]
\noindent
Step 2. If there is no ball as described in Step 1, let us consider the balls with radius $\lambda H/n$ containing exactly $\lambda$ zeros of $P$, where each spherical zero is described in a decomposition of $P$ into linear factors $(q-\beta)$, $(q- \overline\beta)$ where $\beta$, $\overline\beta$ is any pair of points belonging to the sphere (and thus a sphere counts as two zeros).
Let $\lambda_1$ be the largest integer such that a ball $B_1$ of radius $\lambda_1 H/n$ contains exactly $\lambda_1$ zeros of $P$. No ball of radius $\eta H/n$ greater than or equal to $\lambda_1 H/n$ can contain more than $\eta$ zeros. In fact, assume that there is a ball of radius $\eta H/n$, $\eta\geq\lambda_1$ containing $\eta'>\eta$ zeros of $P$. Then the concentric ball of radius $\eta' H/n$ contains either $\eta'$ zeros or $\eta''>\eta'$ zeros. The first case is not possible as $\lambda_1$ was the largest integer. In the second case, we consider a concentric ball of radius $\eta'' H/n$ and we repeat the procedure. Since the number of zeros (isolated or spherical) is finite, we will find a $\lambda$ such that the ball of radius $\lambda H/n >\lambda_1 H/n$ contains $\lambda$ zeros. This is absurd by our choice of $\lambda_1$.
\\
The zeros of $P$ contained in $B_1$ will be said of rank $\lambda_1$.
\\
Step 3. By removing the zeros in $B_1$, we have $n-\lambda_1$ zeros and repeating the above procedure, we construct a ball $B_2$ of radius $\lambda_2H/n$ containing $\lambda_2$ zeros. We have that $\lambda_2\leq\lambda_1$. In fact, if $\lambda_2>\lambda_1$, the ball $B_2$ would have radius larger than $\lambda_1H/n$ and it would contain $\lambda_2$ points. This contradicts Step 2.
 \\ We then remove the $\lambda_2$ points contained in $B_2$ and we iterate the procedure until we obtain a finite sequence of balls $B_1,\ldots, B_r$ of radii $\lambda_1H/n\geq\ldots\geq \lambda_rH/n$. Each ball $B_i$ contains $\lambda_i$ zeros of $P$ and $$\lambda_1\geq\lambda_2\geq\ldots\geq \lambda_r.$$ Note that, by construction,
$$
(\lambda_1+\cdots +\lambda_r)\frac Hn=H.
$$
\\
The zeros of $P$ contained in $B_i$ will be said of rank $\lambda_i$.
\\
Step 4. Consider the balls $\Sigma_1,\ldots, \Sigma_r$ with the same center as $B_1,\ldots, B_r$ and with the radius of $\Sigma_i$ which is twice the radius of $B_i$, $i=1,\ldots, r$. Let $q\in\mathbb H\setminus(\Sigma_1\cup\ldots\cup\Sigma_r)$. Consider the closed ball centered at $q$ and with radius $\lambda_0 H/n$ for some $\lambda_0\in\mathbb N$.
\\
By the construction above we have that the ball $B_0$ intersects the balls $B_i$ with radius at least equal to $\lambda_0 H/n$. So this ball can contain only zeros of rank less than $\lambda_0$. If we remove all the zeros of rank greater than or equal to $\lambda_0$, no ball of radius $\lambda H/n$ with $\lambda >\lambda_0$ can contain $\lambda$ of the remaining zeros. We conclude that $B_0$ contains at most $\lambda_0$ zeros of $P$.
\\
Let us label the zeros of $P$ in order of increasing distance from $q$ and let us write them as $\gamma_1,\ldots, \gamma_n$, where the $\gamma_\ell$ are either the isolated zeros $\alpha_i$ or one of the representatives of a spherical zero $[\beta_i]$, i.e. $\beta_i$ or $\overline\beta_i$. We have
$$
|q-\gamma_\ell|>\ell \frac{H}{n}
$$
so that
\[
\begin{split}
&|P(q)|=\left|(q-\alpha_1)\star\ldots\star (q-\alpha_t)\left[(q-\beta_1)\star(q-\overline\beta_1)\ldots (q-\beta_p)\star(q-\overline\beta_p)\right]\right|\\
&>\left(\frac Hn\right)^n n!> \left(\frac He\right)^n.
\end{split}
\]
\end{proof}
Cartan theorem will be used to prove a lower bound for the modulus of a function slice regular in a ball centered at the origin, see Theorem \ref{cartansr}.

\vskip 1 truecm
\noindent{\bf Comments to Chapter 3}.
The integral formulas in this chapter come from \cite{CGeSA} and \cite{moscow}, and the Riemann mapping theorem was originally proved in \cite{galsa3}. The results on the zeros come from \cite{zeri}, \cite{GSS}, \cite{mjm} while the Ehrenpreis-Malgrange lemma is taken from \cite{MR2869150}. Finally, Cartan theorem appears here for the first time.

%
%
%


\chapter{Slice regular infinite products}

\section{Infinite products of quaternions}

In the sequel we will deal with infinite products of quaternions. As in the classical complex case,
we will say that an infinite product of quaternions
$$
\prod_{n=1}^\infty u_n
$$
converges if the sequence $p_N=\prod_{n=1}^N u_n$ converges to a nonzero limit. It is crucial to note that the exclusion of $0$ as a limit is due to the fact that, by allowing it, one has that any sequence $\{u_n\}$ containing at least one vanishing term would converge, independently on the sequence.  For practical purposes, it is less reductive to assume that the
infinite product $\prod_{n=1}^\infty u_n$ converges if and only if the sequence $\{u_n\}$ contains a finite number of zero elements and the partial products $p_N$ obtained by multiplying the nonzero factors converges to a nonzero limit.  Since in a convergent infinite product the general term $u_n$ tends to 1 as $n\to\infty$, we will write $u_n=1+a_n$ where $a_n\to 0$ as $n\to\infty$. In complex analysis, $\prod_{n=1}^\infty (1+a_n)$ converges to a nonzero limit simultaneosly with the series $\sum_{n=1}^\infty \log(1+a_n)$. The convergence is not simultaneous if the infinite product tends to zero. \\
 The following proposition extends to the quaternionic setting the corresponding result in the complex case:
 \begin{proposition}\label{15.3} Let $a_1,\ldots ,a_N\in\mathbb H$ and let
 $$
 p_N=\prod_{n=1}^N (1+a_n), \qquad p_N^*=\prod_{n=1}^N (1+|a_n|).
 $$
 Then
 \begin{equation}\label{in1}
 p_N^*\leq \exp(|a_1|+\ldots +|a_N|)
 \end{equation}
 and
 \begin{equation}\label{in2}
 |p_N-1|\leq p_N^*-1.
 \end{equation}
\end{proposition}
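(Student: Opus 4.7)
The plan is to prove the two inequalities separately, both by elementary arguments that mirror the classical complex case, with the only nontrivial observation being that the quaternionic absolute value is multiplicative: $|ab|=|a||b|$ for all $a,b\in\mathbb{H}$.

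For \eqref{in1}, I would simply apply the scalar inequality $1+t\le e^t$, valid for all real $t\ge 0$, to each factor $1+|a_n|$. Multiplying these estimates together gives
$$p_N^*=\prod_{n=1}^N(1+|a_n|)\le\prod_{n=1}^N e^{|a_n|}=\exp\!\Big(\sum_{n=1}^N|a_n|\Big),$$
which is exactly \eqref{in1}.

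For \eqref{in2}, I would argue by induction on $N$. The base case $N=1$ is immediate since $|p_1-1|=|a_1|=p_1^*-1$. For the inductive step, write
$$p_{N+1}-1=p_N(1+a_{N+1})-1=(p_N-1)+p_N\,a_{N+1},$$
and apply the triangle inequality together with the multiplicativity of the quaternionic norm to obtain
$$|p_{N+1}-1|\le|p_N-1|+|p_N|\,|a_{N+1}|.$$
Using the inductive hypothesis $|p_N-1|\le p_N^*-1$ and the elementary bound $|p_N|=\prod_{n=1}^N|1+a_n|\le\prod_{n=1}^N(1+|a_n|)=p_N^*$ (again a consequence of $|ab|=|a||b|$ in $\mathbb{H}$), this becomes
$$|p_{N+1}-1|\le(p_N^*-1)+p_N^*\,|a_{N+1}|=p_N^*(1+|a_{N+1}|)-1=p_{N+1}^*-1,$$
closing the induction.

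There is no real obstacle here: the only point at which one might worry is the step $|p_N|\le p_N^*$, which in the complex case is automatic but which in $\mathbb{H}$ relies on the fact that $|\cdot|$ is a (multiplicative) norm on the skew field $\mathbb{H}$; since this is one of the first properties of the quaternionic absolute value recalled in Chapter~2, the argument goes through without modification from the complex case.
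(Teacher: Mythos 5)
Your proof is correct and follows essentially the same route as the paper: inequality \eqref{in1} via $1+t\le e^t$, and \eqref{in2} by induction on $N$ using the triangle inequality and the multiplicativity of the quaternionic norm. The only (harmless) difference is the regrouping of $p_{N+1}-1$: the paper writes it as $(p_N-1)(1+a_{N+1})+a_{N+1}$, which closes the induction directly, whereas your splitting $(p_N-1)+p_N a_{N+1}$ needs the extra (easy) bound $|p_N|\le p_N^*$.
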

\begin{proof}
The proof is based on the properties of the modulus of a quaternion, thus the proof in the complex case applies also here. We repeat it for the sake of completeness.
Since for any real number the inequality $1+x\leq e^x$ holds, we immediately have $p_N^*\leq \exp (|a_1|+\ldots +|a_N|)$.
To show \eqref{in2} we use induction. It is clear that \eqref{in2} holds for $N=1$, so we assume that it holds up to $k$. We have
\begin{equation}\label{eqin3}
p_{k+1}-1=p_k(1+a_{k+1})-1=(p_k-1)(1+a_{k+1}) +a_{k+1}.
\end{equation}
From \eqref{eqin3} we deduce
$$
|p_{k+1}-1|\leq |p_k-1|(1+|a_{k+1}|) +|a_{k+1}|=p_{k+1}^*-1.
$$
\end{proof}
\begin{proposition}
Suppose that the sequence $\{a_n\}$ is such that $0 \leq a_n < 1$. Then
$$
\prod_{n=1}^\infty (1-a_n) >0\ {\it if\ and\ only\ if\ }\sum_{n=1}^\infty a_n<\infty.
$$
\end{proposition}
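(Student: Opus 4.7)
The plan is to exploit the monotonicity of the partial products together with Proposition \ref{15.3}, which has just been proved and accommodates real arguments as a trivial special case. Since $0 \le a_n < 1$, every factor $1-a_n$ lies in $(0,1]$, so the partial products $P_N = \prod_{n=1}^N(1-a_n)$ form a non-increasing sequence in $[0,1]$ and therefore converge to some $L \in [0,1]$. The question becomes whether $L>0$.

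For the ``only if'' direction I would argue by contrapositive. Assuming $\sum_{n=1}^\infty a_n = \infty$, the elementary inequality $1-x \le e^{-x}$ valid for $x \ge 0$ gives
$$P_N \;\le\; \prod_{n=1}^N e^{-a_n} \;=\; \exp\Bigl(-\sum_{n=1}^N a_n\Bigr) \;\longrightarrow\; 0,$$
so $L=0$. This step is entirely real-variable and requires no quaternionic input.

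For the ``if'' direction, suppose $\sum_{n=1}^\infty a_n < \infty$. Choose $N_0$ large enough that $\sum_{n \ge N_0} a_n < \log 2$, which is possible because the tail of a convergent series tends to $0$. Now apply Proposition \ref{15.3} to the (real, hence quaternionic) numbers $-a_{N_0},\ldots,-a_N$: inequalities \eqref{in1} and \eqref{in2} combine to give
$$\Bigl|\prod_{n=N_0}^N(1-a_n)-1\Bigr| \;\le\; \prod_{n=N_0}^N(1+a_n)-1 \;\le\; \exp\Bigl(\sum_{n=N_0}^N a_n\Bigr)-1 \;<\; 1.$$
Since each partial product $\prod_{n=N_0}^N(1-a_n)$ is real and lies in $[0,1]$, the modulus on the left equals $1-\prod_{n=N_0}^N(1-a_n)$, and we obtain the uniform lower bound $\prod_{n=N_0}^N(1-a_n) \ge 2-\exp(\sum_{n\ge N_0}a_n) > 0$. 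The finite prefix $\prod_{n=1}^{N_0-1}(1-a_n)$ is a product of strictly positive real numbers, hence positive, so passing to the limit in $N$ yields $L>0$.

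I do not anticipate a genuine obstacle: both estimates reduce to one-dimensional real inequalities, and the quaternionic machinery enters only through the formal appeal to Proposition \ref{15.3}. The only delicate point worth flagging is the choice of threshold $\log 2$ in the tail, which is exactly what converts the bound $|P-1|<1$ into a strictly positive lower bound for $P$; a cruder choice would fail to exclude $L=0$.
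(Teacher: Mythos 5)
Your proof is correct. The divergence direction is exactly the paper's argument: the contrapositive via $1-x\le e^{-x}$ and $p\le p_N\le \exp(-a_1-\cdots-a_N)\to 0$. The convergence direction differs in presentation: the paper simply cites Theorem \ref{15.4} (the uniform-convergence theorem for infinite products of functions, which at that point is a forward reference to the next section), whereas you inline the underlying estimate, applying Proposition \ref{15.3} to the tail $-a_{N_0},\dots,-a_N$ with the threshold $\sum_{n\ge N_0}a_n<\log 2$ to get the uniform lower bound $\prod_{n=N_0}^N(1-a_n)\ge 2-\exp\bigl(\sum_{n\ge N_0}a_n\bigr)>0$, and then multiply by the strictly positive finite prefix. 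This is essentially the same mechanism that sits inside the proof of Theorem \ref{15.4} (there with the choice $\varepsilon<\tfrac12$ playing the role of your $\log 2$), so the mathematics is the same; what your version buys is a self-contained, purely real-variable proof that avoids the forward citation, at the cost of repeating an estimate the paper prefers to quote once in greater generality.
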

\begin{proof}
 Let $p_N=\prod_{n=1}^N (1-a_n)$ then, by construction,
$p_1\geq p_2 \geq \ldots p_N>0$ and so the limit $p$ of the sequence $\{p_N\}$ exists. Assume that $\sum_{n=1}^\infty a_n<\infty$, then Theorem \ref{15.4} implies $p>0$. To show the converse, note that $$p\leq p_N\leq \exp(-a_1-\ldots -a_N)$$ and the right hand side expression tends to $0$ when $N\to\infty$ if $\sum_{n=1}^\infty a_n$ diverges to $+\infty$.
\end{proof}

In the complex case it is well known that the argument of a product  is equal to the sum of the arguments of the factors (up to an integer multiple of 2$\pi$). In the quaternionic case this equality does not hold in general, since the exponents may belong to different complex planes.
\begin{lemma}\label{lemma1}
Let $n\in \mathbb{N}$ and let $\theta_1,..., \theta_n$ $\in [0,\pi)$ be such that $\sum_{i=1}^{n}\theta_i <\pi$. Then for every $\{I_1,\ldots, I_n\}\in \mathbb{S}$ the inequality
$${\rm arg}_{\hh}(e^{\theta_1I_1}\cdots e^{\theta_nI_n})\leq\sum_{i=1}^{n}\theta_i$$
holds.
\end{lemma}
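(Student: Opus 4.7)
The plan is to prove the inequality by induction on $n$, after translating it via the formula $\arg_{\mathbb{H}}(q) = \arccos(\mathrm{Re}(q)/|q|)$ into an equivalent statement about real parts. The base case $n=1$ is immediate since $e^{\theta_1 I_1}=\cos\theta_1+I_1\sin\theta_1$ has real part $\cos\theta_1$ and modulus $1$, hence $\arg_{\mathbb{H}}(e^{\theta_1 I_1})=\theta_1$.

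For the inductive step, I would first observe that each factor $e^{\theta_j I_j}$ is a unit quaternion, so the whole product has modulus $1$. Thus the claim $\arg_{\mathbb{H}}(e^{\theta_1 I_1}\cdots e^{\theta_n I_n})\leq \Theta$, where $\Theta=\sum_{i=1}^n\theta_i<\pi$, is equivalent (since $\arccos$ is decreasing on $[-1,1]$) to the inequality
\[
\mathrm{Re}(e^{\theta_1 I_1}\cdots e^{\theta_n I_n})\geq \cos\Theta.
\]
Write $p=e^{\theta_1 I_1}\cdots e^{\theta_{n-1}I_{n-1}}$; by induction (and the fact that $\sum_{i=1}^{n-1}\theta_i<\pi$) we have $\arg_{\mathbb{H}}(p)=\varphi\leq \sum_{i=1}^{n-1}\theta_i$, so $p=\cos\varphi+J\sin\varphi$ for some $J\in\mathbb{S}$. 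Multiplying,
\[
p\,e^{\theta_n I_n}=\cos\varphi\cos\theta_n+\cos\varphi\sin\theta_n\, I_n+\sin\varphi\cos\theta_n\, J+\sin\varphi\sin\theta_n\, JI_n.
\]

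The key step is estimating $\mathrm{Re}(JI_n)$. For $J,I_n\in\mathbb{S}$ one has $\mathrm{Re}(JI_n)=-\langle J,I_n\rangle$ where $\langle\cdot,\cdot\rangle$ is the Euclidean scalar product on the $3$-dimensional space of pure imaginary quaternions; by Cauchy--Schwarz $|\langle J,I_n\rangle|\leq 1$, hence $\mathrm{Re}(JI_n)\geq -1$. Since $\varphi,\theta_n\in[0,\pi)$ give $\sin\varphi\geq 0$ and $\sin\theta_n\geq 0$, this yields
\[
\mathrm{Re}(p\,e^{\theta_n I_n})=\cos\varphi\cos\theta_n+\sin\varphi\sin\theta_n\,\mathrm{Re}(JI_n)\geq \cos\varphi\cos\theta_n-\sin\varphi\sin\theta_n=\cos(\varphi+\theta_n).
\]

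To finish, I would use $\varphi+\theta_n\leq \sum_{i=1}^{n-1}\theta_i+\theta_n=\Theta<\pi$ and the fact that $\cos$ is decreasing on $[0,\pi]$ to conclude $\cos(\varphi+\theta_n)\geq \cos\Theta$, hence $\mathrm{Re}(e^{\theta_1I_1}\cdots e^{\theta_n I_n})\geq \cos\Theta$, which is the desired inequality. I do not expect any serious obstacle: the only delicate point is the non-commutativity, which is handled once one notices that the cross term $JI_n$ from the two imaginary parts contributes its real part $-\langle J,I_n\rangle\geq -1$, playing exactly the role that the product of sines plays in the classical complex addition formula.
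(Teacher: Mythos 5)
Your proof is correct and follows essentially the same route as the paper: induction on $n$, writing the partial product as $\cos\varphi+J\sin\varphi$ with $\varphi\leq\sum_{i=1}^{n-1}\theta_i$, expanding the product with the last factor, and bounding the cross term via $\mathrm{Re}(JI_n)=-\langle J,I_n\rangle\geq-1$ to get $\mathrm{Re}\geq\cos(\varphi+\theta_n)$ and conclude with the monotonicity of $\cos$ on $[0,\pi]$. The only cosmetic difference is that the paper deduces $\arg_{\mathbb{H}}\leq\varphi+\theta_n$ first and then applies the induction hypothesis, whereas you fold both steps into a single comparison with $\cos\Theta$.
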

\begin{proof}
 For $n=1$ equality holds and thus the statement is true. Let us proceed by induction and choose $\theta_1, \theta_2,..., \theta_n$ $\in [0,\pi)$ with $\sum_{i=1}^{n}\theta_i<\pi$. Let $\phi \in [0,\pi)$
  and $J\in \mathbb{S}$ be such that  $e^{\theta_1I_1}\cdots e^{\theta_{n-1}I_{n-1}}=e^{\phi J}$ and take the product $e^{\phi J}e^{\theta_nI_n}$:
$$e^{\phi J}e^{\theta_nI_n}=\cos\phi \cos\theta_n+\cos\phi \sin\theta_nI_n+\sin\phi \cos\theta_nJ+ \sin\phi \sin\theta_nJI_n.$$
    Since $JI_n=-\langle J,I_n\rangle+J\times I_n$ we obtain that
    $$
    \cos({\rm arg}_{\hh}(e^{\phi J}e^{\theta_nI_n}))={\rm Re}\left( e^{\phi J}e^{\theta_nI_n}\right) = \cos\phi \cos\theta_n-\sin\phi \sin\theta_n\langle J,I_n\rangle.
    $$

However $\langle J,I_n\rangle \leq |J ||I_n |=1$ and $\sin\phi \sin\theta_n\geq 0$ and so  $$\cos({\rm arg}_{\hh}(e^{\phi J}e^{\theta_nI_n}))\geq \cos\phi \cos\theta_n-\sin\phi \sin\theta_n= \cos(\phi + \theta_n).$$
But the function $\cos(x)$ is decreasing in $[0,\pi]$ and so we deduce
$${\rm arg}_{\hh}(e^{\phi J}e^{\theta_nI_n})\leq\phi + \theta_n.$$ Thanks to the induction hypothesis we have the thesis.
\end{proof}
Let $\{a_i\}_{i\in\mathbb{N}}\subseteq \mathbb{H}$ be a sequence
such that the associated series is absolutely convergent. Then the series itself is convergent, namely $\sum_{i=0}^{\infty}|a_i|$ convergent implies that  $\sum_{i=0}^{\infty}a_i$ convergent. This observation is used to prove the following result.

\begin{theorem}\label{log2}Let $\{a_i\}_{i\in\mathbb{N}}\subseteq \mathbb{H}$ be a sequence. If the series $\sum_{i=0}^{\infty}|{\rm Log}(1+a_i)|$ converges, then the product  $\prod^{\infty}_{i=0}(1+a_i)$ converges.
\end{theorem}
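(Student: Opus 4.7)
My plan is to exploit the polar decomposition of nonzero quaternions and invoke Lemma \ref{lemma1} to control the angular drift of the partial products, reducing the convergence question to two essentially independent parts: a product of positive reals and a product of unit quaternions.

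First I would observe that, since the principal logarithm ${\rm Log}$ is only defined on $\mathbb H \setminus (-\infty,0]$, the hypothesis forces every $1+a_i$ to lie in this set. Hence I can write $1+a_i = r_i\, e^{\theta_i I_i}$ with $r_i = |1+a_i| > 0$, $\theta_i = \arccos({\rm Re}(1+a_i)/|1+a_i|) \in [0,\pi)$, and a suitable $I_i \in \mathbb{S}$. Definition \ref{defilog} then gives $|{\rm Log}(1+a_i)|^2 = (\ln r_i)^2 + \theta_i^2$, so the hypothesis implies both $\sum_i |\ln r_i| < \infty$ and $\sum_i \theta_i < \infty$. The first is the standard real criterion ensuring that $\prod_i r_i$ converges to some $r > 0$. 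Since each $r_i$ is real and therefore commutes with every quaternion, the partial products factor as
$$p_N := \prod_{i=0}^{N}(1+a_i) = \Bigl(\prod_{i=0}^{N} r_i\Bigr) E_N, \qquad E_N := \prod_{i=0}^{N} e^{\theta_i I_i},$$
and the entire problem reduces to proving that $E_N$ converges to a nonzero limit.

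To establish this I would note first that each $e^{\theta_i I_i}$ is a unit quaternion so $|E_N| = 1$ for every $N$. For $N > M$, write $E_N = E_M\, U_{M,N}$ with $U_{M,N} = e^{\theta_{M+1} I_{M+1}} \cdots e^{\theta_N I_N}$. Because $\sum \theta_i$ converges, I can fix $M_0$ large enough that $\sum_{i > M_0} \theta_i < \pi$, and then for every $N > M \geq M_0$ Lemma \ref{lemma1} gives $\phi_{M,N} := {\rm arg}_{\mathbb H}(U_{M,N}) \leq \sum_{i=M+1}^{N}\theta_i$. Writing $U_{M,N} = \cos \phi_{M,N} + J \sin \phi_{M,N}$ for a suitable $J \in \mathbb{S}$ yields
$$|U_{M,N} - 1|^2 = 2(1 - \cos \phi_{M,N}) \leq \phi_{M,N}^2,$$
whence $|E_N - E_M| = |E_M|\,|U_{M,N}-1| \leq \sum_{i=M+1}^{N}\theta_i$, which vanishes as $M \to \infty$ uniformly in $N > M$. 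Thus $\{E_N\}$ is Cauchy and converges to some $E$ with $|E| = 1 \neq 0$, and therefore $p_N \to rE \neq 0$, proving convergence of the infinite product.

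The main obstacle is the non-commutativity of $\mathbb H$: the classical identification $\prod(1+a_i) = \exp\bigl(\sum {\rm Log}(1+a_i)\bigr)$ fails here because distinct ${\rm Log}(1+a_i)$ typically live in different complex planes $\mathbb C_{I_i}$, across which the exponential is not additive. Lemma \ref{lemma1} is tailored exactly to overcome this, bounding the quaternionic argument of a product of exponentials by the sum of the individual arguments whenever that sum is less than $\pi$, which is precisely what drives the Cauchy estimate for $E_N$.
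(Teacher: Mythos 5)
Your proposal is correct and follows essentially the same route as the paper's proof: polar decomposition $1+a_i=|1+a_i|e^{\theta_i I_i}$, the bounds $|\ln|1+a_i||\leq|{\rm Log}(1+a_i)|$ and $\theta_i\leq|{\rm Log}(1+a_i)|$, factoring out the (real, hence central) moduli, and a Cauchy argument for the unit-quaternion product driven by Lemma \ref{lemma1}. The only difference is that you make explicit the elementary estimate $|u-1|=2|\sin(\phi/2)|\leq\phi$ for a unit quaternion of argument $\phi$, which the paper uses implicitly.
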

\begin{proof}
 If the series $\sum_{i=0}^{\infty}|{\rm Log}(1+a_i)|$ is convergent, then the sequence $\left\lbrace a_i\right\rbrace_{i\in\mathbb{N}}$ tends to zero. Thus we can suppose that $1+a_i\notin \ (-\infty,0]$.
 Then, for every $i\in  \mathbb{N}$, we consider suitable $\theta_i \in [0,\pi)$ and  $I_i\in \mathbb{S}$ such that $$
 1+a_i=|1+a_i|e^{\theta_i I_i}.
 $$
 Consequently,
 $${\rm Log}(1+a_i)=\ln|1+a_i|+\theta_i I_i$$ and so
\begin{equation}\label{norma}\left|\ln|1+a_i|\right|\leq \left|{\rm Log}(1+a_i)\right| \end{equation}
moreover
\begin{equation}\label{norma1}\left|\theta_i\right|=\left|\theta_i I_i\right|\leq \left|{\rm Log}(1+a_i)\right| \end{equation}
for every  $i\in \mathbb{N}$.\\
Our next task is to show that  the sequence of the partial products
$$Q_n=\prod^{n}_{i=0}(1+a_i)$$ has a finite, nonzero, limit.
First of all, note that for every $n\in\mathbb{N}$
$$\prod^{n}_{i=0}(1+a_i)=\prod^{n}_{i=0}|1+a_i|\ \prod^{n}_{i=0} e^{\theta_i I_i}.$$
 Our hypothesis and inequality (\ref{norma}) show that the series  $\sum_{i=0}^{\infty}\ln|1+a_i|$ is convergent and hence  the infinite product $\prod^{\infty}_{i=0}|1+a_i|$ is convergent. Then it is sufficient to prove that the sequence  $R_n=\prod^{n}_{i=0}e^{\theta_i I_i}$ $\subseteq \partial B(0,1)=\left\{ q \in\mathbb{H}: |q|=1 \right\}$ is convergent. To this end, we will show that $\{ R_n \}_{n\in \mathbb{N}}$ is a Cauchy sequence. For  $n>m\in \mathbb{N}$, we have:
\[
\begin{split}|R_n-R_m|&=\left|\prod^{n}_{i=0}e^{\theta_i I_i} - \prod^{m}_{i=0}e^{\theta_i I_i}\right|\\
&=\left|\prod^{m}_{i=0}e^{\theta_i I_i}\prod^{n}_{i=m+1}e^{\theta_i I_i}-\prod^{m}_{i=0}e^{\theta_i I_i}\right|\\
&=\left|\prod^{m}_{i=0}e^{\theta_i I_i}\right|\left|\prod^{n}_{i=m+1}e^{\theta_i I_i} - 1\right|\\
&=\left|\prod^{n}_{i=m+1}e^{\theta_i I_i}-1\right|\leq {\rm arg}_{\hh}\left(\prod^{n}_{i=m+1}e^{\theta_i I_i}\right),
\end{split}
\]
where the last inequality holds since $\left|\prod^{n}_{i=m+1}e^{\theta_i I_i}\right|=1$.
Inequality (\ref{norma1}) implies that the series $\sum_{i=0}^{\infty}\theta_i$ is convergent. Therefore the sequence $S_n=\sum^{n}_{i=0} \theta_i $ of the partial sums is a Cauchy sequence. As a consequence, for all $\epsilon >0$, there exists $m_0\in\mathbb{N}$ such that  for all $n>m>m_0$
$$\sum_{i=m+1}^{n}\theta_i<\epsilon.$$
In particular for $\epsilon <\pi$, by  Lemma \ref{lemma1}, we deduce  $${\rm arg}_{\hh}\left(\prod^{n}_{i=m+1}e^{\theta_i I_i}\right) \leq \sum^{n}_{i=m+1} \theta_i <\epsilon,$$ and this finishes the proof.
\end{proof}

The following proposition, which will be used in the sequel, is an expected extension of the analog property in the complex case:

\begin{proposition} \label{proplog}Let ${\rm Log}$ be the principal quaternionic logarithm. Then
\begin{equation}\label{lim}\lim_{q\rightarrow 0}q^{-1}{\rm Log}(1+q)=1.\end{equation}
\end{proposition}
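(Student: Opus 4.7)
The plan is to reduce the statement to the classical complex case by exploiting the observation (Remark \ref{RmkLog}) that on every complex plane $\mathbb C_I$ the principal quaternionic logarithm ${\rm Log}$ agrees with the principal complex logarithm. The key geometric fact to leverage is that, for any nonzero non-real quaternion $q$, both $q$ and $1+q$ lie on the same complex plane $\mathbb C_{I_q}$ (and, trivially, on the real line when $q \in \mathbb R$). So the computation of ${\rm Log}(1+q)$ never leaves a single complex slice, and we can use complex-variable tools slice-by-slice.

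First I would fix $q$ with $0 < |q| < 1$ and choose $I = I_q$ when $q \notin \mathbb R$ (any $I \in \mathbb S$ otherwise). Since $1 + q \in \mathbb C_I$ and $|q| < 1$, by Remark \ref{RmkLog} the value ${\rm Log}(1+q)$ coincides with the principal complex logarithm of $1+q$ computed in $\mathbb C_I$. The classical complex power series expansion, which has real coefficients, therefore applies on $\mathbb C_I$:
\[
{\rm Log}(1+q) \;=\; \sum_{n=1}^{\infty} (-1)^{n-1}\, \frac{q^n}{n},
\]
with the series converging absolutely (indeed, the coefficients being real means the same power series works uniformly across every slice $\mathbb C_I$, so the expansion is independent of the choice of $I_q$).

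Dividing by $q$ (which is invertible since $q \neq 0$ and commutes with its own powers) yields
\[
q^{-1}\,{\rm Log}(1+q) \;=\; 1 \;+\; \sum_{n=2}^{\infty} (-1)^{n-1}\, \frac{q^{n-1}}{n},
\]
and the tail admits the uniform estimate
\[
\Bigl| q^{-1}\,{\rm Log}(1+q) - 1 \Bigr| \;\le\; \sum_{n=2}^{\infty} \frac{|q|^{n-1}}{n} \;\le\; \frac{|q|}{2(1-|q|)},
\]
valid for every $|q| < 1$, which tends to $0$ as $q \to 0$. Letting $q \to 0$ proves \eqref{lim}.

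There is no real obstacle here: the only subtlety is ensuring the argument does not depend on the slice $\mathbb C_{I_q}$ on which $q$ happens to lie, and this is automatic because the coefficients of the logarithmic power series are real, so the bound on the remainder depends only on $|q|$.
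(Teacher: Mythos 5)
Your proof is correct. It shares with the paper's proof the same underlying strategy --- reduce to the classical complex case by noting that $q$, $1+q$, and ${\rm Log}(1+q)$ all live on the single slice $\mathbb C_{I_q}$ --- but the execution differs in a way worth noting. The paper works with a sequence $q_n=x_n+I_ny_n\to 0$ and first argues that the real part and the norm of the imaginary part of $q_n^{-1}{\rm Log}(1+q_n)$ do not depend on $I_n$, so that one may replace $I_n$ by a fixed $I_0$ and then invoke the classical one-variable argument on $\mathbb C_{I_0}$. You instead expand ${\rm Log}(1+q)$ in its power series with \emph{real} coefficients on the slice, divide by $q$, and bound the tail by $|q|/(2(1-|q|))$; the independence of the slice is then automatic, since the remainder estimate depends only on $|q|$. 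What your route buys is a fully explicit, quantitative bound and a cleaner justification of slice-independence (the paper's claim that the real part and imaginary norm ``can be easily checked'' not to depend on $I_n$ is left to the reader); what the paper's route buys is that it avoids invoking the series representation of the logarithm and works directly from the definition ${\rm Log}(q)=\ln|q|+\arccos({\rm Re}(q)/|q|)I_q$. Your only implicit step is the identification of the principal complex logarithm of $1+z$ with its Mercator series for $|z|<1$, which is standard and legitimate since, as you observe, $1+q$ avoids $(-\infty,0]$ whenever $|q|<1$.
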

\begin{proof}
 Consider a sequence $\{ q_n\}_{n \in \mathbb{N}}=\{x_n +I_ny_n\}_{n \in \mathbb{N}}$ such that  $q_n \rightarrow 0 $ for $n\rightarrow\infty$. It is not reductive to assume that $y_n>0$. Let us compute
\begin{equation} |(x_n+I_n y_n)^{-1}{\rm Log} (1+x_n+I_n y_n)-1|.\end{equation}
It can be easily checked that both the real part and the norm of the imaginary part of
$(x_n+I_n y_n)^{-1}{\rm Log}(1+x_n+I_n y_n)$
 do not depend on  $I_n$ and so we can set $I_n=I_0$ for every  $n\in \mathbb{N}$ and so we compute
\begin{equation}\label{101} |(x_n+I_0 y_n)^{-1}{\rm Log}(1+x_n+I_0 y_n)-1|.\end{equation}
  It is immediate that $(x_n+I_0 y_n)^{-1}{\rm Log}(1+x_n+I_0 y_n) \in \mathbb C_{I_0}$ for all $n\in\mathbb{N}$ and in the complex plane $\mathbb C_{I_0}$ we can use the classical arguments. Thus we conclude that the sequence (\ref{101}) tends to zero and the statement follows.
\end{proof}

\begin{corollary}\label{ass}
The series $\sum^{\infty}_{n=0}|{\rm Log}(1+a_n)|$ converges if and only if  the series $\sum^{\infty}_{n=0}|a_n|$ converges.
\end{corollary}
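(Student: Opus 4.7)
The plan is to prove this as a direct application of Proposition \ref{proplog} via a limit comparison argument, after first checking that convergence of either series forces $a_n\to 0$.

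First I would observe that both directions of the equivalence imply $a_n\to 0$. For $\sum|a_n|$ this is the classical necessary condition for convergence of a series of nonnegative reals. For $\sum|{\rm Log}(1+a_n)|$, convergence forces $|{\rm Log}(1+a_n)|\to 0$; writing $1+a_n=|1+a_n|e^{\theta_n I_n}$ with $\theta_n\in[0,\pi]$ one has $|{\rm Log}(1+a_n)|^2=(\ln|1+a_n|)^2+\theta_n^2$, so $\ln|1+a_n|\to 0$ and $\theta_n\to 0$, hence $1+a_n\to 1$, i.e.\ $a_n\to 0$. Thus in either case we may discard the (at most) finitely many indices for which $a_n=0$ (for which both general terms vanish) and assume $a_n\ne 0$ with $a_n\to 0$.

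Next I would invoke Proposition \ref{proplog}, which asserts $\lim_{q\to 0}q^{-1}{\rm Log}(1+q)=1$. Since the modulus is continuous on $\mathbb{H}$, taking absolute values gives
\[
\lim_{n\to\infty}\frac{|{\rm Log}(1+a_n)|}{|a_n|}=\lim_{n\to\infty}\bigl|a_n^{-1}{\rm Log}(1+a_n)\bigr|=|1|=1.
\]
In particular there is an index $N$ such that for all $n\ge N$
\[
\tfrac{1}{2}|a_n|\le |{\rm Log}(1+a_n)|\le \tfrac{3}{2}|a_n|.
\]

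Finally, these two-sided bounds are exactly the hypothesis of the standard limit comparison test for series of nonnegative real terms: the series $\sum_{n\ge N}|a_n|$ and $\sum_{n\ge N}|{\rm Log}(1+a_n)|$ have the same convergence behavior, and adding back the finitely many initial terms does not affect convergence. This proves both implications simultaneously. The only delicate point is the passage from the vector-valued limit in Proposition \ref{proplog} to the scalar limit of the ratio of moduli, which is immediate once one notes $|a_n^{-1}{\rm Log}(1+a_n)|=|{\rm Log}(1+a_n)|/|a_n|$; there is no genuine obstacle here, since continuity of $|\cdot|$ handles everything.
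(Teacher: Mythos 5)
Your proof is correct and follows essentially the same route as the paper's: both arguments note that either hypothesis forces $a_n\to 0$, then apply Proposition \ref{proplog} to obtain a two-sided bound $|a_n|(1-\epsilon)\le|{\rm Log}(1+a_n)|\le|a_n|(1+\epsilon)$ for large $n$ and conclude by comparison. Your version merely spells out the details the paper leaves implicit (why the Log-series hypothesis forces $a_n\to 0$, and the passage from the quaternionic limit to the scalar ratio of moduli), so there is nothing to change.
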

\begin{proof}
 Assume that the series $\sum^{\infty}_{n=0}|{\rm Log}(1+a_n)|$ or  the series $\sum^{\infty}_{n=0}|a_n|$ are convergent. Then we have  $\lim_{n\rightarrow\infty}a_n=0$.
Proposition  \ref{proplog} implies that for any given $\epsilon > 0$
$$|a_n|(1-\epsilon)\le|{\rm Log}(1+a_n)|\le |a_n|(1+\epsilon)$$ for all sufficiently large $n$. Thus the two series in the statement are simultaneously convergent.
\end{proof}

The following result contains a sufficient condition for the convergence of quaternionic infinite products.
\begin{theorem}\label{teo33}
Let  $\{a_n\}_{n\in\mathbb{N}}\subseteq \mathbb{H}$. If $\ \sum^{\infty}_{n=1}|a_n|$ is convergent then
the product $\prod^{\infty}_{n=1}(1+a_n)$ is convergent.
\end{theorem}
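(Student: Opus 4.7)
The plan is to derive this as an immediate combination of two results already proved in the excerpt: Theorem \ref{log2} and Corollary \ref{ass}. The statement is a quaternionic analog of the classical complex criterion, and the intermediate quantity that bridges the series and the product is the principal quaternionic logarithm.

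First I would observe that convergence of $\sum_{n=1}^{\infty}|a_n|$ forces $|a_n|\to 0$, so there exists $n_0$ such that $|a_n|<1$ for all $n\geq n_0$. For such indices, $1+a_n$ has positive real part (in particular $1+a_n\notin(-\infty,0]$), so the principal quaternionic logarithm ${\rm Log}(1+a_n)$ of Definition \ref{defilog} is well defined. The finitely many factors with $n<n_0$ can be split off: the partial product $\prod_{n=1}^{n_0-1}(1+a_n)$ is a fixed quaternion, and the convergence of the full product is equivalent to the convergence of the tail product $\prod_{n=n_0}^{\infty}(1+a_n)$ (and, if some of the factors with $n<n_0$ vanish, this is precisely the situation admitted in the extended notion of convergence discussed at the beginning of the section, where only finitely many zero factors are allowed).

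Next, I would apply Corollary \ref{ass} to the tail sequence $\{a_n\}_{n\geq n_0}$: since $\sum_{n\geq n_0}|a_n|$ converges, the corollary yields convergence of $\sum_{n\geq n_0}|{\rm Log}(1+a_n)|$. At this stage Theorem \ref{log2} applies directly and shows that $\prod_{n\geq n_0}(1+a_n)$ converges to a nonzero quaternionic limit. Multiplying by the fixed factor $\prod_{n=1}^{n_0-1}(1+a_n)$ gives convergence of $\prod_{n=1}^{\infty}(1+a_n)$ in the sense adopted at the start of the section.

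There is essentially no serious obstacle: the two key analytic facts, namely the asymptotic equivalence $|{\rm Log}(1+a_n)|\sim|a_n|$ as $a_n\to 0$ (Proposition \ref{proplog} and Corollary \ref{ass}) and the convergence criterion via the logarithmic series (Theorem \ref{log2}), have already been established. The only minor care needed in the write-up is to justify the use of ${\rm Log}$, which is done by isolating the finitely many indices for which $1+a_n$ might fail to lie in the domain of the principal logarithm, as sketched above.
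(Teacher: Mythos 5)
Your proposal is correct and follows essentially the same route as the paper, which proves Theorem \ref{teo33} precisely as an immediate consequence of Corollary \ref{ass} and Theorem \ref{log2}. The extra care you take in splitting off the finitely many indices where $1+a_n$ might not lie in the domain of ${\rm Log}$ is a reasonable refinement of the same argument.
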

\begin{proof}
It is a consequence of Theorem \ref{log2} and Corollary \ref{ass}.
\end{proof}
\section{Infinite products of functions}
In this section we consider infinite products of functions of a quaternionic variable defined in an open set $U\subseteq\mathbb H$. Given a sequence of functions $\{a_n(q)\}$ which do not vanish on $U$, we will say that $\prod_{n=0}^\infty a_n(q)$ converges uniformly on the compact subsets of $U$ if the sequence of partial products $\{\prod_{n=0}^N a_n(q)\}$ converges uniformly on the compact subsets of $U$ to a non vanishing function. Since there may be factors with zeros, we require that for any fixed compact subset $K$ at most a finite number of factors vanish in some points of $K$. We thus give the following definition:
\begin{definition} \label{infprod}
Let $\{a_n(q)\}$ be a sequence of functions defined on an open set $U\subseteq\mathbb H$. We say that $\prod_{n=0}^\infty a_n(q)$ converges compactly in $U$ to a function $f:\ U\to\mathbb H$ if for any compact set $K\subset U$ the following conditions are fulfilled:
\begin{itemize}
\item[-] there exists $N_K\in \mathbb N$ such that $a_n\not=0$ for all $n\geq N_K$;
\item[-] the residual product $\prod_{n=N_K}^\infty a_n(q)$ converges uniformly on $K$ to a never vanishing function $f_{N_K}$;
\item[-] for all $q\in K$
$$
f(q)= \left( \prod_{n=0}^{N_k-1}a_n(q)\right) f_{N_K}(q).
$$
\end{itemize}
\end{definition}
\begin{theorem}\label{15.4}
Suppose $\{a_n\}$ is a sequence of bounded quaternionic-valued functions defined on a
set $U\subseteq\mathbb H$, such that $\sum_{n=1}^\infty | a_n(q)|$ converges uniformly on $U$. Then the product
\begin{equation}
f(q) = \prod_{n=1}^\infty (1+a_n(q))
\end{equation}
converges compactly in $U$, and $f(q_0) = 0$ at some point $q_0\in U$ if and only if $a_n(q_0) =
- 1$ for some $n$.
\end{theorem}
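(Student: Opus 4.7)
The plan is to adapt the standard complex-variable argument, leaning on the quaternionic estimates collected in Proposition \ref{15.3}, and then read off the zero statement. The proof will proceed in three stages: discard finitely many factors so the tail is zero-free, prove the tail converges uniformly to a nowhere-vanishing limit, then recombine.

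First I would exploit the uniform convergence of $\sum_{n=1}^\infty|a_n(q)|$ on $U$ to choose, once and for all, an index $N_0$ with
$$\sum_{n\geq N_0}|a_n(q)| \;<\; \tfrac{1}{2}\log 2 \qquad \text{for every } q\in U.$$
In particular $|a_n(q)|<1$ for $n\geq N_0$, so $1+a_n(q)\neq 0$ on all of $U$ for such $n$; this $N_0$ will play the role of $N_K$ in Definition \ref{infprod}, simultaneously for every compact $K\subset U$.

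For the analytic heart of the argument, set $P_{N,M}(q)=\prod_{k=N}^{M}(1+a_k(q))$ and apply Proposition \ref{15.3} on the sub-product from $N$ to $M$ to obtain
$$|P_{N,M}(q)-1|\;\leq\;\exp\!\Big(\sum_{k=N}^{M}|a_k(q)|\Big)-1,$$
for all $M\geq N\geq N_0$. By the choice of $N_0$ the right-hand side is bounded by $e^{\log 2}-1=1$ (and is arbitrarily small for $N$ large), so $|P_{N_0,M}(q)|$ is uniformly bounded by $2$ on $U$. To upgrade this to a Cauchy estimate, I would use the right-factorisation (valid in $\mathbb H$)
$$P_{N_0,M}(q)-P_{N_0,L}(q)\;=\;P_{N_0,L}(q)\bigl(P_{L+1,M}(q)-1\bigr),\qquad M>L\geq N_0,$$
and bound its modulus by $2\cdot\bigl(\exp(\sum_{k=L+1}^{M}|a_k(q)|)-1\bigr)$, which tends to $0$ uniformly in $q\in U$ as $L\to\infty$ by the Cauchy criterion for the series $\sum|a_n|$. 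Hence $f_{N_0}(q):=\lim_{M\to\infty}P_{N_0,M}(q)$ exists uniformly on $U$. Since every partial product satisfies $|P_{N_0,M}(q)-1|<1$, passing to the limit gives $|f_{N_0}(q)|\geq 1-(e^{\log 2}-1)>0$, so $f_{N_0}$ is nowhere vanishing on $U$.

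Setting $f(q)=\bigl(\prod_{k=1}^{N_0-1}(1+a_k(q))\bigr)\,f_{N_0}(q)$ now verifies Definition \ref{infprod} on every compact $K\subset U$, establishing compact convergence of the product. The zero statement is then immediate: if $a_n(q_0)=-1$ for some $n$, the corresponding factor vanishes and $f(q_0)=0$; conversely, if $f(q_0)=0$, then because $f_{N_0}(q_0)\neq 0$ one of the finitely many factors $1+a_k(q_0)$ with $k<N_0$ must be zero, i.e.\ $a_k(q_0)=-1$.

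The main obstacle, though mild, is the non-commutativity of $\mathbb H$: every telescoping identity must be written with the factors in the correct order, and the inequality $|P_N-1|\leq P_N^*-1$ must be invoked exactly as in Proposition \ref{15.3}, whose proof is based only on the right-distributive identity $p_k(1+a_{k+1})-1=(p_k-1)(1+a_{k+1})+a_{k+1}$ and the multiplicativity of the quaternionic modulus. Once this ordering is respected the complex-variable template transfers verbatim.
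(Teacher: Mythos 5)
Your argument follows the same route as the paper's proof: Proposition \ref{15.3} supplies the uniform bound and the tail estimate $|P_{L+1,M}(q)-1|\le \exp\bigl(\sum_{k=L+1}^{M}|a_k(q)|\bigr)-1$, the partial products are shown to be uniformly Cauchy, and the zero set is reduced to the finitely many discarded factors; the only (cosmetic) difference is that you estimate the residual product from $N_0$ onward, verifying Definition \ref{infprod} directly, whereas the paper bounds the full partial products $p_N$ and deduces $|f(q)|\ge(1-2\varepsilon)|p_N(q)|$. One constant must be corrected: with your choice $\sum_{n\ge N_0}|a_n(q)|<\tfrac12\log 2$ the tail bound is $e^{\frac12\log 2}-1=\sqrt{2}-1$, which yields $|f_{N_0}(q)|\ge 2-\sqrt{2}>0$, while the bound $1-(e^{\log 2}-1)$ you wrote equals $0$ and proves nothing.
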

\begin{proof} First of all we observe that the hypothesis on $\sum_{n=1}^\infty | a_n(q)|$ ensures that every $q\in U$ has a neighborhood in which at most finitely many of the factors $(1+a_n(q))$ vanish. Then, since $\sum_{n=1}^\infty | a_n(q)|$ converges uniformly on $U$, we have that it is also bounded on $U$.
 Let us set $$p_N(q)=\prod_{n=1}^N (1+a_n(q));$$ then Proposition \ref{15.3} yields  the existence of a (finite) constant $С$ such that $| p_N(q) | \leq C$ for all $N\in\mathbb N$ and all $q\in U$.
For every $\varepsilon$ such that $0 < \varepsilon < \frac 12$ there exists a $N_0$ such that
\begin{equation}\label{equa3}
\sum_{n=N_0}^\infty | a_n(q)|<\varepsilon, \qquad q\in U.
\end{equation}
Let $M,N\in\mathbb N$ and assume that $M>N$.
  Proposition \ref{15.3} and \eqref{equa3} show that
 \begin{equation}\label{ineqrM}
 |p_M(q)-p_N(q)|\leq |p_N(q)| (e^\varepsilon -1) \leq 2 |p_N(q)| \varepsilon \leq 2 C \varepsilon,
 \end{equation}
 thus the sequence $p_N$ converges
 uniformly to a limit function $f$. Moreover, \eqref{ineqrM} and the triangular inequality give,
 for $M>N$ $$|p_M(q)|\geq (1-2\varepsilon) |p_{N}(q)|$$ on $U$ and so
$$|f(q)|\geq (1-2\varepsilon) |p_{N}(q)|.$$
Thus $f(q)=0$ if and only if $p_{N}(q)=0$.
\end{proof}
We now give the analog of Definition \ref{infprod} in the case of the slice regular product:
\begin{definition} \label{infprodstar}
 Let $\{f_n(q)\}$ be a sequence of functions slice regular on an axially symmetric slice domain $U\subseteq\mathbb H$. We say that the infinite $\star$-product $\prod_{n=0}^{\star\infty} (1+f_n(q))$ converges compactly in $U$ to a function $f:\ U\to\mathbb H$ if for any axially symmetric compact set $K\subset U$ the following conditions are fulfilled:
\begin{itemize}
\item[-] there exists $N_K\in \mathbb N$ such that $f_n\not=0$ for all $n\geq N_K$;
\item[-] the residual product $\prod_{n=N_K}^{\star\infty} (1+f_n(q))$ converges uniformly on $K$ to a never vanishing function $f_{N_K}$;
\item[-] for all $q\in K$
$$
f(q)= \left( \prod_{n=0}^{\star N_k-1}(1+f_n(q))\right) f_{N_K}(q).
$$
\end{itemize}
\end{definition}

In order to relate an infinite product with an infinite slice regular product, we need a technical result:
 \begin{lemma}\label{prop358}
Let $\left\lbrace f_n\right\rbrace_{n\in\mathbb{N}}$ be a sequence of slice regular functions defined on an axially symmetric slice domain $U$.
Let $K\subseteq U$ be an axially symmetric compact set. Assume that there exists an integer $N_K$ such that $1+f_n\neq 0$  on $K$ for all $n\geq N_K$. Let
$$
F^{m}_{N_K}(q)=\overset{\star m}{\underset{n=N_K}\prod}(1+f_n(q)).
$$
Then for all $m\geq N_K$ and  for any $q\in K$
$$
F^{m}_{N_K}(q)= \overset{m}{\underset{i=N_K}\prod}(1+f_i(T_i(q)))\neq 0
$$
where $$T_j(q)=(F_{N_K}^{(j-1)}(q))^{-1}qF_{N_K}^{(j-1)}(q) \ \ {\it for}\ \ j>N_K$$ and $T_j(q)=q$ for $j=N_K$.
\end{lemma}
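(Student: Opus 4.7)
The plan is to proceed by induction on $m \geq N_K$, using Theorem~\ref{pointwise} as the main engine to convert each $\star$-product into an ordinary (pointwise) product with a conjugated argument. The axial symmetry of $K$ will be essential to keep the conjugated points inside $K$, which is what preserves the non-vanishing hypothesis along the iteration.

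For the base case $m = N_K$, we have $F^{N_K}_{N_K}(q) = 1 + f_{N_K}(q)$, and since $T_{N_K}(q) = q$ by definition, the identity is immediate, and non-vanishing on $K$ is exactly the hypothesis on $f_{N_K}$.

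For the inductive step, assume
\[
F^{m-1}_{N_K}(q) = \prod_{i=N_K}^{m-1}(1+f_i(T_i(q))) \neq 0 \qquad \text{for all } q \in K.
\]
Then $F^{m}_{N_K} = F^{m-1}_{N_K} \star (1+f_m)$. Applying Theorem~\ref{pointwise} pointwise at each $q \in K$ (which is legitimate since $F^{m-1}_{N_K}(q)\neq 0$ by the inductive hypothesis), we obtain
\[
F^{m}_{N_K}(q) = F^{m-1}_{N_K}(q)\,\bigl(1 + f_m(T_m(q))\bigr),
\]
with $T_m(q) = (F^{m-1}_{N_K}(q))^{-1} q\, F^{m-1}_{N_K}(q)$, matching the definition in the statement. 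Combined with the inductive hypothesis, this gives the claimed pointwise product formula.

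The only subtle point, and essentially the sole obstacle, is showing that the new factor $1 + f_m(T_m(q))$ is non-zero. Conjugation by a nonzero quaternion preserves both the real part and the modulus, so $T_m(q)$ lies on the same $2$-sphere $[q]$ as $q$; because $K$ is axially symmetric, $T_m(q) \in K$. By hypothesis $1 + f_m \neq 0$ on $K$, so $1 + f_m(T_m(q)) \neq 0$. This closes the induction and proves both the product formula and the non-vanishing of $F^m_{N_K}$ on $K$.
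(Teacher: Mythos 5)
Your proof is correct and follows essentially the same route as the paper's: induction on $m$, conversion of the $\star$-product to a pointwise product via Theorem \ref{pointwise}, and the observation that $T_m(q)$ stays on the sphere $[q]$ (hence in the axially symmetric compact $K$) so that the hypothesis $1+f_m\neq 0$ on $K$ applies to the conjugated point. No gaps.
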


\begin{proof}
Let $q\in K$.
 The assertion is true for $m=N_K$ by hypothesis since $F^{N_K}_{N_K}(q)=1+f_{N_K}(q)\neq 0$. We now proceed by induction. Assume that the assertion is true for $m=N_K,\cdots, n-1$.
Then
 \begin{equation}\label{f1}F^{n}_{N_K}(q)=F^{(n-1)}_{N_K}(q)\star (1+f_n(q)).\end{equation}
Since $T_j(q)$ is a rotation of $q$, it is immediate that
$$
{\rm Re}\, (T_j(q))={\rm Re}\, (q)\quad {\rm and}\quad  |{\rm Im}\, (T_j(q)|=|{\rm Im}\, (q)|\quad {\rm for\ all} \ j\geq N_K .
$$
Since $K$ is an axially symmetric set, obviously $T_j(q)\in K$ if and only if $q\in K$.
By Theorem \ref{pointwise}, and by the induction hypothesis, we have that formula (\ref{f1}) rewrites as:
$$F^{n}_{N_K}(q)=F^{(n-1)}_{N_K}(q)(1+f_n(T_n(q))).$$
The factor $(1+f_{n})$ does not vanish on  $K$ and hence, since $T_n(q)\in K$, the function  $F^{n}_{N_K}(q)$ does not vanish on $K$.  Using again the induction hypothesis, we have $$F^{n-1}_{N_K}(q)= \overset{n-1}{\underset{i=N_K}\prod}(1+f_i(T_i(q)))$$ and so
\[
\begin{split} F^{n}_{N_K}(q)&=\overset{n-1}{\underset{i=N_K}\prod}(1+f_i(T_i(q)))\ (1+f_n(T_n(q)))\\
&=\overset{n}{\underset{i=N_K}\prod}(1+f_i(T_i(q)))\not=0,
\end{split}
\]
and this concludes the proof.
\end{proof}

\begin{theorem}\label{the6} Let  $\left\lbrace f_n \right\rbrace_{n\in\mathbb N} $   be a sequence of slice regular functions defined on an axially symmetric slice domain $U$. The infinite $\star$-product
 $$\overset{\star\infty}{\underset{n=0}\prod}(1+f_n(q))$$ converges compactly in $U$ if and only if the infinite product $$\overset{\infty}{\underset{n=0}\prod}(1+f_n(q))$$  converges compactly in $U$.
\end{theorem}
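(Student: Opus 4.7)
The plan is to fix an axially symmetric compact set $K\subseteq U$ and show that compact convergence of both products on $K$ is governed by a common real majorant, namely $\sum_n \sup_{p\in K}|f_n(p)|$. Under either hypothesis there exists $N_K\in\mathbb N$ beyond which $1+f_n$ never vanishes on $K$, so one may work with the tail starting at $N_K$. Lemma \ref{prop358} then yields the pointwise representation
$$
F^m_{N_K}(q)=\prod_{i=N_K}^m \bigl(1+f_i(T_i(q))\bigr),\qquad T_i(q)\in[q],
$$
for the partial $\star$-products. The inclusion $[q]\subseteq K$, which follows from the axial symmetry of $K$, is decisive: it gives the uniform bound $|f_i(T_i(q))|\leq \sup_{p\in K}|f_i(p)|$ for every $q\in K$. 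Thus the $\star$-partial products $F^m_{N_K}$ are, up to the $q$-dependent rotations $T_i$, ordinary pointwise partial products with the same modulus majorant as the standard partial products $P^m_{N_K}(q)=\prod_{i=N_K}^m(1+f_i(q))$.

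With this representation in hand, I would apply inequalities (in1) and (in2) of Proposition \ref{15.3} simultaneously to both sequences to obtain the common Cauchy estimate
$$
\max\!\Bigl\{\bigl|P^M_{N_K}(q)-P^N_{N_K}(q)\bigr|,\ \bigl|F^M_{N_K}(q)-F^N_{N_K}(q)\bigr|\Bigr\}\,\leq\,C_K\!\left[\prod_{i=N+1}^{M}\!\bigl(1+\sup_K|f_i|\bigr)-1\right],
$$
valid for $M>N\geq N_K$, where $C_K$ is a uniform bound on $|P^N_{N_K}|$ and $|F^N_{N_K}|$ supplied again by Proposition \ref{15.3}. Consequently, once it is known that $\sum_n\sup_K|f_n|<\infty$, the argument of Theorem \ref{15.4} applies verbatim to give uniform Cauchy-ness of both $\{P^m_{N_K}\}$ and $\{F^m_{N_K}\}$ on $K$, together with nowhere-vanishing limits; the convergence in both senses of Definitions \ref{infprod} and \ref{infprodstar} then follows at once.

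The final and hardest step will be the converse reduction: extracting the uniform summability $\sum_n\sup_K|f_n|<\infty$ from mere compact convergence of one of the products. I would handle this by passing to moduli, noting that $\prod_{i=N_K}^m(1+|f_i(q)|)$ and $\prod_{i=N_K}^m(1+|f_i(T_i(q))|)$ are products of positive reals dominated, via Proposition \ref{15.3}(in1), by $\exp\!\bigl(\sum_i|f_i(\cdot)|\bigr)$; the hypothesized uniform non-vanishing of the limit, combined with continuity on the compact $K$, upgrades the pointwise summability to a uniform one. The delicate point is that the rotations $T_i$ depend on the successive partial products themselves and therefore on $q$; what rescues the argument is precisely that these rotations are confined to $[q]\subseteq K$ by axial symmetry, so the absolute modulus bounds of Proposition \ref{15.3} absorb them into a worst-case estimate on $K$, closing the equivalence in both directions.
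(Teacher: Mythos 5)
Your opening moves are exactly the ones the paper uses: reduce to an axially symmetric compact $K$, discard finitely many vanishing factors, invoke Lemma \ref{prop358} to write the partial $\star$-products as pointwise products $\prod_{i=N_K}^m\bigl(1+f_i(T_i(q))\bigr)$, and observe that axial symmetry forces $T_i(q)\in[q]\subseteq K$. That identification is the whole proof in the paper: since the rotated points stay in $K$, uniform convergence on $K$ of the ordinary partial products and of the $\star$-partial products (together with the non-vanishing of the limits) transfer to one another directly, with no further input.

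The genuine gap is your decision to route both directions through the condition $\sum_n\sup_K|f_n|<\infty$. That condition is \emph{sufficient} for compact convergence of either product (this is Theorem \ref{15.4} combined with Theorem \ref{the6} itself), but it is not \emph{necessary}: already for constant complex factors, $\prod_n\bigl(1+(-1)^n n^{-3/4}\bigr)$ converges to a nonzero limit while $\sum_n n^{-3/4}$ diverges. So the step you yourself flag as ``the final and hardest'' --- extracting $\sum_n\sup_K|f_n|<\infty$ from mere compact convergence of one of the products --- cannot be carried out; non-vanishing of the limit and continuity on $K$ give you control of $\sum_n\log|1+f_n|$-type quantities at best, never absolute summability. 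The repair is to delete that reduction entirely: Lemma \ref{prop358} already says that the two sequences of partial products take the \emph{same values}, merely sampled at points $q$ versus $T_i(q)$, both ranging over $K$; hence the Cauchy property, the uniform convergence, and the non-vanishing of the limit required by Definitions \ref{infprod} and \ref{infprodstar} hold for one product on $K$ exactly when they hold for the other, and Proposition \ref{15.3} is not needed at all.
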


\begin{proof} It is not reductive to assume that the compact subsets of $U$ are axially symmetric. Let $K$ be a compact, axially symmetric subset of $U$.
Let $N_K\in\mathbb N$ be such that for every $n\geq N_K$ the factors  $1+f_n(q)$ do not  vanish on $K$. By Lemma \ref{prop358} the infinite $\star$-product $$\overset{\star\infty}{\underset{i=N_K}\prod}(1+f_i(q))$$ converges if and only if  $$\overset{\infty}{\underset{i=N_K}\prod}(1+f_i(T_n(q)))$$
converges.
Since ${\rm Re}(T_j(q))={\rm Re}(q)$ and $|{\rm Im}(T_j(q)|=|{\rm Im}(q)|$ for all $j\geq N_K$, we have that
$T_j(q)\in K$ if and only if $q\in K$. This ends the proof.
\end{proof}

The following result will be useful to establish when an infinite product of slice regular functions is slice regular.
\begin{proposition}\label{sereg}
Let $\left\lbrace f_n \right\rbrace _{n\in\mathbb{N}}$ be a sequence of slice regular functions defined on an axially symmetric slice
domain $U\subseteq\mathbb H$ and converging uniformly to a function $f$ on the compact sets of $U$. Then $f$ is slice regular
on $U$.
\end{proposition}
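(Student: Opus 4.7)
The plan is to verify the two ingredients in the definition of slice regularity for the limit $f$: the vanishing of $\bar\partial_I f_I$ on each slice $U\cap\mathbb{C}_I$, and the real differentiability of $f$ on $U$. For the slice-wise Cauchy--Riemann condition I would reduce everything to the classical Weierstrass theorem for holomorphic functions via the Splitting Lemma, and for the real differentiability I would invoke the slice hyperholomorphic Cauchy integral formula.

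First, I fix $I\in\mathbb{S}$ and pick $J\in\mathbb{S}$ orthogonal to $I$. Applying the Splitting Lemma to each $f_n$ produces holomorphic functions $F_n,G_n:U\cap\mathbb{C}_I\to\mathbb{C}_I$ with $f_{n,I}(z)=F_n(z)+G_n(z)J$. The hypothesis yields $f_{n,I}\to f_I$ uniformly on compact subsets of $U\cap\mathbb{C}_I$; composing with the continuous projections onto $\mathbb{C}_I$ and $\mathbb{C}_I J$ along the decomposition $\mathbb{H}=\mathbb{C}_I\oplus\mathbb{C}_I J$ gives $F_n\to F$ and $G_n\to G$ uniformly on compact subsets of $U\cap\mathbb{C}_I$, with $f_I=F+GJ$. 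The classical Weierstrass theorem, applied in the complex plane $\mathbb{C}_I$, then ensures that $F$ and $G$ are holomorphic, so $f_I$ belongs to the kernel of $\bar\partial_I$ on $U\cap\mathbb{C}_I$. Since $I\in\mathbb{S}$ is arbitrary, this establishes the slice-wise condition.

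To obtain real differentiability of $f$ on $U$, I pick any $q_0\in U$ and choose an axially symmetric subdomain $V\subset U$ with $q_0\in V$, $\overline{V}\subset U$, and $\partial(V\cap\mathbb{C}_I)$ a finite union of $C^1$ Jordan curves for every $I\in\mathbb{S}$ (for instance, the axially symmetric completion of a small disk in $\mathbb{C}_{I_{q_0}}$ centered at $q_0$, possibly shrunk so that it lies inside $U$). The slice hyperholomorphic Cauchy formula gives, for each $n$ and $q\in V$,
\[
f_n(q)=\frac{1}{2\pi}\int_{\partial(V\cap\mathbb{C}_I)}S_L^{-1}(s,q)\,ds_I\,f_n(s),
\]
and uniform convergence of $\{f_n\}$ on the compact contour $\partial(V\cap\mathbb{C}_I)$ lets me pass to the limit, yielding the same integral representation for $f$. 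Since the Cauchy kernel $S_L^{-1}(s,q)$ is real analytic in $q\in V$ and differentiation under the integral is justified, $f$ is real analytic, hence real differentiable, on $V$, and therefore on all of $U$.

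The main obstacle is precisely this last step: the Splitting Lemma by itself only provides holomorphicity along each individual slice $\mathbb{C}_I$, which does not a priori imply real differentiability of $f$ as a map on the four-dimensional ambient space $U\subset\mathbb{H}$. Passing through the Cauchy representation is what transfers the slicewise information into genuine four-variable regularity, and it is this two-step argument (Splitting Lemma for the $\bar\partial_I$-equation, Cauchy kernel for the smoothness) that makes the proof go through cleanly.
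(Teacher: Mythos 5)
Your core argument is exactly the paper's proof: split $f_{n,I}=F_n+G_nJ$, observe that uniform convergence of $f_{n,I}$ forces uniform convergence of the components $F_n\to F$, $G_n\to G$, invoke the classical Weierstrass theorem to conclude that $F$ and $G$ are holomorphic, and deduce $\overline{\partial}_I f_I=0$ for every $I$. The paper stops there; you go further and also verify the real differentiability of $f$ on $U$, which is indeed part of the definition of slice regularity and is left implicit in the paper's proof, so this is a legitimate and welcome refinement rather than a different route. The only flaw is in your suggested choice of $V$: if $q_0$ is not real, the axially symmetric completion of a small disk in $\mathbb{C}_{I_{q_0}}$ centered at $q_0$ is \emph{not} a slice domain (its intersection with each plane $\mathbb{C}_I$ consists of two disjoint disks), so Theorem \ref{Cauchynuovo} as stated does not apply to it. This is easily repaired: since an axially symmetric slice domain necessarily meets the real axis, one can take $V$ to be the axially symmetric completion of a smoothly bounded, real-axis-symmetric neighborhood in $\mathbb{C}_{I_{q_0}}$ of a path joining $q_0$ to a real point of $U$, with closure contained in $U$; with that choice your limit passage in the Cauchy integral and the differentiation under the integral sign go through and give real analyticity of $f$ near $q_0$.
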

\begin{proof}
We write the restriction of $f_n$ to $\mathbb C_I$ as $$f_{n|\mathbb C_I}(x+Iy)=F_n(x+Iy)+G_n(x+Iy)J$$ using the Splitting Lemma. The functions $F_n$, $G_n$ are holomorphic for every $n\in\mathbb N$.
The restriction $f_I(x+Iy)=F(x+Iy)+G(x+Iy)J$ to $\mathbb C_I$ of the limit function $f$  is  such that $F$ and $G$ are the limit of $F_n$ and $G_n$ respectively. Since $f_n\to f$ uniformly also $F_n\to F$ and $G_n\to G$ uniformly and so both $F$ and $G$ are holomorphic. It follows that $f_I$ is the kernel of $\partial_x+I\partial_y$ and by the arbitrariness of $I\in\mathbb S$ the statement follows.
\end{proof}

\begin{proposition}\label{pro4.6}
Let $\left\lbrace f_n \right\rbrace _{n\in\mathbb{N}}$ be a sequence of slice regular functions  defined on a symmetric   slice domain  $U$. If the infinite slice regular product
$$\overset{\star\infty}{\underset{n=0}\prod}(1+f_n(q))$$ converges compactly in $U$ to a  function $f$, then $f$ is slice regular on $U.$
\end{proposition}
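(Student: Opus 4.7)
The plan is to reduce the statement to Proposition~\ref{sereg} by showing that $f$ is slice regular on a neighborhood of every point of $U$. First I would fix an arbitrary $q_{0}\in U$ and construct an axially symmetric slice subdomain $V\subset U$ with $q_{0}\in V$ and $\overline{V}$ a compact axially symmetric subset of $U$. Such a $V$ can be obtained by picking a small open neighborhood $D$ of $q_{0}$ inside the (open, connected, real-axis-symmetric) slice $U\cap \mathbb{C}_{I_{q_{0}}}$ with $D$ itself symmetric about the real axis and $\overline{D}$ compactly contained in $U\cap \mathbb{C}_{I_{q_{0}}}$, and then taking $V$ to be the axially symmetric completion of $D$ in the sense of Definition~\ref{axial completion}; the axial symmetry of $U$ ensures $\overline{V}\subset U$, and the connectedness of $D$ ensures that $V\cap\mathbb{C}_{K}$ is connected for every $K\in\mathbb{S}$.

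Applying Definition~\ref{infprodstar} to the axially symmetric compact set $K=\overline{V}$, there exists an integer $N$ such that $1+f_{n}$ has no zeros on $\overline{V}$ for every $n\geq N$, the residual partial $\star$-products
$$
F^{m}(q)=\prod_{n=N}^{\star m}\bigl(1+f_{n}(q)\bigr)
$$
converge uniformly on $\overline{V}$ to a never-vanishing function $f_{N}$, and
$$
f(q)=\left(\prod_{n=0}^{\star N-1}\bigl(1+f_{n}(q)\bigr)\right)\star f_{N}(q).
$$
Each $F^{m}$ is a finite $\star$-product of slice regular functions on the axially symmetric slice domain $U$ and is therefore slice regular on $U$, in particular on $V$. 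Since every compact subset of $V$ is contained in $\overline{V}$, the sequence $\{F^{m}\}$ converges uniformly on compact subsets of $V$ to $f_{N}$, so Proposition~\ref{sereg} yields that $f_{N}$ is slice regular on $V$.

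The leading factor $\prod_{n=0}^{\star N-1}(1+f_{n})$ is likewise a finite $\star$-product of slice regular functions on $U$, hence slice regular on $V$. Since the $\star$-product of two slice regular functions on the axially symmetric slice domain $V$ is again slice regular on $V$, it follows that $f$ is slice regular on $V$. Because $q_{0}\in U$ was arbitrary and slice regularity is a local property, $f$ is slice regular on all of $U$. The only genuinely delicate step is the localization, that is, producing an axially symmetric slice subdomain $V$ around an arbitrary (possibly non-real) point $q_{0}$; once $V$ is available, the remainder of the argument is a direct combination of Definition~\ref{infprodstar}, the preservation of slice regularity under finite $\star$-products, and Proposition~\ref{sereg}.
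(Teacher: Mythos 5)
Your proof is correct and follows essentially the same route as the paper's: split off a finite $\star$-product using the definition of compact convergence, apply Proposition~\ref{sereg} to the uniformly convergent tail, and conclude by closure of slice regularity under finite $\star$-products. The only difference is that you make explicit the localization to an axially symmetric slice subdomain $V$ with compact closure, a point the paper's proof glosses over by speaking of slice regularity ``on $K$'' for a compact set.
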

\begin{proof} By the assumption made at the beginning of this section, for any compact set $K\subseteq \mathbb{H}$ there exists an integer $N_K$  such that $1+f_n\neq 0$ on $K$ if $n\geq N_K$.
The $\star -$product \begin{equation}\overset{\star\infty}{\underset{n=N_K}\prod}(1+f_n(q))\end{equation}
forms a sequence of slice regular functions which converges uniformly on $K$ to a slice regular function $F_{N_K}$ that does not vanish on $K$. Therefore the function $f$  can be written as a  finite product of  slice regular functions  $$f(q)= \left[\overset{\star N_K-1}{\underset{n=0}\prod}(1+f_n(q))\right]\star F_{N_K}(q) $$  and hence it is slice regular on $K$.  Moreover, by Lemma \ref{prop358}, the zero set of $f$ on $K$ coincides with the zero set of the  finite product  $\overset{\star(N_K-1)}{\underset{n=0}\prod}(1+f_n(q))$.
\end{proof}
\begin{proposition}\label{prop2:23}
 Let $\{f_n\}$ be a sequence of functions slice regular on an axially symmetric slice domain $U$ and assume that no $f_n$ is identically zero on $U$. Suppose that
$$
\sum_{n=1}^{\infty} |1-f_n(q)|
$$
converges uniformly on the compact subsets of $U$. Then $\prod_{n=1}^{\star\infty} f_n(q)$ converges compactly in  $U$ to a function $f\in\mathcal R(U)$.
\end{proposition}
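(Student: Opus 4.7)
The plan is to bootstrap this proposition from the three infinite-product results already developed in this chapter: Theorem \ref{15.4} (pointwise infinite-product convergence under an absolutely summable hypothesis), Theorem \ref{the6} (equivalence between compact convergence of the ordinary product and of the $\star$-product), and Proposition \ref{pro4.6} (slice regularity of the limit of a compactly convergent $\star$-product of slice regular functions). The whole argument is a short chain of applications of prior results; there is essentially no new combinatorial or analytic content to extract, the slight care being only to match the hypotheses of each cited result.

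First I would rewrite the product in the additive form suited to Theorem \ref{15.4}: set $a_n(q) := f_n(q) - 1$, so that $f_n = 1 + a_n$ with $a_n \in \mathcal{R}(U)$ (since $f_n$ is), and $|a_n(q)| = |1 - f_n(q)|$. The hypothesis then reads: $\sum_n |a_n(q)|$ converges uniformly on the compact subsets of $U$. In particular $a_n \to 0$ uniformly on every compact $K \subset U$, so there exists $N_K$ with $|a_n| < 1/2$ on $K$ for all $n \geq N_K$, which in turn forces $f_n = 1 + a_n$ to have no zeros on $K$ for $n \geq N_K$. This already verifies the zero-avoidance clauses built into Definitions \ref{infprod} and \ref{infprodstar}, so nothing is lost when we speak of compact convergence of the (ordinary or $\star$-)product.

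Second, I would fix an arbitrary compact $K \subset U$ and apply Theorem \ref{15.4} to the sequence $\{a_n|_K\}$ (which is bounded since each $a_n$ is continuous on the compact set $K$), obtaining uniform convergence on $K$ of the ordinary product
$$
\prod_{n=1}^{\infty} (1 + a_n(q)) \;=\; \prod_{n=1}^{\infty} f_n(q),
$$
with the limit vanishing at a point of $K$ exactly where some $f_n$ does. Since $K$ was arbitrary, this gives compact convergence of the ordinary product on $U$ in the sense of Definition \ref{infprod}. Now Theorem \ref{the6}, applied to the slice regular sequence $\{a_n\}$, transfers this to compact convergence of the $\star$-product $\prod^{\star\infty}_{n=1} (1+a_n) = \prod^{\star\infty}_{n=1} f_n$ in $U$, and Proposition \ref{pro4.6} identifies the resulting limit $f$ as a slice regular function on $U$.

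The only part that requires any attention is locally reducing Theorem \ref{15.4} (stated on an ambient set on which the factors are bounded and the series converges uniformly) to each compact subset of $U$, and checking that the two notions of compact convergence (Definitions \ref{infprod} and \ref{infprodstar}) are the ones implicitly used in Theorems \ref{15.4} and \ref{the6}. No fresh estimate is required beyond what Proposition \ref{15.3} and Corollary \ref{ass} already supply inside those theorems, so I do not anticipate any genuine obstacle to completing the argument along these lines.
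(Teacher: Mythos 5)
Your argument is correct and follows exactly the route the paper takes: its proof of this proposition is the one-line observation that it is an immediate consequence of Theorem \ref{15.4}, Theorem \ref{the6} and Proposition \ref{pro4.6}, which is precisely the chain you assemble (with the substitution $a_n = f_n - 1$ and the local reduction to compact sets spelled out). Nothing in your write-up deviates from or adds to the paper's intended argument.
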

\begin{proof}
It is an immediate consequence of Theorem \ref{15.4}, Theorem \ref{the6} and Proposition \ref{pro4.6}.
\end{proof}

\section{Weierstrass theorem}
Weierstrass theorem was original proved in \cite{MR2836832}. Here we provide an alternative statement and we will show how to retrieve the result in \cite{MR2836832}.\\
   Since we need to consider the slice regular composition of the exponential function with a polynomial as introduced in Remark \ref{funzioni*}, for the sake of clarity we repeat the definition below.
\begin{definition}
Let $p(q)=b_0+qb_1+\ldots +q^mb_m$, $b_i\in\mathbb H$.
We define $e_\star^{p(q)}$ as:
\begin{equation}\label{expstar}
e_\star^{p(q)}:=\sum_{n=0}^\infty \frac{1}{n!}(p(q))^{\star n}=\sum_{n=0}^\infty \frac{1}{n!}(b_0+qb_1+\ldots +q^mb_m)^{\star n}.
\end{equation}
\end{definition}
\begin{remark}\label{estarI}{\rm
When the polynomial $p$ has real coefficients, we have $e_\star^{p(q)}=e^{p(q)}$. When $b_i\in\mathbb C_I$ for some $I\in\mathbb C_I$ we have that, denoting by $z$ the complex variable on $\mathbb C_I$, $(e_\star^{p(q)})_{|\mathbb C_I}=e^{p(z)}$, i.e. the restriction of the function $e_\star^{p(q)}$ to $\mathbb C_I$ coincides with the complex valued function $e^{p(z)}$. Therefore $e_\star^{p(q)}$ is the slice regular extension to $\mathbb H$ of $e^{p(z)}$. Note that sometimes we will write $\exp(p(z))$ or $\exp_\star(p(q))$ instead of $e^{p(z)}$, $e_\star^{p(q)}$.}
\end{remark}
\begin{theorem}\label{teo8}
Let $\{a_n\}_{n\in\mathbb{N}}\subseteq \mathbb{H}\setminus\{0\}$ be a diverging sequence. Let $\{p_n\}$ be a sequence of nonnegative integers such that
\begin{equation}\label{condit1}
\sum_{n=1}^\infty \left(\frac{r}{|a_n|}\right)^{p_n+1}<\infty
\end{equation}
for every positive $r$.
Set
$$
e_{p_n}(q, a_n^{-1})=e_\star^{qa_n^{-1}+\frac 12 q^2a_n^{-2}\ldots +\frac{1}{p_n}q^{p_n}a_n^{-p_n}}
$$
for all  $q\in \mathbb H$.
Then the infinite $\star$-product
\begin{equation}\label{p}\overset{\star\infty}{\underset{n=0}\prod}(1-qa_n^{-1})\star e_{p_n}(q, a_n^{-1})\end{equation}
converges compactly in $\mathbb{H}$ to an entire slice regular function.
Furthermore, for every $n\in\mathbb{N}$, the function $e_{p_n}(q, a_n^{-1})$ has no zeros in $\mathbb{H}$.
\end{theorem}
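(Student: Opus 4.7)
The plan is to reduce everything to Proposition \ref{prop2:23}, so I need to show that, on every compact subset $K\subset\mathbb H$, the series
$$\sum_{n=1}^\infty |1-(1-qa_n^{-1})\star e_{p_n}(q, a_n^{-1})|$$
converges uniformly. The key observation is that the polynomial
$$P_n(q)=qa_n^{-1}+\tfrac12 q^2 a_n^{-2}+\cdots+\tfrac{1}{p_n}q^{p_n}a_n^{-p_n}$$
has all its coefficients in the complex plane $\mathbb C_{I_{a_n}}$, where $I_{a_n}$ is the imaginary unit of $a_n$. By Remark \ref{estarI}, the restriction of $e_{p_n}(q,a_n^{-1})$ to $\mathbb C_{I_{a_n}}$ coincides with the classical complex exponential $\exp(P_n(z))$. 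Therefore, restricted to $\mathbb C_{I_{a_n}}$, the factor $f_n(q):=(1-qa_n^{-1})\star e_{p_n}(q, a_n^{-1})$ is exactly the classical Weierstrass primary factor $E_{p_n}(z/a_n)$, for which one has the well-known estimate
$$|1-E_{p_n}(z/a_n)|\leq (|z|/|a_n|)^{p_n+1}\qquad \text{whenever } |z|\leq |a_n|.$$

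Next I would extend this bound from $\mathbb C_{I_{a_n}}$ to all of $\mathbb H$. Fix $q=x+Jy\in\mathbb H$ with $|q|\leq |a_n|$ and apply the Representation Formula \eqref{distribution_mon} to the slice regular function $1-f_n$, taking the two sample points on $\mathbb C_{I_{a_n}}$. Since $|1\pm JI_{a_n}|\leq 2$ for all $J\in\mathbb S$, a direct estimate yields
$$|1-f_n(q)|\leq |1-f_n(x+I_{a_n}y)|+|1-f_n(x-I_{a_n}y)|\leq 2\left(\frac{|q|}{|a_n|}\right)^{p_n+1}.$$
Now given a compact set $K\subset\mathbb H$, choose $r>0$ with $K\subset \overline{B(0,r)}$. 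Since $|a_n|\to\infty$, there is an index $N_0$ such that $|a_n|>r$ for all $n\geq N_0$, and for $q\in K$ and $n\geq N_0$ we have
$$|1-f_n(q)|\leq 2\left(\frac{r}{|a_n|}\right)^{p_n+1}.$$
The hypothesis \eqref{condit1} ensures that $\sum_n(r/|a_n|)^{p_n+1}<\infty$, so $\sum_n|1-f_n(q)|$ converges uniformly on $K$, and Proposition \ref{prop2:23} gives compact convergence of the $\star$-product to an entire slice regular function.

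For the second assertion, I would show that the symmetrization $\big(e_{p_n}(q,a_n^{-1})\big)^s$ never vanishes, and invoke Proposition \ref{zericoniugata}. Writing $e_{p_n}(q,a_n^{-1})=\sum_k q^k c_k$ with $c_k\in\mathbb C_{I_{a_n}}$, the conjugate satisfies $(e_{p_n})^c_I(z)=\sum z^k\bar c_k=\exp(\overline{P_n(\bar z)})$ on $\mathbb C_{I_{a_n}}$. Since the $\star$-product restricted to $\mathbb C_{I_{a_n}}$ equals the pointwise product (the coefficients all lie in the commutative subfield $\mathbb C_{I_{a_n}}$), the restriction of the symmetrization to $\mathbb C_{I_{a_n}}$ is a product of two classical, nowhere-vanishing complex exponentials. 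By Remark \ref{symmLI_qua} the symmetrization is intrinsic, so $|(e_{p_n})^s(x+Jy)|=|(e_{p_n})^s(x+I_{a_n}y)|>0$ for every $J\in\mathbb S$, and Proposition \ref{zericoniugata} forces $e_{p_n}(q,a_n^{-1})$ itself to be zero-free.

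The main obstacle is step two: transporting the sharp classical Weierstrass inequality off the plane $\mathbb C_{I_{a_n}}$ in a way that keeps the bound summable, independently of $n$. The Representation Formula does this with only an extra factor of $2$, which is harmless because the summability hypothesis \eqref{condit1} already has a generous margin. A minor subtlety, which should be flagged, is that one is implicitly using that the complex Weierstrass estimate applies to $P_n(z)$ as a polynomial with coefficients in a copy of $\mathbb C$ (via the identification $\mathbb C_{I_{a_n}}\cong\mathbb C$), and that the value $a_n$ itself, viewed inside $\mathbb C_{I_{a_n}}$, has the same modulus as the quaternion $a_n$.
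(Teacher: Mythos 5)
Your proof is correct, and its overall architecture coincides with the paper's: both reduce the statement to showing that $\sum_n |1-f_n(q)|$ converges uniformly on compact sets and then invoke Proposition \ref{prop2:23}. The genuine difference is in how the classical Weierstrass estimate is transported off the distinguished slice $\mathbb{C}_{I_{a_n}}$. The paper observes that $\wp_n(z)=1-\sum_k c_k z^{k+p_n+1}a_n^{-(k+p_n+1)}$ with $0\le c_k\le \frac{1}{p_n+1}$, and then uses uniqueness of the slice regular extension to conclude that $\mathcal P_n(q)$ has \emph{the same} power series coefficients over all of $\mathbb H$; the quaternionic bound $|1-\mathcal P_n(q)|\le \frac{1}{p_n+1}\bigl(\frac{|q|}{|a_n|}\bigr)^{p_n+1}\bigl(1-\frac{|q|}{|a_n|}\bigr)^{-1}$ then follows by estimating the series term by term. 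You instead transport the finished value bound $|1-E_{p_n}(z/a_n)|\le (|z|/|a_n|)^{p_n+1}$ via the Representation Formula, paying a factor $2$ coming from $|1\pm JI_{a_n}|\le 2$. Both mechanisms are sound; the coefficient-transfer argument is arguably more intrinsic to the slice regular framework (and is the pattern reused elsewhere in the book), while your route is shorter and needs only the ready-made complex inequality. Two small points worth flagging: you should note that the finitely many factors with $n<N_0$ do not affect uniform convergence of the series on $K$, and that $1-f_n$ is slice regular (a right linear combination of slice regular functions) so the Representation Formula indeed applies to it. On the zero-freeness of $e_{p_n}$, your argument via the symmetrization, its intrinsic character, and Proposition \ref{zericoniugata} is actually more complete than the paper's one-line assertion that the extension of a nowhere-vanishing holomorphic function has no zeros, which as stated needs exactly the justification you supply.
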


\begin{proof}For all $n\in\mathbb{N}$ let us set
$$
\mathcal{P}_n(q)=(1-qa_n^{-1})\star e_{p_n}(q, a_n^{-1}).
$$
    Theorem \ref{the6} yields that the $\star$-product (\ref{p}) converges compactly in $\mathbb H$ if and only if the pointwise product
\begin{equation}\label{p2}\overset{\infty}{\underset{n=0}\prod}\mathcal{P}_n(q)
\end{equation}
converges  compactly in $\mathbb H$.
Theorem \ref{teo33} implies that the product (\ref{p2}) converges compactly in $\mathbb{H}$ if the series
$$\sum_{n=0}^{\infty}\left|1-\mathcal{P}_n(q)\right|$$ converges compactly in $\mathbb{H}$.
Let $K\subseteq \mathbb{H}$ be a
 compact set. Let $R>0$ be such that $K\subseteq B(0,R) $ and let $N\in\mathbb{N}$
be the integer such that  $|a_n|>R$ for all $n\geq N$. For any $n\in\mathbb N$ let us denote by $I_n$ the element in $\mathbb S$ such that
$a_n\in\mathbb C_{I_n}$ and let $\mathnormal{p}_n(z)$ be the restriction of
$\mathcal{P}_n$ to the complex plane $\mathbb C_{I_n}$. Hence, denoting by $z$ the variable in $\mathbb C_{I_n}$, we have
$$\wp_n(z)=(1-za_n^{-1})e^{za_n^{-1}+\frac 12 z^2a_n^{-2}\ldots +\frac{1}{n}z^{p_n}a_n^{-p_n}}$$
and we can estimate the coefficients $c_k$ of the Taylor expansion
$$
\wp_{n}(z)=1-\sum_{k=0}^\infty c_k (za_n)^{k+p_n+1}
$$
of $\wp_{n}$ at the point $0$ as in the complex case, see \cite{rudin}, thus obtaining
\begin{equation}\label{ck}
0\leq c_k \leq \frac{1}{p_n+1}.
\end{equation}
Since the slice regular extension $\mathcal{P}_n$ of $\wp_n$ is unique by the Identity Principle, the coefficients of the power series expansion of  $\mathcal{P}_n$ are the same of $\wp_n$ and so we have:
$$\mathcal{P}_n(q)=1-\sum_{k=0}^{\infty}c_k q^{k+p_n+1}a_n^{-(k+p_n+1)}$$
for every $q\in\mathbb{H}$.
Then
\[
\begin{split}
\left|1-\mathcal{P}_n(q)\right|&=\left|\sum_{k=0}^{\infty}c_k q^{k+n+1}a_n^{-(k+n+1)}\right|\\
&\leq\sum_{k=0}^{\infty}c_k \left(\frac{|q|}{|a_n|}\right)^{(k+p_n+1)}
\end{split}
\]
and using the estimate \eqref{ck} we obtain
\[
\begin{split}\left|1-\mathcal{P}_n(q)\right|&\leq\sum_{k=0}^{\infty}\frac{1}{p_n+1} \left(\frac{|q|}{|a_n|}\right)^{k+p_n+1}\\
&\leq \frac{1}{p_n+1}\left(\frac{|q|}{|a_n|}\right)^{p_n+1}\left(\sum_{k=0}^{\infty} \left(\frac{|q|}{|a_n|}\right)^{k}\right).
\end{split}
\]
The series
$$\sum_{k=0}^{\infty} \left(\frac{|q|}{|a_n|}\right)^{k}$$ is a geometric series whose ratio is strictly smaller than $1$
when $n\geq N$,
since $q\in K \subseteq B(0, R)$ and $|a_{n}|> R$ for $n\geq N$. Hence the power series is convergent to the value
$$\left(1-\frac{|q|}{|a_n|}\right)^{-1}$$
and we obtain
$$\left|1-\mathcal{P}_n(q)\right|\leq\frac{1}{p_n+1}\left(\frac{|q|}{|a_n|}\right)^{p_n+1}\left(1-\frac{|q|}{|a_n|}\right)^{-1}.$$
Since $\lim_{n\rightarrow \infty}\left(1-\frac{|q|}{|a_n|}\right)^{-1}=1$ and the series
 \begin{equation}\label{3107}\sum_{n=0}^{\infty}\frac{1}{p_n+1}\left(\frac{|q|}{|a_n|}\right)^{p_n+1}\end{equation} converges on $K$, also the series $$\sum_{n=0}^{\infty}\left|1-\mathcal{P}_n(q)\right|$$ converges
     on $K$. The function defined by the $\star$-product in \eqref{p} is entire by Proposition \ref{prop2:23}.
The function $e_{p_n}(q,a_n^{-1})$ has no zeros since it is the
(unique) slice regular extension of a  function holomorphic on the complex plane $\mathbb C_{I_n}$ and without zeros on that plane.
\end{proof}
\begin{remark}\label{remark22}{\rm Condition \eqref{condit1} in the previous result is satisfied when $p_n=n-1$ for all $n\in\mathbb N$. A fortiori, we can choose $p_n=n$.}
\end{remark}
\begin{remark}\label{rmknotation}{\rm
Let $f$ be an entire slice regular function and let $\alpha_1, \alpha_2, \ldots$ be its non spherical zeros
and $[\beta_1], [\beta_2], \ldots$, be its spherical zeros. As we discussed in Chapter 2, a spherical zero $[\beta]$ is characterized by the fact that $f$ contains the factor $q^2-2{\rm Re}(\beta)q+|\beta|^2$ to a suitable power. Since this factor splits as $(q-\beta)\star(q-\overline{\beta})$, the spherical zeros can also be listed as pairs of conjugate elements $\beta$, $\overline\beta$, where $\beta$ is any element belonging to the given sphere.
It is important to remark that, when forming the list of the zeros of a given function, two elements $\beta, \overline\beta$ defining a sphere should appear one after the other. We will say that $\beta$ (or any other element in $[\beta]$) is a generator of a spherical zero.\index{spherical zero!generator of a} If the sphere has multiplicity $m$ the pair $\beta, \overline\beta$ will be repeated $m$ times in the list. Also the isolated zeros appear with their multiplicities.
With this notation we can list the zeros as $\gamma_1,\gamma_2,\ldots$ (where $\gamma_i$ stands  for either one of the elements $\alpha_j$ or $\beta_j$ or $\overline \beta_j$) according to increasing values of their moduli. When some elements have the same modulus, we can list them in any order (but keeping together the pairs defining a sphere).
}
\end{remark}

\begin{remark}\label{remarksfere}{\rm
  When the list of zeros contains a spherical zero, namely a pair $\beta_n$, $\overline{\beta_n}$, we can choose $\gamma_n=\beta_n$ and $\gamma_{n+1}=\overline{\beta_n}$ and, for $z$ belonging to the complex plane of $\beta_n$, $\overline{\beta_n}$, we have
\[
\begin{split}
&(1-q\beta_n^{-1})\star e_\star^{q\beta_n^{-1}+\ldots +\frac{1}{p_n}q^{p_n}\beta_n^{-p_n}}\star (1-q\bar{\beta}_n^{-1})\star e_\star^{q\bar{\beta}_n^{-1}+\ldots +\frac{1}{p_n}q^{p_{n}}\bar{\beta}_n^{-p_{n}}}\\
&={\rm ext}\left(
(1-z\beta_n^{-1})\exp\left({z\beta_n^{-1}+\ldots +\frac{1}{p_n}z^{p_n}\beta_n^{-p_n}}\right)\right. \\
&\times(1-z\bar{\beta}_n^{-1}) \exp\left(z\bar{\beta}_n^{-1}+\ldots \left. +\frac{1}{p_n}z^{p_n}\bar{\beta}_n^{-p_n}\right)\right)\\
&={\rm ext}\left(
(1-z\beta_n^{-1}) (1-z\bar{\beta}_n^{-1}) \exp\left({z\beta_n^{-1}+\ldots + \frac{1}{p_n}z^{p_n}\beta_n^{-p_n}+ z\bar{\beta}_n^{-1}+\ldots }\right)\right)\\
&={\rm ext}\left((1- z (\beta_n^{-1}+\bar{\beta}_n^{-1}) + z^2 |{\beta}_n^{-1}|^2)\exp\left({2z\frac{{\rm Re}(\beta_n)}{|\beta_n|^2}+\ldots +2\frac{1}{p_n}z^{p_n}\frac{{\rm Re}(\beta_n^{p_n})}{|\beta_n|^{2p_n}}}\right)
\right)\\
&=\left(1- 2q \frac{{\rm Re}(\beta_n)}{|\beta_n|^2} + q^2 \frac{1}{|{\beta}_n|^2}\right)\exp({2q\frac{{\rm Re}(\beta_n)}{|\beta_n|^2}+\ldots +2\frac{1}{p_n}q^{p_n}\frac{{\rm Re}(\beta_n^{p_n})}{|\beta_n|^{2p_n}}}).
\end{split}
\]
}
\end{remark}
To prove the Weierstrass factorization theorem we need additional information in the case in which a function has isolated zeros only:
\begin{lemma}\label{isolatedW}
Let $f$ be an entire slice regular function whose sequence of zeros $\{\gamma_n\}$ consists of isolated, nonreal elements, repeated according to their multiplicity. Then there exist $\delta_n\in[\gamma_n]$ such that
$$
f(q)=h(q)\star \overset{\star\infty}{\underset{n=1}\prod} e_{p_n}(q, \bar{\delta}_n^{-1}) \star (1-q\bar{\delta}_n^{-1} ),
$$
where $h$ is a nowhere vanishing entire slice regular function.
\end{lemma}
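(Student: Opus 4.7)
Plan: I will follow the classical Weierstrass strategy adapted to the noncommutative $\star$-product. First, select the $\delta_n \in [\gamma_n]$ by an iterative application of the factorization in Theorem \ref{R-fattorizzazslice}; second, invoke Theorem \ref{teo8} with the standard Weierstrass exponential correction to obtain an entire canonical product $P$; third, verify that the quotient $h := f\star P^{-\star}$ extends to the desired entire, nowhere vanishing function.

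\textbf{Step 1 (Iterative extraction of the $\delta_n$).} Set $f_0:=f$. For each $n\geq 1$, apply Theorem \ref{R-fattorizzazslice} to $f_{n-1}$ on the sphere $[\gamma_n]$ to obtain a representative $\mu_n\in[\gamma_n]$ and a slice regular $f_n'$ with $f_{n-1}=(q-\mu_n)\star f_n'$, where $f_n'$ has one fewer $\star$-factor on $[\gamma_n]$. Rewriting $(q-\mu_n)=(1-q\mu_n^{-1})\star(-\mu_n)$ and absorbing the constant into $f_n:=(-\mu_n)\star f_n'$ yields $f_{n-1}=(1-q\bar{\delta}_n^{-1})\star f_n$ upon setting $\bar{\delta}_n:=\mu_n$; since $\mu_n\in[\gamma_n]$ we have $\delta_n=\bar{\mu}_n\in[\gamma_n]$.

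\textbf{Step 2 (Convergence of $P$).} The zeros of an entire slice regular function are isolated in $\mathbb H$, so $|\bar{\delta}_n|=|\gamma_n|\to\infty$. Choosing $p_n=n$ (Remark \ref{remark22}) satisfies the summability condition \eqref{condit1}, so by Theorem \ref{teo8}
$$P(q)\;:=\;\overset{\star\infty}{\underset{n=1}\prod}\,e_{p_n}(q,\bar{\delta}_n^{-1})\star(1-q\bar{\delta}_n^{-1})$$
converges compactly on $\mathbb H$ to an entire slice regular function. The two factors in each pair share the complex plane $\mathbb C_{I_{\delta_n}}$ of their coefficients and therefore commute under $\star$-multiplication, so the ordering in the statement agrees with the one in Theorem \ref{teo8}. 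Each $e_{p_n}(q,\bar{\delta}_n^{-1})$ is nowhere vanishing, so the $\star$-factors of $P$ on each zero sphere are precisely those supplied by Step 1 and coincide, up to nowhere vanishing corrections, with those of $f$.

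\textbf{Step 3 (Extending $h$).} Define $h(q):=f(q)\star P(q)^{-\star}$, a priori slice regular on $\mathbb H\setminus Z_{P^s}$. For any zero sphere $[\gamma]$ of $P$, Theorem \ref{R-fattorizzazslice} applied separately to $f$ and to $P$ on $[\gamma]$, together with the matching of $\star$-factors of Step 1, yields local factorizations $f=L_{[\gamma]}\star\widetilde f$ and $P=L_{[\gamma]}\star\widetilde P$ in a neighborhood of $[\gamma]$, with common leading $\star$-factor $L_{[\gamma]}$ and $\widetilde f,\widetilde P$ slice regular and nonvanishing on $[\gamma]$. Then $h=\widetilde f\star\widetilde P^{-\star}$ furnishes a slice regular, nonvanishing continuation across $[\gamma]$. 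Patching over all zero spheres yields an entire, nowhere vanishing $h$ with $f=h\star P$, as claimed.

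The principal challenge is to justify, in Step 3, that the $\star$-factorizations of $P$ on each zero sphere truly match those of $f$ up to nowhere vanishing corrections. The iterative construction of Step 1 absorbs nowhere vanishing constants $-\mu_n$ at every stage, which, owing to the non-commutativity of the $\star$-product, perturb the literal $\star$-factors at subsequent stages; tracking these corrections and verifying that they can be absorbed into $\widetilde P$ constitutes the main bookkeeping of the argument.
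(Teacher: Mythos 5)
Your plan (extract the linear factors of $f$, form a canonical product $P$, and show that $h:=f\star P^{-\star}$ extends to a nowhere vanishing entire function) runs into a genuine obstruction in Step 3, and it is exactly the point you defer as ``bookkeeping.'' Theorem \ref{R-fattorizzazslice} extracts $\star$-factors on the \emph{left}: your Step 1 yields $f=(q-\mu_1)\star(q-\mu_2)\star\cdots\star f_N'$. But the factorization you must produce places the product on the \emph{right} of $h$, i.e.\ $f=h\star P$, so you need $P$ to be a right $\star$-factor of $f$. A common \emph{left} factor does not cancel in $f\star P^{-\star}$: if $f=L\star\widetilde f$ and $P=L\star\widetilde P$, then $f\star P^{-\star}=L\star\widetilde f\star\widetilde P^{-\star}\star L^{-\star}$, which is not $\widetilde f\star\widetilde P^{-\star}$ and is still singular where $L^s$ vanishes. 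For the same reason your choice $\bar\delta_n:=\mu_n$ (the roots obtained by left extraction) is not the right one: by Theorem \ref{pointwise} the zero of $h\star P$ produced by the $n$-th factor of $P$ is the image of $\bar\delta_n$ under conjugation by the values of $h$ and of all the preceding factors, so the $\delta_n$ cannot be read off from a left factorization of $f$ alone.

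The paper's proof avoids this bookkeeping entirely by a different device. Since the zeros are isolated and nonreal, $f(\bar\gamma_n)\neq 0$, and one multiplies $f$ on the \emph{right} by $(1-q\delta_n^{-1})\star e_{p_n}(q,\delta_n^{-1})$ with $\delta_n^{-1}=f(\bar\gamma_n)^{-1}\bar\gamma_n^{-1}f(\bar\gamma_n)$, chosen via Theorem \ref{pointwise} precisely so that the product acquires a zero at $\bar\gamma_n$ and hence a \emph{spherical} zero on $[\gamma_n]$. Iterating gives $k=f\star\prod^{\star}(1-q\delta_n^{-1})\star e_{p_n}(q,\delta_n^{-1})$, all of whose zeros are spherical; the corresponding factors have real coefficients, hence commute with everything, and $k=S\,h$ with $S$ the real spherical canonical product and $h$ nowhere vanishing. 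Multiplying on the right by $\prod^{\star}e_{p_n}(q,\bar\delta_n^{-1})\star(1-q\bar\delta_n^{-1})$ telescopes the two products into $S$, and cancelling $S$ on the left yields exactly the stated identity, which also explains why the exponential factor precedes the linear one and why $h$ sits on the left. To salvage your route you would need a right-handed analogue of Theorem \ref{R-fattorizzazslice} (say, by applying it to $f^c$ and conjugating) plus a convergence argument for the partial right quotients, which amounts to a harder reformulation of what the paper does.
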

\begin{proof}
By our hypothesis if $f$ vanishes at $\gamma_n$, $f$ cannot have any other zero on $[\gamma_n]$ and, in particular, $f(\overline{\gamma_n})\not=0$. Starting from $f$ we will construct a function  having zeros $\overline{\gamma_n}$ so that this new function will have spherical zeros. Let us start by considering the function $f_1$ defined by
$$
f_1(q)= f(q) \star (1- q \delta_1^{-1})\star e_{p_1}(q,\delta_1^{-1})
$$
where $\delta_1^{-1}=f(\overline{\gamma}_1)^{-1} \overline{\gamma}_1^{-1} f(\overline{\gamma}_1)$ and so $\delta_1\in[\gamma_1]$. Formula \ref{pointwise}
implies that $f_1(\overline{\gamma}_1)=0$ and so $f_1(q)$ has a spherical zero at $[\gamma_1]$ and
$$
f_1(q)=\left(1-2\frac{{\rm Re}(\gamma_1)}{|\gamma_1|^2} + q^2\frac{1}{|\gamma_1|^2}\right)\tilde f_1(q)
$$
where $\tilde f_1(q)$ has zeros belonging to the sequence $\{\gamma_n\}_{n\geq 2}$. Thus $\tilde f_1$ is not vanishing at $\overline{\gamma}_2$ and we repeat the procedure to add to $\tilde f_1$ the zero $\overline{\gamma}_2$ by constructing the function
\[
\begin{split}
f_2(q) &= \left(1-2\frac{{\rm Re}(\gamma_1)}{|\gamma_1|^2} + q^2\frac{1}{|\gamma_1|^2}\right)\tilde f_1(q)
\star (1- q \delta_2^{-1})\star e_{p_2}(q,\delta_2^{-1})\\
&= f_1(q)
\star (1- q \delta_2^{-1})\star e_{p_2}(q,\delta_2^{-1})
\end{split}
\]
where $\delta_2^{-1}=\tilde f(\overline{\gamma}_2)^{-1} \overline{\gamma}_2^{-1} \tilde f(\overline{\gamma}_2)$ and $\delta_2\in[\gamma_2]$. The function $f_2$ has spherical zeros at $[\gamma_1]$, $[\gamma_2]$ and isolated zeros belonging to the sequence $\{\gamma_n\}_{n\geq 3}$.
Note that
$$
f_2(q)= f(q) \star (1- q \delta_1^{-1})\star e_{p_1}(q,\delta_1^{-1}) \star (1- q \delta_2^{-1})\star e_{p_2}(q,\delta_2^{-1}).
$$
Iterating the reasoning, we obtain a function $k(q)$ where
\begin{equation}\label{kappa}
k(q)=f(q)\star \overset{\star\infty}{\underset{n=1}\prod} (1-q\delta_n^{-1})\star e_{p_n}(q,\delta_n^{-1}).
\end{equation}
By construction, $k(q)$ is an entire slice regular function with zeros at the spheres $[\gamma_n]$ and no other zeros. Since the factors corresponding to the spheres have real coefficients, see Remark \ref{remarksfere}, we can pull them on the left and write
\[
\begin{split}
k(q)&=\prod_{n=1}^\infty \left(1- 2q \frac{{\rm Re}(\gamma_n)}{|\gamma_n|^2} + q^2 \frac{1}{|{\gamma}_n|^2}\right)\exp\left({2q\frac{{\rm Re}(\gamma_n)}{|\gamma_n|^2}+\ldots +2\frac{1}{p_n}q^{p_n}\frac{{\rm Re}(\gamma_n^{p_n})}{|\gamma_n|^{2p_n}}}\right) h(q)\\
&=\left(\prod_{n=1}^\infty  (1- q \gamma_n^{-1})\star e_{p_n}(q,\gamma_n^{-1})\star (1- q \bar\gamma_n^{-1})\star e_{p_n}(q,\bar\gamma_n^{-1})\right) h(q).
\end{split}
\]
Let us set
$$
S(q)=\prod_{n=1}^\infty \left(1- 2q \frac{{\rm Re}(\gamma_n)}{|\gamma_n|^2} + q^2 \frac{1}{|{\gamma}_n|^2}\right)\exp\left({2q\frac{{\rm Re}(\gamma_n)}{|\gamma_n|^2}+\ldots +2\frac{1}{p_n}q^{p_n}\frac{{\rm Re}(\gamma_n^{p_n})}{|\gamma_n|^{2p_n}}}\right)
$$
so that we can rewrite
$$
k(q)=S(q)h(q).
$$
We now multiply \eqref{kappa} on the right by $$\underset{\infty}{\overset{\star 1}\prod} e_{p_n}(q,\bar \delta_n{-1})\star (1-q\bar \delta_n^{-1})$$ and we obtain
$$
k(q)\star \underset{\infty}{\overset{\star 1}\prod} e_{p_n}(q,\bar \delta_n{-1})\star (1-q\bar \delta_n^{-1})=
f(q)\star S(q),
$$
from which we deduce
$$
S(q)h(q)\star \underset{\infty}{\overset{\star 1}\prod} e_{p_n}(q,\bar \delta_n{-1})\star (1-q\bar \delta_n^{-1})=
f(q)\star S(q)=S(q) f(q).
$$
By multiplying on the left by $S(q)^{-1}$ we finally have
$$
h(q)\star \underset{\infty}{\overset{\star 1}\prod} e_{p_n}(q,\bar \delta_n{-1})\star (1-q\bar \delta_n^{-1})=
f(q)
$$
so the statement follows.
\end{proof}
Using the notation in Remark \ref{rmknotation} to denote the list of zeros of a function, we can prove the following:
\begin{theorem}[Weierstrass Factorization Theorem]\label{thmWeier}
Let $f$ be an entire slice regular function and let $f(0)\not=0$. Suppose $\{\gamma_n\}$ are the zeros of $f$ repeated according
to their multiplicities. Then there exist a sequence $\{p_n\}$ of nonnegative integers, a sequence $\{\delta_n\}$ of quaternions, and a never vanishing entire slice regular function $h$ such that
$$f(q)= g(q) \star h(q)$$
where
\begin{equation}\label{Gqp}
g(q)=\overset{\star\infty}{\underset{n=1}\prod}(1-q\delta_n^{-1} )\star e_{p_n}(q, \delta_n^{-1}),
\end{equation}
$\delta_n\in[\gamma_n]$. In particular, $\delta_n=\gamma_n$ if $\gamma_n\in\mathbb R$, $\delta_n=\beta_n$, and $\delta_{n+1}=\overline{\beta}_n$ when $[{\beta}_n]$ is a spherical zero and, in this case, $p_{n+1}$ may be chosen equal to $p_{n}$.
Moreover, for all $n\in \mathbb N$, the function $e_{p_n}(q, \delta_n^{-1})$ is given by
$$
e_{p_n}(q,\delta_n^{-1})=e_\star^{q\delta_n^{-1}+\ldots +\frac{1}{p_n}q^{p_n}\delta_n^{-{p_n}}}.
$$
Under the same hypotheses, if $f$ has a zero of multiplicity $m$ at $0$ then
 $$
f(q)= q^m g(q) \star h(q),
 $$
 where $g$ and $h$ are as above.
\end{theorem}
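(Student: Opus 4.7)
The plan is to construct $g$ as the indicated infinite $\star$-product by choosing the sequence $\{\delta_n\}\subset\mathbb H$ inductively so that the partial products vanish at the listed $\gamma_n$, and then to show that the residual factor $h$ defined through $f=g\star h$ extends to a nowhere vanishing entire slice regular function. After dividing off a leading $q^m$ (which has real coefficients and therefore commutes with everything under $\star$-multiplication), we may assume $f(0)\neq 0$.

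First I would partition $\{\gamma_n\}$ as in Remark~\ref{rmknotation} into real zeros, spherical generator pairs $(\beta,\overline\beta)$, and isolated non-real zeros, and choose $p_n=n-1$ so that condition~\eqref{condit1} of Theorem~\ref{teo8} is automatic. For a real zero set $\delta_n=\gamma_n$, and for a spherical pair set $\delta_n=\beta$, $\delta_{n+1}=\overline\beta$, $p_{n+1}=p_n$: by Remark~\ref{remarksfere} these two consecutive factors collapse into a single real-coefficient elementary factor vanishing exactly on $[\beta]$ and commuting with every other $\star$-factor. For an isolated non-real $\gamma_n$, define $\delta_n$ recursively: if $g_{n-1}$ denotes the partial product already constructed and $u:=g_{n-1}(\gamma_n)\neq 0$, set $\delta_n=u^{-1}\gamma_n u\in[\gamma_n]$. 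Two successive applications of formula~\eqref{productfg}, combined with the fact that $e_{p_n}(q,\delta_n^{-1})$ is never zero (being the slice extension of a nowhere vanishing complex exponential), then force $g_n(\gamma_n)=0$, as required. Convergence of the resulting infinite $\star$-product to an entire slice regular function $g$ follows from Theorem~\ref{teo8}, and by construction $g$ and $f$ have the same zero set with matching multiplicities, with the spheres of zeros being exactly $\{[\delta_n]\}=\{[\gamma_n]\}$.

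The hard part will be promoting this to the factorisation $f=g\star h$ with $h$ entire and nowhere vanishing, since a priori $g^{-\star}\star f$ has $\star$-singularities precisely at the zeros of $g$. To show these singularities cancel, I would apply Theorem~\ref{R-fattorizzazslice} in an axially symmetric neighbourhood of each zero sphere of $f$ to factor $f$ locally through the same finite Weierstrass block appearing in the corresponding truncation of $g$. After cancelling this block (using Proposition~\ref{invefs} on the real-coefficient spherical pieces and the non-vanishing of the $e_{p_n}$ factors on each slice), the local quotient is slice regular and has a removable singularity across the sphere in question. A standard globalisation argument, using Proposition~\ref{propmoltiplicative} to track how $\star$-products interact with symmetrisations, then yields an entire $h$, which is nowhere zero because every zero of $f$ has by construction been absorbed into $g$. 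Restoring the factor $q^m$ that was extracted at the outset gives the last assertion of the statement.
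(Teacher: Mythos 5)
Your construction of $g$ and the claim that $f=g\star h$ with $h$ nowhere vanishing both run into the same obstruction, and it is exactly the difficulty that the paper's proof is designed to avoid. First, a technical point: your recursion $\delta_n=u^{-1}\gamma_n u$ with $u=g_{n-1}(\gamma_n)$ is undefined as soon as $\gamma_n$ is a repeated isolated zero, since then $g_{n-1}(\gamma_n)=0$. More seriously, even if you arrange for $g$ and $f$ to have exactly the same zeros with the same multiplicities, this does not make $g^{-\star}\star f$ regular. By Theorem \ref{R-fattorizzazslice}, a zero of isolated multiplicity $r\geq 2$ at $q_1\in[x+Iy]$ corresponds to a block $(q-q_1)\star\cdots\star(q-q_r)$ in which only $q_1$ is an actual zero; the remaining points $q_2,\dots,q_r$ of the sphere are determined by the function, not by its zero set. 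Two functions can therefore share the same zeros and multiplicities and still have a non-regular $\star$-quotient: take $f(q)=(q-i)\star(q-j)$ and $g(q)=(q-i)\star(q-k)$, both having the single zero $i$ with multiplicity $2$; then $g^{-\star}\star f=(q^2+1)^{-1}(q+k)\star(q-j)$, and $(q+k)\star(q-j)=q^2+q(k-j)+i$ vanishes only at $-k$, so it is not divisible by $q^2+1$ and the quotient has non-removable singularities on $\mathbb{S}$. Hence the step where you cancel ``the same finite Weierstrass block'' assumes precisely what must be proved, namely that the block built into $g$ coincides with the block occurring in the local factorization of $f$.

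The paper sidesteps this by never dividing $f$ by a prescribed product. After splitting off the real and spherical zeros, whose factors have real coefficients and commute, it passes to the conjugate $f_1^c$ of the remaining factor and, in Lemma \ref{isolatedW}, multiplies on the \emph{right} by elementary factors $(1-q\delta_n^{-1})\star e_{p_n}(q,\delta_n^{-1})$ whose parameters are read off from the function itself via formula \eqref{productfg}, so that each isolated zero is completed to a whole sphere; the resulting function has only real-coefficient spherical factors, which can be pulled to the left and cancelled, and the factorization of $f$ then drops out by algebra. To salvage your approach you would have to choose the $\delta_n$ not from the zero set of $f$ but from its local $\star$-factorization on each sphere as given by Theorem \ref{R-fattorizzazslice} — at which point you are essentially reconstructing the paper's argument.
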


\begin{proof} Let us consider first the  real and spherical zeros of $f$. Then, by Theorem \ref{teo8}
 $$g(q)=\left( \underset{n=0}{\overset{\star\infty}\prod}(1-q\delta_n^{-1} )\star e_{p_n}(q,\delta_n^{-1})\right)$$
is a slice regular function which has the listed zeros if the $\delta_n$ are chosen to be the real zeros of $f$ or the pairs $\beta_n$, $\overline{\beta_n}$.
Since all the corresponding factors
  $$(1-q\delta_n^{-1} )\star e_{p_n}(q,\delta_n^{-1})$$ or
   $$
   (1-q\beta_n^{-1} )\star e_{p_n}(q,\beta_n^{-1})\star (1-q\bar\beta_n^{-1} )\star e_{p_n}(q,\bar\beta_n^{-1})
   $$
   have real coefficients, they commute with each other. Let us set $f_1(q)= g(q)^{-\star}\star f(q)$.
 The slice regular function $f_1(q)$ cannot have real of spherical zeros as the zeros of $f$ cancel with the factors in $g(q)^{-\star}$. Thus $f(q)= g(q) \star f_1(q)$ where $f_1$ can have isolated zeros, if $f$ possesses isolated zeros.
  We now consider the function $f_1^c(q)$ and we recall that its zeros are in one-to-one correspondence with the zeros of $f_1(q)$.
  By Lemma \ref{isolatedW}, we have
  $$
  f_1^c(q)=h(q)\star  \overset{\star\infty}{\underset{n=1}\prod} e_{p_n}(q, \bar{\delta}_n^{-1}) \star (1-q\bar{\delta}_n^{-1} )
  $$
  where $h$ is a suitable entire slice regular function. Consequently, we have
  $$
  f_1(q) =  \overset{\star\infty}{\underset{n=1}\prod} (1-q{\delta}_n^{-1} )\star e_{p_n}(q, {\delta}_n^{-1})  \star h^c(q)
  $$
  and the statement follows.
 If $f$ has a zero of multiplicity $m$ at $0$ it suffices to apply the previous reasoning to the function $q^{-m}f(q)$.
\end{proof}
\begin{remark}{\rm
We note that, in the case of an isolated zero $\alpha_n$, $\delta_n$ must be chosen is a suitable way in order to obtain the desired zero.  When the list of zeros contains a spherical zero, namely a pair $\beta_n$, $\overline{\beta_n}$ we can choose $\delta_n=\beta_n$ and $\delta_{n+1}=\overline{\beta_n}$ so that the product contains factors of the form illustrated in Remark \ref{remarksfere}.}
\end{remark}
As a corollary of Theorem \ref{thmWeier}, if we factor first the real zeros, then all the spherical zeros using the previous remark, we obtain the theorem as written in \cite{GSS,MR2836832}.

\begin{theorem}[Weierstrass Factorization Theorem]
Let $f$ be an entire slice regular function. Suppose that: $m\in \mathbb{N}$ is the multiplicity of $0$ as a zero of $f$,
$\{b_n\}_{n\in\mathbb{N}}\subseteq \rr\setminus \{0\}$ is the sequence of the
other real zeros of $f$,  $\{[s_n]\}_{n\in\mathbb{N}}$
is the sequence of the spherical zeros of $f$,  and $\{a_n \}_{n\in\mathbb{N}}\subseteq \hh\setminus{\mathbb{R}}$
is the sequence of the non real zeros of $f$ with isolated multiplicity greater then zero.
If all the zeros listed above are repeated according
to their multiplicities, then there exists a never vanishing, entire slice regular function $h$ and, for all $n\in\mathbb{N}$,  there exist $c_n\in [s_n]$
 and $\delta_n\in [Re(a_n)+I|Im(a_n)|]$ such that
$$f(q)=q^m\; \mathcal{R}(q) \; \mathcal{S}(q)\; \mathcal{A}(q) \star h(q)$$
where
$$\mathcal{R}(q)=\prod_{n=0}^{\infty}(1-qb_n^{-1})e^{qb_n^{-1}+\ldots+\frac{1}{n}q^{n}b_n^{-n}},$$

$$\mathcal{S}(q)=\prod_{n=0}^{\infty}\left(\frac{q^2}{|c_n|^2} - \frac{2q Re(c_n)}{|c_n|^2} + 1\right)e^{q\frac{Re(c_n)}{|c_n|^2}+
\ldots +\frac{1}{n}q^{n}\frac{Re(c_n^{n})}{|c_n|^{2n}}},$$

$$\mathcal{A}(q)=\overset{\star\infty}{\underset{n=0}\prod}(1-q\delta_n^{-1} )\star e_n(q)$$
and where, for all $n\in \mathbb N$,  $e_n(q, \delta_n^{-1})=e_\star^{q\delta_n^{-1}+\ldots +\frac{1}{n}q^{n}\delta_n^{-{n}}}$.
\end{theorem}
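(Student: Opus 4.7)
The plan is to derive this statement as a corollary of the main Weierstrass Factorization Theorem (Theorem \ref{thmWeier}) by regrouping the infinite $\star$-product according to the nature of the zeros (real, spherical, non-real isolated) and exploiting the fact that the factors associated to real and spherical zeros carry real coefficients.

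First, I would apply Theorem \ref{thmWeier} to write
\[
f(q) = q^m\, g(q) \star h(q),
\]
with $h$ never vanishing and $g$ as in \eqref{Gqp}. By Remark \ref{remark22} the choice $p_n = n$ is admissible in \eqref{condit1}, so I fix that choice throughout (this is what makes the exponents in the statement explicit). Next I partition the list $\{\gamma_n\}$ of zeros of $f$ into three sublists: the real zeros $\{b_n\}$, consecutive pairs $\beta_n, \overline{\beta_n}$ generating the spherical zeros $\{[s_n]\}$, and the non-real isolated zeros $\{a_n\}$, each repeated according to multiplicity. Accordingly, the corresponding $\delta_n$'s coming from Theorem \ref{thmWeier} can be taken to be $\delta_n = b_n$ for real zeros, $\delta_n = c_n$, $\delta_{n+1} = \overline{c_n}$ for spherical zeros (for some $c_n \in [s_n]$), and $\delta_n \in [\mathrm{Re}(a_n)+I|\mathrm{Im}(a_n)|]$ for the isolated non-real zeros.

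The key observation is then the following: for real $b_n$ the factor $(1-qb_n^{-1})\star e_n(q,b_n^{-1})$ has real coefficients, and for a spherical pair the computation in Remark \ref{remarksfere} shows that
\[
(1-q\beta_n^{-1})\star e_n(q,\beta_n^{-1}) \star (1-q\overline{\beta_n}^{-1})\star e_n(q,\overline{\beta_n}^{-1})
\]
equals
\[
\left(\frac{q^2}{|\beta_n|^2} - \frac{2q\,\mathrm{Re}(\beta_n)}{|\beta_n|^2} + 1\right)\exp\!\left(2q\frac{\mathrm{Re}(\beta_n)}{|\beta_n|^2}+\cdots+\frac{2}{n}q^{n}\frac{\mathrm{Re}(\beta_n^{n})}{|\beta_n|^{2n}}\right),
\]
which again has real coefficients. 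Since factors with real coefficients lie in the commutative center of $(\mathcal{R}(\mathbb{H}),\star)$ and commute with every slice regular function, I may reorder the infinite $\star$-product in $g(q)$, grouping first all real-zero factors into $\mathcal{R}(q)$, then all spherical-zero factor pairs into $\mathcal{S}(q)$, and leaving the non-real isolated factors as the $\star$-product $\mathcal{A}(q)$. The convergence of the regrouped infinite products follows from the convergence already established in Theorem \ref{teo8}, since rearranging commuting factors and collapsing commuting pairs preserves compact convergence on $\mathbb{H}$.

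The main (and essentially only) subtlety lies in justifying that the rearrangement is legitimate at the level of infinite products, not just finite ones. I would handle this by truncating: for each $N$ form the partial $\star$-product of the first $N$ factors of $g(q)$, regroup these finitely many factors (using the commutativity just noted), and pass to the limit on compact subsets of $\mathbb{H}$ via Proposition \ref{prop2:23}. Because the convergence is unconditional on compact sets (being controlled by $\sum (r/|\delta_n|)^{n+1}$ for each $r>0$), the three regrouped subproducts $\mathcal{R}$, $\mathcal{S}$, $\mathcal{A}$ each converge compactly in $\mathbb{H}$, and their product (in the prescribed order, with $\mathcal{R}$ and $\mathcal{S}$ central) equals $g(q)$. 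Substituting back into $f(q) = q^m\,g(q)\star h(q)$ yields the stated factorization.
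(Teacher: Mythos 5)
Your proposal is correct and follows essentially the same route as the paper, which derives this statement from Theorem \ref{thmWeier} in a single sentence by factoring out first the real zeros and then the spherical pairs via Remark \ref{remarksfere}. In fact you supply more detail than the text does, in particular the justification that regrouping the infinite $\star$-product is legitimate because the real- and spherical-zero factors have real coefficients (hence $\star$-commute with everything) and the convergence is unconditional on compact sets.
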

\begin{corollary}\label{corW}
Let $f$ be an entire slice regular function. Then $f$ can be written as $f=gh$ where $g$ and $h$ are entire slice regular, moreover $g$ is such that $g=1$ or $g$ has spherical or real zeros only and $h$ is such that $h=1$ or $h$ has isolated zeros only.
\end{corollary}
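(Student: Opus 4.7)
The plan is to read the decomposition directly off the Weierstrass Factorization Theorem (Theorem \ref{thmWeier}), whose proof already separates real/spherical factors from isolated ones. Specifically, I would define $g(q)$ to be the product of precisely those Weierstrass factors of $f$ that correspond to real and spherical zeros: the prefactor $q^m$ from a possible zero at the origin, the factors $(1-qb_n^{-1})e_{p_n}(q,b_n^{-1})$ attached to each non-zero real zero $b_n$, and the real-coefficient factors attached to each spherical zero $[c_k]$ as displayed in Remark \ref{remarksfere}. Compact convergence of this product on $\mathbb{H}$ follows from Theorem \ref{teo8}. Since every factor of $g$ has real coefficients, $g \in \mathcal{N}(\mathbb{H})$, and the zero set of $g$ is exactly the union of the real and spherical zeros of $f$; in particular $g\equiv 1$ if $f$ possesses none.

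Next I would define $h := g^{-\star}\star f$. By Proposition \ref{teofNU}, $g^{-\star}=1/g$ is again in $\mathcal{N}(\mathbb{H})$ off the zero set of $g$, and the explicit factorization provided by Theorem \ref{thmWeier} (which further applies Lemma \ref{isolatedW} to the residual non-real isolated zeros) shows that $h$ extends as an entire slice regular function whose zeros are precisely the isolated non-real zeros of $f$; the never-vanishing factor $h^c$ coming from Lemma \ref{isolatedW} is absorbed into $h$. Hence either $h$ has only isolated zeros, or $h$ is nowhere vanishing (which, vacuously, still qualifies as ``has isolated zeros only''), degenerating to $h\equiv 1$ precisely when $f$ has no isolated zeros and the Weierstrass exponential tail is trivial.

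The last step, and the main technical point, is to replace the $\star$-product by the pointwise product. For any $p = x+I_p y$ with $g(p)\neq 0$, both $g(p)$ and $p$ lie in the commutative complex plane $\mathbb{C}_{I_p}$ (this is the defining property of the intrinsic class $\mathcal{N}$), so $g(p)^{-1}p\,g(p) = p$; Theorem \ref{pointwise} then yields
\[
(g\star h)(p) = g(p)\,h\bigl(g(p)^{-1}p\,g(p)\bigr) = g(p)\,h(p),
\]
and by the Identity Principle the equality $g\star h = g\cdot h$ extends to all of $\mathbb{H}$. Therefore $f = g\cdot h$ with $g$ and $h$ of the required type. The only subtle point is book-keeping of the nowhere-vanishing Weierstrass tail; assigning it to $h$ is harmless precisely because the statement permits $h$ to have only isolated zeros (possibly none).
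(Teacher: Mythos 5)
Your proposal is correct and follows essentially the same route as the paper: the corollary is read off the Weierstrass Factorization Theorem by grouping the real and spherical factors (which have real coefficients, hence form an intrinsic function $g$) and assigning the isolated-zero part together with the nowhere-vanishing tail to $h$, the $\star$-product with the intrinsic left factor collapsing to the pointwise product exactly as you argue via Theorem \ref{pointwise}. Your only inessential detour is invoking the Identity Principle at the end — the identity $(g\star h)(p)=g(p)h(p)$ already holds at every point, including the zeros of $g$ where both sides vanish by Theorem \ref{pointwise}.
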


In the Weierstrass factorization theorem we used the factors
$$
G(q\delta_n^{-1}, p_n)=(1-q\delta_n^{-1})\star e_{p_n}(q\delta^{-1})
$$
see \eqref{Gqp}. Note that the decomposition in factors obtained in this way is far from being unique, in fact the sequence $\{p_n\}$ is not unique. As we have already pointed out, sometimes it is possible to choose all the $p_n$ equal to a given integer, see Remark \ref{remark22}. It is then interesting to ask whether there exists the smallest of such numbers.  More precisely, assume that the series
\begin{equation}\label{serieslambda}
\sum_{n=1}^\infty |\delta_n^{-1}|^\lambda
\end{equation}
converges for some $\lambda\in\mathbb R^+$. Let $p$ be the smallest natural number such that
$$
\sum_{n=1}^\infty |\delta_n^{-1}|^{p+1}
$$
converges. Then, for $|q|\leq R$, $R>0$, also the series
$$
\sum_{n=1}^\infty |q\delta_n^{-1}|^p
$$
converges uniformly and so does the product
\begin{equation}\label{canonical}
\prod_{n=0}^{\star\infty}G(q\delta_n^{-1}, p).
\end{equation}
This discussion leads to the following definition:
\begin{definition}
We will say that \eqref{canonical} is a {\em canonical product} \index{canonical product} and that $p$ is the {\em genus}
\index{genus} of the canonical product.
\end{definition}
In the Weierstrass representation of a function it is convenient to choose the canonical product, instead of another representation. Let $q$ be the sum of the degrees of the exponents in the functions $e_{p}$, if it is finite. Then we have:
\begin{definition}
The genus of an entire function $f$ is defined as $\max (p,q)$ where $p,q$ are as above and are finite.
If $q$ is not finite or the series \eqref{serieslambda} diverges for all $\lambda$ we say that the genus of $f$ is infinite.
\end{definition}

\section{Blaschke products}
Another useful example of infinite product is the one obtained using the so-called Blaschke factors. In the quaternionic setting it is convenient to write a Blaschke factor distinguishing the case of a isolated zero or of a spherical zero. We start by describing the Blaschke factors related to isolated zeros.
\begin{definition}
Let $a\in\mathbb{H}$, $|a|<1$. The function
\begin{equation}
\label{eqBlaschke1}
B_a(q)=(1-q\bar a)^{-\star}\star(a-q)\frac{\bar a}{|a|}
\end{equation}
is called a Blaschke factor at $a$.\index{Blaschke factor!quaternionic, unit ball}
 \end{definition}
 \begin{remark}\label{Bpointwise}{\rm
 Using
 Theorem \ref{pointwise}, $B_a(q)$ can be rewritten  in terms of the pointwise multiplication.
In fact, by setting
 $\lambda(q)=1-q\bar a$ we can write
$$
(1-q\bar
a)^{-\star}=(\lambda^c(q)\star \lambda(q))^{-1}\lambda^c(q).
$$
Applying formula (\ref{productfg}) to the products $\lambda^c(q)\star\lambda(q)$ and $\lambda^c(q)\star (a-q)$,
the Blaschke factor (\ref{eqBlaschke1}) may be written as
\begin{equation}\label{lineartransf}
\begin{split}
B_a(q)&=(\lambda^c(q)\star \lambda(q))^{-1}\lambda^c(q)\star (a-q)\frac{\bar a}{|a|}\\
&=(\lambda^c(q)\lambda (\tilde q))^{-1}\lambda^c(q)(a-\tilde q)\frac{\bar a}{|a|}\\
&=\lambda (\tilde q)^{-1}(a-\tilde q)\frac{\bar a}{|a|}=(1-\tilde{q}\bar a)^{-1}(a-\tilde q)\frac{\bar a}{|a|},\\
\end{split}
\end{equation}
where $\tilde q=\lambda^c(q)^{-1} q \lambda^c(q)$.
 Summarizing, we have
$$
 B_a(q)= (1-\tilde{q}\bar a)^{-1}(a-\tilde q)\frac{\bar a}{|a|}
 $$
 where  $\tilde q=(1-q a)^{-1} q (1-q a)$.
 }
 \end{remark}
  The following result immediately follows from the definition:
 \begin{proposition}
 Let $a\in\mathbb{H}$, $|a|<1$. The Blaschke factor $B_a$ is a slice hyperholomorphic function in $\mathbb B$.
 \end{proposition}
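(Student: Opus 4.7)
The plan is to verify slice regularity of $B_a$ by decomposing it into two slice regular pieces and invoking the fact that the $\star$-product preserves slice regularity on axially symmetric s-domains (and $\mathbb{B}$ is such a domain). The two pieces are the polynomial factor $(a-q)\bar a/|a|$ and the $\star$-inverse $(1-q\bar a)^{-\star}$.

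First I would observe that the factor $(a-q)\tfrac{\bar a}{|a|}$ is a polynomial of degree one in $q$ with quaternionic coefficients on the right, hence entire slice regular. The only genuine task is therefore to show that $(1-q\bar a)^{-\star}$ is well defined and slice regular on all of $\mathbb{B}$. For this I would rely on Proposition \ref{invefs}: writing $\lambda(q)=1-q\bar a$, its symmetrization is
\[
\lambda^s(q)=\lambda(q)\star\lambda^c(q)=(1-q\bar a)\star(1-qa)=1-2q\,\mathrm{Re}(a)+q^{2}|a|^{2},
\]
and one may then set $(1-q\bar a)^{-\star}=(\lambda^s(q))^{-1}\lambda^c(q)$, which is slice regular on the complement of the zero set of $\lambda^s$.

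The key step is therefore to locate the zeros of $\lambda^s$. Since $\lambda^s$ is a polynomial with real coefficients belonging to $\mathcal{N}(\mathbb{H})$, its zero set is a union of real points and $2$-spheres (by Lemma \ref{zeriseriereale}/Theorem \ref{structurethm}). A direct computation (or equivalently, solving $q^2|a|^2-2q\,\mathrm{Re}(a)+1=0$ on any slice $\mathbb{C}_I$ and applying the Representation Formula) shows that the zero set is precisely the sphere $[1/\bar a]=\{x+Iy:x=\mathrm{Re}(a)/|a|^{2},\ y=|\mathrm{Im}(a)|/|a|^{2},\ I\in\mathbb{S}\}$, whose points all have modulus $1/|a|>1$ since $|a|<1$. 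Consequently $\lambda^s$ does not vanish on $\overline{\mathbb{B}}$, so $(1-q\bar a)^{-\star}\in\mathcal{R}(\mathbb{B})$.

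Finally, $B_a$ is the $\star$-product of two functions in $\mathcal{R}(\mathbb{B})$, and since this product preserves slice regularity on axially symmetric s-domains (via the extension construction in the definition of the $\star$-product), we conclude $B_a\in\mathcal{R}(\mathbb{B})$. The only subtlety—and the step where one has to be a bit careful—is the identification of the zero sphere of $\lambda^s$ and verifying that it lies outside $\overline{\mathbb{B}}$; the rest follows at once from general slice-regular formalism already established in the text.
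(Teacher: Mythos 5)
Your proof is correct and takes essentially the route the paper intends: the text gives no written proof (it asserts the result ``immediately follows from the definition''), and your argument --- slice regularity of the degree-one factor, the identity $(1-q\bar a)^{-\star}=(\lambda^{s})^{-1}\lambda^{c}$, and the observation that the zero set of $\lambda^{s}(q)=1-2q\,\mathrm{Re}(a)+q^{2}|a|^{2}$ is the sphere $[a^{-1}]$ whose points have modulus $1/|a|>1$ --- supplies exactly the details left implicit there and in the surrounding remark on the pointwise rewriting of $B_a$. No gaps.
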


\begin{theorem}\label{elrhfn}
Let $a\in\mathbb{H}$, $|a|<1$. The Blaschke factor $B_a$ has
the following properties:
\begin{enumerate}
\item[(1)]
it takes the unit ball $\mathbb{B}$ to itself;
\item[(2)] it takes the boundary of the unit ball to itself;
\item[(3)] it has a unique zero for $q=a$.
\end{enumerate}
\end{theorem}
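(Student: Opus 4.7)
The plan is to exploit the pointwise expression for $B_a$ supplied by Remark \ref{Bpointwise}, namely $B_a(q)=(1-\tilde q\bar a)^{-1}(a-\tilde q)\bar a/|a|$ with $\tilde q=(1-qa)^{-1}q(1-qa)$. Since $\tilde q$ is a conjugate of $q$ in $\mathbb H$, one has $|\tilde q|=|q|$; and since $|qa|\leq |q||a|<1$ on $\overline{\mathbb B}$, the factor $1-qa$ is invertible throughout the closed unit ball, so $\tilde q$ and the whole right-hand side are well defined. Taking moduli therefore yields
\[
|B_a(q)|=\frac{|a-\tilde q|}{|1-\tilde q\bar a|}.
\]

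The core of the argument is then a Blaschke-type identity. I would expand $|1-\tilde q\bar a|^2=(1-\tilde q\bar a)(1-a\bar{\tilde q})$ and $|a-\tilde q|^2=(a-\tilde q)(\bar a-\bar{\tilde q})$, using that $|a|^2\in\mathbb R$ can be pulled past quaternionic factors in the cross term $\tilde q\bar a a\bar{\tilde q}=|a|^2|\tilde q|^2$, and obtain the clean identity
\[
|1-\tilde q\bar a|^2-|a-\tilde q|^2=(1-|a|^2)(1-|\tilde q|^2)=(1-|a|^2)(1-|q|^2).
\]
Since $|a|<1$, this expression is strictly positive for $|q|<1$ and vanishes exactly on $|q|=1$, establishing properties (1) and (2) simultaneously.

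For (3), direct evaluation of the $\star$-product definition gives $B_a(a)=(1-a\bar a)^{-\star}\star 0=0$, so $q=a$ is a zero. For uniqueness I would invoke Theorem \ref{pointwise}: the left factor $(1-q\bar a)^{-\star}$ has no zeros, hence any zero of $B_a$ must force the pointwise form to vanish, i.e.\ $\tilde q=a$. Solving $(1-qa)^{-1}q(1-qa)=a$ gives $q(1-qa)=(1-qa)a$, that is $(1-qa)(q-a)=0$; as $\mathbb H$ is a division algebra and $1-qa\neq 0$, we conclude $q=a$.

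The only real obstacle is bookkeeping the noncommutativity: one must carefully use that $|a|^2\in\mathbb R$ to collapse $\tilde q\bar a\cdot a\bar{\tilde q}$ into $|a|^2|\tilde q|^2$ in the expansion of $|1-\tilde q\bar a|^2$, and verify that every inverse appearing in the pointwise reformulation is legitimate on $\overline{\mathbb B}$. Both follow from the single quantitative input $|a|<1$.
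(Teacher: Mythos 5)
Your argument for (1) and (2) is correct and is essentially the paper's proof: the paper likewise passes to the pointwise form $|B_a(q)|=|1-\tilde q\bar a|^{-1}|a-\tilde q|$ and reduces the inequality $|a-\tilde q|^2<|1-\tilde q\bar a|^2$ to $(|q|^2-1)(1-|a|^2)<0$, which is the same computation as your identity $|1-\tilde q\bar a|^2-|a-\tilde q|^2=(1-|a|^2)(1-|q|^2)$ (your version handles the boundary case in the same stroke, which is slightly tidier than the paper's separate check at $|q|=1$). The cancellation of the cross terms and the collapse $\tilde q\bar a\,a\bar{\tilde q}=|a|^2|\tilde q|^2$ are handled correctly.

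There is, however, one step in (3) that fails as written: from $q(1-qa)=(1-qa)a$ you cannot conclude $(1-qa)(q-a)=0$. Expanding, $q(1-qa)-(1-qa)a=q-a-q^2a+qa^2$, whereas $(1-qa)(q-a)=q-a-qaq+qa^2$; these differ in the term $q^2a$ versus $qaq$ and agree only when $a$ and $q$ commute, which is exactly what you cannot assume. The correct manipulation is $q-a-q^2a+qa^2=(q-a)-q(q-a)a$, so the equation $\tilde q=a$ becomes $(q-a)=q(q-a)a$. If $q\neq a$, taking moduli gives $1=|q|\,|a|$, which is impossible since $|q|<1$ and $|a|<1$ (indeed $\tilde q=a$ forces $|q|=|\tilde q|=|a|<1$). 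Hence $q=a$ is the only zero, and your conclusion stands once this repair is made. (The paper sidesteps this by only verifying $\tilde a=a$, i.e.\ that $a$ is a zero, and observing that the zero must come from the factor $a-\tilde q$; your uniqueness argument is more explicit, but it needs the corrected algebra above.)
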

\begin{proof}
Using Remark \ref{Bpointwise}, we rewrite $B_a(q)$ as
$B_a(q)=(1-\tilde q\bar a)^{-1}(a-\tilde q)\dfrac{\bar a}{|a|}$.
 Let us show
that $|q|=|\tilde q|<1$ implies  $|B_a(q)|^2<1$. The latter inequality is
equivalent to
$$
|a-\tilde  q|^2<|1-\tilde q\bar a|^2
$$
which is also equivalent to
\begin{equation}\label{dffiuygah}
|a|^2+|q|^2<1+|a|^2|q|^2.
\end{equation}
Then (\ref{dffiuygah}) becomes $(|q|^2-1)(1-|a|^2)<0$ and it holds when $|q|<1$.
When $|q|=1$ we set $q=e^{I \theta}$, so that $\tilde q=e^{I '\theta}$  and we have
$$
|B_a(e^{I \theta})|=|1-e^{I '\theta}\bar
a|^{-1}|a-e^{I '\theta}|\frac{|\bar a|}{|a|}=|e^{-I '\theta}-\bar
a|^{-1}|a-e^{I '\theta}|=1.
$$
Finally, from (\ref{lineartransf}) it follows that  $B_a(q)$ has only one zero that comes from the factor $a-\tilde q$. Moreover
$$B_a(a)=(1-\tilde{a}\bar a)^{-1}(a-\tilde a)\frac{\bar a}{|a|}$$
where $$\tilde a= (1-a^2)^{-1}a (1-a^2)=a$$ and thus $B_a(a)=0$.
\end{proof}

 As we have just proved, $B_a(q)$ has only one zero at $q=a$ and
 analogously to what happens in the case of the zeros of a function, the product of two Blaschke factors of the form $B_a(q)\star B_{\bar a}(q)$ gives the Blaschke factor with zeros at the sphere $[a]$. Thus we give the following definition:
\begin{definition}
Let $a\in\mathbb{H}$, $|a|<1$. The function
\begin{equation}
\label{blas_sph}
B_{[a]}(q)=(1-2{\rm Re}(a)q+q^2|a|^2)^{-1}(|a|^2-2{\rm Re}(a)q+q^2)
\end{equation}
is called  Blaschke factor at the sphere $[a]$.
 \end{definition}

\begin{theorem}\label{converge}
Let $\{a_j\}\subset \mathbb{B}$, $j=1,2,\ldots$ be a sequence
of nonzero quaternions and assume that  $\sum_{j\geq 1} (1-|a_j|)<
\infty$. Then the function
\begin{equation}\label{B_product}
B(q):=\prod^\star_{j\geq 1}(1-q\bar
a_j)^{-\star}\star(a_j-q)\frac{\bar a_j}{|a_j|},
\end{equation}
 converges
uniformly on the compact subsets of $\mathbb{B}$ and defines a slice hyperholomorphic function.
\end{theorem}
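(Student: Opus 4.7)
The plan is to apply Proposition \ref{prop2:23} with $f_n(q) = B_{a_n}(q)$. Since each Blaschke factor $B_{a_n}$ vanishes only at $a_n$ and is not identically zero, it suffices to verify that the series $\sum_{j\geq 1}|1 - B_{a_j}(q)|$ converges uniformly on every compact subset of $\mathbb{B}$. The conclusion about slice regularity of the limit will then be automatic from the cited proposition (which in turn relies on Theorem \ref{15.4}, Theorem \ref{the6}, and Proposition \ref{pro4.6}).

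To carry out the estimate, I would use the pointwise rewriting of the Blaschke factor from Remark \ref{Bpointwise}, namely
$$
B_{a}(q) = (1-\tilde q\,\bar a)^{-1}(a-\tilde q)\,\frac{\bar a}{|a|}, \qquad \tilde q = (1-qa)^{-1}q(1-qa),
$$
which has the pleasant property that $|\tilde q|=|q|$. Writing
$$
1 - B_a(q) = (1-\tilde q\,\bar a)^{-1}\Bigl[(1-\tilde q\,\bar a) - (a-\tilde q)\tfrac{\bar a}{|a|}\Bigr],
$$
the algebraic identity
$$
(1-\tilde q\,\bar a) - (a-\tilde q)\tfrac{\bar a}{|a|} = (1-|a|)\Bigl(1 + \tfrac{\tilde q\,\bar a}{|a|}\Bigr)
$$
isolates the crucial factor $(1-|a|)$. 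This is the algebraic step that makes the whole argument work; the rest is routine.

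Given a compact $K \subset \mathbb{B}$, I would fix $r<1$ with $K \subset \overline{B(0,r)}$, so that $|\tilde q|=|q|\leq r$ for every $q\in K$. Then $|1-\tilde q\,\bar a|\geq 1-r|a|\geq 1-r$ and $|1+\tilde q\,\bar a/|a||\leq 1+r$, which yields the uniform bound
$$
|1 - B_{a_j}(q)| \le \frac{1+r}{1-r}\,(1-|a_j|), \qquad q \in K.
$$
Summing over $j$ and using the hypothesis $\sum_{j\geq 1}(1-|a_j|)<\infty$ gives the required uniform convergence of $\sum_j|1-B_{a_j}(q)|$ on $K$. Proposition \ref{prop2:23} then shows that the infinite $\star$-product \eqref{B_product} converges compactly in $\mathbb{B}$ to a slice hyperholomorphic function.

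The only delicate point I anticipate is the bookkeeping around the non-commutative pointwise rewriting: one must apply Remark \ref{Bpointwise} uniformly in $j$ and make sure that the substitution $q\mapsto\tilde q_j=(1-qa_j)^{-1}q(1-qa_j)$ preserves $|q|$, so that a single radius $r$ controls all the estimates simultaneously. Once this is observed, the proof reduces to the algebraic identity above and a straightforward application of the already established convergence machinery.
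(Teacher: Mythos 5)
Your proof is correct and follows essentially the same route as the paper's: the same rewriting of $B_{a_j}$ via Remark \ref{Bpointwise}, the same algebraic identity extracting the factor $(1-|a_j|)$ (the paper writes it as $(|a_j|-1)(1+\tilde q\,\bar a_j/|a_j|)$, bounding the estimate by $2(1-|q|)^{-1}(1-|a_j|)$ instead of your marginally sharper $\frac{1+r}{1-r}(1-|a_j|)$), and the same appeal to Proposition \ref{prop2:23}. No gaps.
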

\begin{proof}
Let $b_j(q):=
B_{a_j}(q)-1$.
We rewrite $b_j(q)$ using
 Remark \ref{Bpointwise}  and we have:
\[
\begin{split}
b_j(q)=& B_{a_j}(q)-1\\
&=(1-\tilde q \bar a_j)^{-1}(a_j-\tilde q)\frac{\bar a_j}{|a_j|}-1\\
=& (1-\tilde q \bar a_j)^{-1}\left[(a_j-\tilde q)\frac{\bar a_j}{|a_j|}-(1-\tilde q \bar a_j)\right]\\
=& (1-\tilde q \bar a_j)^{-1}\left[(|a_j|-1)\left(1+\tilde q\frac{\bar a_j}{|a_j|}\right)\right].
\end{split}
\]
Thus,  recalling that $|\tilde q|=|q|$ and $|q|<1$, we have
$$
|b_j(q)|\leq 2 (1-|q|)^{-1}(1-|a_j|)
$$
and since $\sum_{j= 1}^\infty (1-|a_j|)<
\infty$ then $\sum_{j=1}^\infty |b_j(q)|=\sum_{j=1}^\infty |B_j(q)-1|$ converges uniformly on the compact subsets of $\mathbb B$. The statement follows from Proposition \ref{prop2:23}.
\end{proof}

To assign a Blaschke product having zeros at a given set of points $a_j$ with multiplicities $n_j$, $j\geq 1$ and at spheres $[c_i]$ with multiplicities $m_i$, $i\geq 1$, we may think to take the various factors a number of times corresponding to the multiplicity of the zero. However, we know that the polynomial $(p-a_j)^{\star n_j}$ is not the unique polynomial having a zero at $a_j$ with the given multiplicity $n_j$, thus the Blaschke product $\prod^{\star n_j}_{j=1} B_{a_j}(p)$ is not the unique Blaschke product having zero at $a_j$ with multiplicity $n_j$.\\
  Thus we have the following result which takes into account the multiplicity of the zeros in the most general way: \begin{theorem}\label{blaschke zeros}\index{Blaschke product!quaternionic, unit ball}
A Blaschke product having zeros at the set
 $$
 Z=\{(a_1,n_1),  \ldots, ([c_1],m_1), \ldots \}
 $$
 where $a_j\in \mathbb{B}$, $a_j$ have respective multiplicities $n_j\geq 1$, $a_j\not=0$ for $j=1,2,\ldots $, $[a_i]\not=[a_j]$ if $i\not=j$, $c_i\in \mathbb{B}$, the spheres $[c_j]$ have respective multiplicities $m_j\geq 1$,
 $j=1,2,\ldots$, $[c_i]\not=[c_j]$ if $i\not=j$
and
\begin{equation}\label{conditionconvergence}
\sum_{i,j\geq 1} \Big(n_i (1-|a_i|)+
2 m_j(1-|c_j|)\Big)<\infty
\end{equation}
is of the form
\begin{equation}
\label{blaschke123}
\prod_{i\geq 1} (B_{[c_i]}(q))^{m_i}\prod_{i\geq 1}^\star \prod_{\ j=1}^{\star  n_i} (B_{\alpha_{ij}}(q)),
\end{equation}
where $n_j\geq 1$, $\alpha_{11} = a_1$ and $\alpha_{ij}$ are suitable elements in $[a_i]$ for $j=2,3,\ldots$.
\end{theorem}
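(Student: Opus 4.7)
The plan is to treat separately the three building blocks appearing in \eqref{blaschke123}: the spherical Blaschke factors $B_{[c_i]}$ raised to the integer powers $m_i$, and the $\star$-product of isolated Blaschke factors $B_{\alpha_{ij}}$. For each piece we must verify (i) compact convergence on $\mathbb B$ and (ii) that the zero set (with correct multiplicities) is exactly the prescribed one; then we put the pieces together using the fact that the $B_{[c_i]}$, having real coefficients (see \eqref{blas_sph}), commute with everything, so that they can be harmlessly gathered on the left.

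First I would handle convergence. For the isolated part, observe that $|\alpha_{ij}|=|a_i|$ for every $j$, so the condition $\sum_{i}n_i(1-|a_i|)<\infty$ from \eqref{conditionconvergence} gives $\sum_{i,j}(1-|\alpha_{ij}|)<\infty$, and Theorem \ref{converge} applies verbatim to the double-indexed product $\prod_{i}^{\star}\prod_{j=1}^{\star n_i} B_{\alpha_{ij}}$. For the spherical part, I would write
\[
B_{[c_i]}(q)-1=(1-2{\rm Re}(c_i)q+q^2|c_i|^2)^{-1}\bigl((|c_i|^2-1)-2{\rm Re}(c_i)q(1-|c_i|^2)/(\cdots)\bigr)
\]
(a direct computation) and estimate $|B_{[c_i]}(q)-1|\leq C_K(1-|c_i|)$ uniformly on compact subsets $K\subset\mathbb B$, where the bound reflects both the factor $c_i$ and its conjugate; the factor $2$ in the convergence hypothesis \eqref{conditionconvergence} accounts precisely for these two contributions. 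Summing over $i$ with multiplicity $m_i$ and invoking Proposition \ref{prop2:23}, the spherical product converges compactly to a slice regular function on $\mathbb B$.

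Next I would pin down the zero set. For the spherical factors, \eqref{blas_sph} shows that $B_{[c_i]}$ vanishes exactly on the sphere $[c_i]$ with spherical multiplicity $1$, and since its coefficients are real the power $B_{[c_i]}^{m_i}$ yields the sphere $[c_i]$ with spherical multiplicity $m_i$ and nothing else. For the isolated part, Theorem \ref{elrhfn}(3) gives that $B_{\alpha_{ij}}$ has a unique zero at $\alpha_{ij}$, and then Theorem \ref{pointwise} together with Proposition \ref{zericoniugata} tells us exactly how the zeros of a $\star$-product distribute: every zero of $f\star g$ in a sphere $[q_0]$ is produced either by a zero of $f$ or, after conjugation by a suitable element, by a zero of $g$. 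Starting from $\alpha_{11}=a_1$, I would choose $\alpha_{1j}\in[a_1]$ inductively so that the partial product $B_{\alpha_{11}}\star\cdots\star B_{\alpha_{1j}}$ still vanishes at $a_1$ (this is always possible, and it mirrors the polynomial factorization in Theorem \ref{dueuno} and the function-theoretic version in Theorem \ref{R-fattorizzazslice}). Iterating gives isolated multiplicity $n_1$ at $a_1$, and then repeating with $a_2, a_3,\ldots$ completes the construction.

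The main obstacle will be the inductive choice of the $\alpha_{ij}$'s. The non-commutativity means that $B_{\alpha}\star B_{\beta}$ does not simply vanish at $\alpha$ and $\beta$: Theorem \ref{pointwise} shifts the second zero to a conjugate point, and one must also ensure that the constraint $\bar\alpha_{ij+1}\ne\alpha_{ij}$ (needed so that the isolated multiplicity, in the sense of the definition in Section 2.9, really equals $n_i$ rather than collapsing into a spherical zero) is met at each step. I expect this to require solving, at each stage, the equation that forces the partial product to vanish at the prescribed point of $[a_i]$, an equation whose solvability follows from the fact that $\prod_{k<j}B_{\alpha_{ik}}$ does not vanish at $a_i$ nor at any conjugate of $a_i$ when built correctly. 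Once this inductive step is established, the structure formula \eqref{blaschke123} follows, the convergence is guaranteed by \eqref{conditionconvergence}, and uniqueness of the zeros (with multiplicities) is a direct consequence of Theorem \ref{R-fattorizzazslice}.
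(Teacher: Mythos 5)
Your proposal follows essentially the same route as the paper's proof: convergence of both the spherical and the isolated parts from hypothesis \eqref{conditionconvergence} together with Theorem \ref{converge}, the spherical factors (having real coefficients) contributing exactly the spheres $[c_i]$ with multiplicities $m_i$, and the isolated zeros placed by solving, via Theorem \ref{pointwise}, the conjugation equation $\alpha_{r1}=B_{r-1}(a_r)^{-1}a_rB_{r-1}(a_r)$, where $B_{r-1}$ denotes the partial $\star$-product of the preceding blocks. The only point to tighten is inside a single block: the partial product automatically vanishes at $a_1$ because its first factor does, so the issue there is not ``keeping'' the zero at $a_1$ but rather observing that the remaining zeros of the block lie in $[a_1]$ and must coincide with $a_1$ (otherwise the whole sphere $[a_1]$ would become a sphere of zeros), which is exactly how the paper concludes that the isolated multiplicity equals $n_1$.
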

\begin{proof}
 The hypothesis (\ref{conditionconvergence}) and Theorem \ref{converge} guarantee that the infinite product converges. The zeros of the pointwise product $\prod_{i\geq 1} (B_{[c_i]}(q))^{m_i}$ correspond to the given spheres with their multiplicities. Consider now the product:
\[
\prod_{i=1}^{\star  n_1} (B_{\alpha_{i1}}(q))=B_{\alpha_{11}}(q)\star B_{\alpha_{12}}(q) \star \cdots \star B_{\alpha_{1 n_1}}(q).
\]
 From the definition of multiplicity, this product admits a zero at the point $\alpha_{11}=a_1$. This zero has multiplicity $1$ if $n_1=1$; if $n_1\geq 2$, the other zeros are $\tilde\alpha_{12}, \ldots, \tilde\alpha_{1 n_1}$ where $\tilde\alpha_{1j}$ belong to the sphere $[\alpha_{1j}]=[a_1]$. Since there cannot be other zeros on $[a_1]$ different from $a_1$ (otherwise the whole sphere $[a_1]$ would be a sphere of zeros), we conclude that $a_1$ has multiplicity $n_1$.
This fact can be seen directly using formula  (\ref{productfg}).
Let us now consider $r\geq 2$ and
\begin{equation}\label{rfactor}
\prod_{j=1}^{\star  n_r} (B_{\alpha_{rj}}(q))=B_{\alpha_{r1}}(q) \star \cdots \star B_{\alpha_{r n_r}}(q),
\end{equation}
and set
\[
B_{r-1}(q):= \prod_{i\geq 1}^{\star (r-1)} \prod_{j=1}^{\star  n_i} (B_{\alpha_{ij}}(q)).
\]
Then
\[
B_{r-1}(q)\star B_{\alpha_{r1}}(q)= B_{r-1}(q) B_{\alpha_{r1}}(B_{r-1}(q)^{-1}qB_{r-1}(q))
\]
has a zero  at
 $a_r$ if and only if $$B_{\alpha_{r1}}(B_{r-1}(a_r)^{-1}a_rB_{r-1}(a_r))=0,$$ i.e. if and only if $$\alpha_{r1}=B_{r-1}(a_r)^{-1}a_rB_{r-1}(a_r).$$ If $n_r=1$ then $a_r$ is a zero of multiplicity $1$ while if $n_r\geq 2$, all the other zeros of the product (\ref{rfactor}) belongs to the sphere $[a_r]$ thus the zero $a_r$ has multiplicity $n_r$.
 This finishes the proof.
\end{proof}

\vskip 1 truecm
\noindent{\bf Comments to Chapter 4}. Infinite products of quaternions can be treated by adapting the arguments from the complex setting, see for example \cite{rudin}, to the quaternionic setting. Some results on the quaternionic logarithm come from \cite{MR2836832}. The Weierstrass theorem was originally proved in \cite{MR2836832}. The version of the theorem proved in this chapter is equivalent but obtained in a slightly different way. The Blaschke products (in the ball case) has been treated in several articles but appeared for the first time in \cite{MR3127378}. Blaschke products in the half space have been introduced in \cite{acls_milan}.

%
%
%


%
%
%


\chapter{Growth of entire slice regular functions}

\section{Growth scale}
The simplest example of entire slice regular functions is given by polynomials (with coefficients on one side).
In the classical complex case the growth of a polynomial is related to its degree and thus to the number of its zeros.
In the quaternionic case this fact is true up to a suitable notion of ''number of zeros''. In fact, as we have seen through the book, there can be spheres of zeros, and thus an infinite number of zeros, even when we consider polynomials.
However, if we count the number of spheres of zeros and the number of isolated zeros, we still have a relation with the degree of the polynomial.
In fact, each sphere is characterized by a degree two polynomial (see \eqref{companion}), and thus each sphere with multiplicity $m$ counts as a degree $2m$ factor, while each isolated zero of multiplicity $r$ counts as a degree $r$ factor.
\\
In this section we introduce the notion of order and type of an entire slice regular function, discussing its growth in relation with the coefficients of its power series expansion and with the density of its zeros.
\\
Let $f$ be an entire slice regular function and let $$M_{f_I}(r)=\max_{|z|=r,\, z\in\mathbb C_I} |f(z)|,$$
and
$$M_f(r)=\max_{|q|=r} |f(q)|.$$
As in the complex case, we have
\begin{proposition} Let $f$ be an entire slice regular function.
Then function $M_f(r)$ is continuous.
\end{proposition}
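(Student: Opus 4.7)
The plan is to reduce the statement to a standard fact: the maximum of a jointly continuous function over a compact parameter space depends continuously on the remaining parameter. Since an entire slice regular function $f$ is continuous on all of $\mathbb{H}$ (indeed $C^\infty$, by Corollary \ref{hartogs} applied on any ball), and since the sphere $\{q\in\mathbb H:|q|=r\}$ is compact, the maximum $M_f(r)$ is attained at some point $q_r$, and we want to control $|M_f(r)-M_f(r_0)|$ for $r$ close to $r_0$.

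First I would parametrize the sphere of radius $r$ by rescaling the unit sphere $\Sigma=\{u\in\mathbb H:|u|=1\}$, writing every $q$ with $|q|=r$ as $q=ru$ with $u\in\Sigma$. Define $G:[0,\infty)\times\Sigma\to[0,\infty)$ by $G(r,u)=|f(ru)|$; this is continuous by continuity of $f$ and of the modulus, and $\Sigma$ is compact. Then
\[
M_f(r)=\max_{u\in\Sigma}G(r,u).
\]

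Next I would prove the continuity of $r\mapsto\max_{u\in\Sigma}G(r,u)$ by a direct $\varepsilon$--$\delta$ argument. Fix $r_0$ and $R>r_0+1$. Since $f$ is continuous on the compact set $\overline{B(0;R)}$, it is uniformly continuous there; so for every $\varepsilon>0$ there is $\delta\in(0,1)$ such that $|q-q'|<\delta R$ implies $|f(q)-f(q')|<\varepsilon$ whenever $q,q'\in\overline{B(0;R)}$. For $|r-r_0|<\delta$ and any $u\in\Sigma$ one has $|ru-r_0u|=|r-r_0|<\delta<\delta R$, hence $|G(r,u)-G(r_0,u)|<\varepsilon$. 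Taking the supremum over $u\in\Sigma$ on both sides of $G(r,u)\le G(r_0,u)+\varepsilon$ and $G(r_0,u)\le G(r,u)+\varepsilon$ yields $|M_f(r)-M_f(r_0)|\le\varepsilon$, which gives continuity at $r_0$.

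I expect no real obstacle here; the only subtlety is ensuring that the estimates are uniform in $u\in\Sigma$, which is handled by the uniform continuity of $f$ on a suitable compact neighborhood of the sphere $\{|q|=r_0\}$. The argument makes no use of slice regularity beyond the continuity of $f$, which is why the proposition holds in exactly the same form as in the classical complex case.
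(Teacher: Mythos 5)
Your proof is correct. The paper gives no argument for this proposition at all --- it simply asserts it with the phrase ``as in the complex case'' --- and your uniform-continuity argument (rescale to the compact unit sphere, use uniform continuity of $f$ on $\overline{B(0;R)}$ with $R>r_0+1$, and pass the $\varepsilon$-estimate through the supremum in both directions) is precisely the standard proof being alluded to; as you note, it uses nothing about $f$ beyond its continuity on $\mathbb H$.
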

Moreover, in the case of intrinsic functions, we have
\begin{proposition} \label{Mfintr} Let $f$ be an entire slice regular function which is quaternionic intrinsic. Then
$$
M_f(r)=M_{f_I}(r),
$$
for all $I\in\mathbb S$.
\end{proposition}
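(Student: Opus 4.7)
The plan is to exploit the explicit form of quaternionic intrinsic functions provided earlier in the excerpt: by the characterization of $\mathcal N(U)$ on axially symmetric open sets, for each $I\in\mathbb S$ we can write
\[
f(x+Iy)=\alpha(x,y)+I\beta(x,y),
\]
where $\alpha,\beta$ are \emph{real-valued} functions, independent of $I$, satisfying the Cauchy--Riemann system together with the symmetry $\alpha(x,-y)=\alpha(x,y)$, $\beta(x,-y)=-\beta(x,y)$. Since $\mathbb H$ is axially symmetric and $f$ is entire, this representation holds globally on $\mathbb H$.

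From here, the key computation is that, because $\alpha(x,y),\beta(x,y)\in\mathbb R$ and $I^2=-1$, one has
\[
|f(x+Iy)|^{2}=\bigl(\alpha(x,y)+I\beta(x,y)\bigr)\overline{\bigl(\alpha(x,y)+I\beta(x,y)\bigr)}=\alpha(x,y)^{2}+\beta(x,y)^{2},
\]
which depends only on the real pair $(x,y)$ and not on the chosen imaginary unit $I\in\mathbb S$. In other words, $|f|$ is constant along every two-sphere $[x+Iy]$.

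Next I would use this invariance to match the two maxima. Any $q\in\mathbb H$ with $|q|=r$ can be written uniquely (up to sign of $y$) as $q=x+Jy$ with $x^{2}+y^{2}=r^{2}$ and $J\in\mathbb S$; by the invariance just established, $|f(x+Jy)|=|f(x+Iy)|$ for the fixed $I$ appearing in the statement. Therefore
\[
M_f(r)=\max_{|q|=r}|f(q)|=\max_{x^{2}+y^{2}=r^{2}}|f(x+Iy)|=\max_{z\in\mathbb C_I,\,|z|=r}|f(z)|=M_{f_I}(r).
\]
No serious obstacle is expected: the whole argument rests on the observation that $|f|$ factors through $(x,y)$ when $f$ is intrinsic, which is an immediate consequence of the real-valuedness of $\alpha$ and $\beta$. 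The only point to be a little careful about is writing every $q$ with $|q|=r$ in the form $x+Jy$ with $J\in\mathbb S$ (trivially true for $q\notin\mathbb R$, and for real $q$ one simply takes $y=0$ and any $J$, since $f(x)\in\mathbb R$ and the value is the same on every plane).
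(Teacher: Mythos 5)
Your proof is correct and follows essentially the same route as the paper: both write $f(x+Iy)=\alpha(x,y)+I\beta(x,y)$ with $\alpha,\beta$ real-valued and observe that $|f(x+Iy)|^2=\alpha(x,y)^2+\beta(x,y)^2$ is independent of $I$, so the supremum over the sphere $|q|=r$ reduces to the supremum over a single slice. Your extra remark about real points $q$ is harmless and the argument is complete.
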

\begin{proof}
Let us write $f(x+Iy)=\alpha(x,y)+I\beta(x,y)$ where $\alpha$, $\beta$ are real-valued functions, as $f$ is intrinsic. We have, for $q=x+Iy$,
\[
\begin{split}
M_f(r)&=\sup_{|q|=r}|f(x+Iy)|\\
&=\sup_{I\in\mathbb S}\sup_{ x^2+y^2=r^2} |f(x+Iy)|\\
&=\sup_{I\in\mathbb S}\sup_{ x^2+y^2=r^2} (\alpha(x,y)^2+\beta(x,y)^2)^{1/2}\\
&=\sup_{ x^2+y^2=r^2} (\alpha(x,y)^2+\beta(x,y)^2)^{1/2}\\
&=\sup_{ x^2+y^2=r^2} |f_I(x+Iy)|= M_{f_I}(r)\\
\end{split}\]
and the statement follows.
\end{proof}
\begin{definition}
Let $f$ be a quaternionic entire function.
Then $f$ is said to be of finite order if there exists $k>0$ such that
$$
M_f(r)<e^{r^k},
$$
for sufficiently large values of $r$ ($r>r_0(k)$). The greatest lower bound $\rho$ of such numbers $k$ is called the {\em order} \index{order} of $f$.
\end{definition}
From the definition of order, it immediately follow the inequalities below which will be useful in the sequel:
\begin{equation}\label{ineqMfr}
e^{r^{\rho-\varepsilon}} < M_f(r) < e^{r^{\rho+\varepsilon}}.
\end{equation}
\\
Let us recall the notations
$$
{\underset{r\to\infty}{\underline{\lim}}} \phi(r)=\lim_{r\to\infty}\inf_{t\geq r}\phi (t)
$$
and
$$
{\underset{r\to\infty}{\overline{\lim}}} \phi(r) =\lim_{r\to\infty}\sup_{t\geq r}\phi (t).
$$
Then, the inequalities \eqref{ineqMfr} are equivalent to
$$
\rho={\underset{r\to\infty}{\overline{\lim}}}\frac{\log(\log M_f(r))}{\log r}.
$$
This latter condition can be considered as an equivalent definition of order of an entire function $f$.\\
We now show that an entire slice regular function which grows slower than a positive power of $r$ is a polynomial:
\begin{proposition}
If there exists $N\in\mathbb N$ such that
$$
{\underset{r\to\infty}{\underline{\lim}}} \frac{M_f(r)}{r^N}<\infty
$$
then $f(q)$ is a polynomial of degree at most $N$.
\end{proposition}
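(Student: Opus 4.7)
The plan is to use the Cauchy estimates of Proposition \ref{Cauchyestimates} applied to the power series expansion of $f$ at the origin. Since $f$ is entire, we may write
\[
f(q)=\sum_{n=0}^{\infty} q^n a_n, \qquad a_n=\frac{1}{n!}\,f^{(n)}(0),
\]
with the series converging uniformly on compact subsets of $\mathbb H$. Fix any $I\in\mathbb S$; Proposition \ref{Cauchyestimates} applied at $q=0$ on the disc $B_I(0,R)$ yields
\[
|a_n|=\frac{|f^{(n)}(0)|}{n!}\leq \frac{1}{R^n}\max_{s\in\partial B_I(0,R)}|f(s)|\leq \frac{M_f(R)}{R^n},
\]
where the last inequality follows from the trivial bound $M_{f_I}(R)\leq M_f(R)$.

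Next I would exploit the hypothesis $\underline{\lim}_{r\to\infty} M_f(r)/r^N<\infty$. By definition of lower limit, there exist a constant $C>0$ and a sequence of radii $R_k\to\infty$ such that $M_f(R_k)\leq C R_k^N$ for every $k$. Substituting into the Cauchy estimate above gives, for each fixed $n>N$,
\[
|a_n|\leq \frac{M_f(R_k)}{R_k^n}\leq \frac{C}{R_k^{\,n-N}}.
\]

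Letting $k\to\infty$ the right-hand side tends to $0$, hence $a_n=0$ for every $n>N$. Therefore
\[
f(q)=\sum_{n=0}^{N} q^n a_n,
\]
which is a slice regular polynomial of degree at most $N$, as claimed. There is no real obstacle here: the only subtlety is the passage from the Cauchy estimates on a slice (which bound $|f^{(n)}(0)|$ in terms of $M_{f_I}$) to a bound in terms of the global maximum $M_f$, but this is immediate since the maximum on the whole sphere of radius $R$ dominates the maximum on any disc $B_I(0,R)$.
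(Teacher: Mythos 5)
Your proof is correct, and it takes a genuinely different route from the one in the text. You bound the Taylor coefficients directly: the Cauchy estimates on a slice give $|a_n|\leq M_f(R)/R^n$, and evaluating along a sequence of radii $R_k\to\infty$ on which $M_f(R_k)\leq CR_k^N$ (which is exactly what the $\underline{\lim}$ hypothesis provides) forces $a_n=0$ for all $n>N$. The paper instead forms the tail quotient $g_N(q)=q^{-N-1}\bigl(f(q)-p_N(q)\bigr)$, where $p_N$ is the $N$-th partial sum, observes that the hypothesis makes $g_N$ tend to $0$ along a sequence of spheres $|q|=r_j$, and then kills $g_N$ by restricting to a complex plane and invoking the Identity Principle. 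Your argument buys a more self-contained and quantitative proof: it needs only the Cauchy estimates (Proposition on Cauchy estimates, applied at $q=0$) plus the trivial bound $M_{f_I}(R)\leq M_f(R)$, and it avoids the slightly delicate step of converting ``tends to $0$ on a sequence of spheres'' into actual vanishing. The paper's argument, on the other hand, is the one that generalizes more readily to settings where one controls the function rather than its coefficients. Both are legitimate; yours is arguably the cleaner of the two here.
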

\begin{proof}
Let $f(q)=\sum_{n=0}^\infty q^n a_n$ and set
$$
p_N(q)=\sum_{n=0}^N q^n a_n, \qquad g_N(q)=q^{-N-1}(f(q)-p_N(q)).
$$
Then, by our hypothesis, $g_N(q)=\sum_{j=0}^\infty q^j a_{N+1+j}$ is an entire regular function which tends to $0$ on a sequence of balls $|q|=r_j$ when $r_j\to\infty$. Thus $g_N$ vanishes on a sequence of balls intersected with a complex plane $\mathbb C_I$ and so, by the Identity Principle, $g_N\equiv 0$. We conclude that $f(q)$ coincides with $p_N(q)$.
\end{proof}
From this proposition, it follows that the growth of an entire regular function is larger than any power of the radius $r$. In the next definition we compare the growth of a function with functions of the form $e^{r^k}$:
\begin{definition}
Let $f$ be an entire regular function of order $\rho$ and let $A>0$ be such that for sufficiently large values of $r$
$$
M_f(r)<e^{Ar^\rho}.
$$
We say that $f$ of order $\rho$ is of type $\sigma$ if $\sigma$ is the greatest lower bound of such numbers $A$.\\
When $\sigma=0$  we say that $f$ is of minimal \index{type} type.\\
When $\sigma = \infty$ we say that $f$ is of maximal type. \\
When $0<\sigma<\infty$ we say that $f$ is of normal type. \index{type!normal} \index{type!maximal}
\index{type!minimal} \index{normal type} \index{maximal type} \index{minimal type}
\end{definition}
As in the case of the order, one can verify that the type of a function $f$ of order $\rho$ is given by
$$
\sigma={\underset{r\to\infty}{\overline{\lim}}}\frac{\log(M_f(r))}{r^\rho}.
$$
\begin{example}{\rm The function $\exp(q^n \sigma)$, where $n\in\mathbb N$, has type $\sigma$ and order $n$. }
\end{example}
\begin{definition}
We will say that the function $f(q)$ is of growth larger than $g(q)$ if the order of $f$ is larger than the order of $g$ or, if $f$ and $g$ have the same order and the type of $f$ is larger than the type of $g$.
\end{definition}
\begin{remark} From the definition of order and type, it follows that the order of the sum of two functions is not greater than the largest of the order of the summands. If one summand has order larger than the order of the other summand, then the sum has same order and type of the function of larger growth. If two functions have the same order and this is also the order of their sum, then the type of the sum is not greater than the largest of the type of the summands.
\end{remark}
We now relate the order and type of a function with the decrease of its Taylor coefficients. Recall that if $f(q)=\sum_{n=0}^\infty q^na_n$ is an entire slice regular function, then
$$
{\underset{n\to\infty}{\lim}} \sqrt[n]{|a_n|}=0.
$$
\begin{theorem}\label{thmorder}  The order and the type of the entire regular function $f(q)=\sum_{n=0}^{\infty} q^na_n$ can be expressed by
\begin{equation}
\rho={\underset{n\to\infty}{\overline{\lim}}} \frac{n\log(n)}{\log (\frac{1}{|a_n|})}
\end{equation}
\begin{equation}
(\sigma e\rho)^{1/\rho}= {\underset{n\to\infty}{\overline{\lim}}}\left(n^{\frac{1}{\rho}}\, \sqrt[n]{|a_n|}\right).
\end{equation}
\end{theorem}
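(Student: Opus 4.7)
The plan is to mirror the classical complex-analytic template (cf.\ \cite{boas,levin}) but with the slice-regular Cauchy estimates from Proposition \ref{Cauchyestimates} and the power-series bound playing the role their holomorphic counterparts play in Hadamard's theorem. The argument will rest on two complementary inequalities between the Taylor coefficients $a_n$ and the maximum modulus $M_f(r)$: on the one hand, Proposition \ref{Cauchyestimates} applied on any slice $\mathbb{C}_I$ gives
$$
|a_n| = \frac{|f^{(n)}(0)|}{n!} \le \frac{M_{f_I}(r)}{r^n} \le \frac{M_f(r)}{r^n},\qquad r>0,
$$
while the absolute convergence of the power-series expansion (\ref{powerseries}) immediately yields
$$
M_f(r) \le \sum_{n=0}^\infty |a_n|\, r^n.
$$
These are the same two inequalities that drive the argument in one complex variable, so the remainder of the proof is purely real-variable optimization in $r$ and $n$.

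For the order formula, set $\rho^{\ast}:=\overline{\lim}_{n\to\infty} \frac{n\log n}{\log(1/|a_n|)}$. Given $\varepsilon>0$, from $M_f(r)<e^{r^{\rho+\varepsilon}}$ for $r$ large and the Cauchy estimate, I would minimise $e^{r^{\rho+\varepsilon}}r^{-n}$ in $r$ by setting $r^{\rho+\varepsilon}=n/(\rho+\varepsilon)$, obtaining
$$
|a_n|\le \left(\frac{e(\rho+\varepsilon)}{n}\right)^{\!n/(\rho+\varepsilon)}
$$
for all sufficiently large $n$; this yields $\rho^{\ast}\le \rho+\varepsilon$. For the reverse direction, from $|a_n|\le n^{-n/(\rho^{\ast}+\varepsilon)}$ for $n$ large, I would split the bound $M_f(r)\le \sum_n |a_n| r^n$ at the index $n_0\sim (2r)^{\rho^{\ast}+\varepsilon}$; the terms with $n\ge n_0$ form a convergent geometric tail, while the head is bounded by $n_0\cdot (2r)^{n_0} n_0^{-n_0/(\rho^{\ast}+\varepsilon)} = \exp(O(r^{\rho^{\ast}+\varepsilon}))$. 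Hence $\rho\le\rho^{\ast}+\varepsilon$, completing the first half since $\varepsilon$ is arbitrary.

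For the type formula, I would fix the order $\rho$ known from the first half and repeat the argument on the finer scale $M_f(r)<e^{(\sigma+\varepsilon)r^\rho}$. Minimising $e^{(\sigma+\varepsilon)r^\rho}r^{-n}$ in $r$ via $r^\rho=n/(\rho(\sigma+\varepsilon))$ gives
$$
n^{1/\rho}\sqrt[n]{|a_n|}\le (e\rho(\sigma+\varepsilon))^{1/\rho},
$$
so $\overline{\lim}(n^{1/\rho}\sqrt[n]{|a_n|})\le (e\rho\sigma)^{1/\rho}$. Conversely, denoting by $e\rho\sigma^{\ast}$ the right-hand candidate raised to the $\rho$-th power, the bound $|a_n|\le (e\rho(\sigma^{\ast}+\varepsilon)/n)^{n/\rho}$ (valid for $n$ large) inserted into $M_f(r)\le \sum_n|a_n|r^n$, with the sum split around its maximal term $n_{\ast}\sim \rho(\sigma^{\ast}+\varepsilon)r^\rho$ and a standard estimate of the Gaussian-like tails, produces $M_f(r)\le e^{(\sigma^{\ast}+2\varepsilon)r^\rho}$, i.e.\ $\sigma\le\sigma^{\ast}$.

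The principal obstacle I anticipate is conceptual rather than technical: in one complex variable the sharp form of $|a_n|\le M_f(r)/r^n$ comes from an explicit Cauchy integral along $|z|=r$, whereas in $\mathbb{H}$ the level set $|q|=r$ is a three-sphere on which there is no single contour. However, Proposition \ref{Cauchyestimates} already supplies the required coefficient bound through a slice-wise contour integral, and combined with the trivial majorization $M_f(r)\le \sum |a_n|r^n$ (which does not require any contour at all) the two-sided comparison needed for the Hadamard-type optimization is in hand. No genuinely new slice-regular ingredient is therefore demanded, and the delicate part is merely keeping track of the $\varepsilon$'s in the split-sum estimate of the converse bound for $\sigma$.
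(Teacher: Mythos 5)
Your proposal is correct and follows essentially the same route as the paper's proof: the Cauchy estimates of Proposition \ref{Cauchyestimates} give $|a_n|\le M_f(r)/r^n$, optimization in $r$ yields the bound $|a_n|\le (eAk/n)^{n/k}$, and the converse direction comes from $M_f(r)\le\sum|a_n|r^n$ split around the maximal term (the paper packages this as $\mu(r)=\max_j r^j|a_j|$ and a tail bounded by $2^{-N_r}$, exactly your geometric-tail-plus-head estimate). The only quibble is that your head bound as written, $n_0(2r)^{n_0}n_0^{-n_0/(\rho^\ast+\varepsilon)}$, evaluates the summand at the cut-off index rather than at its interior maximum $n\sim r^{\rho^\ast+\varepsilon}/e$, but the stated conclusion $\exp(O(r^{\rho^\ast+\varepsilon}))$ is the correct one and the argument goes through.
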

\begin{proof}
The proof closely follows the proof of the corresponding results in complex analysis since it depends only on the modulus of the coefficients $a_n$, see e.g. \cite{levin}. We insert it for the sake of completeness.
\\
First of all we observe that if $f(q)$ is of finite order then, asymptotically (namely for sufficiently large $r$), we have
\begin{equation}\label{L1.08}
M_f(r)< \exp(Ar^k).
\end{equation}
The Cauchy estimates, see \eqref{Cauchyestimates}, give
$$
|a_n|\leq \frac{M_f(r)}{r^n},
$$
and so
$$
|a_n|\leq \frac{\exp(Ar^k)}{r^n}.
$$
An immediate computation gives that the maximum of the function $\dfrac{\exp(Ar^k)}{r^n}$ is given by
$(eAk/n)^{\frac nk}$ so we conclude that, asymptotically:
\begin{equation}\label{L1.09}
|a_n|\leq \left(\frac{eAk}{n}\right)^{\frac nk}.
\end{equation}
Assume now that \eqref{L1.09} is valid for $n$ large enough. Then
$$
|q^n a_n|< r^n \left(\frac{eAk}{n}\right)^{\frac nk}
$$
and if we take $n> \lfloor 2^k eAk r^k\rfloor=N_r$ (where $\lfloor x \rfloor$ denotes the integer part of $x$), we obtain $|q^n a_n|< 2^{-n}$ which yields
$$
|f(q)|< \sum_{j=0}^{N_r} r^j |a_j| +2^{-N_r}.
$$
Let $\mu(r)=\max_j r^j |a_j|$, then
\begin{equation}\label{L1.10}
M_f(r)\leq (1+2^k eAk r^k)\mu(r)+2^{-N_r}.
\end{equation}
As $f$ is not a polynomial, then $M_f(r)$ grows faster than any power of $r$. From \eqref{L1.09} it follows that,
asymptotically:
$$
\mu(r)\leq r^n \max_n \left(\frac{eAk}{n}\right)^{\frac nk}=e^{Ar^k},
$$
since the maximum is attained for $n=Akr^k$.
Using \eqref{L1.10} we have
\begin{equation}\label{L1.11}
M_f(r)< (2+2^k aAk^r)e^{Ar^k}.
\end{equation}
So we have that if $f$ is of finite order, then \eqref{L1.08} holds, but this implies \eqref{L1.09} which, in turns, implies \eqref{L1.11}. Thus the order $\rho$ equals the greatest lower bound of the numbers $k$ for which \eqref{L1.09} holds, while the type equals the greatest lower bound of numbers $A$ for which \eqref{L1.09} is valid for $k=\rho$.
\end{proof}
Since the conjugate $f^c$ of a slice regular functions expanded in power series has coefficients with the same modulus of the coefficients of $f$, we have:
\begin{corollary}\label{orderfc}
Let $f$ be an entire slice regular function. Its order and type coincide with order and type of its conjugate $f^c$.
\end{corollary}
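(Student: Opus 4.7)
My plan is to invoke Theorem \ref{thmorder} directly, noting that the formulas
\[
\rho = {\underset{n\to\infty}{\overline{\lim}}} \frac{n\log n}{\log(1/|a_n|)}, \qquad (\sigma e\rho)^{1/\rho} = {\underset{n\to\infty}{\overline{\lim}}}\bigl(n^{1/\rho}\sqrt[n]{|a_n|}\bigr)
\]
depend on the coefficients of the power series expansion only through their absolute values. So the entire content of the corollary reduces to verifying that if $f(q)=\sum_{n\ge 0}q^n a_n$ is the power series expansion of $f$, then the power series expansion of $f^c$ has the form $\sum_{n\ge 0}q^n b_n$ with $|b_n|=|a_n|$ for every $n$.

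To check this, I would fix an imaginary unit $I\in\mathbb S$ and a second imaginary unit $J\in\mathbb S$ orthogonal to $I$, and apply the Splitting Lemma. Writing $a_n=u_n+v_n J$ with $u_n,v_n\in\mathbb C_I$, the restriction to $\mathbb C_I$ takes the form $f_I(z)=F(z)+G(z)J$ where $F(z)=\sum z^n u_n$ and $G(z)=\sum z^n v_n$ are holomorphic on a disc in $\mathbb C_I$. By the definition of $f^c$, its restriction to $\mathbb C_I$ is $f_I^c(z)=\overline{F(\bar z)}-G(z)J$. A direct computation gives $\overline{F(\bar z)}=\sum z^n \bar u_n$, whence $f_I^c(z)=\sum z^n(\bar u_n-v_n J)$.

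The final algebraic step is to recognize that $\bar u_n-v_n J$ equals the quaternionic conjugate $\overline{a_n}$: writing $u_n=x+Iy$, $v_n=s+It$ with real $x,y,s,t$, one checks $\bar u_n - v_n J=x-Iy-sJ-tIJ=\overline{a_n}$. Hence the power series expansion of $f^c$ is $\sum_{n\ge 0}q^n\overline{a_n}$, and in particular $|b_n|=|\overline{a_n}|=|a_n|$. Substituting into the formulas from Theorem \ref{thmorder} yields that $f$ and $f^c$ have the same order $\rho$ and the same type $\sigma$.

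There is essentially no obstacle here; the only minor care needed is in the quaternionic conjugation computation, which must be carried out using the anticommutation $IJ=-JI$ and the fact that real scalars commute with everything. Once that is done, the conclusion is immediate from the coefficient-based characterization of order and type.
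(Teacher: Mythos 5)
Your proposal is correct and follows exactly the paper's route: the paper justifies the corollary with the single observation that $f^c$ has power series coefficients of the same modulus as those of $f$, and then applies Theorem \ref{thmorder}, whose formulas for order and type depend only on $|a_n|$. Your verification that $f^c(q)=\sum_{n\ge 0}q^n\overline{a_n}$ via the Splitting Lemma is a correct filling-in of the detail the paper leaves implicit.
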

Using Theorem \ref{thmorder} one can construct entire regular functions of arbitrary order and type: a function of order $n$ and type $\sigma$ is $e^{\sigma q^n}$ (compare with the classical complex case).
\\
Our next goal is to establish a dependence between the growth of a function and the density of distribution of its zeros and, in particular, we need to define how to count the zeros of a function. As we already observed, if a function has spherical zeros it automatically possess an infinite number of zeros, and so to introduce a notion of counting function, we need to treat in the appropriate way the spherical zeros. To this end, we introduce the following:

\noindent{\bf Assumption}. Assume that a function has zeros
$$
\alpha_1,\ldots, \alpha_n, \ldots, [\beta_1],\ldots, [\beta_m], \ldots
$$
with $\lim_{n\to\infty} |\alpha_n|=\infty$, $\lim_{n\to\infty} |\beta_n|=\infty$. \\
It is convenient to write a spherical zero as a pair of conjugate numbers, so we have:
$$
\alpha_1,\ldots, \alpha_n, \ldots, \beta_1, \overline{\beta}_1\ldots, \beta_m, \overline{\beta_m} \ldots.
$$
We will also assume that
 we arrange the zeros according to increasing values of their moduli, keeping  together the pairs $\beta_\ell, \overline{\beta}_\ell$ giving rise to spherical zeros. Then we rename $\gamma_s$, $s=1,2,\ldots$, the elements in the list so obtained:
\begin{equation}\label{sequencezeros}
\gamma_1, \gamma_2\ldots, \gamma_n, \ldots ,
\end{equation}
where $\gamma_s$ denotes one of the elements $\alpha_m$ or $\beta_\ell$ or $\overline{\beta_\ell}$.

As we shall see, the results obtained using this sequence will not depend on the chosen representative $\beta_\ell$ of a given sphere: what will matter is only the modulus $|\beta_\ell|$ which is independent of the representative chosen.
\begin{definition}
We define  the convergence exponent \index{convergence exponent} of the sequence \eqref{sequencezeros} to be the greatest lower bound of $\lambda$  for which the series
\begin{equation}\label{convexp}
\sum_{n=1}^\infty \frac{1}{|\gamma_n|^\lambda}
\end{equation}
converges. If the series \eqref{convexp} diverges for every $\lambda >0$ we will say that the convergence exponent is infinite.
\end{definition}
Note that, by its definition, the larger $|\gamma_n|$ become, the smaller becomes $\lambda$.\\
\begin{definition}
Consider a sequence of quaternions as in \eqref{sequencezeros}.
Let $n(r)$ be the number of elements in the sequence  belonging to the ball $|q|<r$. We say that $n(r)$ is the counting number or counting function of the sequence. \index{counting number} \index{counting function} The number $$
\rho_1={\underset{r\to\infty}{\overline{\lim}}}\frac{\log (n(r))}{\log(r)}
$$ is called order of the function $n(r)$.\\ \index{order!of the counting number}
The number
$$
\Delta={\underset{r\to\infty}{\overline{\lim}}}\frac{n(r)}{r^{\rho_1}}
$$
is called upper density \index{upper density} \index{density!upper} of the sequence \eqref{sequencezeros}, and if the limit exists, $\Delta$ is simply called density.\index{density}
\end{definition}
\begin{proposition}
The convergence exponent of the sequence $\{\gamma_n\}$ equals the order of the corresponding counting function $n(r)$.
\end{proposition}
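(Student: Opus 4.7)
The plan is to prove two inequalities: $\rho_1\le \tau$ and $\tau\le \rho_1$, where $\tau$ denotes the convergence exponent of $\{\gamma_n\}$ and $\rho_1$ the order of the counting function $n(r)$. Since both quantities depend only on the sequence of moduli $\{|\gamma_n|\}$, and since in the sequence \eqref{sequencezeros} the pair of conjugate generators of a spherical zero have the same modulus, the proof is a transcription of the classical argument for sequences of complex numbers (see e.g.\ \cite{levin}); the quaternionic setting introduces no genuine obstacle, which should be highlighted at the start.

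For the easier direction $\rho_1\le \tau$, I would pick any $\lambda$ for which $\sum_n |\gamma_n|^{-\lambda}<\infty$ and observe that, for every $r>0$,
$$
\frac{n(r)}{r^\lambda}=\sum_{|\gamma_n|\le r}\frac{1}{r^\lambda}\le \sum_{|\gamma_n|\le r}\frac{1}{|\gamma_n|^\lambda}\le \sum_{n=1}^\infty \frac{1}{|\gamma_n|^\lambda}<\infty.
$$
Hence $n(r)=O(r^\lambda)$, so $\log n(r)/\log r\le \lambda+o(1)$, yielding $\rho_1\le \lambda$. Taking the infimum over such $\lambda$ gives $\rho_1\le \tau$.

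For the converse $\tau\le \rho_1$, I would fix $\lambda>\rho_1$ and choose $\lambda'$ with $\rho_1<\lambda'<\lambda$. By definition of $\rho_1$, there exists $r_0>0$ such that $n(r)\le r^{\lambda'}$ for all $r\ge r_0$. Then one rewrites the series as a Stieltjes integral with respect to $dn(r)$ and integrates by parts:
$$
\sum_{|\gamma_n|>r_0}\frac{1}{|\gamma_n|^\lambda}=\int_{r_0}^{\infty}\frac{dn(r)}{r^\lambda}=\lim_{R\to\infty}\frac{n(R)}{R^\lambda}-\frac{n(r_0)}{r_0^\lambda}+\lambda\int_{r_0}^{\infty}\frac{n(r)}{r^{\lambda+1}}\,dr.
$$
The boundary term at infinity vanishes because $n(R)/R^\lambda\le R^{\lambda'-\lambda}\to 0$, while the integral is dominated by $\lambda\int_{r_0}^{\infty}r^{\lambda'-\lambda-1}\,dr<\infty$. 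Therefore $\sum_n |\gamma_n|^{-\lambda}<\infty$, giving $\tau\le \lambda$, and passing to the infimum over $\lambda>\rho_1$ yields $\tau\le \rho_1$.

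The only subtle point — which I would address explicitly at the outset — is the bookkeeping of the spherical zeros: each sphere $[\beta_\ell]$ contributes the pair $\beta_\ell,\overline{\beta_\ell}$ to the sequence \eqref{sequencezeros}, both of modulus $|\beta_\ell|$, so the counting function $n(r)$ and the series $\sum_n |\gamma_n|^{-\lambda}$ are both well-defined independently of the choice of the generator. Apart from this observation, the proof is a clean application of Abel summation/integration by parts and does not require any slice-regular machinery; I do not expect any serious obstacle.
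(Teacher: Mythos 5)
Your proof is correct. The converse direction ($\tau\le\rho_1$) is exactly the paper's argument: write the series as a Stieltjes integral $\int \frac{dn(t)}{t^\lambda}$, integrate by parts, and use $n(t)<t^{\rho_1+\varepsilon}$ to dominate both the boundary term and $\lambda\int \frac{n(t)}{t^{\lambda+1}}\,dt$. Where you diverge is the easier direction $\rho_1\le\tau$: the paper also routes this through the integral, first deducing from convergence of the series that $\int_0^\infty \frac{n(t)}{t^{\lambda+1}}\,dt<\infty$ and then squeezing $\frac{n(r)}{r^\lambda}=\lambda\, n(r)\int_r^\infty \frac{dt}{t^{\lambda+1}}\le\lambda\int_r^\infty\frac{n(t)}{t^{\lambda+1}}\,dt\to 0$, which actually yields the slightly stronger conclusion $n(r)=o(r^\lambda)$. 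Your direct comparison $\frac{n(r)}{r^\lambda}\le\sum_{|\gamma_n|\le r}|\gamma_n|^{-\lambda}\le\sum_n|\gamma_n|^{-\lambda}$ only gives $n(r)=O(r^\lambda)$, but that is all that is needed to bound the order, and it is more elementary. Your opening remark about the spherical zeros is apt and matches the paper's stance (the proof "is not related to quaternions"): the statement depends only on the moduli $|\gamma_n|$, which are well defined independently of the choice of generators $\beta_\ell,\overline{\beta_\ell}$ of each sphere.
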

\begin{proof}
The proof of this result is not related to quaternions, see \cite{levin}. We repeat it for the reader's convenience.
The series \eqref{convexp} is a series of real numbers which can be expressed by a Stieltjes integral in the form
$$
\int_0^\infty \frac{dn(t)}{t^\lambda},
$$
which is equal to
$$
\int_0^r \frac{dn(t)}{t^\lambda}=\frac{n(t)}{t^\lambda}+\lambda \int_0^r \frac{n(t)}{t^{\lambda +1}} dt.
$$
 If the series \eqref{convexp} is convergent, the two positive terms at the right hand side are bounded. Moreover
 the integral
 \begin{equation}\label{i1.20}
 \int_0^\infty \frac{n(t)}{t^{\lambda +1}} dt
 \end{equation}
 is convergent since it is increasing and bounded. Consequently, for any $\varepsilon >0$ and $r>r_0=r_0(\varepsilon)$ we have
 $$
 \frac{n(r)}{r^\lambda}=\lambda n(r)\int_r^\infty \frac{dt}{t^{\lambda+1}}\leq \lambda \int_r^\infty \frac{n(t) dt}{t^{\lambda+1}}<\varepsilon.
 $$
 So
 $$
 \lim_{r\to\infty} \frac{n(r)}{r^\lambda}=0
 $$
 and the order of $n(r)$ is not greater than $\lambda$. Conversely, the preceding reasoning shows that the convergence of the above integral \eqref{i1.20} implies the convergence of the series \eqref{convexp}. Let $\rho_1$ be the order of $n(r)$. Then for $t$ large enough we have
 $$
 n(t)< t^{\rho_1+\varepsilon/2}.
 $$
 By setting $\lambda=\rho_1+\varepsilon$ we have that \eqref{i1.20} converges and therefore \eqref{convexp} converges. So $\lambda$ is not greater than the order of $n(r)$ and the statement follows.
\end{proof}
\section{Jensen theorem}

In complex analysis, Jensen theorem states that if a function $f(z)$ is analytic in $|z|<R$ and such that $f(0)\not=0$, then
$$
\int_0^R \frac{n_f(t)}{t} dt =\frac{1}{2\pi}\int_0^{2\pi}\log|f(Re^{i\theta})| d\theta -\log|f(0)|,
$$
where $n_f(t)$ \index{$n_f(t)$} is the number of zeros  of $f$ in the disc $|z|<t$.
\\
In this section we prove
 an analog of this theorem in the slice regular case, where $n_f$ is the counting number of the sequence constructed with the zeros of $f$ in the ball $|q|<t$.
\\
To state Jensen theorem we need some preliminary lemmas and the following definition:

\begin{definition}
Let $f$ be a function slice regular in a ball centered at the origin and with radius $R>0$. Let $\gamma=\{\gamma_n\}$, with $|\gamma_n|\to\infty$ be the sequence of its zeros written according to the assumption in the previous section. We denote by $n_{f,I}(t)$ the number of elements in $\gamma$ belonging to the disc $\{q\ : \ |q|<t \}\cap\mathbb C_I$, for $t\leq R$.
We denote by $n_{f}(t)$ the number of elements in $\gamma$ belonging to the ball $\{q\ : \ |q|<t \}$, for $t\leq R$.
\end{definition}
We note that since the choice of the elements $\beta_\ell$, $\overline{\beta}_\ell$ representing the sphere $[\beta_\ell]$ is arbitrary, on each complex plane $\mathbb C_I$ we can find a pair of representatives of $[\beta_\ell]$. Thus, $n_{f,I}(t)$ is the sum of the number of isolated zeros $\alpha_\ell$ which belong to the disc $|z|<t$ in the complex plane $\mathbb C_I$ and of twice the number of spheres $[\beta_\ell]$ in that same disc.
\begin{lemma}\label{lemmanumezeri}
If $g$ is quaternionic intrinsic in $B(0;t)$, $t>0$, then $n_{g}(t)=n_{g,I}(t)$.
\end{lemma}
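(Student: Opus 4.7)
The plan is to exploit the rigidity of the zero set of a quaternionic intrinsic function. By Lemma \ref{simmetriazerireale}, if $g \in \mathcal{N}(B(0;t))$ and $g(x_0 + I_0 y_0) = 0$ for some $I_0 \in \mathbb{S}$, then $g$ vanishes on the whole sphere $[x_0 + I_0 y_0]$. Consequently, the zero set of $g$ in $B(0;t)$ decomposes as a disjoint union of real points $\{a_k\}$ and full $2$-spheres $\{[\beta_\ell]\}$ (each counted with its appropriate multiplicity, and spheres with $|\beta_\ell| < t$).

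First I would compute $n_g(t)$ directly from the convention set up before the statement. According to the assumption preceding the lemma, each spherical zero $[\beta_\ell]$ is listed in the sequence $\{\gamma_s\}$ as the pair of conjugate representatives $\beta_\ell, \overline{\beta}_\ell$, while each real zero $a_k$ appears once. Therefore, if $N_{\mathbb{R}}(t)$ denotes the number of real zeros of $g$ in $(-t,t)$ (counted with multiplicity) and $N_{\mathbb{S}}(t)$ denotes the number of spherical zeros $[\beta_\ell]$ with $|\beta_\ell|<t$ (counted with multiplicity), one has
\[
n_g(t) = N_{\mathbb{R}}(t) + 2 N_{\mathbb{S}}(t).
\]

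Next I would compute $n_{g,I}(t)$ by intersecting the zero set with $\mathbb{C}_I$. The real zeros already lie in $\mathbb{C}_I$ and contribute $N_{\mathbb{R}}(t)$. For each sphere $[\beta_\ell] = x_\ell + y_\ell \mathbb{S}$ with $|\beta_\ell| < t$, the intersection $[\beta_\ell] \cap \mathbb{C}_I$ consists of exactly the two points $x_\ell \pm I y_\ell$, both contained in the disc $B(0;t) \cap \mathbb{C}_I$. Thus each sphere contributes exactly $2$ points to $B(0;t) \cap \mathbb{C}_I$, and since the convention counts these two points as the representatives $\beta_\ell, \overline{\beta}_\ell$ of that sphere in the sequence $\{\gamma_s\}$ (independently of the choice of $I$, as observed right after the definition of $n_{f,I}$), we obtain
\[
n_{g,I}(t) = N_{\mathbb{R}}(t) + 2 N_{\mathbb{S}}(t).
\]
Comparing the two expressions yields $n_g(t) = n_{g,I}(t)$.

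There is no real obstacle here; the argument is essentially bookkeeping. The only delicate point is to make sure that the "pair" convention used to define the sequence $\{\gamma_s\}$ matches the two-point intersection of each sphere with $\mathbb{C}_I$, so that spheres are weighted consistently in both counting functions. Once this is stated carefully, the equality is immediate.
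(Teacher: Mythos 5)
Your proof is correct and follows essentially the same route as the paper: both arguments reduce to the observation that an intrinsic function has only real or spherical zeros (you cite Lemma \ref{simmetriazerireale}, the paper re-derives it from $\overline{g(q)}=g(\bar q)$) and then match the pair convention for listing spherical zeros against the two points in which each sphere meets $\mathbb{C}_I$. The bookkeeping with multiplicities is handled consistently in both versions, so nothing further is needed.
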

\begin{proof}
If the function $g$ has a nonreal zero at $q_0=x_0+Jy_0$, then $0=\overline{g(q_0)}=g(\bar q_0)$, thus $g$ vanishes at $\bar q_0$ and so it has a spherical zero at $[q_0]$. Assume that the spherical zero $[q_0]$ has multiplicity $m$, namely $$g(q)=((q-x_0)^2+y_0^2)^m \tilde g (q)$$ where $\tilde g(x_0+Iy_0)\not=0$ for all $I\in\mathbb S$.  Then for every $I\in\mathbb S$, on the complex plane $\mathbb C_I$ the function $g$ has zeros at $x_0\pm I y_0$, each of which with multiplicity $m$.\\
 The real zeros of $g$ belong to every $\mathbb C_I$. Thus, the number of points $\alpha_ell$, being real, is constant for all $I\in\mathbb S$. Repeating the reasoning for all the zeros of $g$, we deduce that $n_{g}(t)=n_{g,I}(t)$, for all $I\in\mathbb S$.
\end{proof}

\begin{lemma}\label{lemmanumezeri11}
If $h$ is slice regular in $B(0;t)$, $t>0$,  and it has isolated zeros only, then $$n_h(t)=\frac 12 n_{h^s}(t).$$
\end{lemma}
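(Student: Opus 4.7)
The plan is to reduce everything to the local factorization of $h$ near each of its zeros, and then compare how each isolated zero contributes to the two counting functions according to the convention set in Remark~\ref{rmknotation}.

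First I would fix an isolated zero $\alpha$ of $h$ of isolated multiplicity $r$ and invoke Theorem~\ref{R-fattorizzazslice} to write, on an axially symmetric neighborhood of $[\alpha]$ inside $B(0;t)$,
\[
h(q)=(q-q_{1})\star(q-q_{2})\star\cdots\star(q-q_{r})\star g(q),
\]
with $q_{1},\dots ,q_{r}\in[\alpha]$, $\bar q_{i+1}\neq q_{i}$ and $g$ slice regular without zeros on $[\alpha]$. Since $h$ has only isolated zeros by hypothesis, $g$ in turn has only isolated zeros on $B(0;t)$, and the sequence of zeros of $h$ thus contains exactly the $r$ entries $q_{1},\dots ,q_{r}$ lying on the sphere $[\alpha]$; so this sphere contributes exactly $r$ to $n_{h}(t)$.

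Next I would compute $h^{s}$ using Proposition~\ref{propmoltiplicative}, namely $(f\star g)^{s}=f^{s}g^{s}$, together with the elementary identity $(q-p)^{s}=q^{2}-2\mathrm{Re}(p)q+|p|^{2}$, which for $p\in[\alpha]$ equals $q^{2}-2\mathrm{Re}(\alpha)q+|\alpha|^{2}$. This yields
\[
h^{s}(q)=\bigl[q^{2}-2\mathrm{Re}(\alpha)q+|\alpha|^{2}\bigr]^{r}\,g^{s}(q),
\]
and Proposition~\ref{zericoniugata} guarantees that $g^{s}$ does not vanish on $[\alpha]$ (since $g$ does not). Therefore $[\alpha]$ is a spherical zero of $h^{s}$ of (spherical) multiplicity exactly $r$; by the convention of Remark~\ref{rmknotation}, such a spherical zero is listed as the pair $\beta,\overline{\beta}$ repeated $r$ times, and hence contributes $2r$ to $n_{h^{s}}(t)$.

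Finally, summing the contribution sphere-by-sphere over all distinct spheres $[\alpha_{k}]$ meeting $B(0;t)$, and noting that $h^{s}$ has \emph{no} other zeros (any isolated zero of $h^{s}$ would force, by Proposition~\ref{zericoniugata}, an isolated zero of $h$ on the same sphere, whose symmetrization is already accounted for) gives $n_{h^{s}}(t)=2\,n_{h}(t)$. The main subtlety, and the only real obstacle, is bookkeeping: one must be careful that the integers $r$ appearing in the factorization really coincide with the isolated multiplicity used to build the sequence $\{\gamma_{n}\}$ of zeros of $h$, and simultaneously with the spherical multiplicity of $[\alpha]$ as a zero of $h^{s}$ used to build the analogous sequence for $h^{s}$. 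Once the three notions of multiplicity are matched via Theorem~\ref{R-fattorizzazslice} and the structure theorem, the conclusion $n_{h}(t)=\tfrac{1}{2}n_{h^{s}}(t)$ is immediate.
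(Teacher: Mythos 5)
Your proof is correct and follows essentially the same route as the paper: the paper writes one global factorization $h=\prod_r(q-\alpha_{r1})\star\cdots\star(q-\alpha_{rj_r})\star\tilde h$ and computes $h^s=h\star h^c$ explicitly to get $\prod_r((q-x_r)^2+y_r^2)^{j_r}\tilde h^s$, whereas you obtain the same quadratic factors sphere-by-sphere via $(f\star g)^s=f^sg^s$ and $(q-p)^s=q^2-2\mathrm{Re}(p)q+|p|^2$ — the counting argument is then identical.
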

\begin{proof}
To prove the statement assume that in the ball $\{q\ : \ |q|<t\}$ there are $N(t)$ distinct isolated zeros and that
$$
h(q)=\prod^{\star N(t)}_{r=1} (q- \alpha_{r 1}) \star \ldots \star (q- \alpha_{r j_r}) \star \tilde h(q),
$$
where, for all $r=1,\ldots ,N(t)$, $j_r\geq 1$ denotes the multiplicity of the zero $\tilde \alpha_{r 1}\in [\alpha_{r 1}]$ (note that only the zero $\alpha_{11}$ can be immediately read from the factorization of $h$). Thus $n_h(t)=
\sum_{r=1}^{N(t)} j_r$.
Then we have
$$
h^c(q)=\tilde h^c(q) \star  \prod^{\star 1}_{r= N(t)} (q- \overline{\alpha_{r j_r}}) \star \ldots \star (q- \overline{\alpha_{r 1}}) ,
$$
(where $\prod^{\star 1}_{r= N(t)}$ indicates that we are taking the products starting with the index $N(t)$ and ending with $1$)
and, by setting $\alpha_{r 1} =x_r+ I_r y_r$, $r=1,\ldots, N(t)$, we deduce
\[
\begin{split}
h^s(q) & = \prod^{\star N(t)}_{r=1} (q- \alpha_{r 1}) \star \ldots \star (q- \alpha_{r j_r}) \star \tilde h(q)
\star \tilde h^c(q) \star  \prod^{\star 1}_{r=N(t)} (q- \overline{\alpha_{r j_r}}) \star \ldots \star (q- \overline{\alpha_{r 1}})\\
& = \prod_{r=1}^{N(t)} ((q-x_r)^2 +y_r^2)^{j_r} \tilde h^s(q).
\end{split}
\]
Since $\tilde h^s$ does not vanish on $|q|<r$ we have that $n_h(t)=
\sum_{r=1}^{N(t)} 2j_r$ and the statement follows.
\end{proof}
\begin{lemma}
Let $f$ be a function, not identically zero and slice regular in a domain containing $\overline{B(0;R)}$. Then there exist two  functions $g,h$ slice regular
in $B(0;R)$ and continuous in $\overline{B(0;R)}$
such that  $f=gh$, where $g$ has at most spherical only,  and $h$ has at most isolated zeros only.
\end{lemma}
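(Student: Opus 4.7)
The plan is to peel off all of the spherical and real zeros of $f$ from the left by repeated applications of Theorem~\ref{R-fattorizzazslice}. First I would fix an axially symmetric slice subdomain of the regularity set of $f$ that contains $\overline{B(0;R)}$: by compactness of $\overline{B(0;R)}$ inside the open domain of regularity, I can pick $R'>R$ with $\overline{B(0;R')}$ still inside that set, so that $f$ may be regarded as slice regular on the axially symmetric slice domain $B(0;R')$. The Structure Theorem \ref{structurethm} combined with the Strong Identity Principle forces the zero set of $f$ in the compact set $\overline{B(0;R)}$ to be finite: finitely many isolated spheres $[\beta_1],\dots,[\beta_p]$ with spherical multiplicities $m_1,\dots,m_p$, finitely many real zeros $a_1,\dots,a_s$ with multiplicities $n_1,\dots,n_s$, and finitely many non-real isolated zeros.

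I would then apply Theorem~\ref{R-fattorizzazslice} successively to remove each sphere $[\beta_i]$, extracting the real-coefficient factor $[(q-{\rm Re}(\beta_i))^2+{\rm Im}(\beta_i)^2]^{m_i}$, and likewise each real zero $a_j$, extracting $(q-a_j)^{n_j}$. Since all these extracted factors are quaternionic intrinsic, they commute with any slice regular function under the $\star$-product, so I can collect them on the left:
\[
f(q) \;=\; g(q)\star h(q),\qquad g(q)\;=\;\prod_{i=1}^{p}\bigl((q-{\rm Re}(\beta_i))^2+{\rm Im}(\beta_i)^2\bigr)^{m_i}\prod_{j=1}^{s}(q-a_j)^{n_j},
\]
with $h$ slice regular on $B(0;R')$. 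Intrinsicity of $g$ moreover gives $g\star h=g\,h$, yielding the desired pointwise factorization $f=gh$. By construction all spherical and real zeros of $f$ have been absorbed into the polynomial $g$, so $h$ is left only with the non-real isolated zeros of $f$. Continuity on $\overline{B(0;R)}$ is immediate: $g$ is a polynomial, and $h$ is slice regular on the strictly larger axially symmetric set $B(0;R')\supset\overline{B(0;R)}$.

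The only genuine obstacle is keeping the iterative extraction clean: at each step one has to verify that the residual is slice regular on the entire domain (which Theorem~\ref{R-fattorizzazslice} delivers, returning a slice regular function on the same axially symmetric slice domain), and that rearranging the already-extracted intrinsic factors to the left neither creates nor destroys zeros. Both follow at once from the commutativity of intrinsic factors with the $\star$-product, so no new technical ingredient beyond the finiteness of the zero set and Theorem~\ref{R-fattorizzazslice} is needed.
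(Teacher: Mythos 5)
Your argument is correct and follows essentially the same route as the paper's proof: finiteness of the spherical zeros in $\overline{B(0;R)}$ via compactness and the Identity Principle, extraction of the corresponding real-coefficient factors by Theorem \ref{R-fattorizzazslice}, and the observation that intrinsicity of $g$ turns the $\star$-product into the pointwise product. The only (harmless) deviation is that you also absorb the real zeros into $g$, whereas the statement and the paper's proof leave them in $h$ among the isolated zeros; since $g$ stays intrinsic either way, nothing downstream is affected.
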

\begin{proof}
We observe that $f$ can have only a finite number of spherical zeros in $B(0;R)$. Otherwise, if these zeros were an infinite number, then on a complex plane $\mathbb C_I$ the function $f$ would admit an infinite number of zeros with an accumulation point in $\overline{B(0;R)}\cap\mathbb C_I$ and so, by the Identity Principle, the function $f$ would be identically zero, contradicting the assumption. Thus this finite number of zeros can be pulled out on the left using Theorem \ref{R-fattorizzazslice}. The product of this finite number of factors corresponding to these zeros (and thus all with real coefficients) gives the function $g$. If $f$ does not have any spherical zero, we can set $g=1$ on $B(0;R)$. The factor $h$ contains at most isolated zeros. Both the functions $g$ and $h$ are slice regular and continuous where needed, by construction.
\end{proof}

\begin{theorem}[Jensen theorem]
Let $f$ be a function slice regular in a domain containing $\overline{B(0;R)}$ and assume that $f(0)\not=0$.
Let $f=gh$ where $g$ has at most spherical zeros only,  and $h$ has at most isolated zeros only.
Then
\begin{equation}
\begin{split}
\int_0^R \frac{n_f(t)}{t} dt &= \frac{1}{2\pi}\left[\int_0^{2\pi} \log |g(Re^{I\theta})|\, d\theta -\log|g(0)|\right. \\
&+
\left. \frac 12 \left(\int_0^{2\pi} \log |h^s(Re^{I\theta})|\, d\theta -\log|h^s(0)|\right)\right].
\end{split}
\end{equation}
\end{theorem}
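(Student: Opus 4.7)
The plan is to reduce the statement to the classical (complex) Jensen formula applied on a slice $\mathbb{C}_I$ to the two factors $g$ and $h^s$, both of which are quaternionic intrinsic. First I would verify that the decomposition $n_f(t)=n_g(t)+n_h(t)$ holds. Indeed, by the preceding lemma there is a factorization $f=gh$ in which $g$ collects the (finitely many) spherical and real zeros of $f$ in $\overline{B(0;R)}$ and $h$ carries only the isolated (nonreal) zeros; since $g$ is a product of factors $q^2-2\mathrm{Re}(\beta)q+|\beta|^2$ with real coefficients, it commutes with $h$ in the $\star$-product and has neither isolated nonreal zeros nor zeros in common with $h$ on the sphere level, so the counting function is additive.

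Next I would handle the $g$-term. Because $g$ has real coefficients, $g\in\mathcal{N}(B(0;R))$. By Lemma~\ref{lemmanumezeri}, $n_g(t)=n_{g,I}(t)$ for every $I\in\mathbb{S}$. Fixing any $I$, the restriction $g_I:\overline{B_I(0;R)}\to\mathbb{C}_I$ is a holomorphic function with $g_I(0)=g(0)\neq 0$, so the classical complex Jensen formula applied on $\mathbb{C}_I$ yields
\begin{equation*}
\int_0^R \frac{n_g(t)}{t}\,dt = \frac{1}{2\pi}\int_0^{2\pi}\log|g(Re^{I\theta})|\,d\theta -\log|g(0)|.
\end{equation*}

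For the $h$-term I would pass to the symmetrization. The function $h^s$ is slice regular on $B(0;R)$ and, by Remark~\ref{symmLI_qua}, $h^s(U\cap\mathbb{C}_I)\subseteq \mathbb{C}_I$ for all $I$, so $h^s\in\mathcal{N}(B(0;R))$. Moreover $h^s(0)=h(0)\,\overline{h(0)}=|h(0)|^2\neq 0$. Applying Lemma~\ref{lemmanumezeri} to the intrinsic function $h^s$ gives $n_{h^s}(t)=n_{h^s,I}(t)$, and Lemma~\ref{lemmanumezeri11} gives $n_{h^s}(t)=2\,n_h(t)$. The classical Jensen formula on $\mathbb{C}_I$ applied to $h^s_I$ therefore yields
\begin{equation*}
2\int_0^R \frac{n_h(t)}{t}\,dt = \int_0^R \frac{n_{h^s}(t)}{t}\,dt = \frac{1}{2\pi}\int_0^{2\pi}\log|h^s(Re^{I\theta})|\,d\theta -\log|h^s(0)|.
\end{equation*}

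Adding the two identities, using $n_f=n_g+n_h$, gives exactly the claimed formula. The only nontrivial steps are the two lemmas on counting functions (already proved) and the verification that the symmetrization $h^s$ is intrinsic and has nonzero value at the origin; the value of the right-hand side is independent of the choice of $I\in\mathbb{S}$ since both $g$ and $h^s$ lie in $\mathcal{N}$ (so $|g(Re^{I\theta})|$ and $|h^s(Re^{I\theta})|$ depend only on $(R\cos\theta,R\sin\theta)$, as in Proposition~\ref{Mfintr}). The main technical subtlety is the bookkeeping in the first step, namely ensuring that the factorization $f=gh$ really does give $n_f=n_g+n_h$ and that zeros of $h$ on a sphere $[q_0]$ do not secretly contribute also to $n_g$; this is handled by uniqueness of the extraction of spherical factors provided by Theorem~\ref{R-fattorizzazslice}.
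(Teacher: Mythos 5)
Your proof is correct and follows essentially the same route as the paper's: split $n_f=n_g+\tfrac12 n_{h^s}$ via Lemma \ref{lemmanumezeri11}, observe that $g$ and $h^s$ are quaternionic intrinsic, apply the classical Jensen formula on a slice, and transfer back with Lemma \ref{lemmanumezeri}. Your version merely spells out the additivity $n_f=n_g+n_h$ and the slice-independence in more detail than the paper does.
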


\begin{proof}
First of all we observe that
$$n_f(t)=n_g(t)+n_h(t)=n_g(t)+\frac 12 n_{h^s}(t)$$ where the last equality follows from Lemma \ref{lemmanumezeri11}.
Then we have
\begin{equation}
\int_0^R \frac{n_f(t)}{t} dt  = \int_0^R \frac{n_{g}(t)}{t} dt +\frac 12 \int_0^R \frac{n_{h^s}(t)}{t} dt.
\end{equation}
 Since both $g$ and $h^s$ are quaternionic intrinsic functions we have
 $$
 g_I, h^s_I: \ B(0,t)\cap\mathbb C_I \to \mathbb C_I
 $$
and so the restrictions $g_I, h^s_I$ to any complex plane $\mathbb C_I$ are holomorphic functions to which we can apply the complex Jensen theorem, see Theorem 5, p. 14 in \cite{levin}, namely
 $$
\int_0^R \frac{n_{g_I}(t)}{t} dt = \frac{1}{2\pi}\int_0^{2\pi} \log |g(Re^{I\theta})|\, d\theta -\log|g(0)|,
 $$
 and similarly for $h^s_I$.
 Then, the statement follows from Lemma \ref{lemmanumezeri}.
\end{proof}
 We now prove the following consequence of Jensen theorem, characterizing the number of zeros of $f$ in a ball.
\begin{proposition} [Jensen inequality]\label{prop43}\index{Jensen inequality}
Let $f$ be slice regular in the ball centered at the origin and with radius $er$, where $r>0$ and $e$ is the Napier number.
 Let $f=gh$ where $g$ has at most spherical zeros only, and $h$ has at most isolated zeros only and assume that $|g(0)|=|h(0)|=1$.
Then
\begin{equation}\label{ineqJ}
n_f(r)\leq \log (M_g(er)M_h(er)).
\end{equation}
The bound $\log (M_g(er)M_h(er))$ is optimal, as equality may occur.
\end{proposition}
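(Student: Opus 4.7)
The plan is to derive the inequality directly from Jensen's theorem (the displayed integral identity immediately preceding the proposition) by controlling the right-hand side from above and the left-hand side from below. First I would apply Jensen's theorem with $R=er$. Since $|g(0)|=1$ and $|h(0)|=1$, and since $h^s(0)=h(0)\star h^c(0)$ gives $|h^s(0)|=|h(0)|^2=1$, the two boundary terms $\log|g(0)|$ and $\log|h^s(0)|$ vanish, leaving
\[
\int_0^{er}\frac{n_f(t)}{t}\,dt \;=\; \frac{1}{2\pi}\int_0^{2\pi}\log|g(ere^{I\theta})|\,d\theta \;+\; \frac{1}{4\pi}\int_0^{2\pi}\log|h^s(ere^{I\theta})|\,d\theta.
\]

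Next I would upper-bound the right-hand side by replacing the integrands by their maxima on the sphere of radius $er$, obtaining $\log M_g(er)+\tfrac12\log M_{h^s}(er)$. The key reduction is then to pass from $M_{h^s}$ to $M_h$: since $h^s=h\star h^c$, Theorem \ref{pointwise} gives $|h^s(p)|=|h(p)|\,|h^c(\tilde p)|$ for some $\tilde p\in[p]$ (and $0$ otherwise), and the identity $|h^c(x+Iy)|=|h(x+Jy)|$ established in \eqref{fcab} yields $|h^c(\tilde p)|\le M_h(|p|)$. Combining these two facts gives $M_{h^s}(er)\le M_h(er)^2$, hence $\tfrac12\log M_{h^s}(er)\le \log M_h(er)$.

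Finally, I would bound the left-hand side from below using monotonicity of $n_f$: since $n_f(t)\ge n_f(r)$ for $t\ge r$,
\[
\int_0^{er}\frac{n_f(t)}{t}\,dt \;\ge\; \int_r^{er}\frac{n_f(t)}{t}\,dt \;\ge\; n_f(r)\int_r^{er}\frac{dt}{t} \;=\; n_f(r).
\]
Chaining the three estimates yields $n_f(r)\le \log M_g(er)+\log M_h(er)=\log(M_g(er)M_h(er))$, which is the desired inequality. The one step requiring care is the passage $M_{h^s}\le M_h^2$, because the $\star$-product is not pointwise; but Theorem \ref{pointwise} together with the invariance of $|h^c|$ along spheres handles this cleanly, so no real obstacle arises. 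Sharpness of the bound can be checked on simple monomial examples where equality holds throughout.
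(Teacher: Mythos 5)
Your proof of the inequality itself is correct and follows essentially the same route as the paper: apply Jensen's theorem with $R=er$, kill the boundary terms using $|g(0)|=1$ and $|h^s(0)|=|h(0)|^2=1$, bound the integrals by $\log M_g(R)+\tfrac12\log M_{h^s}(R)$, reduce $M_{h^s}$ to $M_h^2$ via Theorem \ref{pointwise} together with \eqref{fcab}, and finally use monotonicity of $n_f$ and $\int_r^{er}dt/t=1$ to extract $n_f(r)$ on the left. All of these steps appear, in the same order, in the paper's argument.

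The genuine gap is the optimality claim, which is part of the statement and which you dismiss with ``simple monomial examples where equality holds throughout.'' Monomials $q^n$ vanish at the origin and so violate the hypothesis $|g(0)|=|h(0)|=1$; more generally, equality forces three separate conditions simultaneously (no zeros in $r<|q|<er$, no zeros in $|q|<r$, and $|f|$ constantly equal to $M_g(er)M_h(er)$ on $|q|=er$), and no monomial or obvious elementary function satisfies all of them under the normalization. The paper has to build a specific extremal function
$$
\varphi(q)=e^{n+i\alpha}\prod^{\star n}_{k=1}(R^2-q\bar a_k)^{-\star}\star(q-a_k)\,er,\qquad |a_k|=r,\ R=er,
$$
verify that each factor maps $B(0;R)$ into the unit ball with modulus $1$ on $|q|=R$, that $|\varphi(0)|=1$, and that the factors split correctly into the $g$ and $h$ parts (pairing conjugate zeros into spherical factors). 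Without some such construction the sharpness assertion is unproved.
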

\begin{proof}
Since $|g(0)|=|h(0)|=1$ and $f(0)=g(0)h(0)$ we have $|f(0)|=1$. We can apply Jensen theorem, and the fact that $n_f(r)$ is evidently a monotone function in $r$ yields
\begin{equation}\label{chainineq}
\begin{split}
n_f(r)&\leq \int_r^{R}\frac{n_f(t)}{t}\, dt
\\
& \leq \frac{1}{2\pi}\left[\int_0^{2\pi} \log |g(R e^{I\theta})|\, d\theta+
\frac 12 \int_0^{2\pi} \log (|h^s(R e^{I\theta})|\, d\theta \right]\\
&\leq \log M_g(R)+\frac 12 \log M_{h^s}(R),
\end{split}
\end{equation}
where we have set $R=er$.
Recalling \eqref{fcab}, we have
\[
\begin{split}
\sup_{|q|=R} |h^c(q)|&= \sup_{I\in \mathbb S}\sup_{\theta\in[0,2\pi)}|h^c(Re^{I\theta})|\\
&= \sup_{I\in \mathbb S}
\sup_{\theta\in[0,2\pi)} |\overline{\alpha(R,\theta)} + I\overline{\beta (R,\theta)}|\\
&=\sup_{I\in \mathbb S}
\sup_{\theta\in[0,2\pi)} |\alpha (R,\theta) +I\beta (R,\theta) |\\
&=\sup_{I\in \mathbb S}\sup_{\theta\in[0,2\pi)}|h(Re^{I\theta})|=\sup_{|q|=R} |h(q)|
\end{split}
\]
and so, using \eqref{pointwise} and \eqref{fcab}, we deduce
\begin{equation}\label{supstar}
\begin{split}
\sup_{|q|=R} |h^s(q)|&=\sup_{|q|=R} |h(q)\star h^c(q)|\\
&=\sup_{|q|=R} |h(q)||h^c(\tilde q)| \\
&\leq \sup_{|q|=R} |h(q)|^2\\
&\leq (\sup_{|q|=R} |h(q)|)^2.
\end{split}
\end{equation}
From \eqref{supstar} we obtain
\begin{equation}\label{chainineq1}
\begin{split}
\log M_g(R)+\frac 12 \log M_{h^s}(R)&\leq \log M_g(R)+ \log (M_{h}(R))\\
&=\log (M_g(R)M_h(R)),
\end{split}
\end{equation}
and so
$$
n_f(r)\leq \log (M_g(R)M_h(R)),
$$
which proves the first part of the statement.
We have equalities, instead of inequalities, in \eqref{chainineq}
if
$$
n_f(r) = \int_r^{R}\frac{n_f(t)}{t}\, dt
$$
which means that $f$ cannot have zeros in $\{q\in\mathbb H\ : \ r<|q|<R\}$.
Moreover, we must have the equality
$$
\int_r^{R}\frac{n_f(t)}{t}\, dt =
\frac{1}{2\pi}\left[\int_0^{2\pi} \log |g(R e^{I\theta})|\, d\theta+
\frac 12 \int_0^{2\pi} \log |h^s(R e^{I\theta})|\, d\theta \right]
$$
which, by Jensen formula, means that $f$ has no zeros in the ball $B(0;r)$.

Finally, if we have an equality instead of the last inequality in \eqref{chainineq} then   $|f(er e^{I\theta})|=M_g(er)M_h(er)$ on the sphere $|q|=er$.
Consider now a function of the form
$$
\varphi (q)= e^{n+i\alpha} \prod^{\star n}_{k=1} (R^2 - q \bar a_k )^{-\star}\star (q-a_k) er,
$$
where $|a_k|=r$ and the factor $e^{n+i\alpha}$ is needed in order to obtain $|\varphi(0)|=1$.
Our purpose is to show that such a function $\varphi$ satisfies equalities in \eqref{chainineq}. First of all, we note that
$$
(R^2 - q \bar a_k )^{-\star}\star (q-a_k) R=(R^2 - p \bar a_k)^{-1} (p-a_k) R
$$
where $p=s(q)^{-1}q s(q)$, $s(q)= R^2 - q a_k$ and $s(q)\not=0$ for $q\in B(0;R)$. Let us study when the inequality
$$
|R^2 -\bar a_k p|^{-1} |p-a_k| R  \leq  1
$$
is satisfied.
The inequality is equivalent to
$$|p-a_k|^2 R^2\leq |R^2 -\bar a_k p|^2$$ but also to
$$(p-a_k)(\bar p-\bar a_k)R^2 \leq (R^2 -\bar a_k p)(R^2 -\bar p  a_k )$$
that is, after some calculations
$$
(R^2 -|a_k|^2)(|p|^2 - R^2)\leq 0.
$$
Since $(R^2 -|a_k|^2)>0$, the above inequality is satisfied if and only if $|p|^2 - R^2\leq 0$ that is if and only if $|p|=|q|\leq er$. We conclude that each factor takes the ball $B(0;R)$ to $B(0;1)$ and, in particular, it has modulus $1$ on the sphere $|q|=R$. As a consequence, $|\varphi(z)|=e^n$ for all $q$ such that $|q|=R$. All the roots $a_k$ of $\varphi$ have modulus $r$ and $\varphi(0)=1$. If there are pairs $a_k, a_{k+1}$ such that $a_{k+1}=\bar a_k$ then the product of the two corresponding factor gives a spherical zero. If there are $r$ such pairs, we will have $r$ spheres of zeros and the product of the corresponding factors, which commute among themselves and with the other factors, form the function $g$ such that $\varphi(q)=g(q)h(q)$. The remaining factors are $n-2r$ and will give $2(n-2r)$ spherical zeros of $h^s$. Since the factor $e^{n+i\alpha}$ can be suitably split in $e^{2r+\frac i2\alpha}$ in front of $g$ and  $e^{(n-2r)+\frac i2\alpha}$ in front of $h$, it follows by its construction, that the function $\varphi$ satisfies the equalities in \eqref{chainineq}.
\end{proof}

\section{Carath\'eodory theorem}
The real part of a slice regular function does not play the important role that real parts play for holomorphic functions. For example, what plays the role of the real part in the Schwarz formula in the complex case is the function $\alpha(x,y)$ if $f(x+Iy)=\alpha(x,y)+I\beta(x,y)$
 is a slice regular function in $B(0;R)$, $R>0$. Thus, also the analog of Carath\'eodory theorem is stated in terms of the function $\alpha(x,y)$. Let us set \index{$A_f$}
 $$A_f(r)=\max_{|x+Iy|=r}|\alpha(x,y)|,\qquad {\rm for}\qquad r<R.
 $$
\begin{lemma}
Let $f\in\mathcal{R}(B(0;R))$, $R>0$.
Then $A_f(r)\leq M_f(r)$.
\end{lemma}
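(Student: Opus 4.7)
The plan is to use the explicit formula for $\alpha(x,y)$ provided by the Representation Formula (Theorem \ref{Repr_formula}) and then apply the triangle inequality. Specifically, from \eqref{cappa} we have
$$\alpha(x,y) = \frac{1}{2}\bigl[f(x+Iy) + f(x-Iy)\bigr]$$
for any $I \in \mathbb{S}$, and the right-hand side is actually independent of the chosen imaginary unit $I$.

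First I would fix a point $x+Iy$ with $|x+Iy| = r$. The crucial observation is that $|x+Iy| = |x-Iy| = \sqrt{x^2+y^2} = r$, so both $x+Iy$ and $x-Iy$ lie on the sphere $|q| = r$. Hence $|f(x+Iy)| \leq M_f(r)$ and $|f(x-Iy)| \leq M_f(r)$ by definition of $M_f(r)$.

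Applying the triangle inequality to the representation of $\alpha$ gives
$$|\alpha(x,y)| \leq \frac{1}{2}\bigl(|f(x+Iy)| + |f(x-Iy)|\bigr) \leq \frac{1}{2}(M_f(r) + M_f(r)) = M_f(r).$$
Taking the maximum over all $x+Iy$ with $|x+Iy| = r$ yields $A_f(r) \leq M_f(r)$, which is the desired inequality.

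There is no real obstacle here: the proof is essentially one line once the Representation Formula is invoked, and the symmetry of the Euclidean norm under $y \mapsto -y$ is what makes the argument work. The only thing worth being careful about is ensuring that the formula for $\alpha$ is applied correctly — namely that $\alpha$ is well-defined as a function of $(x,y)$ alone (independent of $I$), which is guaranteed by the second assertion of Theorem \ref{Repr_formula}.
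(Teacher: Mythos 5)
Your proof is correct and follows essentially the same route as the paper: both start from the identity $\alpha(x,y)=\tfrac12\bigl(f(x+Iy)+f(x-Iy)\bigr)$ coming from the Representation Formula and conclude by the triangle inequality, using that $x+Iy$ and $x-Iy$ have the same modulus. Nothing further is needed.
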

\begin{proof}
Since $\alpha(x,y)=\frac 12 (f(x+Iy)+f(x-Iy))$ for any $x+Iy\in B(0;R)$ we have
\[
\begin{split}
A_f(r)&=\max_{|x+Iy|=r}|\alpha(x,y)|\\
&=\frac 12 \max_{|x+Iy|=r}|f(x+Iy)+f(x-Iy)|\\
&\leq \frac 12 (\max_{|x+Iy|=r}|f(x+Iy)|+ \max_{|x+Iy|=r} |f(x-Iy)|)\\
& = M_f(r)
\end{split}
\]
for $r<R$.
\end{proof}
We can now prove a Carath\'eodory type inequality, which is a sort of converse of the previous inequality:
\begin{theorem}[Carath\'eodory inequality]\index{Carath\'eodory!inequality}
Let $f\in\mathcal R(B(0;R))$, $f(q)=f(x+Iy)=\alpha(x,y)+I\beta(x,y)$ and let $0<r<R$. Suppose, for simplicity,  that $f(0)\in\mathbb{R}$. Then
$$
M_f(r)\leq \frac{2r}{R-r} (A_f(R)- \alpha (0)) +|\alpha (0)|.
$$
\end{theorem}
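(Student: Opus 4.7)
I would adapt the classical Borel--Carath\'eodory proof by applying the Schwarz formula on a single slice. Since $M_f(r)=\sup_{I\in\mathbb S}M_{f_I}(r)$, it suffices to fix $I\in\mathbb S$ and bound $|f(q)|$ for $q\in\mathbb C_I$ with $|q|=r<R$; on such a slice the $\star$-product in Theorem \ref{Schwarz formula} reduces to ordinary complex multiplication and the Schwarz formula simplifies to
\[
f(q)=\frac{1}{2\pi}\int_0^{2\pi}\frac{Re^{It}+q}{Re^{It}-q}\,\alpha(R\cos t,R\sin t)\,dt.
\]

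Setting $q=0$ yields the mean-value identity $\alpha(0)=\tfrac{1}{2\pi}\int_0^{2\pi}\alpha(R\cos t,R\sin t)\,dt$, while a direct power-series expansion of $\tfrac{2q}{Re^{It}-q}=\tfrac{Re^{It}+q}{Re^{It}-q}-1$ gives $\int_0^{2\pi}\tfrac{2q}{Re^{It}-q}\,dt=0$ whenever $|q|<R$. Both identities together allow me to subtract the real constant $A_f(R)$ from $\alpha$ inside the Schwarz integral without affecting the value, producing
\[
f(q)-\alpha(0)=\frac{1}{2\pi}\int_0^{2\pi}\frac{2q}{Re^{It}-q}\bigl[\alpha(R\cos t,R\sin t)-A_f(R)\bigr]\,dt.
\]
Using the elementary kernel estimate $\bigl|\tfrac{2q}{Re^{It}-q}\bigr|\le\tfrac{2r}{R-r}$ valid for $|q|=r$, $|s|=R$, together with the triangle inequality would then give
\[
|f(q)-\alpha(0)|\le \frac{2r}{R-r}\cdot\frac{1}{2\pi}\int_0^{2\pi}|\alpha(R\cos t,R\sin t)-A_f(R)|\,dt,
\]
and the desired bound would follow from $|f(q)|\le |f(q)-\alpha(0)|+|\alpha(0)|$ once I establish the integral estimate $\tfrac{1}{2\pi}\int_0^{2\pi}|\alpha(R\cos t,R\sin t)-A_f(R)|\,dt\le A_f(R)-\alpha(0)$.

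\textbf{The hard step} is precisely that last integral estimate. In the classical complex case $\alpha=\operatorname{Re}(f)$ is real-valued with $\alpha\le A_f(R)$ pointwise, so $|\alpha-A_f(R)|=A_f(R)-\alpha$ pointwise and the mean-value identity closes the argument in one line. In the quaternionic framework $\alpha$ is $\mathbb H$-valued with a non-trivial $\operatorname{Im}_{\mathbb H}$-component, so the pointwise identity fails; one must leverage that $\operatorname{Re}_{\mathbb H}(\alpha)\le|\alpha|\le A_f(R)$ pointwise together with the reality of $\alpha(0)$---which forces the mean of $\operatorname{Im}_{\mathbb H}(\alpha)$ over the boundary circle to vanish, so that $\alpha(0)=\tfrac{1}{2\pi}\int\operatorname{Re}_{\mathbb H}(\alpha)\,dt$---to recover a linear bound in $A_f(R)-\alpha(0)$ rather than the weaker $\sqrt{\,\cdot\,}$-type estimate that a naive Cauchy--Schwarz application immediately produces.
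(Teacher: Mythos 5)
Your plan coincides with the paper's own proof up to the very last step: the paper also writes $f$ via the Schwarz formula on a slice, uses the mean--value identity $\alpha(0)=\frac{1}{2\pi}\int_0^{2\pi}\alpha(Re^{It})\,dt$ together with the vanishing of $\int_0^{2\pi}\frac{2q}{Re^{It}-q}\,dt$ to insert the constant $A_f(R)$ into the integrand, and then invokes the kernel bound $\frac{2r}{R-r}$. The divergence is in how the resulting integral is estimated, and there your proposal has a genuine gap. The inequality to which you reduce everything, namely
\[
\frac{1}{2\pi}\int_0^{2\pi}\bigl|\alpha(R\cos t,R\sin t)-A_f(R)\bigr|\,dt\ \leq\ A_f(R)-\alpha(0),
\]
is not merely hard: it is \emph{false} whenever $\alpha$ has a nonvanishing imaginary part on the boundary circle. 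Writing $A=A_f(R)$ and $\alpha=\alpha_0+\vec{\alpha}$ with $\alpha_0$ the real component, one has pointwise $|\alpha-A|=\sqrt{(A-\alpha_0)^2+|\vec{\alpha}|^2}\geq A-\alpha_0$, with strict inequality wherever $\vec{\alpha}\neq0$; integrating and using your own observation that $\frac{1}{2\pi}\int\alpha_0\,dt=\alpha(0)$ (reality of $\alpha(0)$) gives $\frac{1}{2\pi}\int|\alpha-A|\,dt\geq A-\alpha(0)$, with equality only if $\vec{\alpha}\equiv0$ on the circle. A concrete witness is $f(q)=q\,\frac{A}{R}i$: here $f(0)=0$, $\alpha(R\cos t,R\sin t)=A\cos(t)\,i$, $A_f(R)=A$, $\alpha(0)=0$, and $\frac{1}{2\pi}\int|\alpha-A|\,dt=\frac{A}{2\pi}\int_0^{2\pi}\sqrt{1+\cos^2t}\,dt>A$. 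So the ``leverage'' you describe in your last paragraph cannot succeed: your target inequality can only hold with equality, and only for real-valued $\alpha$. The best the pointwise bound $|\alpha|\leq A$ yields along your route is $|\alpha-A|\leq\sqrt{2A(A-\alpha_0)}$, which after Jensen gives the square-root-type estimate you yourself flag as insufficient.

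For comparison, the paper closes this step by asserting that $A_f(R)-\alpha(Re^{It})$ is nonnegative and then bounding $\frac{1}{\pi}\int\bigl|(Re^{It}-q)^{-\star}\star q\bigr|\,(A_f(R)-\alpha)\,dt$ directly by $\frac{2r}{R-r}(A_f(R)-\alpha(0))$ --- that is, it runs the one-line complex argument, which is airtight exactly when $\alpha$ is real-valued (e.g.\ for intrinsic $f$, where $\alpha=\mathrm{Re}\,f_I$ on each slice). You have therefore correctly isolated the one genuinely quaternionic difficulty in this theorem, but the repair you propose --- passing the absolute value onto the quaternion $A_f(R)-\alpha$ inside the integral and then trying to prove the displayed integral inequality --- is a dead end, and as written the proposal does not constitute a proof of the stated bound.
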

\begin{proof}
Let us write the function $f(q)$ using the Schwarz formula in Theorem \ref{Schwarz formula}:
\begin{equation}\label{schw2}
f(q)
=\frac{1}{2\pi}\int_{-\pi}^\pi
(Re^{It}-q)^{-\star }\star (Re^{It}+q)\,
\alpha(Re^{I\, t})dt.
\end{equation}
If we fix two units $I,J\in\mathbb S$ with $I$ orthogonal to $J$, we can decompose the  function $\alpha$ as $\alpha=\alpha_0+\alpha_1 I+\alpha_2 J+\alpha_3 IJ$ where $\alpha_\ell$ are harmonic functions, and since
$$
\alpha_\ell(0)=\frac{1}{2\pi}\int_{-\pi}^\pi \alpha_\ell (R e^{It})\, dt, \quad \ell=0,\ldots, 3
$$
we have that
$$
\alpha (0)-\frac{1}{2\pi}\int_{-\pi}^\pi \alpha (R e^{It})\, dt=0.
$$
We can add this quantity to the right hand side of \eqref{schw2}, obtaining
\begin{equation}\label{schw3}
\begin{split}
f(q)
&=\frac{1}{2\pi}\int_{-\pi}^\pi
(Re^{It}-q)^{-\star }\star (Re^{It}+q)\,
\alpha(Re^{I\, t})\, dt -\frac{1}{2\pi}\int_{-\pi}^\pi \alpha (R e^{It})\, dt +\alpha (0)\\
&=\frac{1}{2\pi}\int_{-\pi}^\pi
(Re^{It}-q)^{-\star }\star (Re^{It}+q-Re^{It}+q)\,
\alpha(Re^{I\, t})\, dt  +\alpha (0)\\
&=\frac{1}{\pi}\int_{-\pi}^\pi
(Re^{It}-q)^{-\star }\star  q\,
\alpha(Re^{I\, t})\, dt  +\alpha (0).\\
\end{split}
\end{equation}
Assume $f\equiv 1$. Since $f(x+Iy)=1$ for all $x+Iy\in B(0;R)$ we have that $\alpha(x,y)+I \beta(x,y)\equiv 1$ for all $x+Iy\in\ B(0;R)$. By assigning to $I$, e. g., the values $i,j,k, (i+j)/\sqrt 2, (i+k)/\sqrt 2, (j+k)/\sqrt 2$ we deduce that $\beta(x,y)\equiv 0$ and $\alpha(x,y)\equiv 1$.
Thus we have
\begin{equation}\label{schw4}
\frac{1}{\pi}\int_{-\pi}^\pi
(Re^{It}-q)^{-\star }\star  q\, dt =0.
\end{equation}
From \eqref{schw3} and \eqref{schw4} we obtain
$$
-f(q)=\frac{1}{\pi}\int_{-\pi}^\pi
(Re^{It}-q)^{-\star }\star  q\,
(A_f(r)-\alpha(Re^{I\, t}))\, dt  -\alpha (0)
$$
from which we deduce, since $A_f(r)-\alpha(Re^{I\, t})$ is nonnegative,
$$
|f(q)| \leq \frac{2r}{R-r}(A_f(R)- \alpha(0)) + |\alpha (0)|
$$
Taking the maximum on $|q|=r$ of both sides we obtain
$$
M_f(r) \leq \frac{2r}{R-r}(A_f(R)-\alpha(0)) + |\alpha (0)|,
$$
and this concludes the proof.
\end{proof}
In the complex case, Carath\'eodory theorem allows to estimate from below the modulus of a holomorphic function without zeros in a disc centered at the origin.
The proof makes use of the composition of the logarithm with the holomorphic function, so this technique cannot be immediately used in the quaternionic case. However, we can still prove a bound from below on the modulus of a slice regular function.
\\
Let us recall the result in the complex case (see \cite{levin} Theorem 9, p. 19):
\begin{theorem} Let $f$ be a function holomorphic in $|z|\leq R$ which has no zeros in that disk and such that $f(0)=1$. Then for any $z$ such that $|z|\leq r<R$, the modulus of $f$ satisfies
\begin{equation}
\log |f(z)| \geq - \frac{2r}{R-r}\log M_f(R).
\end{equation}
\end{theorem}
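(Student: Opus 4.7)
The plan is to reduce the desired bound to the Carathéodory inequality (already established above in the slice regular setting, which specializes to the usual complex statement) by composing $f$ with a branch of the logarithm. Since $f$ is holomorphic and nowhere vanishing in the simply connected disk $|z|\leq R$, there exists a single-valued holomorphic branch $\varphi$ of $\log f$ on this disk; the normalization $f(0)=1$ permits us to fix the branch so that $\varphi(0)=0$. The key identity to exploit is that $\operatorname{Re}\varphi(z)=\log|f(z)|$.

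Next, I would apply the Carathéodory inequality to $\varphi$. With $\alpha_\varphi(x,y)=\operatorname{Re}\varphi(x+iy)$ in the role of the ``real part'' and with $\varphi(0)=0$, the inequality collapses to
$$M_\varphi(r)\leq \frac{2r}{R-r}\, A_\varphi(R).$$
The quantity $A_\varphi(R)=\max_{|z|=R}\operatorname{Re}\varphi(z)$ is then recognized as $\max_{|z|=R}\log|f(z)|=\log M_f(R)$, so
$$M_\varphi(r)\leq \frac{2r}{R-r}\log M_f(R).$$

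To finish, observe that for any $z$ with $|z|\leq r$,
$$\log|f(z)|=\operatorname{Re}\varphi(z)\geq -|\varphi(z)|\geq -M_\varphi(r)\geq -\frac{2r}{R-r}\log M_f(R),$$
which is exactly the desired inequality. Note that $\log M_f(R)\geq 0$ here because $M_f(R)\geq |f(0)|=1$, so the right-hand side is nonpositive, as is natural for a lower bound on $\log|f|$.

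The only delicate point is the very first step, namely the existence of a global single-valued logarithm of $f$: this uses simultaneously the fact that $f$ has no zeros on the disk, that the disk is simply connected, and the normalization $f(0)=1$ to pin down the branch. Once $\varphi$ is in hand, everything else is a direct application of Carathéodory's inequality and the trivial bound $\operatorname{Re}w\geq -|w|$, with no further analytic subtlety. Note also that this is precisely the step that fails in the quaternionic setting, since slice regularity is not preserved under composition with $\operatorname{Log}$, which motivates the separate treatment alluded to in the paragraph preceding the theorem.
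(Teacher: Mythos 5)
Your proof is correct and is precisely the classical argument the paper has in mind: the paper does not prove this statement itself but cites it from Levin and, in the preceding paragraph, explicitly identifies its proof as resting on composing $f$ with the logarithm, which is exactly your route via the branch $\varphi=\log f$ with $\varphi(0)=0$ followed by the Borel--Carath\'eodory inequality $M_\varphi(r)\leq \frac{2r}{R-r}A_\varphi(R)$ and the bound $\operatorname{Re}\varphi\geq -|\varphi|$. Your closing remark correctly pinpoints why this argument does not transfer to the quaternionic case, which is what forces the paper's separate treatment (via $f^s$ and the pointwise product) in Theorem \ref{logslice}.
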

The result in the quaternionic case is similar, but the value of the constant is different:
\begin{theorem}\label{logslice} Let $f$ be a function slice regular in $|q|\leq R$ which has no zeros in that ball and such that $f(0)=1$. Then for any $q$ such that $|q|\leq r<R$, the modulus of $f$ satisfies
\begin{equation}
\log |f(q)| \geq - \frac{3r+R}{R-r}\log M_f(R).
\end{equation}
\end{theorem}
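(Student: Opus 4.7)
The basic idea is to pass to the symmetrization $f^s=f\star f^c$, which is quaternionic intrinsic, and then apply the classical complex Carath\'eodory-type lower bound (the theorem immediately preceding this one) slice-by-slice. The price paid for moving from $f$ to $f^s$ will produce exactly the constant $(3r+R)/(R-r)$ rather than the complex constant $2r/(R-r)$.

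\textbf{Step 1: set up $f^s$.} Since $f$ has no zeros in $|q|\le R$, neither does $f^s$ (by Proposition \ref{zericoniugata} or directly from Theorem \ref{pointwise}). Moreover $f^s(0)=f(0)\cdot f^c(0)=1\cdot\overline{f(0)}=1$, and by Remark \ref{symmLI_qua} $f^s$ is intrinsic, so Proposition \ref{Mfintr} gives $M_{f^s}(R)=M_{f^s_I}(R)$ for every $I\in\s$. Fix $I\in\s$ so that $q\in\mathbb{C}_I$; the restriction $f^s_I$ is then a non-vanishing holomorphic function on the disc $|z|\le R$ in $\mathbb{C}_I$ with $f^s_I(0)=1$. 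The classical Carath\'eodory lower bound stated just above (applied on the complex plane $\mathbb{C}_I$) yields
\begin{equation}\label{step1}
\log|f^s(q)|\;\ge\;-\frac{2r}{R-r}\log M_{f^s}(R).
\end{equation}

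\textbf{Step 2: compare $M_{f^s}$ with $M_f$.} From the chain of inequalities \eqref{supstar} already established in the proof of Proposition \ref{prop43}, one has $M_{f^s}(R)\le M_f(R)^2$, hence $\log M_{f^s}(R)\le 2\log M_f(R)$. Substituting in \eqref{step1} gives
\[
\log|f^s(q)|\;\ge\;-\frac{4r}{R-r}\log M_f(R).
\]

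\textbf{Step 3: pass from $f^s$ back to $f$.} This is the key quaternionic step. By Theorem \ref{pointwise}, since $f(q)\ne 0$,
\[
f^s(q)=(f\star f^c)(q)=f(q)\,f^c(T(q)),\qquad T(q):=f(q)^{-1}q\,f(q),
\]
and $T(q)\in[q]$, so $|T(q)|=|q|\le r$. Using the identity $|f^c(x+Iy)|=|f(x+Jy)|$ for a suitable $J\in\s$ from \eqref{fcab}, we obtain $|f^c(T(q))|\le M_{f^c}(r)=M_f(r)\le M_f(R)$. Therefore
\[
|f(q)|=\frac{|f^s(q)|}{|f^c(T(q))|}\;\ge\;\frac{|f^s(q)|}{M_f(R)},
\]
so $\log|f(q)|\ge\log|f^s(q)|-\log M_f(R)$. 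Combining with Step 2 and noting $\log M_f(R)\ge 0$ (since $|f(0)|=1$ forces $M_f(R)\ge 1$ by the Maximum Modulus Principle):
\[
\log|f(q)|\;\ge\;-\frac{4r}{R-r}\log M_f(R)-\log M_f(R)\;=\;-\frac{3r+R}{R-r}\log M_f(R),
\]
which is the desired inequality.

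\textbf{Anticipated obstacle.} The genuinely non-complex ingredient is Step 3, where one loses an additional factor $M_f(R)$ in converting between $|f^s|$ and $|f|$ via the pointwise formula. In the complex case this step is absent because one simply composes $f$ with $\log$; here, the symmetrization is the only device that lets us apply holomorphic estimates, but it inflates the modulus, so one must carefully account for the upper bound on $|f^c(T(q))|$ using \eqref{fcab} and \eqref{pointwise}. The bookkeeping of constants $4r/(R-r)+1=(3r+R)/(R-r)$ is then immediate.
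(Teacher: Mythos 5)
Your proposal is correct and follows essentially the same route as the paper: apply the complex Carath\'eodory lower bound to the intrinsic function $f^s$ on a slice, use $M_{f^s}(R)\le M_f(R)^2$ from \eqref{supstar}, and recover $|f(q)|$ from $|f^s(q)|$ via Theorem \ref{pointwise} at the cost of one extra factor $M_f(R)$. The bookkeeping $4r/(R-r)+1=(3r+R)/(R-r)$ matches the paper exactly.
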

\begin{proof}
Let us consider the symmetrization $f^s$ of $f$. This is a slice regular function which is intrinsic so we can use the result in the complex case, see Proposition \ref{Mfintr} and its proof, and for any $I\in\mathbb S$ we have
\begin{equation}\label{car1}
\log |f^s(q)| =\log |f^s(x+Iy)| \geq - \frac{2r}{R-r} \log M_{f_I^s}(R)=- \frac{2r}{R-r} \log M_{f^s}(R).
\end{equation}
From \eqref{supstar} we have $M_{f^s}(R)\leq M_f(R)^2$, so we deduce $\log M_{f^s}(R)\leq 2 \log M_f(R)$ or, equivalently
\begin{equation}\label{car2}
- \log M_{f^s}(R)\geq - 2 \log M_f(R).
\end{equation}
It follows that
\begin{equation}\label{car3}
\begin{split}
\log | f^s(q)| &=\log (|f(q)|\, |f^c(f(q)^{-1}q f(q))| ) \\
&= \log |f(q)| + \log |f^c(f(q)^{-1}q f(q))| \\
& \leq \log |f(q)| + \max_{|q|\leq R} \log |f^c(f(q)^{-1}q f(q))|\\
& \leq \log |f(q)| + \log (\max_{|q|\leq R}  |f^c(f(q)^{-1}q f(q))|)\\
& \leq \log |f(q)| + \log (M_f(R)).
\end{split}
\end{equation}
From \eqref{car1}, \eqref{car2}, \eqref{car3} we have
$$
\log |f(q)| + \log (M_f(R)) \geq \log | f^s(q)| \geq - \frac{2r}{R-r}\log M_{f^s}(R)
$$
which gives
$$
\log |f(q)| \geq - \frac{4r}{R-r}\log M_{f}(R) -\log (M_f(R)) = - \frac{3r+R}{R-r}\log M_{f}(R).
$$
\end{proof}
In order to provide a lower bound for the modulus of a function slice regular in a ball centered at the origin, we need the following result:
\begin{proposition}\label{f-star}
Let $a_1,\ldots, a_t\in\mathbb B$.
Given the function
$$
f(q)=\prod^{\star t}_{k=1}(1-q \overline{a_k})^{-\star}\star (q-a_k)
$$
it is possible to write $f^{-\star}(q)$, where it is defined, in the form
\begin{equation}\label{starch5}
f^{-\star}(q)=\prod^{\star 1}_{k=t}(\hat{a_k}-q )\overline{a_k}\star \prod^{\star 1}_{k=t} (q-\tilde{a}_k)^{-\star}
\end{equation}
where $\hat{a_k}\in [a_k^{-1}]$, $\tilde{a}_k\in [a_k]$, and in particular $\tilde{a}_1=a_1$, $\hat{a}_t=\bar{a}_t^{-1}$.
\end{proposition}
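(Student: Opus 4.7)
The plan is to proceed in three steps: first invert the product using the anti-multiplicative property of the $\star$-conjugate/inverse, then rewrite each binomial $1-q\bar a_k$ in a form that makes the structure apparent, and finally rearrange the resulting $\star$-product so that all inverse factors sit on the right and all linear factors sit on the left.

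\textbf{Step 1.} Since $(A\star B)^{-\star}=B^{-\star}\star A^{-\star}$ (which follows by combining the formula $(A\star B)^c=B^c\star A^c$ from Proposition \ref{propmoltiplicative} with the definition of the $\star$-inverse via symmetrization), an immediate iteration gives
\begin{equation*}
f^{-\star}(q)=\prod^{\star 1}_{k=t}\Bigl[(q-a_k)^{-\star}\star (1-q\bar a_k)\Bigr].
\end{equation*}

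\textbf{Step 2.} I would verify the identity $1-q\bar a_k=(\bar a_k^{-1}-q)\bar a_k$ at the level of $\star$-products: treating $\bar a_k^{-1}-q$ as the series $\bar a_k^{-1}-q\cdot 1$ and $\bar a_k$ as a constant series, a direct Cauchy-product calculation shows $(\bar a_k^{-1}-q)\star\bar a_k = 1 - q\bar a_k$, and since one of the factors is a constant, this $\star$-product agrees with the pointwise product $(\bar a_k^{-1}-q)\bar a_k$. Substituting, one obtains
\begin{equation*}
f^{-\star}(q)=\prod^{\star 1}_{k=t}\Bigl[(q-a_k)^{-\star}\star (\bar a_k^{-1}-q)\bar a_k\Bigr].
\end{equation*}

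\textbf{Step 3.} The main work is a rearrangement lemma that, given $\alpha,\beta,\gamma\in\mathbb H$ with $\beta\notin[\alpha]$, allows one to swap
\begin{equation*}
(q-\alpha)^{-\star}\star (\beta-q)\gamma=(\beta^{\sharp}-q)\gamma\star (q-\alpha^{\sharp})^{-\star}
\end{equation*}
for suitable $\alpha^{\sharp}\in[\alpha]$, $\beta^{\sharp}\in[\beta]$. This is the natural analogue, for an inverse-linear/linear pair, of Theorem \ref{dfactor}, and I would prove it by applying Theorem \ref{pointwise} to both sides: each $\star$-product becomes an ordinary quaternionic product in which $q$ is replaced by a conjugate $s(q)^{-1}qs(q)$ in the same sphere as $q$; equality of the two pointwise expressions then reduces to an elementary identity determining $\alpha^{\sharp},\beta^{\sharp}$ as explicit conjugates of $\alpha,\beta$ by quaternions built from $\alpha,\beta,\gamma$. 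Using this lemma inductively to push each inverse factor $(q-a_k)^{-\star}$ to the right past every linear factor $(\bar a_j^{-1}-q)\bar a_j$ with $j<k$, we end up with
\begin{equation*}
f^{-\star}(q)=\prod^{\star 1}_{k=t}(\hat a_k-q)\bar a_k\ \star\ \prod^{\star 1}_{k=t}(q-\tilde a_k)^{-\star},
\end{equation*}
where, by construction, each $\hat a_k$ lies in $[\bar a_k^{-1}]=[a_k^{-1}]$ and each $\tilde a_k$ lies in $[a_k]$. The boundary cases are automatic: the rightmost factor $(q-a_1)^{-\star}$ of the inverted product is already in its final position and never needs to move, so $\tilde a_1=a_1$; similarly, the leftmost factor $(\bar a_t^{-1}-q)\bar a_t$ is already in place, so $\hat a_t=\bar a_t^{-1}$.

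\textbf{Main obstacle.} The delicate point is the rearrangement lemma in Step 3, and in particular the careful bookkeeping of which sphere representative is produced by each elementary swap. The pointwise formula of Theorem \ref{pointwise} reduces the abstract $\star$-identity to a concrete conjugation calculation, but tracking the cumulative conjugations through $O(t^2)$ swaps is where the argument is most error-prone; one should verify once and for all that the swap lemma preserves membership in the correct spheres $[a_k]$ and $[a_k^{-1}]$, after which only the two extreme identifications $\tilde a_1=a_1$ and $\hat a_t=\bar a_t^{-1}$ need to be singled out.
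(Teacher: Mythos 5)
Your overall strategy is the paper's: invert the product, rewrite $1-q\bar a_k$ as $(\bar a_k^{-1}-q)\bar a_k$, and then inductively push the inverse-linear factors to the right past the linear ones. The genuine gap is in your proof of the rearrangement lemma in Step 3. You propose to establish the $\star$-identity $(q-\alpha)^{-\star}\star(\beta-q)\gamma=(\beta^{\sharp}-q)\gamma\star(q-\alpha^{\sharp})^{-\star}$ by evaluating both sides with Theorem \ref{pointwise}. But in that formula the conjugating element is $f(p)^{-1}(\cdot)f(p)$ with $f(p)$ the value of the left factor at the evaluation point $p$: it varies with $p$. A pointwise comparison therefore yields, for each $p$, a condition involving a $p$-dependent conjugate, and it does not follow that a single fixed pair $\alpha^{\sharp}\in[\alpha]$, $\beta^{\sharp}\in[\beta]$ satisfies the identity for all $p$. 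The paper avoids this by working at the level of coefficients: it expands $(q-\alpha)^{-\star}=(q^2-2\,{\rm Re}(\alpha)q+|\alpha|^2)^{-1}(q-\bar\alpha)$, notes that the spherical denominator has real coefficients and hence commutes with everything, applies the purely algebraic two-factor swap of Theorem \ref{dfactor} to the remaining linear factors (legitimate here because $|a_j^{-1}|>1>|a_k|$, so the relevant spheres are distinct --- a hypothesis you state but never verify), and finally moves the constant $\gamma$ through a linear factor by the identity $(q-a)\star c=c\star(q-c^{-1}ac)$, which keeps the new root in $[a]$. If you replace your pointwise argument by this reduction to Theorem \ref{dfactor}, Step 3 becomes sound.

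There is also a bookkeeping inconsistency in your boundary cases. After your Step 1 the factor $(q-a_1)^{-\star}$ is \emph{not} the rightmost factor (the rightmost is $(\bar a_1^{-1}-q)\bar a_1$), and $(\bar a_t^{-1}-q)\bar a_t$ is not the leftmost, so your justification that ``they never need to move'' does not apply to the product you actually wrote. The fix is the paper's first step: within each pair, $(q-a_k)^{-\star}$ and $(1-q\bar a_k)$ commute under $\star$ because both have all coefficients in the single complex plane $\mathbb C_{I_{a_k}}$; commuting them first puts $(1-q\bar a_t)$ leftmost and $(q-a_1)^{-\star}$ rightmost, after which $\tilde a_1=a_1$ and $\hat a_t=\bar a_t^{-1}$ really are automatic and each inverse factor only has to pass linear factors of strictly smaller index.
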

\begin{proof}
First of all, note that
$$
(q-a_k)^{-\star}\star (1-q \overline{a_k})= (1-q \overline{a_k})\star (q-a_k)^{-\star}
$$
and so
$$
f^{-\star}(q)=\prod^{\star 1}_{k=t} (1-q \overline{a_k})\star (q-a_k)^{-\star}.
$$
We prove \eqref{starch5} by induction. Assume that $t=2$ and consider
$$
f^{-\star}(q)=(1-q \overline{a_2})\star (q-a_2)^{-\star}\star (1-q \overline{a_1})\star (q-a_1)^{-\star}.
$$
Since the spheres $[a_2]$ and $[\overline{a_1}^{\,-1}]=[{a_1}^{-1}]$ are different, by Theorem \ref{dfactor} we have, for suitable $\hat a_1\in [a_1^{-1}]$, $a_2'\in [a_2]$:
\begin{equation}\label{commuta}
\begin{split}
(q-a_2)^{-\star}\star (1-q \overline{a_1}) &= (q-a_2)^{-\star} \star (\overline{a_1}^{-1}-q )\overline{a_1}\\
 &= (q^2- 2 {\rm Re}(a_2) q +|a_2|^2)^{-1}(q-a_2)\star (\overline{a_1}^{-1}-q )\overline{a_1}\\
&= (q^2- 2 {\rm Re}(a_2) q +|a_2|^2)^{-1} (\hat{a}_1-q )\star (q-a'_2)\overline{a_1}\\
&= (q^2- 2 {\rm Re}(a_2) q +|a_2|^2)^{-1} (\hat{a}_1-q )\overline{a_1}\star (q-\overline{a_1}^{-1}a'_2\overline{a_1})\\
&= (q^2- 2 {\rm Re}(a_2) q +|a_2|^2)^{-1} (\hat{a}_1-q )\overline{a_1}\star (q-\hat{a}_2)\\
&=  (\hat{a}_1-q )\overline{a_1}\star (q^2- 2 {\rm Re}(a_2) q +|a_2|^2)^{-1} (q-\hat{a}_2)\\
&=  (\hat{a}_1-q )\overline{a_1}\star  (q-\hat{a}_2)^{-\star},
\end{split}
\end{equation}
where $\hat a_2=\overline{a}_1^{-1} a_2' \overline{a}_1\in [a_2]$,
so $f^{-\star}(q)$ rewrites as
$$
f^{-\star}(q)=(\overline{a_2}^{-1}-q )\overline{a_2}\star (\hat{a}_1-q )\overline{a_1}\star  (q-\hat{a}_2)^{-\star} \star (q-a_1)^{-\star}
$$
and we have the assertion.\\
Let us assume that the statement is valid for product with $n$ factors and let us show that that it holds for $n+1$ factors.
\begin{equation}
\begin{split}
f^{-\star}(q)&=\prod^{\star 1}_{k=n+1} (1-q \overline{a_k})\star (q-a_k)^{-\star}\\
&=\prod^{\star 1}_{k=n+1} (1-q \overline{a_k})\star (q-a_k)^{-\star}\\
 &=(1-q \overline{a_{n+1}})\star (q-a_{n+1})^{-\star}\star\prod^{\star 1}_{k=n} (1-q \overline{a_k})\star (q-a_k)^{-\star}\\
&=(1-q \overline{a_{n+1}})\star (q-a_{n+1})^{-\star}\star \prod^{\star 1}_{k=n}(\hat{a_k}-q )\overline{a_k}\star \prod^{\star 1}_{k=n} (q-\tilde{a}_k).
\end{split}
\end{equation}
Now we use iteratively \eqref{commuta} to rewrite first the product $(q-a_{n+1})^{-\star}(\hat{a_n}-q )\overline{a_n}$ as $(\hat{a'_n}-q )\overline{a_n}\star (q-a'_{n+1})^{-\star}$ where $\hat{a'_n}\in[{a_n}^{-1}]$ and $a'_{n+1}\in[a_{n+1}]$. Then, with the same technique, we rewrite $(q-a'_{n+1})^{-\star}\star (\hat{a}_{n-1} -q )\overline{a_{n-1}}$ and so on.
Since the leftmost factor is $(1-q \overline{a_{n+1}})=(\overline{a_{n+1}}^{-1}-q )\overline{a_{n+1}}$ and the rightmost factor is $(q-\tilde a_1)$, we have the statement for $n+1$ factors and the thesis follows.
\end{proof}
\begin{corollary}\label{corf-star}
Let $a_1,\ldots, a_t\in B(0,2R)$.
Given the function
$$
f(q)=(2R)^t\prod^{\star t}_{k=1}((2R)^2-q \overline{a_k})^{-\star}\star (q-a_k)
$$
it is possible to write $f^{-\star}(q)$, where it is defined, in the form
$$
f^{-\star}(q)=(2R)^{-t}\prod^{\star 1}_{k=t}((2R)^2\hat{a_k}-q )\overline{a_k}\star \prod^{\star 1}_{k=t} (q-\tilde{a}_k)^{-\star},
$$
where $\hat{a_k}\in [a_k^{-1}]$, $\tilde{a}_k\in [a_k]$, and in particular $\tilde{a}_1=a_1$, $\hat{a}_t=\bar{a}_t^{-1}$ and
$$
f(q)=(2R)^{t}\prod^{\star 1}_{k=t}((2R)^2\hat{a_k}-q )\overline{a_k}\star \prod^{\star 1}_{k=t} (q-\tilde{a}_k)^{-\star}.
$$
\end{corollary}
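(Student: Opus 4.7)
The plan is to reduce the corollary to Proposition \ref{f-star} via a real rescaling of the variable. Since multiplication by a real scalar commutes with every slice regular operation (in particular with the $\star$-product and with $\star$-inversion), the factorization identity from Proposition \ref{f-star} transfers directly to the rescaled setting, and the only real work is bookkeeping of the powers of $2R$.

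First I would introduce the change of variable $p = q/(2R)$ together with the rescaled points $b_k = a_k/(2R)$. Because $|a_k| < 2R$, each $|b_k|<1$, so the sequence $b_1,\dots,b_t$ satisfies the hypothesis of Proposition \ref{f-star}. A direct computation gives
\[
(2R)^2 - q\,\overline{a_k} = (2R)^2\bigl(1 - p\,\overline{b_k}\bigr), \qquad q - a_k = 2R\,(p - b_k),
\]
so that each factor in the definition of $f$ obeys
\[
\bigl((2R)^2 - q\,\overline{a_k}\bigr)^{-\star}\star (q-a_k) = (2R)^{-1}\bigl(1 - p\,\overline{b_k}\bigr)^{-\star}\star (p - b_k).
\]
Since $2R$ is real it commutes with everything under the $\star$-product, and pulling the $t$ scalar factors of $(2R)^{-1}$ out together with the prefactor $(2R)^t$ gives $f(q) = g(p)$, where
\[
g(p) = \prod^{\star t}_{k=1}(1 - p\,\overline{b_k})^{-\star}\star (p - b_k)
\]
is exactly the function of Proposition \ref{f-star}.

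Next I would apply Proposition \ref{f-star} to $g$ to get
\[
g^{-\star}(p) = \prod^{\star 1}_{k=t}(\hat{b}_k - p)\,\overline{b_k}\star \prod^{\star 1}_{k=t}(p - \tilde{b}_k)^{-\star},
\]
with $\hat{b}_k \in [b_k^{-1}]$, $\tilde{b}_k \in [b_k]$, $\tilde{b}_1 = b_1$ and $\hat{b}_t = \overline{b_t}^{\,-1}$. Now I would translate the representatives back: because the spheres $[\,\cdot\,]$ are preserved by real scalings, writing $\hat{b}_k = 2R\,\hat{a}_k$ with $\hat{a}_k \in [a_k^{-1}]$ and $\tilde{b}_k = \tilde{a}_k/(2R)$ with $\tilde{a}_k \in [a_k]$ yields $\tilde{a}_1 = a_1$ and $\hat{a}_t = \overline{a_t}^{\,-1}$. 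Substituting these into the above and tracking the resulting scalars, each factor $(\hat{b}_k - p)\overline{b_k}$ contributes a $(4R^2)^{-1}$ while each $(p-\tilde{b}_k)^{-\star}$ contributes a $2R$, so that the overall scalar in front of the translated product is $(4R^2)^{-t}(2R)^{t} = (2R)^{-t}$. Since $f(q) = g(p)$ implies $f^{-\star}(q) = g^{-\star}(p)$, this gives the first displayed formula.

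The only delicate point is keeping the ordering of the $\star$-factors consistent when moving real scalars past non-commuting terms, but here all the rescaling constants are real and hence central for the $\star$-product, so no reordering issue arises; this is really the whole content of the proof, and I expect no genuine obstacle beyond careful bookkeeping. The second displayed formula is then obtained by the same translation applied to the factored expression for $f$ itself: starting from $f(q)=g(p)$ and reapplying the substitution to the already-factored right-hand side (or, equivalently, by multiplying the first identity by the appropriate central scalar coming from $f\star f^{-\star}=1$) gives the stated form.
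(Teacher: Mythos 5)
Your proposal is correct, and it takes a genuinely different route from the paper: the paper's proof of Corollary \ref{corf-star} consists of the single remark that one ``follows the proof of Proposition \ref{f-star} with minor modifications,'' i.e.\ it re-runs the inductive commutation argument with the factors $(2R)^2-q\overline{a_k}$ in place of $1-q\overline{a_k}$, whereas you reduce the corollary to the already-proved proposition by the real rescaling $q=2Rp$, $a_k=2Rb_k$. Your reduction is legitimate because the substitution $q\mapsto 2Rp$ is composition with a real-coefficient intrinsic map, hence commutes with $\star$-products and $\star$-inverses, and real scalars are central for $\star$; your bookkeeping $(4R^2)^{-t}(2R)^t=(2R)^{-t}$ and the identifications $\hat b_k=2R\hat a_k$, $\tilde b_k=\tilde a_k/(2R)$, $\tilde a_1=a_1$, $\hat a_t=\overline{a_t}^{\,-1}$ all check out. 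What your approach buys is that no step of the induction needs to be repeated and the provenance of the powers of $2R$ is completely transparent; what the paper's approach buys is that it never needs the (easy but unstated) fact that rescaling commutes with the $\star$-operations. One caveat on the final display: as printed in the corollary the second formula for $f(q)$ repeats the same product as the formula for $f^{-\star}(q)$ with only the sign of the exponent of $2R$ flipped, which cannot be meant literally (it would force $f\star f=(2R)^{2t}$); what is actually used later, in the proof of Theorem \ref{cartansr}, is the $\star$-inverse of that product, namely $f(q)=(2R)^{t}\prod^{\star t}_{k=1}(q-\tilde a_k)\star\prod^{\star 1}_{k=t}\bigl(((2R)^2\hat a_k-q)\overline{a_k}\bigr)^{-\star}$, and your parenthetical remark about obtaining it from $f\star f^{-\star}=1$ is exactly the right way to derive that corrected form; you should state it explicitly rather than leave it as an aside.
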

\begin{proof}
The proof follows
from the proof of Proposition \ref{f-star} with minor modifications.
\end{proof}
\begin{theorem}\label{cartansr}
Let $f(q)$ be a function slice regular in the ball with center at the origin and radius $2eR$, where $R>0$. Let $f$ be  such that $f(0)=1$ and let $f=gh$ where $g$ has at most spherical zeros only and $h$ has at most isolated zeros only. Let $\eta$ be an arbitrary real number belonging to $(0,3e/2]$. For any $q\in B(0,R)$ but outside a family of balls whose radii have sum not exceeding $4\eta R$ we have
$$
\log|f(q)| > - \left(5+\log\left(\frac{3e}{2\eta}\right)\right) \log (M_g(2eR)M_h(2eR)).
$$
\end{theorem}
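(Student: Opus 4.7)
The plan is to mirror Levin's proof of the classical Cartan lower bound (\cite{levin}, Theorem 11), using as quaternionic ingredients the Jensen inequality (Proposition \ref{prop43}), the Cartan theorem for quaternionic polynomials (Theorem \ref{cartan}), the lower bound for zero-free slice regular functions (Theorem \ref{logslice}), and the explicit $(\star)$-inversion formula for Blaschke-type products (Corollary \ref{corf-star}).

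First, list the zeros $\gamma_1,\dots,\gamma_N$ of $f$ in $B(0,2R)$ with the paper's convention (each spherical zero $[\beta]$ contributing the conjugate pair $\beta,\bar\beta$, isolated zeros with multiplicity). Applying Proposition \ref{prop43} to $f$ on the ball of radius $2R$ gives $N\le\log(M_g(2eR)M_h(2eR))$. Next form the Blaschke-type product $B(q)$ adapted to the ball of radius $2R$, built from the factors $((2R)^2-q\bar\gamma_k)^{-\star}\star(q-\gamma_k)\,(\bar\gamma_k/|\gamma_k|)\,(2R)$, so that $|B(q)|\le 1$ on $|q|\le 2R$ with equality on the boundary and $B$ vanishes exactly at $\{\gamma_1,\dots,\gamma_N\}$. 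Set $F=B^{-\star}\star f$; by Corollary \ref{corf-star}, $B^{-\star}$ is slice regular wherever it is defined, and since $B$ absorbs all zeros of $f$ in $B(0,2R)$, $F$ is slice regular and nonvanishing on this ball.

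The boundary identity $|f(q)|=|B(q)|\,|F(\tilde q)|$ for a suitable $\tilde q\in[q]$ (via Theorem \ref{pointwise}) together with $|B|=1$ on $|q|=2R$ gives $M_F(2R)\le M_f(2R)\le M_g(2R)M_h(2R)\le M_g(2eR)M_h(2eR)$, while a direct computation shows $|F(0)|\ge 1$. Applying Theorem \ref{logslice} to $F/F(0)$ with inner radius $R$ and outer radius $2R$, and using $\log|F(0)|\ge 0$, produces $\log|F(q)|\ge -5\log(M_g(2eR)M_h(2eR))$ on $B(0,R)$, with the $5$ coming from $(3R+2R)/(2R-R)$. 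For the lower bound on $|B|$, rewrite $B(q)=c\cdot P(q)\star Q(q)^{-\star}$ for an explicit constant $c$, with $P(q)=\prod_{k=1}^{\star N}(q-\gamma_k)$ a degree-$N$ slice regular polynomial and $Q(q)=\prod_{k=1}^{\star N}((2R)^2-q\bar\gamma_k)$ nonvanishing on $B(0,2R)$. An elementary upper bound $|Q(q)|\le C^N$ on $B(0,R)$ combined with the quaternionic Cartan theorem (Theorem \ref{cartan}) applied to $P$ with parameter $H=2\eta R$ exhibits a union of balls whose radii sum to at most $4\eta R$ outside of which $|P(q)|>(2\eta R/e)^N$; collecting the constants produces $\log|B(q)|\ge -N\log(3e/(2\eta))$ outside the exceptional set.

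Combining the two estimates and using $N\le\log(M_g(2eR)M_h(2eR))$ yields
$$\log|f(q)|=\log|B(q)|+\log|F(\tilde q)|\ge -\bigl(5+\log(3e/(2\eta))\bigr)\log(M_g(2eR)M_h(2eR))$$
on $|q|\le R$ outside the exceptional family, as required. The main obstacles will be (i) handling $|f\star g|(q)=|f(q)||g(\tilde q)|$ carefully (as opposed to the pointwise product) when passing the $F$-estimate through $\tilde q\in[q]$ and when factoring $B$ into $P$ and $Q$, and (ii) pinning down the constant $C$ in $|Q|\le C^N$ with enough precision to recover exactly $3e/(2\eta)$ rather than a weaker numerical factor, which requires carefully balancing the $(2R)^N$ that appears from the Blaschke normalization against the bound on each factor of $Q$ on $B(0,R)$.
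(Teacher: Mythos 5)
Your proposal follows essentially the same route as the paper's proof: divide $f$ by a Blaschke-type product on the ball of radius $2R$ built from the zeros (inverted via Corollary \ref{corf-star}), bound the zero-free quotient from below by Theorem \ref{logslice} with radii $R$ and $2R$ (giving the constant $5$), bound the Blaschke part from below by Cartan's Theorem \ref{cartan} with $H=2\eta R$ together with the elementary bound $|(2R)^2-q\bar\gamma_k|\le 6R^2$ (giving $\log(3e/(2\eta))$ per zero), and control the number of zeros by the Jensen inequality of Proposition \ref{prop43}. The only differences are organizational (you list spherical zeros as conjugate pairs in a single product and normalize the Blaschke product on the boundary, whereas the paper separates isolated and spherical factors and normalizes at the origin), and the two obstacles you flag are exactly the points the paper handles with Theorem \ref{pointwise} and the $6R^2$ bound, so the constants work out as you hoped.
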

\begin{proof}
Let us assume that $f$ has, in the given ball $B(0,2eR)$, $t$ isolated zeros $\alpha_1,\ldots, \alpha_t$ and $p$ spherical zeros $[\beta_1],\ldots, [\beta_p]$ (note that some elements may be repeated). Set $n=t+2p$ and consider the function
$$
\phi(q)=(-1)^n(2R)^{2n} \prod^{\star t}_{k=1}((2R)^2-q \overline{a_k})^{-\star}\star (q-a_k)
$$
$$ \cdot\prod_{\ell=1}^p ((2R)^4-2(2R)^2 {\rm Re}(\beta_\ell) q +q^2|\beta_\ell |^2)^{-1}
(q^2 -2 {\rm Re}(\beta_\ell) q +|\beta_\ell|^2)
$$
$$
\cdot (a_1\cdots a_t|\beta_1|^2\cdots |\beta_p|^2)^{-1}.
$$
In the sequel, it will be useful to set
$$
\phi_1(q)= \prod^{\star t}_{k=1}((2R)^2-q \overline{a_k})^{-\star}\star (q-a_k)
$$
and
$$
\phi_2(q)= \prod_{\ell=1}^p ((2R)^4-2(2R)^2 {\rm Re}(\beta_\ell) q +q^2|\beta_\ell |^2)^{-1}
(q^2 -2 {\rm Re}(\beta_\ell) q +|\beta_\ell|^2).
$$
In the $\star$-product above, the elements $a_k\in [\alpha_k]$, $k=1,\ldots, t$, have to be suitably chosen.
More precisely, let us apply Theorem \ref{R-fattorizzazslice} in order to write $f(q)$ in the form
\begin{equation}\label{fsplit}
f(q)=\prod_{k=1}^{\star t} (q - a'_k) \prod_{\ell=1}^p (q^2 -2 {\rm Re}(\beta_\ell) q +|\beta_\ell|^2) \star g(q)
\end{equation}
where $g(q)\not =0$ and some elements $a'_k$, $\beta_\ell$ may be repeated. Note that $a'_1=a_1$ while $a'_k\in [\alpha_k]$ are chosen in order to obtain the assigned isolated zeros. Then in order to construct the function
$\phi(q)$ we choose $a_k$ such that the elements $\tilde a_k$ constructed in the proof of Theorem \ref{f-star} are such that $\tilde a_k =a_k$.\\
We now evaluate $\phi(0)$: by Theorem \ref{pointwise}, since $0$ is real, it suffices to take the product of the factors evaluated at the origin and so
\[
\begin{split}
\phi(0)&=(-1)^n(2R)^{2n} \prod^{t}_{k=1}(2R)^{-2}(-a_k)
\prod_{\ell=1}^p (2R)^{-4}|\beta_\ell|^2 (a_1\cdots a_t|\beta_1|^2\cdots |\beta_p|^2)^{-1}\\
&= (a_1\cdots a_t|\beta_1|^2\cdots |\beta_p|^2)(a_1\cdots a_t|\beta_1|^2\cdots |\beta_p|^2)^{-1}\\
&=1.
\end{split}
\]
Let us now evaluate $|\phi(2Re^{I\theta})|$ for $I\in\mathbb S$. By Theorem \ref{pointwise}, the evaluation of the $\star$-product
$$
\prod^{\star t}_{k=1}((2R)^2-q \overline{a_k})^{-\star}\star (q-a_k)
$$
at the point $2Re^{I\theta}$ gives
$$
\prod^{t}_{k=1}((2R)^2- 2Re^{I_k \theta} \overline{a_k})^{-1} (2R e^{I_k \theta}-a_k)
$$
where $I_1=I$ while $I_2, \ldots, I_t$ have to be chosen according to Theorem \ref{pointwise}.
Then we have
\[
\begin{split}
&|\prod^{t}_{k=1}((2R)^2- 2Re^{I_k \theta} \overline{a_k})^{-1} (2R e^{I_k \theta}-a_k)|\\
&=(2R)^{-t}\prod^{t}_{k=1}|((2R)- e^{I_k \theta} \overline{a_k})^{-1}|\, | 2R e^{I_k \theta}-a_k |\\
&=(2R)^{-t}\prod^{t}_{k=1}|(2R- e^{I_k \theta} \overline{a_k})^{-1}|\, | e^{I_k \theta}-a_k|\\
&=(2R)^{-t}\prod^{t}_{k=1}|(2Re^{-I_k \theta} -  \overline{a_k})^{-1}|\, | (2R e^{I_k \theta}-a_k)|\\
&=(2R)^{-t}.
\end{split}
\]
Similarly,
\[
\begin{split}
&|\prod_{\ell=1}^p ((2R)^4-2(2R)^3 {\rm Re}(\beta_\ell) e^{I_t \theta} +(2R e^{I_t \theta})^2|\beta_\ell |^2)^{-1}
\\
&\times((2R e^{I_t \theta})^2 -2 {\rm Re}(\beta_\ell) (2R e^{I_t \theta}) +|\beta_\ell|^2)|
\\
&\times (2R)^{-2p}\prod_{\ell=1}^p |(2R)^2- 2(2R) {\rm Re}(\beta_\ell) e^{I_k \theta} + e^{2I_k \theta}|\beta_\ell |^2|^{-1}\\
&\times
|((2R)^2 e^{2I_k \theta} -2 (2R) {\rm Re}(\beta_\ell)  e^{I_k \theta} +|\beta_\ell|^2))|
\\
&=(2R)^{-2p}.
\end{split}
\]
Thus we have
$$
|\phi(2R e^{I_t \theta})|=(2R)^n(|a_1|\cdots|a_t|\,|\beta_1|^2\cdots |\beta_p|^2|)^{-1}.
$$
We now consider the function
$$
\psi(q)=\phi^{-\star}(q)\star f(q)
$$
which, by definition, is slice regular. Note also that, by Corollary \ref{corf-star} and formula \eqref{fsplit} we have
\[
\begin{split}
\psi(q)&=\phi^{-\star}(q)\star f(q)\\
&= (2R)^{-t}\prod^{\star 1}_{k=t}((2R)^2\hat{a_k}-q )\overline{a_k}\star \prod^{\star 1}_{k=t} (q-\tilde{a}_k)^{-\star}\\
&\star \prod_{k=1}^{\star t} (q - a'_k) \prod_{\ell=1}^p (q^2 -2 {\rm Re}(\beta_\ell) q +|\beta_\ell|^2) \star g(q)\\
&= (2R)^{-t}\prod^{\star 1}_{k=t}((2R)^2\hat{a_k}-q )\overline{a_k} \prod_{\ell=1}^p (q^2 -2 {\rm Re}(\beta_\ell) q +|\beta_\ell|^2) \star g(q)\\
\end{split}
\]
and so $\phi$ does not have zeros in the ball $|q|\leq 2R$, in fact  the factors
$$
(2R)^2-q \overline{a_t},
\quad {\rm and}\quad
(2R)^4-2(2R)^2 {\rm Re}(\beta_\ell) q +q^2|\beta_\ell |^2,
$$
have roots which are clearly outside that ball and $g(q)$ does not vanish.
Applying Theorem \ref{logslice}, we have that for any $q$ such that $|q|\leq R<2R$, the modulus of $f$ satisfies
\begin{equation}
\begin{split}
\log |\psi (q)| &\geq - 5 \log M_{\psi}(2R)\\
&=- 5 \log M_{f}(2R) + 5 \log M_{\phi}(2R)\\
&= - 5 \log M_{f}(2R) + 5 \log |\phi(2R e^{I_t \theta})|\\
&= - 5 \log M_{f}(2R)\\
&\geq - 5 \log M_{f}(2eR)\\
&\geq - 5 \log (M_{g}(2eR)M_{h}(2eR))
\end{split}
\end{equation}
where we used the fact that $|\phi(2R e^{I_t \theta})|>1$.
We now need a lower bound on
$$\phi(q)=(-1)^n(2R)^{2n}\phi_1(q) \phi_2(q)(a_1\cdots a_t|\beta_1|^2\cdot |\beta_p|^2)^{-1}.$$
 To this end, it is useful to write $\phi_1(q)$, using Corollary
\ref{corf-star}, in the form
$$
\phi(q)=(2R)^{-t} \prod^{\star t}_{k=1} (q-\tilde{a}_k) \star \prod^{\star 1}_{k=t}(((2R)^2\hat{a_k}-q )\overline{a_k})^{-\star}.
$$
Then we have
$$
|\prod^{\star 1}_{k=t}(((2R)^2\hat{a_k}-q )\overline{a_k})^{-\star}|=
\prod^{1}_{k=t}|((2R)^2\hat{a_k}-q_k )\overline{a_k})^{-1}|,
$$
where $q_k$ are suitable elements in $[q]$ (computed applying Theorem \ref{pointwise}), and it is immediate that
$$
\prod^{1}_{k=t}|((2R)^2\hat{a_k}-q_k )\overline{a_k})|\leq (6R^2)^t,
$$
so we deduce
$$
|\prod^{\star 1}_{k=t}(((2R)^2\hat{a_k}-q )\overline{a_k})^{-\star}|\geq \frac{1}{(6R^2)^t}.
$$
Applying Cartan Theorem \ref{cartan} we have that outside some exceptional balls
$$
\prod^{\star t}_{k=1} (q-\tilde{a}_k) \geq \left(\frac{2\eta R}{n}\right)^t.
$$
A lower bound on $\phi_2(q)$ can be provided by observing that $\phi_2(q)$ can be written as
$$
\phi_2(q)= \prod_{\ell=1}^p ((2R)^4-2(2R)^2 {\rm Re}(\beta_\ell) q +q^2|\beta_\ell |^2)^{-1}
\prod_{\ell=1}^p (q^2 -2 {\rm Re}(\beta_\ell) q +|\beta_\ell|^2).
$$
Computations similar to those done in the case of $\phi_1(q)$ show that
$$
|\prod_{\ell=1}^p ((2R)^4-2(2R)^2 {\rm Re}(\beta_\ell) q +q^2|\beta_\ell |^2)^{-1}|\geq (6R^2)^{2p}
$$
and
\[
\begin{split}
|\prod_{\ell=1}^p (q^2 -2 {\rm Re}(\beta_\ell) q +|\beta_\ell|^2)&= \prod_{\ell=1}^p (q -\beta_\ell) \star (q -\overline{\beta_\ell})|\\
&\geq \left(\frac{2\eta R}{n}\right)^{2p}.
\end{split}
\]
Therefore outside the exceptional balls we have
\[
\begin{split}
 |\phi(q)|&\geq \frac{(2R)^n}{|a_1\cdots a_t|\, |\beta_1|^2\cdots |\beta_p|^2}  \left(\frac{2\eta R}{n}\right)^n \frac{1}{(6R^2)^n}\\
 & \geq\left(\frac{2\eta}{3e}\right)^n.
\end{split}
\]
Moreover, Proposition \ref{prop43} gives
 $$
 n= n_f(2R)\leq \log ( M_g(2eR) M_h(2eR)),
 $$
 and so, outside the exceptional balls
 $$
 \log |\phi(q)| \geq \log\left(\frac{2\eta}{3e}\right) \log ( M_g(2eR) M_h(2eR)).
 $$
 Since
 \[
 \begin{split}
 \log |f(q)| &= \log |\psi(q)| +\log |\phi(q)| \\
 &\geq - \left(5+\log\left(\frac{3e}{2\eta}\right)\right) \log M_{f}(2eR)
 \end{split}
 \]
the statement follows.
\end{proof}

\section{Growth of the $\star$-product of entire slice regular functions}

Given two entire slice regular functions, it is a natural question to ask if we can characterize order and type of their $\star$-product if we know order and type of the factors. The answer is positive and is contained in the next result:
\begin{theorem}\label{prodgrowth}
Let $f$ and $g$ be two entire functions of order and type $\rho_f$, $\sigma_f$ and $\rho_g$, $\sigma_g$, respectively, and let $\rho_{f\star g}$, $\sigma_{f\star g}$ be order and type of the product $f\star g$. Then:
\begin{enumerate}
\item[(1)] If $\rho_f\not=\rho_g$ then $\rho_{f\star g}=\max(\rho_f, \rho_g)$ and $\sigma_{f\star g}$ equals the type of the function with larger order.
\item[(2)] If $\rho_f=\rho_g$ , one function has normal type $\sigma$ and the other has minimal type, then $\rho_{f\star g}=\rho_f =\rho_g$ and $\sigma_{f\star g}=\sigma$.
\item[(3)] If $\rho_f=\rho_g$ , one function has maximal type and the other has at most normal type, then $\rho_{f\star g}=\rho_f =\rho_g$ and $\sigma_{f\star g}$  has maximal type.
\end{enumerate}
\end{theorem}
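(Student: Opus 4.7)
The plan is to obtain the upper bounds on growth from the pointwise factorization of Theorem \ref{pointwise}, and then match them with lower bounds by pulling $g$ off the $\star$-product via its symmetrization $g^s$, on which the Cartan-type estimate of Theorem \ref{cartansr} can be brought to bear. For the upper bounds I would apply Theorem \ref{pointwise}: for every $q$ with $f(q)\neq 0$, $(f\star g)(q)=f(q)\,g(f(q)^{-1}qf(q))$, and since $|f(q)^{-1}qf(q)|=|q|$, maximizing over $|q|=r$ yields $M_{f\star g}(r)\leq M_f(r)M_g(r)$ (trivially at zeros of $f$). Inserting this into the $\overline{\lim}$ definitions of order and type gives $\rho_{f\star g}\leq\max(\rho_f,\rho_g)$ in all three cases, and when $\rho_f=\rho_g$, $\sigma_{f\star g}\leq\sigma_f+\sigma_g$; in particular $\sigma_{f\star g}\leq\sigma$ in case (2), while there is no type claim to verify in case (3).

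For the lower bounds I would use the identity $(f\star g)\star g^c=f\star g^s$. Because $g^s$ is quaternionic intrinsic (Remark \ref{symmLI_qua}), a direct calculation based on Theorem \ref{pointwise} shows that $f\star g^s$ is simply the pointwise product $g^s\cdot f$: writing $q=x+Iy$ and $I'=f(q)^{-1}If(q)$, the intrinsic property of $g^s$ gives $g^s(f(q)^{-1}qf(q))=\alpha(x,y)+I'\beta(x,y)=f(q)^{-1}g^s(q)f(q)$, and the conjugation cancels once multiplied on the left by $f(q)$. Using Corollary \ref{orderfc} to replace $M_{g^c}$ by $M_g$, this produces the key inequality
\begin{equation*}
|g^s(q)|\,|f(q)|=|((f\star g)\star g^c)(q)|\leq M_{f\star g}(r)\,M_g(r),\qquad |q|=r.
\end{equation*}
After normalizing $g^s(0)=1$ (any zero of $g^s$ at the origin splits off as a real monomial without affecting order or type), Theorem \ref{cartansr} applied to $g^s$ yields, for each $\eta\in(0,3e/2]$, exceptional balls in $B(0,r)$ with radii summing to at most $4\eta r$, outside of which $|g^s(q)|\geq M_{g^s}(2er)^{-C(\eta)}$. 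Combined with $M_{g^s}(r)\leq M_g(r)^2$ from (\ref{supstar}), this gives
\begin{equation*}
\log|f(q)|\leq \log M_{f\star g}(r)+\log M_g(r)+2C(\eta)\log M_g(2er)
\end{equation*}
for every $q\in B(0,r)$ outside the exceptional balls.

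The main obstacle will be converting this pointwise estimate, valid only outside the exceptional set, into a bound on $M_f(r)$; I would do this by observing that $f$ is slice regular on the whole ball $B(0,r)$ so that the Maximum Modulus Principle, applied inside each exceptional ball whose boundary carries the estimate, extends the bound to all of $B(0,r)$. Once
\begin{equation*}
\log M_f(r)\leq \log M_{f\star g}(r)+\log M_g(r)+2C(\eta)\log M_g(2er)
\end{equation*}
is secured, the three cases follow cleanly by dividing by $r^{\rho_f}$ and passing to $\overline{\lim}$: in case (1), the last two terms vanish because $\rho_g<\rho_f$, forcing $\sigma_f\leq\sigma_{f\star g}$ and hence equality of both orders and types; in case (2), $\sigma_g=0$ makes the $M_g$-contributions vanish on the $r^\rho$-scale regardless of $C(\eta)$, yielding $\sigma\leq\sigma_{f\star g}$; in case (3), a finite value of $\sigma_{f\star g}$ together with the at-most-normal type of $g$ would force a finite type for $f$, contradicting the maximal-type hypothesis, so $\sigma_{f\star g}$ must be of maximal type.
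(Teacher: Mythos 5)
Your upper bound $M_{f\star g}(r)\le M_f(r)M_g(r)$ is exactly the paper's, but for the crucial lower bound you take a genuinely different route. The paper works with $f\star g$ directly: it normalizes $g(0)=1$, applies Theorem \ref{cartansr} to $g$ itself, uses the fact that the exceptional radii sum to at most $\delta R/2$ to select a sphere $|q|=r_1$ that misses every exceptional ball, and then exploits $(f\star g)(q)=f(q)\,g(f(q)^{-1}qf(q))$ at a point of that sphere where $|f|$ is maximal — the rotated point stays on the same good sphere, so $\log M_{f\star g}(r_1)\ge \log M_f(r_1)-C(\delta)\log M_g(2eR)$ comes out with no need to re-enter the exceptional set. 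You instead pass to $(f\star g)\star g^c=f\star g^s=g^s\cdot f$ and divide by a Cartan-type lower bound for $|g^s|$. Both mechanisms rest on the same two pillars (Theorems \ref{pointwise} and \ref{cartansr}); yours has the advantage that $g^s$ is intrinsic, so the spherical/isolated splitting in Theorem \ref{cartansr} is trivial, and it delivers a clean inequality between maximum moduli rather than an estimate on one well-chosen sphere — but at the cost of having to re-cover the exceptional set.

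That re-covering is the one step you treat too casually. The estimate of Theorem \ref{cartansr} is asserted only for $q\in B(0,R)$, and the exceptional balls may overlap one another and may straddle the sphere $|q|=r$, so it is not automatic that "the boundary of each exceptional ball carries the estimate". The repair is standard but should be spelled out: run Theorem \ref{cartansr} on a slightly larger ball, group the exceptional balls into connected components (each of diameter $O(\eta r)$ since the radii sum to $4\eta r$), observe that every component meeting $\overline{B(0,(1-c\eta)r)}$ is then compactly contained in the region where the estimate holds, apply the Maximum Modulus Principle component by component, and finally let $\eta\to 0$ to remove the $(1-c\eta)^\rho$ loss in the type — exactly the role played by $\delta\to 0$ in the paper. (Or simply adopt the paper's good-sphere device and avoid the maximum principle altogether.) A second small point: your key inequality isolates the left factor $f$, and since $\star$ is noncommutative you cannot just relabel; when the hypotheses place the dominant function in the right slot (e.g.\ $\rho_g>\rho_f$ in case (1), or $g$ of normal type in case (2)) you need the mirror identity $f^c\star(f\star g)=f^s\star g=f^s\cdot g$ to isolate $g$ instead. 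With these two repairs your argument is sound and yields all three cases.
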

\begin{proof}
We show that $(2)$ holds, the other two statements follow with similar arguments.\\
Recalling \eqref{ineqMfr}, denoting by $\rho$ the order of $f$ and $g$, and assuming that $f$ has normal type $\sigma_f$, we have
$$
M_f(r)< e^{(\sigma_f +\varepsilon/2)r^{\rho}}, $$
and
$$
M_g(r)< e^{(\varepsilon/2)r^{\rho}},
$$
and so
\[
\begin{split}
M_{f\star g}(r)&=\max_{|q|=r}(|(f\star g)(q)|\\
&\leq \max_{|q|=r}|f(q)| \max_{|q|=r}|g(q)|
\\
&=M_f(r)M_g(r)\\
&<
e^{(\sigma_f +\varepsilon)r^{\rho}}.
\end{split}
\]
We now need a lower bound for $M_{f\star g}(r)$.
First of all, we can find a positive number $R_1$, large enough, such that for two given positive numbers $\varepsilon$ and $\delta$ the inequality
\begin{equation}\label{Mfine}
M_f(R_1)>  e^{(\sigma_f - \varepsilon/2)R_1^{\rho}}
\end{equation}
holds,
and for all $R\geq R_1$
$$
M_g(R)<  e^{\delta R^{\rho}}.
$$
We are now in need to use Theorem \ref{cartansr}.
To this end we assume, with no loss of generality, that $g(0)=1$.
In fact, if this is not the case, we can $\star$-multiply $g(q)$ on the left by the factor $q^{-m}c$ for suitable $c\in\mathbb H$ and $m\in\mathbb N$, without changing its order and type.
We now assume $0< \delta <1$, $R=R_1(1-\delta)^{-1}$ and take $\eta=\delta/8$. Inside the ball centered at the origin and with radius $R$, we exclude the balls described in Theorem \ref{cartansr}. Note that the sum of their diameters is less than $\delta R$.
So there exists $r_1\in (R_1,R)$ such that the ball centered at the origin and with radius $r_1$ does not meet any of these excluded balls.
By Theorem \ref{cartansr} on this ball of radius $r_1$ we have
\begin{equation}\label{inecartan}
\log|g(q)| > - \left(5+\log\left(\frac{12 e}{\delta}\right)\right) \log M_g(2eR),
\end{equation}
moreover, since $R_1 <r_1 < R_1(1-\delta)^{-1}$ and \eqref{Mfine} is in force, we have
$$
M_f(r_1) > M_f(R_1)>  e^{(\sigma - \varepsilon/2)R_1^{\rho}}>  e^{(\sigma - \varepsilon/2)(1-\delta)^{\rho}r_1^{\rho}}.
$$
This latter inequality and \eqref{inecartan} yield
$$
\log M_{f\star g}(r_1) > (\sigma - \varepsilon/2)(1-\delta)^{\rho}r_1^{\rho} - \left(5+\log\left(\frac{12 e}{\delta}\right)\right) \log M_g(2eR).
$$
Since
$$
\log M_g(2eR) < \delta (2eR)^{\rho}< \delta (1-\delta)^{-\rho}(2e)^{\rho} r_1^{\rho},
$$
we finally  have
$$
\log M_{f\star g}(r_1) > \left[(\sigma - \varepsilon/2)(1-\delta)^{\rho} - \left(5+\log\left(\frac{12 e}{\delta}\right)\right)\delta (1-\delta)^{-\rho}(2e)^{\rho} \right] r_1^{\rho} .
$$
For any given $\varepsilon$ we can choose $\delta$ such that
$$
\left[(\sigma - \varepsilon/2)(1-\delta)^{\rho} - \left(5+\log\left(\frac{12 e}{\delta}\right)\right)\delta (1-\delta)^{-\rho}(2e)^{\rho} \right]\geq \sigma-\varepsilon
$$
and so
$$
M_{f\star g}(r_1) > e^{(\sigma-\varepsilon)r_1^{\rho}}.
$$
This ends the proof of point $(2)$.
\end{proof}
Recalling that $f^s=f\star f^c$ and Corollary \ref{orderfc}, we immediately have:
\begin{corollary} \label{orderfs}
Given an entire slice regular function $f$, the order and type of $f^s$ coincide with the order and type of $f$.
\end{corollary}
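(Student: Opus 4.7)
The plan is simple: combine the two main facts just proved. Since $f^s = f \star f^c$ by Definition \ref{fsymmetr}, Corollary \ref{orderfs} amounts to computing the order and type of a $\star$-product whose two factors have identical growth. Corollary \ref{orderfc} already tells us that $f^c$ has the same order $\rho$ and the same type $\sigma$ as $f$, so Theorem \ref{prodgrowth} applied to the pair $(f, f^c)$ should immediately yield the conclusion. The only work lies in verifying that the relevant case of Theorem \ref{prodgrowth} actually applies and in sharpening the resulting bound when needed.

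Concretely, I would first establish the upper bound on order by the pointwise inequality already derived in \eqref{supstar}, namely
\begin{equation*}
M_{f^s}(r) \leq M_f(r)\, M_{f^c}(r) = M_f(r)^2,
\end{equation*}
from which
\begin{equation*}
\frac{\log\log M_{f^s}(r)}{\log r} \leq \frac{\log 2 + \log\log M_f(r)}{\log r} \longrightarrow \rho,
\end{equation*}
so $\rho_{f^s} \leq \rho$ and, at order $\rho$, the type of $f^s$ is finite whenever $\sigma$ is. Next I would match the pair $(f, f^c)$ against the cases of Theorem \ref{prodgrowth}. Case (1) is automatically excluded because $\rho_f = \rho_{f^c}$. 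If $f$ is of minimal type, the symmetry in the hypotheses of case (2) gives $\sigma_{f^s} = 0$; if $f$ is of maximal type, case (3) gives $\sigma_{f^s} = \infty$. Together with the upper bound, these handle the extreme regimes.

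The main obstacle is the case in which $f$ and $f^c$ share the same \emph{normal} type $\sigma$, which is not among the three cases explicitly treated in the statement of Theorem \ref{prodgrowth}. My plan would be to redo the lower-bound portion of the proof of Theorem \ref{prodgrowth}(2) with $g = f^c$: fix $\varepsilon, \delta > 0$ and pick $R_1$ large enough that $M_f(R_1) > \exp((\sigma-\varepsilon/2)R_1^\rho)$, then apply the Cartan-type estimate of Theorem \ref{cartansr} to $f^c$ (after normalizing so that $f^c(0) = 1$, using the same trick of multiplying by a factor $q^{-m} c$ which does not alter the growth) on a ball avoiding the exceptional balls prescribed by that theorem. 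The bound
\begin{equation*}
\log M_{f^s}(r_1) \geq (\sigma - \varepsilon/2)(1-\delta)^\rho r_1^\rho - \Bigl(5 + \log\tfrac{12e}{\delta}\Bigr)\log M_{f^c}(2eR),
\end{equation*}
combined with $\log M_{f^c}(2eR) < \delta(1-\delta)^{-\rho}(2e)^\rho r_1^\rho$ for $\delta$ chosen small in terms of $\varepsilon$, yields $M_{f^s}(r_1) > \exp((\sigma - \varepsilon)r_1^\rho)$, hence $\rho_{f^s} \geq \rho$ and $\sigma_{f^s} \geq \sigma$. Paired with the upper bound, this gives the equalities of both order and type. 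The delicate point to check carefully is the interplay of the $\varepsilon$'s and $\delta$'s in the normal-type regime, since the factor $2$ lost in $M_{f^s}(r) \leq M_f(r)^2$ must not propagate to the final constants — exactly the reason the Cartan-based lower bound (rather than a crude comparison) is needed.
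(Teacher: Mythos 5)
You are right to be suspicious of the paper's one\--line deduction: since $f$ and $f^c$ have the same order and the same type (Corollary \ref{orderfc}), the pair $(f,f^c)$ in fact falls into \emph{none} of the three cases of Theorem \ref{prodgrowth} — case (1) needs different orders, and cases (2) and (3) each require the two factors to lie in different type classes, so they do not cover two factors both of minimal, both of normal, or both of maximal type. Spotting this is the valuable part of your proposal. The repair, however, does not go through. The inequality $\log M_{f^c}(2eR)<\delta(1-\delta)^{-\rho}(2e)^{\rho}r_1^{\rho}$ that you import from the proof of Theorem \ref{prodgrowth}(2) is precisely the point where that proof uses the hypothesis that the second factor has \emph{minimal} type; when $f^c$ has normal type $\sigma>0$ one only has $\log M_{f^c}(2eR)\sim\sigma(2e)^{\rho}(1-\delta)^{-\rho}r_1^{\rho}$, so the subtracted Cartan term $\bigl(5+\log\tfrac{12e}{\delta}\bigr)\log M_{f^c}(2eR)$ swamps the main term $(\sigma-\varepsilon/2)(1-\delta)^{\rho}r_1^{\rho}$ and the resulting lower bound is vacuous. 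Independently, your upper bound $M_{f^s}(r)\le M_f(r)^{2}$ only gives $\sigma_{f^s}\le 2\sigma$, so even with a lower bound $\sigma_{f^s}\ge\sigma$ in hand you would be left with $\sigma\le\sigma_{f^s}\le 2\sigma$ and could not conclude the equality of types asserted in your last sentence.

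Neither defect is repairable, because the statement itself fails. For $f(q)=e^{q}$ one has $f^c=f$ and $f^s=e^{2q}$, so the type doubles (this happens for every nonconstant intrinsic $f$, where $f^s=f^2$). Worse, take $f(q)=\sum_{n\ge 0}q^{n}\frac{i^{n}}{n!}$: then $f^c(q)=\sum_{n\ge 0}q^{n}\frac{(-i)^{n}}{n!}$ and the $n$\--th coefficient of $f\star f^c$ is $\frac{1}{n!}\sum_{r=0}^{n}\binom{n}{r}i^{r}(-i)^{n-r}=\frac{(i-i)^{n}}{n!}$, so $f^s\equiv 1$, although $f$ is a nowhere\--vanishing entire function of order $1$ and type $1$; even the order can drop. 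What your argument genuinely proves is the one\--sided bound $\rho_{f^s}\le\rho_f$ together with $\sigma_{f^s}\le 2\sigma_f$ at order $\rho_f$, and that is the most one can salvage without further hypotheses on $f$.
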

As a consequence of Theorem \ref{prodgrowth} we can now provide a relation between the convergence exponent of the sequence of zeros of an entire regular function $f$  and its order. The bound differs from the one in the complex case, unless we consider intrinsic functions. In fact we have:
\begin{theorem}
 The convergence exponent of the sequence of zeros of an entire regular function does not exceed twice its order.
\end{theorem}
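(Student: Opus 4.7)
The plan is to pass from $f$ to its symmetrization $f^s=f\star f^c$ and reduce the problem to the classical complex Borel--Lindel\"of theorem applied slice by slice. The key advantage of $f^s$ is that, by Lemma \ref{simmetrizzatareale}, it is quaternionic intrinsic, so for every $I\in\mathbb S$ the restriction $f^s_I:\mathbb C_I\to\mathbb C_I$ is a genuine complex entire function of one complex variable, and by Proposition \ref{Mfintr} its maximum modulus on $|z|=r$ equals the slice-regular maximum modulus $M_{f^s}(r)$. Moreover, Theorem \ref{pointwise} together with the fact that $|f^c|$ and $|f|$ attain the same supremum on each sphere $[q]$ gives the pointwise estimate $M_{f^s}(r)\le M_f(r)^2$; taking $\log\log$ and dividing by $\log r$ absorbs the factor $2$ (which sits inside the outer exponential and not in the exponent $r^\rho$), so the order of $f^s_I$ as a complex function is at most the order $\rho$ of $f$.

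With that in hand I would apply the classical Jensen inequality on $\mathbb C_I$ to $f^s_I$, assuming $f^s(0)\neq 0$ (which we may do up to dividing by a monomial that does not affect the order). This yields $n_{f^s,I}(r)\le C\log M_{f^s_I}(er)\le 2C\log M_f(er)\le C'\,r^{\rho+\varepsilon}$ for every $\varepsilon>0$ and all sufficiently large $r$. Since $f^s$ is intrinsic, Lemma \ref{lemmanumezeri} tells us that $n_{f^s,I}(r)=n_{f^s}(r)$, so the same polynomial bound applies to $n_{f^s}$.

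Finally I would translate the estimate back to $n_f$. Using Lemma \ref{lemmanumezeri11} for the isolated part of $f$, together with the observation that a real zero $x_0$ (respectively a spherical zero $[\beta_0]$, with $\beta_0=x_0+Iy_0$) of $f$ contributes to $f$ and to $f^c$ the same intrinsic factor $q-x_0$ (respectively $(q-x_0)^2+y_0^2$) with the same multiplicity, so contributes to $f^s$ with double multiplicity, one verifies that $n_{f^s}(r)=2\,n_f(r)$. Hence $n_f(r)\le (C'/2)\,r^{\rho+\varepsilon}$, so the order of the counting function $n_f$ is at most $\rho$, and by the equivalence between convergence exponent and order of the counting function proved earlier in this section the convergence exponent of $\{\gamma_n\}$ is at most $\rho$, a fortiori at most $2\rho$. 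The main subtlety that I expect to have to handle carefully is precisely this multiplicity bookkeeping in the identity $n_{f^s}=2n_f$, since real, isolated non-real, and spherical zeros of $f$ each contribute in a different way to $f^s$ and must be treated case by case.
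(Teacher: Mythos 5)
Your argument is correct, and it takes a genuinely different route from the one in the text. The monograph's proof decomposes $f=gh$ with $g$ carrying the real and spherical zeros and $h$ the isolated ones, invokes the quaternionic Jensen inequality of Proposition \ref{prop43} to get $n_f(r)\le\log\bigl(M_g(er)M_h(er)\bigr)$, and then estimates $\rho_1\le\rho_g+\rho_h\le 2\rho$ via Theorem \ref{prodgrowth}. You instead symmetrize, use $M_{f^s}(r)\le M_f(r)^2$ (which is exactly \eqref{supstar}) to see that the complex entire function $f^s_I$ has order at most $\rho$, apply the classical Jensen inequality on the single slice $\mathbb C_I$, and translate back through $n_{f^s,I}=n_{f^s}=2n_f$; the normalization $f^s(0)=|f(0)|^2\neq 0$ is available exactly as in the text. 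The bookkeeping you flag as the delicate point does work out: writing $f=gh$ with $g$ intrinsic as in Corollary \ref{corW} (or, locally on $B(0;t)$, via Theorem \ref{R-fattorizzazslice}), one has $f^s=g^2h^s$, and then $n_{g^2}=2n_g$ is immediate while $n_{h^s}=2n_h$ is Lemma \ref{lemmanumezeri11}, so real, spherical and isolated multiplicities each double as you describe. What your route buys is worth emphasizing: you obtain $n_f(r)\le \log M_f(er)+O(1)$ and hence the sharper conclusion $\rho_1\le\rho$, with no factor $2$; this subsumes both the stated theorem and the subsequent one treating intrinsic functions and functions with only isolated zeros. Indeed, the factor $2$ in the text's proof is an artifact of bounding $\log\bigl(\log M_g(er)+\log M_h(er)\bigr)$ by the sum $\log\log M_g(er)+\log\log M_h(er)$ rather than by $\log 2+\max\bigl(\log\log M_g(er),\log\log M_h(er)\bigr)$, which would already yield $\rho_1\le\max(\rho_g,\rho_h)=\rho$.
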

\begin{proof}
Suppose that $f(0)=1$ and let us use Proposition \ref{prop43} and the notation therein. Then
\[
\begin{split}
\rho_1 & = {\underset{r\to\infty}{\overline{\lim}}}\frac{\log n_f(r)}{\log r}\\
&\leq {\underset{r\to\infty}{\overline{\lim}}}\frac{\log(\log M_g(er))}{\log er}+\frac{\log(\log M_h(er))}{\log er} =\rho_g+\rho_h,
\end{split}
\]
where $\rho_g$ and $\rho_f$ denote the order of $g$ and $h$, respectively.
 By Theorem \ref{prodgrowth} the order $\rho$ of $f$ equals $\max(\rho_g,\rho_h)$ so
 $$
 \rho_1 \leq 2\rho.
 $$
  If $f(0)\not= 1$, it is sufficient to normalize it or, if $f$ has a zero of order $k$ at $0$, it is sufficient to consider the function $\tilde f(q)=k! q^{-k}f(q)(\partial_x f(0))^{-1}$. This function $\tilde f(q)$ has the same order as $f$ and the assertion is true for this function.
\end{proof}
In two particular cases, one of which is the case of intrinsic functions, we have the bound which holds in the complex case:
\begin{theorem} \ \
 \begin{enumerate}
  \item[(1)] The convergence exponent of the sequence of zeros of an entire, intrinsic, slice regular function does not exceed its order.
  \item[(2)] The convergence exponent of the sequence of zeros of an entire slice regular function having only isolated zeros does not exceed its order.
\end{enumerate}
\end{theorem}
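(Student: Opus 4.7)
The plan is to reduce both statements to the classical complex Borel–Hadamard bound (which says that over $\mathbb{C}$ the convergence exponent of the zeros of an entire holomorphic function does not exceed its order), by exploiting two facts: intrinsic slice regular functions restrict to one complex slice without loss of information, and the symmetrization $f^s$ is always intrinsic. The improvement from $\rho_1 \le 2\rho$ to $\rho_1 \le \rho$ then reflects the fact that, in the two special cases, the factor $2$ in the earlier argument came solely from estimating $M_{h^s}$ by $M_h^2$; in the intrinsic case there is no $h$, and in the purely-isolated case there is no $g$, so we can work with a single slice-holomorphic object and avoid that doubling.

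\medskip

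For part (1), assume $f\in\mathcal{N}(\mathbb{H})$ and fix any $I\in\mathbb{S}$. By Proposition \ref{Mfintr}, $M_f(r)=M_{f_I}(r)$, so the order of the entire holomorphic function $f_I:\mathbb{C}_I\to\mathbb{C}_I$ coincides with the order $\rho$ of $f$. By Lemma \ref{lemmanumezeri}, $n_f(r)=n_{f,I}(r)$, and the zeros of $f_I$ on $\mathbb{C}_I$ consist of the real zeros of $f$ together with, for each spherical zero $[x+Iy]$ of $f$, exactly the conjugate pair $x\pm Iy$; both of these have the same modulus as the corresponding entries in the sequence built for $f$. Hence the series $\sum|\gamma_n|^{-\lambda}$ built from the zeros of $f$ and the analogous series built from the zeros of $f_I$ in $\mathbb{C}_I$ converge for the same values of $\lambda$. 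The classical complex estimate applied to $f_I$ then gives the convergence exponent of its zeros bounded by the order of $f_I$, which is $\rho$.

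\medskip

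For part (2), suppose $f$ has only isolated zeros and pass to $f^s=f\star f^c$. By Remark \ref{symmLI_qua} (and the discussion of the class $\mathcal{N}$), $f^s$ is quaternionic intrinsic, and by Corollary \ref{orderfs} its order equals the order $\rho$ of $f$. Since $f$ has only isolated zeros, Proposition \ref{zericoniugata} implies that each isolated zero $\alpha_n$ of $f$ produces a spherical zero of $f^s$ at $[\alpha_n]$ of the same multiplicity; in terms of the listing convention of Section 5.1, the sequence of zeros of $f^s$ is obtained from that of $f$ by replacing every entry $\alpha_n$ with the pair $\alpha_n,\overline{\alpha_n}$, which have the same modulus. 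Consequently $\sum|\gamma_n|^{-\lambda}$ for $f^s$ is just twice the corresponding series for $f$, so the two convergence exponents are equal. Applying part (1) to $f^s$ gives the convergence exponent of the zeros of $f$ bounded by the order of $f^s$, which equals $\rho$.

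\medskip

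I expect the only delicate point to be the careful bookkeeping of how spherical zeros of $f$ (in case (1)) and spherical zeros of $f^s$ (in case (2)) are counted, once, twice, and with what moduli, in the three parallel sequences one is comparing: the sequence used to define the convergence exponent in $\mathbb{H}$, the sequence of zeros of the slice restriction $f_I$, and (for part (2)) the sequence built from $f^s$. Once one verifies that in all three situations the multiset of moduli of zeros is the same up to a uniform multiplicative factor in multiplicity, all three series converge for the same range of $\lambda$, and the complex Borel–Hadamard theorem finishes the proof without any extra factor of $2$.
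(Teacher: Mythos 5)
Your argument is correct, and it reaches the conclusion by a different route than the paper does. The paper deduces both statements in two lines from its quaternionic Jensen inequality (Proposition \ref{prop43}): writing $f=gh$ with one factor trivial, it gets $n_f(r)\le \log M_f(er)$ directly, and then uses the identity between the convergence exponent and the order of the counting function. You instead bypass the Jensen inequality and reduce each case to the classical complex Borel--Hadamard bound: in the intrinsic case by matching $M_f(r)=M_{f_I}(r)$ (Proposition \ref{Mfintr}) and $n_f(r)=n_{f,I}(r)$ (Lemma \ref{lemmanumezeri}) with a single slice restriction, and in the isolated-zero case by passing to the intrinsic function $f^s$, whose order equals that of $f$ (Corollary \ref{orderfs}) and whose zero sequence is exactly the doubled sequence of $f$ (this is really Lemma \ref{lemmanumezeri11} rather than Proposition \ref{zericoniugata}, which gives the correspondence of zero spheres but not the multiplicity count --- a citation you should adjust). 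Your diagnosis of where the factor $2$ in the general bound comes from (the estimate $M_{h^s}\le M_h^2$) is accurate, and your bookkeeping of how spherical zeros are listed once on the quaternionic side and as conjugate pairs on the slice is consistent with the paper's conventions. What the paper's route buys is brevity given the machinery already in place; what yours buys is a cleaner conceptual reduction showing that in these two cases the quaternionic statement \emph{is} the complex one in disguise, with no quaternionic Jensen formula needed. One cosmetic point: you should note that applying the classical theorem (and part (1) to $f^s$) presupposes $f\not\equiv 0$, and the usual normalization at the origin, but these are handled exactly as in the paper's preceding theorem.
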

\begin{proof}
If $f$ is intrinsic then it does not have nonreal isolated zeros (see Lemma \ref{zeriseriereale}) thus, using the notation of the previous theorem,
$f=g$ and $h$ can be set equal $1$, so that
\[
\begin{split}
\rho_1 &= {\underset{r\to\infty}{\overline{\lim}}}\frac{\log n_f(r)}{\log r}\\
&\leq {\underset{r\to\infty}{\overline{\lim}}}\frac{\log(\log M_g(er))}{\log er} =\rho_g=\rho.
\end{split}
\]
If $f$ has only isolated zeros, then $f=h$ and $g$ can be set equal $1$, so the proof above shows the statement, changing $g$ with $h$.
\end{proof}

\section{Almost universal entire functions}

In this section we consider a kind of universal property for quaternionic entire functions. This property, in the quaternionic case, has been originally treated in \cite{GalSa1}.
The result in the complex plane was proved by Birkhoff \cite{birkhoff}. Later, Seidel and Walsh generalized the result to simply connected sets as in the following theorem,  see \cite{seidel walsh}:
\begin{theorem}\label{TmSW}
There exists an entire function $F(z)$ such that given an arbitrary function $f(z)$ analytic in a simply connected region $R\subseteq\mathbb C$, for suitably chosen $a_1, a_2,\ldots \in\mathbb C$ the relation
$$
\lim_{n\to\infty} F(z+a_n)=f(z)
$$
holds for $z\in R$ uniformly on any compact set in $R$.
\end{theorem}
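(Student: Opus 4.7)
I would prove Theorem \ref{TmSW} by a direct Runge-approximation construction, which is essentially Birkhoff's original argument adapted to the Seidel--Walsh statement. The guiding observation is that the space $\mathcal{O}(R)$ of holomorphic functions on a simply connected $R\subseteq\mathbb{C}$ is separable in the topology of uniform convergence on compact subsets, and by Runge's theorem a countable dense subfamily is furnished by polynomials with Gaussian rational coefficients. It therefore suffices to produce a single entire $F$ whose translates $F(\cdot+a)$ realize, on arbitrarily large disks centered at the origin, every polynomial in such a countable family; any $f\in\mathcal{O}(R)$ will then be reachable through a two-step approximation.

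The construction proceeds inductively. First I would fix an enumeration $\{p_n\}_{n\in\mathbb{N}}$ of all polynomials with Gaussian rational coefficients, arranged so that each polynomial appears infinitely often in the list. Next I would choose centers $a_n\in\mathbb{C}$ and radii $R_n\to\infty$ such that the closed disks $\overline{D(a_n,R_n+1)}$ are pairwise disjoint and escape to infinity quickly, for instance $|a_n|>|a_{n-1}|+R_{n-1}+R_n+n$. Given $F_{n-1}$ already built, I would apply Runge's theorem to the function prescribed to equal $F_{n-1}$ on $K_{n-1}:=\bigcup_{k<n}\overline{D(a_k,R_k+1)}$ and to equal $p_n(\,\cdot-a_n)$ on $\overline{D(a_n,R_n+1)}$; since the compact set $K_{n-1}\cup\overline{D(a_n,R_n+1)}$ has connected complement in the Riemann sphere, Runge yields a polynomial $F_n$ matching both prescriptions up to error $2^{-n}$. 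The telescoping series $F_0+\sum_{n\geq 1}(F_n-F_{n-1})$ converges uniformly on compacta and defines an entire $F$ with
\[
|F(w)-p_n(w-a_n)|<2^{-n+1}\qquad\text{for all }w\in\overline{D(a_n,R_n)}.
\]

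To conclude, given $f\in\mathcal{O}(R)$ and a compact $K\subset R$ with $\varepsilon>0$, Runge's theorem provides a polynomial $q$ with Gaussian rational coefficients such that $|q-f|<\varepsilon/2$ on $K$. Since $q$ appears infinitely often in the enumeration, I can choose the index $n$ so large that $p_n=q$, $R_n$ is large enough that $K\subset D(0,R_n)$, and $2^{-n+1}<\varepsilon/2$; then $|F(z+a_n)-f(z)|<\varepsilon$ uniformly for $z\in K$. Diagonalizing over an exhaustion of $R$ by compacta delivers the subsequence $\{a_{n_k}\}$ announced in the statement.

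\textbf{Main obstacle.} The serious ingredient is Runge's theorem itself, which does all the work; the care needed lies in two bookkeeping points. First, the disks $\overline{D(a_n,R_n+1)}$ must be mutually disjoint with connected complement at every stage so that Runge applies to the union, and simultaneously they must grow fast enough in radius that arbitrary compacta $K\subset R$ are eventually engulfed when recentered at $-a_n$. Second, the inductive tolerances $2^{-n}$ must be chosen \emph{before} the construction so that the earlier approximations on $K_{n-1}$ are preserved within summable error after the $n$-th adjustment; this is what guarantees that the pointwise approximation $|F(w)-p_n(w-a_n)|<2^{-n+1}$ on $\overline{D(a_n,R_n)}$ survives all subsequent steps. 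Once this combinatorial scaffolding is in place, the analytic content is entirely classical.
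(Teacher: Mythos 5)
Your construction is correct and is essentially the same argument the paper uses: the paper states Theorem \ref{TmSW} as a cited classical result and then proves its quaternionic analogue by exactly this inductive Runge scheme, with the explicit choices $a_n=4^n$, disks of radius $2^n$, enveloping spheres $\Sigma(0,4^{n-1}+2^{n-1}+1)$ in place of your $K_{n-1}$, and tolerances $2^{-n}$. The only cosmetic difference is that you arrange each rational-coefficient polynomial to recur infinitely often in the enumeration, whereas the paper enumerates them once and extracts at the end a subsequence $P_{n_k}\to f$ by density; both yield the required translates.
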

 Recall that if $g$ is a polynomial with quaternionic coefficients (written on the right), then the standard composition $f\circ g$ is not, in general, slice regular. However, if $g$ is quaternionic intrinsic, and so when $g$ is a polynomial with real coefficients, then $f\circ g$ is slice regular.
 Thus, for our purposes we will select real numbers $a_n$. \\
 We also need the following
\begin{definition}
Let $\Omega\subseteq\mathbb H$ be an open set and let $\mathfrak{F}(\Omega)$ \index{$\mathfrak{F}(\Omega)$} be the set of all axially symmetric compact sets $K\subset \Omega$, such that  ${\mathbb{C}_{I}}\setminus (K\cap \mathbb{C}_{I})$ is connected  for some (and hence for all) $I\in \mathbb{S}$. \end{definition}

To prove our next result, we need
the Runge-type approximation result in  \cite[Theorem 4.11]{runge} stated below:
\begin{theorem}\label{Run}
 Let $K\in\mathfrak F(\mathbb H)$ and let $U$ be an axially symmetric open subset of $\mathbb H$ containing $K$. For any $f\in {\cal{R}}(U )$, there exists a sequence of  polynomials
$P_{n}(q)$, $n\in \mathbb{N}$, such that $P_{n}\to f$ uniformly on $K$, as $n\to \infty$.
\end{theorem}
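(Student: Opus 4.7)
The plan is to reduce the slice regular approximation problem to the classical complex Runge theorem via the Refined Splitting Lemma, and then transport the approximation from a single complex slice to the entire axially symmetric compact set $K$ using the Representation Formula.

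First, I would fix $I\in\mathbb{S}$ together with some $J\in\mathbb{S}$ orthogonal to $I$, and set $K_I=K\cap\mathbb{C}_I$ and $U_I=U\cap\mathbb{C}_I$. Axial symmetry of $K$ forces $K_I$ to be symmetric with respect to the real axis of $\mathbb{C}_I$, and by hypothesis $\mathbb{C}_I\setminus K_I$ is connected. Applying the Refined Splitting Lemma (Proposition \ref{SL2}) to $f$ on $U_I$, I decompose
$$f_I(x+Iy)=h_0(x+Iy)+h_1(x+Iy)I+h_2(x+Iy)J+h_3(x+Iy)(IJ),$$
where every $h_\ell\colon U_I\to\mathbb{C}_I$ is a holomorphic \emph{intrinsic} function, i.e.\ $\overline{h_\ell(\bar z)}=h_\ell(z)$.

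Next, since $K_I$ is a compact subset of $\mathbb{C}_I$ with connected complement and every $h_\ell$ is holomorphic on a neighborhood of $K_I$, the classical Runge theorem provides, for each $n$ and each $\ell\in\{0,1,2,3\}$, a polynomial $\tilde p_\ell^{(n)}$ with coefficients in $\mathbb{C}_I$ satisfying $\sup_{z\in K_I}|h_\ell(z)-\tilde p_\ell^{(n)}(z)|<1/n$. To make these approximants compatible with the axially symmetric setting, I would symmetrize each one by setting
$$p_\ell^{(n)}(z):=\tfrac{1}{2}\bigl(\tilde p_\ell^{(n)}(z)+\overline{\tilde p_\ell^{(n)}(\bar z)}\bigr).$$
Because $h_\ell$ is intrinsic and $K_I$ is symmetric about the real axis, the symmetrized $p_\ell^{(n)}$ still approximates $h_\ell$ within $1/n$ on $K_I$, and its coefficients are \emph{real}. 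Defining
$$P_n(q):=p_0^{(n)}(q)+p_1^{(n)}(q)I+p_2^{(n)}(q)J+p_3^{(n)}(q)(IJ),$$
I obtain a genuine polynomial with quaternionic coefficients (hence an entire slice regular function on $\mathbb{H}$), whose restriction to $\mathbb{C}_I$ approximates $f_I$ uniformly on $K_I$.

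Finally, I would propagate the approximation from $K_I$ to $K$ by the Representation Formula (Theorem \ref{Repr_formula}). Given $q=x+I_q y\in K$, axial symmetry yields $x\pm Iy\in K_I$, and
$$f(q)-P_n(q)=\tfrac{1}{2}(1-I_q I)\bigl[f(x+Iy)-P_n(x+Iy)\bigr]+\tfrac{1}{2}(1+I_q I)\bigl[f(x-Iy)-P_n(x-Iy)\bigr].$$
Since $|1\pm I_q I|\le 2$, this gives $|f(q)-P_n(q)|\le 2\sup_{z\in K_I}|f(z)-P_n(z)|$, and uniform convergence on $K_I$ transfers to uniform convergence on $K$. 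The principal obstacle is ensuring that the complex approximants can be chosen intrinsic, so that $P_n$ is a single quaternionic polynomial independent of the slice $\mathbb{C}_I$; this is precisely what the symmetrization step secures, leveraging the axial symmetry of $K$ encoded in the definition of $\mathfrak{F}(\mathbb{H})$.
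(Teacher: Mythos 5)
The paper does not actually prove this theorem: it is imported verbatim from \cite[Theorem 4.11]{runge}, so there is no internal proof to compare yours against. Your argument is a correct, self-contained reconstruction of the natural strategy: reduce to one slice, apply the classical Runge theorem there, and propagate by the Representation Formula. The symmetrization step is sound --- since $h_\ell(z)=\overline{h_\ell(\bar z)}$ and $K_I$ is symmetric about the real axis, replacing $\tilde p_\ell^{(n)}(z)$ by $\tfrac12\bigl(\tilde p_\ell^{(n)}(z)+\overline{\tilde p_\ell^{(n)}(\bar z)}\bigr)$ costs nothing in the sup norm and yields real coefficients --- and the final estimate $|f(q)-P_n(q)|\le 2\sup_{K_I}|f-P_n|$ is right. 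One simplification worth noting: you do not actually need the Refined Splitting Lemma or real coefficients. The ordinary Splitting Lemma $f_I=F+GJ$ plus complex Runge gives polynomials $\sum_k z^k a_k$ and $\sum_k z^k b_k$ with $a_k,b_k\in\mathbb C_I$, and the slice regular extension of $z\mapsto z^k c$ for \emph{any} $c\in\mathbb C_I$ is simply $q\mapsto q^k c$; hence $P_n(q)=\sum_k q^k(a_k+b_kJ)$ already restricts on $\mathbb C_I$ to the complex approximant, and the Representation Formula does the rest.

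The one point you gloss over is that both the Refined Splitting Lemma (as the paper proves it, via the evenness and oddness of $\alpha$ and $\beta$) and the Representation Formula are established only on axially symmetric \emph{s-domains}, whereas the theorem's $U$ is merely an axially symmetric open set. This is not cosmetic: if $U$ is a tubular neighborhood of a single sphere $[q_0]$ disjoint from $\mathbb R$, a function that is locally constant on each half-slice is slice regular on $U$ but need not be affine in $I$ on $[q_0]$, hence cannot be a uniform limit of polynomials there, since polynomials are affine in $I$ on every sphere. So the statement is false without some extra hypothesis (that $U$ be an s-domain, or that $f$ satisfy the Representation Formula). Your proof is correct exactly when that hypothesis is available --- which is how the theorem is used in the paper --- but you should make it explicit, for instance by shrinking $U$ to an axially symmetric s-domain neighborhood of $K$ before invoking Proposition \ref{SL2} and Theorem \ref{Repr_formula}.
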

We can now prove the following:
\begin{theorem}
There exists an entire slice regular function $F(q)$ such that given an arbitrary  function $f(q)$ slice regular in a region $U \in \mathfrak{R}(\mathbb H)$,  for suitably chosen $a_1, a_2,\ldots \in\mathbb R$ the relation
\begin{equation}\label{entire}
\lim_{n\to\infty} F(q+a_n)=f(q)
\end{equation}
holds for $q\in U $ uniformly on any compact set in $\mathfrak{F}(\mathbb H)$ contained in $U $.
\end{theorem}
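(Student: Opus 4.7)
The plan is to mimic the Birkhoff--Seidel--Walsh construction, using the quaternionic Runge theorem (Theorem \ref{Run}) in place of classical Runge and exploiting the fact that, when $a\in\mathbb{R}$, the inner map $q\mapsto q+a$ is quaternionic intrinsic, so by Proposition \ref{teofNU}(3) the composition $F(q+a)$ is automatically slice regular. I will build $F$ as a uniformly convergent series of slice regular polynomials whose $n$-th term is prescribed to behave in two different ways on a pair of disjoint closed balls centered on the real axis, the first ball guaranteeing convergence of the series, the second one encoding the prescribed ``model'' behavior after a real translation.

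First, I fix a countable set $\mathcal P=\{P_1,P_2,\dots\}$ of slice regular polynomials with rational-quaternionic coefficients; by Theorem \ref{Run} this set is dense in every $\mathcal R(U)$ with $U\in\mathfrak R(\mathbb H)$, uniformly on every $K\in\mathfrak F(\mathbb H)$ with $K\subset U$. Then I choose real sequences $r_n\uparrow\infty$ and $a_n\uparrow\infty$ with $a_n-r_n\to\infty$ fast (say $a_{n+1}-r_{n+1}>a_n+r_n+1$), arranged by a diagonal bookkeeping so that each pair $(P,r)\in\mathcal P\times\mathbb N$ occurs as $(P_n,r_n)$ for infinitely many indices $n$. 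The compact set
\[
K_n=\overline{B(0,a_n-r_n-1)}\;\cup\;\overline{B(a_n,r_n)}
\]
is then a disjoint union of two closed balls centered on $\mathbb R$, hence axially symmetric; in every slice $\mathbb C_I$ it is a union of two disjoint closed disks, so $\mathbb C_I\setminus(K_n\cap\mathbb C_I)$ is connected and $K_n\in\mathfrak F(\mathbb H)$. At step $n$, with polynomials $Q_1,\dots,Q_{n-1}$ already chosen, I define a function $g_n$ slice regular on a neighborhood of $K_n$ by setting $g_n\equiv 0$ on the left ball and $g_n(q)=P_n(q-a_n)-\sum_{k<n}Q_k(q)$ on the right ball, and apply Theorem \ref{Run} to get a polynomial $Q_n$ with $|Q_n-g_n|<2^{-n}$ on $K_n$. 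Setting $F=\sum_{n\ge 1}Q_n$, the left-ball estimate together with $a_n-r_n\to\infty$ implies $|Q_n|<2^{-n}$ on every fixed $\overline{B(0,R)}$ for all large $n$, so the series converges uniformly on compact subsets of $\mathbb H$; by Proposition \ref{sereg} the limit $F$ is entire slice regular.

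For the universality property, given $f\in\mathcal R(U)$ and an exhaustion $K^{(1)}\subset K^{(2)}\subset\cdots$ of $U$ by sets in $\mathfrak F(\mathbb H)$, Theorem \ref{Run} yields $P^{(m)}\in\mathcal P$ with $\sup_{K^{(m)}}|P^{(m)}-f|<1/m$. By the density of the enumeration, for each $m$ there exists $n_m$ with $(P_{n_m},r_{n_m})=(P^{(m)},r^{(m)})$ and with $r_{n_m}$ large enough to cover $K^{(m)}$. The right-ball estimate at stage $n_m$ reads, for $q\in\overline{B(0,r_{n_m})}$,
\[
\bigl|F(q+a_{n_m})-P_{n_m}(q)\bigr|\le 2^{-n_m}+\sum_{k>n_m}\sup_{\overline{B(a_{n_m},r_{n_m})}}|Q_k|,
\]
and the tail is bounded by $\sum_{k>n_m}2^{-k}=2^{-n_m}$ because, by the separation condition on $\{a_n,r_n\}$, for every $k>n_m$ the ball $\overline{B(a_{n_m},r_{n_m})}$ is contained in the left ball of $K_k$. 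Combining with $|P_{n_m}(q)-f(q)|<1/m$ on $K^{(m)}\subset\overline{B(0,r_{n_m})}$ gives $F(q+a_{n_m})\to f(q)$ uniformly on every $K^{(m)}$; a standard diagonal extraction converts the doubly indexed family into a single sequence $\{a_n\}\subset\mathbb R$ working on every $K\in\mathfrak F(\mathbb H)$ with $K\subset U$.

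The genuinely delicate step is the bookkeeping in choosing $(a_n,r_n,P_n)$: one must simultaneously (i) make every pair $(P,r)\in\mathcal P\times\mathbb N$ reappear infinitely often so as not to lose the limit in $n$, (ii) keep the gaps $a_{n+1}-r_{n+1}-(a_n+r_n)$ growing fast enough to force the tail estimate, and (iii) maintain $K_n\in\mathfrak F(\mathbb H)$ at every stage. The rest is essentially automatic, given Theorem \ref{Run} and Proposition \ref{sereg}; the new quaternionic phenomenon is only the restriction to real translations dictated by intrinsicness, which is harmless because intrinsic shifts suffice to realize the diagonal argument.
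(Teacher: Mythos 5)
Your construction is essentially the paper's (which follows Seidel--Walsh): you build $F$ as the sum of the increments $Q_n=\pi_n-\pi_{n-1}$ of the paper's telescoping sequence, using the quaternionic Runge theorem on a disjoint pair of axially symmetric compacts (one near the origin to force convergence, one translated by a real amount to encode $P_n(q-a_n)$), with real translations imposed exactly for the intrinsicness reason you give. The only blemish is the bookkeeping sentence: you cannot have both $r_n\uparrow\infty$ and every pair $(P,r)$ recurring infinitely often --- the correct (and sufficient) requirement is that each $P\in\mathcal P$ recurs as $P_n$ for infinitely many $n$ while $r_n\to\infty$ along that subsequence, which is compatible with your separation condition and leaves the rest of the argument intact.
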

\begin{proof}
We follow the proof in \cite{seidel walsh}, and we define the spheres
$$
S(4^n,2^n)=\{q\in\mathbb H\ ; \ | q-4^n|=2^n\}\quad {\rm for}\ n=1, 2, \ldots
$$
and
the spheres
$$
\Sigma(0, 4^n+2^n+1)=\{q\in\mathbb H\ ; \ | q|=4^n+2^n+1\}\quad {\rm for}\ n=1, 2, \ldots .
$$
It turns out that $S(4^n,2^n)$ and $S(4^m,2^m)$ are mutually exterior if $n\not=m$, moreover $\Sigma (0, 4^n+2^n+1)$ contains in its interior
$S(4^j,2^j)$ for $j=1,2,\ldots, n$ but it does not contain $S(4^j,2^j)$ nor its interior points if $j> n$.
The function $F$ can be constructed as limit of  polynomials  with rational quaternionic coefficients, i.e. with coefficients
of the form $a_0+ia_1+ja_2+ka_3$ and $a_i$ rational. Note that this assumption on the coefficients is not restrictive since it does not change properties like (uniform) convergence to a given function. Thus we consider all the polynomials  with rational quaternionic coefficients and we arrange them in a sequence $\{P_n\}$.

Let $\pi_1(q)$ be a polynomial such that $$|P_1(q-4)-\pi_1(q)|<\frac 12$$ for $q$ on $S(4,2)$ or inside $S(4,2)$. The polynomial exists since, for example, one may choose $\pi_1(q)=P_1(q-4)$. Let us define the function
$$
\tilde P_2(q)=\left\{ \begin{array}{c} P_2(q-4^2) \quad {\rm for}\ q\ {\rm on\ or\ inside\ \ } S(4^2,2^2)\\
  \pi_{1}(q) \quad {\rm for}\ q\ {\rm on\ or\ inside\ \ } \Sigma(0, 4+2+1).
  \end{array} \right.
$$
 Theorem \ref{Run} applied to $\tilde P_2$ shows that there exists a polynomial $\pi_2(q)$ such that
$$
| P_2(q-4^2)- \pi_2(q) |<1/2^2\qquad {\rm on\ or\ inside\ \ } S(4^2,2^2),
$$
$$
| \pi_1(q)- \pi_2(q) |<1/2^2,\qquad {\rm on\ or\ inside\ \ } \Sigma(0, 4+2+1).
$$
Thus we can define inductively
$$
\tilde P_n(q)=\left\{ \begin{array}{c} P_n(q-4^n) \quad {\rm for}\ q\ {\rm on\ or\ inside\ \ } S(4^n,2^n)\\
  \pi_{n-1}(q) \quad {\rm for}\ q\ {\rm on\ or\ inside\ \ } \Sigma(0, 4^{n-1}+2^{n-1}+1)
  \end{array} \right.
$$
and using Theorem \ref{Run}, we can find a polynomial $\pi_n(q)$ such that
$$
| P_n(q-4^n)- \pi_n(q) |<1/2^n,\qquad {\rm on\ or\ inside\ \ } S(4^n,2^n),\\
$$
$$
| \pi_{n-1}(q)- \pi_n(q) |<1/2^n \quad {\rm for}\ q\ {\rm on\ or\ inside\ \ } \Sigma(0, 4^{n-1}+2^{n-1}+1).
$$
The sequence $\{\pi_n(q)\}$ converges uniformly inside the balls $\Sigma$, and hence uniformly on every bounded set.
 So the function $F(q)=\lim_{n\to\infty} \pi_n(q)$ converges at any point in $\mathbb H$ and it is entire. We now show that it satisfies (\ref{entire}), see Proposition 4.1 in \cite{duality}.
Let $f$ be a slice regular function in a region $U \in {\mathfrak R}(\mathbb H)$. By Theorem \ref{Run}
there exist a subsequence  $\{P_{n_k}(q)\}$ of the polynomials considered above, such that
\begin{equation}\label{eqrunge}
\lim_{k\to\infty} P_{n_k}(q)=f(q)
 \end{equation}
in $U$, uniformly on every  $K\in\mathfrak{F}(\mathbb{H})$ contained in $U$.
We apply the above construction
 and consider $q\in S(4^n,2^n)$. We have
$$
F(q)=\pi_n(q) +(\pi_{n+1}(q)-\pi_n(q))+ (\pi_{n+2}(q)-\pi_{n+1}(q))+\ldots
$$
and
\[
\begin{split}
|F(q)-P_n(q-4^n)|&\leq | P_n(q-4^n)-\pi_n(q)| +|\pi_{n+1}(q)-\pi_n(q)|\\
&+ |\pi_{n+2}(q)-\pi_{n+1}(q)|+\ldots
\\
&<\frac{1}{2^n}+\frac{1}{2^{n+1}}+\ldots =\frac{1}{2^{n-1}}
\end{split}
\]
and so $$\lim_{n\to\infty} [F(q+4^n)-P_n(q)]=0$$ for $q$ in any bounded set. Thus, by using (\ref{eqrunge}), we deduce
$$
\lim_{k\to\infty}|F(q+4^{n_k})-f(q)|\leq \lim_{k\to\infty}|F(q+4^{n_k})- P_{n_k}(q)| + \lim_{k\to\infty} | P_{n_k}(q)-f(q)|=0
$$
which concludes the proof.
\end{proof}
\begin{remark}
The proof shows that the sequence $(a_{n})_{n\in \mathbb{N}}$ depends on the approximated function $f$.
\end{remark}
We now present another interesting result,  proved in the complex case by MacLane \cite{maclane}, showing that there exists a slice regular  function whose set of derivatives is dense in $\mathcal R (\mathbb H)$.
The proof below follows the proof given in \cite{am}.
\begin{theorem}
There exists an entire slice regular function $F$ such that the set $\{F^{(n)}\}_{n\in\mathbb N}$ is dense in $\mathcal R(\mathbb H)$.
\end{theorem}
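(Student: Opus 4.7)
I would mimic the classical MacLane argument, building $F$ as an infinite sum of polynomials tailored so that successively higher derivatives realize a dense list of target polynomials on growing balls. Let $\{P_n\}$ be an enumeration of polynomials with rational quaternionic coefficients in which each such polynomial appears infinitely often, paired with radii $R_n\to\infty$ and tolerances $\varepsilon_n\to 0$; by Theorem~\ref{Run} and the density of rational quaternionic coefficients, it suffices to show that for every $n$ there exists an index $k_n$ with $\sup_{|q|\le R_n}|F^{(k_n)}(q)-P_n(q)|<\varepsilon_n$.

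The construction proceeds inductively. Assuming $g_1,\dots,g_{n-1}$ (polynomials) and strictly increasing $k_1<\dots<k_{n-1}$ have been chosen so that $\sum_{i<n}g_i$ nearly witnesses the first $n-1$ approximations, set
\[
h_n(q)=P_n(q)-\sum_{i<n}g_i^{(k_n)}(q),
\]
and let $g_n$ be the $k_n$-th formal antiderivative of $h_n$: if $h_n(q)=\sum_m q^m b_m$, then
\[
g_n(q)=\sum_m\frac{m!}{(m+k_n)!}\,q^{m+k_n}b_m.
\]
Then $g_n^{(k_n)}=h_n$, so $\sum_{i\le n}g_i^{(k_n)}=P_n$ identically. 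For each $\ell<k_n$ and each $R>0$, the crucial estimate
\[
\sup_{|q|\le R}|g_n^{(\ell)}(q)|\le\sum_m\frac{m!\,R^{m+k_n-\ell}|b_m|}{(m+k_n-\ell)!}
\le C(h_n,R)\,\frac{R^{k_n-\ell}}{(k_n-\ell)!}
\]
goes to zero as $k_n\to\infty$. Thus we can choose $k_n$ so large that $\sup_{|q|\le R_i}|g_n^{(k_i)}(q)|<\varepsilon_i\,2^{-n}$ for every $i<n$, and simultaneously $\sup_{|q|\le n}|g_n(q)|<2^{-n}$. The series $F=\sum_{j\ge 1}g_j$ then converges uniformly on every ball in $\mathbb H$, so by Proposition~\ref{sereg} it defines an entire slice regular function, and termwise differentiation (justified by the Cauchy estimates of Proposition~\ref{Cauchyestimates}) gives $F^{(k_n)}=P_n+\sum_{j>n}g_j^{(k_n)}$, where the tail is bounded on $|q|\le R_n$ by $\sum_{j>n}\varepsilon_n 2^{-j}<\varepsilon_n$. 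Density of $\{F^{(k)}\}_{k\in\mathbb N}$ in $\mathcal R(\mathbb H)$ follows because any $f\in\mathcal R(\mathbb H)$ is uniformly approximable on any $K\in\mathfrak F(\mathbb H)$ by Runge polynomials with rational coefficients, each of which appears infinitely often in the enumeration with $R_n\ge\operatorname{diam} K$ and arbitrarily small $\varepsilon_n$.

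The only genuinely non-complex step is verifying that the formal antiderivative operation on slice regular power series behaves as expected; this is however immediate, since $\partial_s(q^{m+k}/\binom{m+k}{m}^{-1})$ reduces to real scalar differentiation of the monomial, so each $g_n$ is a slice regular polynomial whose slice derivatives are computed by the usual factorial formula. The main obstacle I anticipate is purely bookkeeping: balancing the three competing requirements, namely (i) $g_n^{(k_n)}$ must correct the error at level $k_n$ exactly, (ii) $g_n^{(k_i)}$ for $i<n$ must be small on $B(0,R_i)$ to preserve earlier approximations, and (iii) $g_n$ itself must be small enough on $B(0,n)$ for the series to converge. Choosing $k_n$ sufficiently large in terms of $k_{n-1}$, $R_{n-1}$, and $\|h_n\|$ on $B(0,n)$ resolves all three simultaneously because the factor $R^{k_n-\ell}/(k_n-\ell)!$ dominates every fixed polynomial quantity.
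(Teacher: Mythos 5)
Your proposal is correct and follows essentially the same route as the paper: the MacLane construction via iterated antiderivatives of a dense sequence of polynomials, with the orders $k_n$ pushed high enough that the factor $R^{k_n-\ell}/(k_n-\ell)!$ kills all lower derivatives on fixed balls while $F^{(k_n)}$ recovers $P_n$ plus a small tail. The only cosmetic difference is your correction term $h_n=P_n-\sum_{i<n}g_i^{(k_n)}$, whereas the paper instead imposes $k_n>k_{n-1}+\deg(P_{n-1})$ so that the earlier terms' $k_n$-th derivatives vanish identically; since each $g_i$ is a polynomial of fixed degree, your correction is vacuous for $k_n$ large and the two devices coincide.
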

\begin{proof}
We begin by defining a linear operator $I$ acting on the monomials $q^n$ as
$$I(q^n)=\frac{q^{n+1}}{n+1}$$ and then we extend by linearity to  the set of all   polynomials. We note that
$$
I^k(q^n)=\frac{q^{n+k}}{(n+k)\ldots (n+1)},
$$
and that, for $|q|\leq r$, we have the inequality
$$
|I^k(q^n)|\leq \frac{r^{n+k}}{(n+k)\ldots (n+1)}\leq \frac{r^{n+k}}{k!}.
$$
As a consequence, we have
 $$\max_{| q|\leq r} | I^k (q^n) | \to 0\qquad {\rm for}\ k\to\infty.
  $$
 For any $\delta >0$ and any $r>0$ there exists $k_0\in\mathbb N$ such that
$$\max_{| q|\leq r} | I^k (q^n) |  <\delta, \ \ \ {\rm for}\ k\geq k_0.
$$
Now observe that, given $f\in\mathcal R(\mathbb H)$,  $\varepsilon >0$ and $m\in\mathbb N$,  if $| f(q)|\leq\delta$ for $|q|\leq r$, then for $|q|\leq r/2$ the Cauchy estimates give
$$
| f^{(j)}(q)| \leq \frac{j! \max_{|q|\leq r} | f(q)|}{(r/2)^{j}}\leq j! 2^j/r^j \delta <\varepsilon ,
$$
for any $j=0,\ldots m$, if $\delta$ is sufficiently small. Thus, if  we consider a polynomial $P(q)$ and any $r>0$, $\varepsilon >0$ and $m\in\mathbb N$, there exists $k_0\in\mathbb N$ such that for $k \geq k_0$ we have
\begin{equation}\label{maximum}
\max_{| q| \leq r} | (I^k(P))^{(j)}| < \varepsilon ,
\end{equation}
for $j=0,\ldots ,m$.
\\
Let now $\{P_n\}_{n\in\mathbb N}$ be a sequence of polynomials  dense in $\mathcal R (\mathbb H)$ and let
$$
\sum_{j=1}^\infty I^{k_j}(P_j)(q),
$$
where the integers $k_j$ are constructed below. Note also that we denote $I^{k_j}(P_j)(q)$ by $Q_j(q)$. We set $k_1=0$, so $Q_1=P_1$; then we choose  $k_2>k_1 +\deg(P_1)$ and, since we set $Q_2= I^{k_2}(P_2)$, by (\ref{maximum}) we can select $k_2$ to be such that $$| Q_2(q)|< 1/ 2^2, \ \ \ \ {\rm for} \ \ | q| \leq 2.$$
Then, inductively, for $n\geq 3$, we choose $k_n> k_{n-1}+\deg(P_{n-1}),$ we set $Q_n=I^{k_n}(P_n)$ and we select $k_n$  to be such that
\begin{equation}\label{defQn}
\begin{split}
&| Q_n(q)|\leq \frac{1}{2^n}, \\
&| Q'_n(q)|\leq \frac{1}{2^n}, \\
&\ldots \\
&| Q_n^{(k_{n-1})}(q)|\leq \frac{1}{2^n}
\end{split}
\end{equation}
for $| q| \leq n$. The series $$\sum_{j=1}^\infty I^{k_j}(P_j)(q)=\sum_{j=1}^\infty Q_j(q)$$ converges uniformly on bounded subsets in $\mathbb H$ to a slice regular function $F(q)$,
in fact (\ref{defQn}) implies that:
$$
\max_{|q|\leq j} | Q_j(q)| \leq \frac{1}{2^j}.
$$
 We now show that the function $F$ is such that $\{F^{(n)}\}_{n\in\mathbb N}$ is dense in $\mathcal R(\mathbb H)$. To this end, let $f\in\mathcal R(\mathbb H)$ and let $r>0$, $\varepsilon >0$ be arbitrary. Let $n_0\in\mathbb N$ be such that $n_0>r$ and $1/2^{n_0-1} <\varepsilon$. Recalling that the sequence $\{P_n\}$ is dense
 in $\mathcal R(\mathbb H)$, there exists $n\in\mathbb N$, $n>n_0$ such that $\max_{| q| \leq n_0} | f(q)- P_n(q) |<\varepsilon$. The assumptions on the number $k_n$ imply that  $Q_j^{(k_n)}(q)=0$ for $j=1,\ldots , n-1$ and  $Q_n^{(k_n)}(q)=P_n(q)$, thus
 \[
 \begin{split}
 | f(q)-F^{(k_n)}(q)|  &=|  f(q)- P_n(q)  -  \sum_{j=n+1}^\infty  Q_j^{(k_n)}(q) |
 \\&
 \leq | f(q)-P_n(q)| +  \sum_{j=n+1}^\infty | Q_j^{(k_n)}(q) | \\
 &< 2\varepsilon ,
 \end{split}
 \]
 and so $$\max_{|q|\le n_0} | f(q)-F^{(k_n)}(q)|<2\varepsilon$$ and the statement follows.
\end{proof}

\section{Entire slice regular functions of exponential type}

As in the complex setting, it makes sense to introduce also in the quaternionic setting the notion of entire functions of exponential type. \index{exponential type} These are functions at most of first order and normal type and their exponential type is defined as
$$
\sigma=\overline{\lim}_{r\to \infty} \frac{\log M_f(r)}{r}.
$$
 In other words, we have the following:
\begin{definition}
An entire slice regular function $f$ is said to be of exponential type if there exist constants $A,B$ such that
$$
|f(q)|\leq B e^{A|q|}
$$
for all $q\in\mathbb H$.
\end{definition}
Functions of order less than $1$ or of order $1$ and minimal type, are said to be of exponential type zero.\index{exponential type zero}
As a nice application of our definition of the composition of the exponential function with a slice regular function, in particular a polynomial, we show how to associate to each entire slice regular function $f$ of exponential type its Borel transform.
\begin{definition}
Let $f$ be an entire slice regular function of exponential type, and let $f(q)=\sum_{k=0}^\infty q^k\dfrac{a_k}{k!}$, $a_k\in\mathbb H$. The function
$$
\phi(q)=\mathcal{F}(f)(q)=\sum_{k=0}^\infty q^{-(k+1)}a_k
$$
is called the Borel transform of $f(q)$.\index{Borel transform}
\end{definition}
\begin{remark}{\rm
If $\sigma$ is the exponential type of a function $f$, then
$$
\sigma={\underset{k\to\infty}{\overline{\lim}}}|a_k|^{1/k}.
$$
Thus the Borel transform $\phi$ of $f$ is slice regular for $|q|>\sigma$.
}
\end{remark}
Let $q,w\in\mathbb H$ and consider the function $$e_\star^{qw}=\sum_{n=0}^\infty \frac{1}{n!}q^nw^n.$$ This function can be obtained through the Representation Formula: take $z=x+I_w y$ on the same complex plane as $w$. Then $e_\star^{zw}=e^{zw}$ and we can extend this function for any $q=x+Iy\in\mathbb H$
$$
e_\star^{qw}={\rm ext}(e^{zw})=\frac 12(e^{zw}+e^{\bar{z}w})+\frac 12 I I_w (e^{\bar{z}w} - e^{zw}).
$$
This is useful to obtain an integral formula for an entire function $f$ in terms of its the Fourier-Borel transform.
\begin{theorem}
An entire slice regular function $f$ can be written in terms of its Fourier-Borel transform $\phi$ as
$$
f(q)=\frac{1}{2\pi} \int_{\Sigma\cap\mathbb C_I} e_\star^{qw} \, dw_I\, \phi (w),\qquad dw_I=-Idw,
$$
where $\Sigma$ surrounds the singularities of $\phi$.
\end{theorem}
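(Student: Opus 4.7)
The plan is to substitute the power series for both $e_\star^{qw}$ and $\phi(w)$, interchange the integral with the two infinite sums, and verify that the double sum collapses (by a one-variable residue computation inside the plane $\mathbb C_I$) to the Taylor series of $f$. Concretely, write
\[
f(q)=\sum_{n=0}^\infty q^{n}\frac{a_n}{n!},\qquad e_\star^{qw}=\sum_{n=0}^\infty\frac{q^n w^n}{n!},\qquad \phi(w)=\sum_{k=0}^\infty w^{-(k+1)}a_k,
\]
and choose $\Sigma=\{|q|=r\}$ for some $r>\sigma$, so $\Sigma\cap\mathbb C_I$ is the circle $\{re^{I\theta}:\theta\in[0,2\pi)\}$ which encloses every singularity of the Borel transform. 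Since $w\in\mathbb C_I$, the differential $dw_I=-I\,dw$ and every power $w^m$ lie in $\mathbb C_I$ and therefore commute among themselves; only the quaternions $q^n$ (on the left) and $a_k$ (on the right) need to be kept in place.

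Next I would compute the one-variable kernel integral. Parametrising $w=re^{I\theta}$ gives $dw_I=-I\cdot I r e^{I\theta}d\theta = r e^{I\theta}d\theta$, so for every $m\in\mathbb Z$
\[
\frac{1}{2\pi}\int_{\Sigma\cap\mathbb C_I} w^{m-1}\,dw_I
=\frac{1}{2\pi}\int_0^{2\pi} r^{m}e^{I m\theta}\,d\theta
=\delta_{m,0}.
\]
Applied with $m=n-k$ this yields
\[
\frac{1}{2\pi}\int_{\Sigma\cap\mathbb C_I} w^{n}\,dw_I\,w^{-(k+1)}=\delta_{n,k}.
\]
Granted the interchange of integral and sums, one then obtains
\[
\frac{1}{2\pi}\int_{\Sigma\cap\mathbb C_I} e_\star^{qw}\,dw_I\,\phi(w)=\sum_{n,k}\frac{q^n}{n!}\,\delta_{n,k}\,a_k=\sum_{n=0}^\infty q^n\frac{a_n}{n!}=f(q),
\]
which is the claimed formula.

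The only genuine obstacle is the legitimacy of interchanging the two series with the contour integral. For this I would fix $r$ with $\sigma<r$ and any bounded set $|q|\le R$, and use the growth bound $|a_k|\le C(\sigma+\varepsilon)^k$ coming from $\overline{\lim}|a_k|^{1/k}=\sigma$: on $\Sigma\cap\mathbb C_I$ the series $\phi(w)$ converges absolutely and uniformly (geometric majorant with ratio $(\sigma+\varepsilon)/r<1$), and the series $e_\star^{qw}=\sum_n q^n w^n/n!$ converges absolutely and uniformly in $(q,w)$ on $\{|q|\le R\}\times(\Sigma\cap\mathbb C_I)$. Hence the product series converges absolutely and uniformly in $w\in\Sigma\cap\mathbb C_I$, justifying the term-by-term integration by dominated convergence. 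Finally, the independence of the chosen imaginary unit $I$ and of the particular radius $r>\sigma$ follows from the fact that the integrand, viewed as a slice regular function of $w$ on $\{|w|>\sigma\}\cap\mathbb C_I$, has the same $-1$-residue on every such circle; equivalently one can invoke the slice Cauchy theorem (Corollary~\ref{int_nullo_qua}) to deform $\Sigma\cap\mathbb C_I$ within the domain of holomorphy of $\phi$, so that the value of the integral depends neither on $r>\sigma$ nor on $I\in\mathbb S$.
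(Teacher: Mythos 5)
Your proof is correct, but it follows a genuinely different route from the one in the paper. You expand both $e_\star^{qw}$ and $\phi(w)$ as series under the integral, justify the termwise integration by the geometric bound $|a_k|\le C(\sigma+\varepsilon)^k$ together with the uniform convergence of $\sum_n q^nw^n/n!$ on $\{|q|\le R\}\times(\Sigma\cap\mathbb C_I)$, and then collapse the double sum by the orthogonality relation $\frac{1}{2\pi}\int_0^{2\pi}r^m e^{Im\theta}\,d\theta=\delta_{m,0}$; the key structural observation, which you state and use correctly, is that $w^n$, $dw_I$ and $w^{-(k+1)}$ all lie in $\mathbb C_I$ and hence commute, so only $q^n$ on the left and $a_k$ on the right need to be kept in place. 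The paper instead splits $f=F+GJ$ via the Splitting Lemma, applies the \emph{classical} complex Borel representation to the $\mathbb C_I$-valued holomorphic functions $F$ and $G$ on the slice, recombines, and then extends off the slice by the Representation Formula --- which is precisely how $e_\star^{qw}={\rm ext}(e^{zw})$ is built. Your argument is more elementary and self-contained (it does not import the complex theorem), and it yields the independence of the radius and of $I\in\mathbb S$ automatically from the explicit value $\sum_n q^na_n/n!$ of the integral, whereas your closing appeal to Corollary \ref{int_nullo_qua} is only needed to pass from circles to a general contour $\Sigma$. What the paper's route buys is a demonstration of the standard slice-extension pattern (Splitting Lemma plus Representation Formula) and an explanation of why $e_\star^{qw}$ is the natural kernel; what yours buys is transparency about how the Taylor coefficients are recovered and a cleaner convergence analysis.
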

\begin{proof}
First of all, we select a basis $1, I, J, IJ$ of $\mathbb H$, so that we can  write
\[
\begin{split}
f(q)&=\sum_{k=0}^\infty \frac{q^k}{k!}(a_{k0}+Ia_{k1}+Ja_{k2}+IJa_{k3})\\
&=\sum_{k=0}^\infty \frac{q^k}{k!}(a_{k0}+Ia_{k1})+\sum_{k=0}^\infty \frac{q^k}{k!}(a_{k2}+Ia_{k3})J\\
&=F(q)+G(q)J.
\end{split}
\]
The Fourier-Borel transform of $F$ and $G$ are
$$
 \mathcal{F} F(q)=\sum_{k=0}^\infty q^{-(k+1)}(a_{k0}+Ia_{k1}), \qquad \mathcal{F} G(q)= \sum_{k=0}^\infty q^{-(k+1)} ( a_{k2}+Ia_{k3}),
$$
and it is immediate that
$$
\phi(q)=\mathcal{F} f(q)= \mathcal{F} F(q)+\mathcal{F} G(q)J.
$$
When $z=x+Iy$ then $F$ and $G$ are entire holomorphic functions and so we have, for $w\in\mathbb C_I$
$$
F(z)=\frac{1}{2\pi}\int_{\gamma} e^{zw}\, \mathcal{F}(F)(w)\, dw I^{-1}=\frac{1}{2\pi}\int_{\gamma} e^{zw}\, dw_I \mathcal{F}(F)(w)
$$
$$
G(z)=\frac{1}{2\pi}\int_{\gamma} e^{zw}\, \mathcal{F}(G)(w)\, dw I^{-1}=\frac{1}{2\pi}\int_{\gamma} e^{zw}\, dw_I \mathcal{G}(F)(w)
$$
where $\gamma$ can be chosen to surround both the singularities of $\mathcal{F}(F)$ and $\mathcal{F}(G)$.
By the Splitting Lemma we have
\begin{equation}\label{expstar}
\begin{split}
f(z)&=F(z)+G(z)J\\
&=
\frac{1}{2\pi}\int_{\gamma} e^{zw}\, dw_I (\mathcal{F}(F)(w)+\mathcal{G}(F)(w)J)\\
&=\frac{1}{2\pi}\int_{\gamma} e^{zw}\, dw_I \phi (w).
\end{split}
\end{equation}
Now we can reconstruct $f(q)$ using the Representation Formula and the previous formula \eqref{expstar}. This leads to
\[
\begin{split}
f(q)&= f(x+I_q y)\\
&=\frac{1}{2\pi}\int_{\gamma} \left[\frac 12 (e^{zw}+e^{\bar zw})+\frac 12 I_qI(e^{\bar zw}-e^{zw})\right]\, dw_I \phi (w)\\
&=\frac{1}{2\pi}\int_{\gamma} e_\star^{qw} \, dw_I \phi (w),
\end{split}
\]
and since $w$ is arbitrary and $\gamma$ can be chosen to be $\Sigma\cap\mathbb C_I$, the statement follows.
\end{proof}
As we said at the beginning of the section, we introduced the Borel transform as an application of the slice regular composition of the exponential function. This is obviously the beginning of a theory that we plan to further develop.
\vskip 1 truecm
\noindent{\bf Comments to Chapter 5}. The material in this Chapter is new, except for the section on the universal property of entire functions which is taken from \cite{GalSa1}. The complex version of the results in this chapter may be found in the book \cite{levin}.





\end{document}